\documentclass[12pt]{amsart}
\usepackage{leftidx}
\usepackage{mathrsfs}
\usepackage{amssymb}
\usepackage{mathtools}
\usepackage{verbatim}

\newcommand{\tr}{\mathrm{tr}}
\newcommand{\sgn}{\mathrm{sgn}}
\newcommand{\ord}{\mathrm{ord}}
\newcommand{\Mp}{\mathrm{Mp}}
\newcommand{\PGL}{\mathrm{PGL}}
\newcommand{\SL}{\mathrm{SL}}
\newcommand{\Vol}{\mathrm{Vol}}
\newcommand{\Wald}{\mathrm{Wald}}

\newtheorem{definition}{Definition}
\newtheorem{theorem}{Theorem}
\newtheorem{lemma}{Lemma}
\newtheorem{proposition}{Proposition}
\newtheorem{corollary}{Corollary}
\newtheorem*{remark}{Remark}

\numberwithin{equation}{section}
\numberwithin{theorem}{section}
\numberwithin{definition}{section}
\numberwithin{lemma}{section}
\numberwithin{proposition}{section}
\numberwithin{corollary}{section}

\allowdisplaybreaks

\begin{document}
\title{Newforms in the Kohnen plus space}
\author{Ren-He Su}
\address{Graduate school of mathematics, Kyoto University, Kitashirakawa, Kyoto, 606-8502, Japan}
\email{ru-su@math.kyoto-u.ac.jp}
\maketitle

\begin{abstract}
In this paper we want to define the Kohnen plus space for Hilbert modular forms with a odd square-free level and a quadratic character by a representation-theoretic way.
We will show that in the classical case the one we defined is the same with the one given by Kohnen.
Also, we will interpret the actions of the Hecke operators on the new forms in the plus space and give a characterization for the new forms using Hecke operators.
All the results with respect to Hecke operators can are comparable with the integral weight case.
\end{abstract}

\section{Introduction}\label{Introduction}
In 1980, Kohnen \cite{Kohnen:80} introduced a special subspace of classical modular forms with half-integral weight in which the forms are characterized by the Fourier coefficients.
Precisely, if $\sum a(n)\mathbf{e}(nz)$ is such a form with weight $k+1/2$, then $a(n)=0$ unless $(-1)^kn\equiv0$ or $1$ mod $4$.
If we only consider the cusp forms, then the corresponding subspace, denoted by $S^+_{k+1/2}(\Gamma_0(4)),$ is actually an eigenspace of some Hecke operator with respect to some eigenvalue.
An important result is that there exists an isomorphism between $S^+_{k+1/2}(\Gamma_0(4))$ and $S_{2k}(\Gamma_0(1)),$ the space of cusp forms with weight $2k,$ as Hecke modules.

\par
Later, in 1982, Kohnen \cite{Kohnen:82} generalized the plus space to the modular forms with level $4N$ and a quadratic character $\chi$ mod $N,$ where $N$ is a square-free odd integer.
The restriction for the Fourier coefficients of a form in the plus space is now $a(n)=0$ unless $\chi(-1)(-1)^kn\equiv0$ or $1$ mod $4$.
Denote the space by $S^+_{k+1/2}(\Gamma_0(4N),\chi)$.
Kohnen showed that such a plus space is isomorphic to the plus space with trivial character, to which many questions can be reduced.
Let the space of new forms in $S^+_{k+1/2}(\Gamma_0(4N),\chi),$ which is denoted by $S_{k+1/2}^{+,\mathrm{NEW}}(\Gamma_0(4N),\chi),$ be the orthogonal complement of the subspace
\[
\sum_{d\mid N, d<N}\left(S^+_{k+1/2}(\Gamma_0(4d),\chi)+S^+_{k+1/2}(\Gamma_0(4d),\chi)\mid U(N^2/d^2)\right)
\]
with respect to the Petersson product.
Here $U(r^2)$ is the operator replacing the $n$-th Fourier coefficient of a modular form by its $r^2n$-th one.
Kohnen also showed that there exists a linear combination of the Shimura liftings which is a Hecke isomorphism from $S_{k+1/2}^{+,\mathrm{NEW}}(\Gamma_0(4N),\chi)$ onto $S_{2k}^{\mathrm{NEW}}(\Gamma_0(N))$.

\par
In this paper, we want to consider the Hilbert case.

\par
Let $F$ be a totally real number field with ring of integers $\mathfrak{o}$ and the different $\mathfrak{d}_1$.
We fix a square-free odd integral ideal $\mathfrak{I}$ in $\mathfrak{o}$ and a primitive quadratic character $\chi$ mod $\mathfrak{I}$ with conductor $(\mathfrak{f})$ which is a principal ideal generated by some $\mathfrak{f}\in\mathfrak{o}$ such that $\mathfrak{f}\mid\mathfrak{I}$.
For any ideals $\mathfrak{b}$ and $\mathfrak{c}$ such that $\mathfrak{bc}\subset\mathfrak{o}$ we put
\[
\Gamma[\mathfrak{b},\mathfrak{c}]=\left\{
\begin{pmatrix}
a&b\\c&d
\end{pmatrix}\in\SL_2(F)\,\mid\,a,d\in\mathfrak{o}, b\in\mathfrak{b}, c\in\mathfrak{c}
\right\}
\]
and
\[
\Gamma_0(\mathfrak{a})=\Gamma[\mathfrak{d}_1^{-1},\mathfrak{a}\mathfrak{d}_1]
\]
for integral ideal $\mathfrak{a}\subset\mathfrak{o}$
We aim to define the plus space $S^+_{k+1/2}(\Gamma_0(4\mathfrak{I}),\chi)$ using a representation-theoretic method.
For simplicity, here we let $k$ be parallel and take $\mathfrak{f}$ so that it has sign $(-1)^k$.
The non-trivial additive character $\psi_1=\prod_{v\leq\infty}\psi_{1,v}$ on $\mathbb{A}/F$ is the unique one such that $\psi_{1,\infty}(x)$ is equal to $\exp(2\pi\sqrt{-1}x)$ for any infinite place $\infty$ of $F$ and put $\psi(x)=\psi_1(\mathfrak{f}x)$.

\par
Fix a finite place $v$ of $F$ and set $\varpi_v\in F_v$ to be the uniformizer corresponding to $v$.
An element in the metaplectic double covering $\Mp_2(F_v)$ of $\SL_2(F_v)$ has the form $[g,\zeta]$ where $g\in\SL_2(F_v)$ and $\zeta\in\{\pm1\}$.
We let $\mathfrak{d}=\mathfrak{f}\mathfrak{d}_1,$
\[
K_v=
\begin{cases}
\Gamma_0(1)_v&\mbox{ if }v\nmid\mathfrak{I},\\
\Gamma_0(\varpi_v)_v&\mbox{ if }v\mid\mathfrak{f}^{-1}\mathfrak{I},\\
\Gamma[\varpi_v\mathfrak{d}_v^{-1},\mathfrak{d}_v]&\mbox{ if }v\mid\mathfrak{f},
\end{cases}
\]
and
\[
\Gamma_v=
\begin{cases}
\Gamma_0(4)_v&\mbox{ if }v\mid2,\\
K_v&\mbox{ otherwise.}
\end{cases}
\]
Their inverse images in $\Mp_2(F_v)$ are denoted by $\widetilde{K_v}$ and $\widetilde{\Gamma_v},$ respectively
Let $\omega_{\psi_v}$ be the Weil representation of $\Mp_2(F_v)$ on the Schwartz space $\mathcal{S}(F_v)$ with respect to the local additive character $\psi_v$.
Then there exists a genuine character $\varepsilon_v$ of $\widetilde{\Gamma_v}$ given by
\[
\omega_{\psi_v}(\gamma)\phi_{0,v}=\varepsilon_v(\gamma)^{-1}\phi_{0,v}
\]
where $\phi_{0,v}\in\mathcal{S}(F_v)$ is the characteristic function of $\mathfrak{o}_v$.
Let $\widetilde{\mathcal{H}_v}=\widetilde{\mathcal{H}}(\widetilde{\Gamma_v}\backslash\Mp_2(F_v)/\widetilde{\Gamma_v};\varepsilon)$ be the Hecke algebra consisting of compactly supported functions $\varphi$ on $\Mp_2(F_v)$ such that $\varphi(\gamma_1g\gamma_2)=\varepsilon(\gamma_1\gamma_2)\varphi(g)$ for any $\gamma_1, \gamma_2\in\widetilde{\Gamma_v}$.
Now we set
\[
\widetilde{K'_v}=
\begin{cases}
\widetilde{\Gamma[4^{-1}\mathfrak{d}_v^{-1},4\mathfrak{d}_v]}&\mbox{ if }v\mid2,\\
\widetilde{K_v}&\mbox{ otherwise,}
\end{cases}
\]
and
\[
E^K_v(g)=
\begin{cases}
|2|_v^{-1}\Vol(\widetilde{K}_v)^{-1}(\phi_{0,v},\omega_{\psi_v}(g)\phi_{0,v})_v&\mbox{ if }g\in\widetilde{K'_v},\\
0&\mbox{ otherwise,}
\end{cases}
\]
where $(\cdot,\cdot)_v$ denotes the inner product for $\mathcal{S}(F_v)$ and $|\cdot|_v$ and $\Vol$ are normalized so that $|\varpi_v|_v=q_v^{-1}$ and $\Vol(\widetilde{\Gamma_v})=1,$ respectively.
The function $E^K_v$ is a Hecke operator contained in $\widetilde{\mathcal{H}_v}$ and is an idempotent with respect to the involution.
We put $E^K=\prod_{v<\infty}E^K_v\in\widetilde{\mathcal{H}}=\prod_{v<\infty}\widetilde{\mathcal{H}_v}$.

\par

Let $\Mp_2(\mathbb{A})$ be the metaplectic double covering of $\SL_2(\mathbb{A})$ where $\mathbb{A}$ is the adele ring of $F$.
The space $\mathcal{A}^{\tiny\mbox{CUSP}}_{k+1/2}(\SL_2(F)\backslash \Mp_2(\mathbb{A})；\widetilde{\Gamma_{\mathrm{f}}};\varepsilon)$ consists of all cuspidal automorphic forms $\Phi$ on $\Mp_2(\mathbb{A})$ with weight $k+1/2$ which satisfies
\[
\Phi(g\gamma)=\varepsilon(\gamma)^{-1}\Phi(g)\quad(\gamma\in\widetilde{\Gamma_{\mathrm{f}}})
\]
where $\widetilde{\Gamma_{\mathrm{f}}}=\otimes'_{v<\infty}\widetilde{\Gamma_v}$ and $\varepsilon=\prod_{v<\infty}\varepsilon_v$.
Each automorphic form in that space corresponds to exactly one Hilbert cusp form with respect to the congruence subgroup $\Gamma_0(4\mathfrak{I})$ of $\SL_2(F_v)$ with weight $k+1/2$.
The space consisting of all Hilbert cusp forms corresponding to the forms in $\mathcal{A}^{\tiny\mbox{CUSP}}_{k+1/2}(\SL_2(F)\backslash \Mp_2(\mathbb{A})；\widetilde{\Gamma_{\mathrm{f}}};\varepsilon)$ is denoted by $S_{k+1/2}(\Gamma_0(4\mathfrak{I}),\chi_\mathfrak{f})$.
Since the global Hecke algebra $\widetilde{\mathcal{H}}$ acts on $\mathcal{A}^{\tiny\mbox{CUSP}}_{k+1/2}(\SL_2(F)\backslash \Mp_2(\mathbb{A})；\widetilde{\Gamma_{\mathrm{f}}};\varepsilon)$ by the right translation $\rho,$ it also acts on $S_{k+1/2}(\Gamma_0(4\mathfrak{I}),\chi_\mathfrak{f})$ equivalently.
We call the $E^K$-fixed subspace $S_{k+1/2}(\Gamma_0(4\mathfrak{I}),\chi_\mathfrak{f}),$ which is denoted by $S^+_{k+1/2}(\Gamma_0(4\mathfrak{I}),\chi_\mathfrak{f}),$ the Kohnen plus space.

\par

Similar to the case $F=\mathbb{Q},$ the plus space can be characterized by some properties of the Fourier coefficients of the forms in it.
\begin{theorem}
	The Kohnen plus space $S^+_{k+1/2}(\Gamma_0(4\mathfrak{I}),\chi_\mathfrak{f})$ is the subspace of $S_{k+1/2}(4\Gamma_0(\mathfrak{I}),\chi_\mathfrak{f})$ which consists of the forms whose $\xi$-th Fourier coefficient vanishes unless there exists some $\lambda\in\mathfrak{o}$ such that $\xi-\mathfrak{f}\lambda^2\in4\mathfrak{o}$.
\end{theorem}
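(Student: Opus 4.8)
The plan is to use the factorization $E^K=\prod_{v<\infty}E^K_v$ into commuting local idempotents, so that a form $\Phi$ lies in $S^+_{k+1/2}(\Gamma_0(4\mathfrak{I}),\chi_\mathfrak{f})$ if and only if $\rho(E^K_v)\Phi=\Phi$ for every finite place $v$. First I would record that $\rho(E^K_v)$ acts diagonally on the Fourier expansion: if $W_\xi$ denotes the $\xi$-th Fourier coefficient of $\Phi$, then because right convolution by $E^K_v$ commutes with the left translations by $n(b)=\left(\begin{smallmatrix}1&b\\0&1\end{smallmatrix}\right)$ that define the character $b\mapsto\psi_{1,v}(\xi b)$, the function $\rho(E^K_v)W_\xi$ is again a $\xi$-Whittaker function of the same weight and level. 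By the local uniqueness of such a vector it equals $c_v(\xi)W_\xi$ for a scalar $c_v(\xi)$, and since $E^K_v$ is idempotent, $c_v(\xi)\in\{0,1\}$. Hence $\rho(E^K)$ multiplies the $\xi$-th Fourier coefficient by $\prod_v c_v(\xi)$, and $\Phi$ is fixed exactly when its $\xi$-th coefficient vanishes whenever some $c_v(\xi)=0$.

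For $v\nmid2$ I would check that $E^K_v$ is the unit of $\widetilde{\mathcal{H}_v}$, so that $c_v(\xi)=1$ and no condition is imposed. Indeed here $\widetilde{K'_v}=\widetilde{K_v}=\widetilde{\Gamma_v}$ and $|2|_v=1$, and since $\phi_{0,v}$ is by construction the $\varepsilon_v$-eigenvector of $\widetilde{\Gamma_v}$, the matrix coefficient collapses on its support to $\varepsilon_v(g)^{-1}(\phi_{0,v},\phi_{0,v})_v$; this is a scalar multiple of the unit element $\varepsilon_v^{-1}\cdot\mathbf{1}_{\widetilde{\Gamma_v}}$, and idempotency forces the scalar to be $1$.

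The core of the theorem, and the step I expect to be the main obstacle, is the computation at $v\mid2$, where $\widetilde{K'_v}=\widetilde{\Gamma[4^{-1}\mathfrak{d}_v^{-1},4\mathfrak{d}_v]}$ properly contains $\widetilde{\Gamma_v}=\widetilde{\Gamma_0(4)_v}$, the enlargement being in the upper-triangular entry $b$, from $\mathfrak{d}_v^{-1}$ to $4^{-1}\mathfrak{d}_v^{-1}$. Using the right-$\widetilde{\Gamma_v}$-invariance of the integrand, $c_v(\xi)W_\xi(e)=\int_{\widetilde{K'_v}}E^K_v(h)W_\xi(h)\,dh$ reduces to a finite sum over $\widetilde{\Gamma_v}\backslash\widetilde{K'_v}$. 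On the cosets represented by unipotent elements $n(b)$ the Whittaker relation gives $W_\xi(n(b))=\psi_{1,v}(\xi b)W_\xi(e)$, while the Weil action $\omega_{\psi_v}(n(b))\phi_{0,v}(x)=\psi_v(bx^2)\phi_{0,v}(x)$ turns $E^K_v(n(b))$ into the Gauss sum $\int_{\mathfrak{o}_v}\psi_v(bx^2)\,dx$; summing $\psi_{1,v}(\xi b)\int_{\mathfrak{o}_v}\psi_v(bx^2)\,dx$ over $b\in4^{-1}\mathfrak{d}_v^{-1}/\mathfrak{d}_v^{-1}$ and applying orthogonality of additive characters produces, up to the normalizing constants $|2|_v^{-1}$ and $\Vol(\widetilde{K}_v)^{-1}$, exactly the indicator that $\xi\equiv\mathfrak{f}\lambda^2\pmod{4\mathfrak{o}_v}$ is solvable. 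The genuine difficulty is that $v$ is dyadic: the cosets on which the diagonal entry is not a unit are represented by elements involving the Weyl element, so that $\omega_{\psi_v}$ contributes Fourier transforms and dyadic Weil indices that must be evaluated and shown to recombine into the same indicator, and the overall constant and sign must be pinned down so that $c_v(\xi)\in\{0,1\}$ as demanded by idempotency. The conceptual reason the answer must take this shape is that $\phi_{0,v}$ is attached to the rank-one quadratic form $x\mapsto\mathfrak{f}x^2$, whose theta series is supported on $\xi=\mathfrak{f}\lambda^2$, with the $4^{-1}$-enlargement of the level relaxing this to a congruence modulo $4\mathfrak{o}_v$.

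Finally I would globalize. The indices $\xi$ surviving all the projections are exactly those for which $\xi\equiv\mathfrak{f}\lambda_v^2\pmod{4\mathfrak{o}_v}$ is solvable at every $v\mid2$. Choosing, by the Chinese Remainder Theorem in $\mathfrak{o}/2\mathfrak{o}\cong\prod_{v\mid2}\mathfrak{o}_v/2\mathfrak{o}_v$, a single $\lambda\in\mathfrak{o}$ with $\lambda\equiv\lambda_v\pmod{2\mathfrak{o}_v}$, and noting that a congruence modulo $2\mathfrak{o}_v$ squares to one modulo $4\mathfrak{o}_v$ while $4$ is a unit away from $2$, I obtain $\xi-\mathfrak{f}\lambda^2\in\bigcap_{v\mid2}(4\mathfrak{o}_v\cap\mathfrak{o})=4\mathfrak{o}$. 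The reverse implication is immediate, so the fixed subspace is precisely the space of forms whose $\xi$-th Fourier coefficient vanishes unless $\xi-\mathfrak{f}\lambda^2\in4\mathfrak{o}$ for some $\lambda\in\mathfrak{o}$, which is the assertion of the theorem.
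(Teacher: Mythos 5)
Your reduction to the dyadic places and your identification of the unipotent-coset Gauss sum are the right heuristics, but the argument collapses at its first step: the claim that $\rho(E^K_v)$ multiplies the $\xi$-th Fourier--Whittaker coefficient by a scalar $c_v(\xi)$. The ``local uniqueness'' you invoke does not give this. Uniqueness of Whittaker models supplies one functional per irreducible representation; it does not say that the space of functions on $\Mp_2(F_v)$ which are left $(N_v,\psi_{1,v}(\xi\,\cdot))$-equivariant and right $(\widetilde{\Gamma_0(4)_v},\varepsilon_v^{-1})$-equivariant is one-dimensional. At $v\mid 2$ it is not: already inside a single principal series $\tilde I_{\psi_v}(s_v)$ the $\widetilde{\Gamma_0(4)_v}$-equivariant subspace has dimension equal to the cardinality of the set $\mathfrak{R}$ of representatives of $\widetilde{B_v}\backslash\Mp_2(F_v)/\widetilde{\Gamma_0(4)_v}$, which is at least $3$. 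Concretely, $\widetilde{\Gamma_v}\backslash\widetilde{K'_v}$ is represented not only by the unipotents $\mathbf{u}^\sharp(4^{-1}\boldsymbol{\delta}_v^{-1}z)$ but also by $\mathbf{u}^\sharp(4^{-1}\boldsymbol{\delta}_v^{-1}z)\mathbf{w}_{4\boldsymbol{\delta}_v}$ with $z\in\mathfrak{p}_v/4\mathfrak{o}_v$, and the corresponding terms of $\int E^K_v(h)W_\xi(gh)\,dh$ involve the values $W_\xi(g\,\mathbf{u}^\sharp(\cdot)\mathbf{w}_{4\boldsymbol{\delta}_v})$, which are independent data not recoverable from $W_\xi(g)$. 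So $\rho(E^K_v)$ genuinely mixes Fourier coefficients; your computation produces only the diagonal part of the operator, and no evaluation of dyadic Weil indices will make the Weyl-coset contribution collapse onto a multiple of $W_\xi(g)$. Indeed, if $\rho(E^K)$ were the coefficient-truncation operator, then $S_{k+1/2}(\Gamma_0(4\mathfrak{I}),\chi_{\mathfrak{f}})$ would split as the plus space plus the space of forms supported on the complementary residue classes; in the classical case this is false, which is why Kohnen's projector is built from a $U(4)$-type operator rather than from truncation.

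The paper never diagonalizes $E^K$ on Fourier expansions. In Proposition \ref{plusimpliesfourier} one conjugates $f$ by $\mathbf{w}'_{2\boldsymbol{\delta}}$ to reduce $\rho(E^K)f=f$ to $\rho(e^K)f_0=f_0$, observes that $e^K$ is a matrix coefficient of the finite-dimensional representation $\Omega_\psi$ of $\widetilde{K_{\mathrm{f}}}$ on $\mathbb{S}(2^{-1}\hat{\mathfrak{o}}/\hat{\mathfrak{o}})$, so that the $\widetilde{K_{\mathrm{f}}}$-span of $f_0$ is a copy of $\Omega_\psi$; transporting the basis $\{\phi_\lambda\}_{\lambda\in\mathfrak{o}/2\mathfrak{o}}$ yields a decomposition $f(z/4)=\sum_{\lambda}f_\lambda(z)$ in which each $f_\lambda$ is a $\psi(x\lambda^2/4)$-eigenvector of $\rho(\mathbf{u}^\sharp(x))$, and the support condition on the coefficients follows. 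This is the rigorous form of your (correct) intuition that $\phi_{0,v}$ is attached to the theta series of $x\mapsto\mathfrak{f}x^2$. Note also that the converse inclusion is not ``immediate'' as you assert: it is Proposition \ref{fourierimpliesplus}, which requires showing that the coefficient condition forces $\sum_\lambda\mathbb{C}\cdot f_\lambda$ to be a $\widetilde{K_{\mathrm{f}}}$-stable copy of $\Omega_\psi$, for which the paper appeals to Lemma 3.1 of \cite{Ren:16}.
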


The paper will focus on the new forms in the plus space.
For a finite place $v$ of $F$, any Hecke operator in $\widetilde{\mathcal{H}_v}$ of the form $E^K_v\ast\varphi$ leaves $S^+_{k+1/2}(\Gamma_0(4\mathfrak{I}),\chi_\mathfrak{f})$ invariant, where $\ast$ is the involution of Hecke operators and $\varphi\in\widetilde{\mathcal{H}_v}$.
Note that for any odd integral ideal $\mathfrak{I}'$ such that $\mathfrak{f}\mid\mathfrak{I}'$ and $\mathfrak{I}'\mid\mathfrak{I},$ the space $S^+_{k+1/2}(\Gamma_0(4\mathfrak{I}'),\chi_\mathfrak{f})$ is a subspace of $S^+_{k+1/2}(\Gamma_0(4\mathfrak{I}),\chi_\mathfrak{f})$.
Now the subspace of $S^+_{k+1/2}(\Gamma_0(\mathfrak{4I}),\chi_\mathfrak{f})$ spanned by any cusp form in the image of some $S^+_{k+1/2}(\Gamma_0(4\mathfrak{I}'),\chi_\mathfrak{f})$ under $E^K_v\ast\widetilde{\mathcal{H}_v}$ for some $v<\infty$ is called the old space, denoted by $S^{+,\mathrm{OLD}}_{k+1/2}(\Gamma_0(\mathfrak{4I}),\chi_\mathfrak{f})$.
The orthogonal complement of the old space with respect to the Petersson product is called the new space, denoted by $S^{+,\mathrm{NEW}}_{k+1/2}(\Gamma_0(\mathfrak{4I}),\chi_\mathfrak{f})$.
Any cusp form in the new space is called a new form.

\par

Given a finite place $v$ of $F,$ a cusp form $f\in S^+_{k+1/2}(\Gamma_0(\mathfrak{4I}),\chi_\mathfrak{f})$ generates a representation $\rho_v$ of $\Mp_2(F_v)$ via the right translation, that is, the space spanned by $\{\rho(g)f\mid g\in\Mp_2(F_v)\}$.
If $\rho_v$ is irreducible for all $v<\infty,$ the cusp form $f$ is called a primitive form.
In that case, each $\rho_v$ is contained in some principal series representation $\tilde{I}_{\psi_v}(s_v)$ where $s_v\in\mathbb{C}$.
It is well-known that we can take a basis $\mathcal{B}$ of $S^{+,\mathrm{NEW}}_{k+1/2}(\Gamma_0(\mathfrak{4I}),\chi_\mathfrak{f})$ consisting of primitive forms, which are unique up to multiplication with non-zero complex numbers.
If $f\in\mathcal{B}$ and $v\mid\mathfrak{I},$ the local representation $\rho_v$ is equivalent to a so-called Steinberg representation, twisted or non-twisted, which is a subrepresentation of $\tilde{I}_{\psi_v}(s_v)$ with some $s_v$ such that $q_v^{2s_v}=q_v$.

\par

For a finite place $v$ of $F$ dividing $\mathfrak{I},$ set $\beta_v=1$ or $-1$ according $v\mid\mathfrak{f}^{-1}\mathfrak{I}$ or $v\mid\mathfrak{f},$ respectively.
we let $\widetilde{\mathcal{T}}_{1,v}\in\widetilde{\mathcal{H}_v}$ and $\widetilde{\mathcal{U}}_{1,v}\in\widetilde{\mathcal{H}_v}$ be the Hecke operators which are supported on $\widetilde{\Gamma_v}\left[\begin{pmatrix}
\varpi_v^{\beta_v}&0\\0&\varpi_v^{-\beta_v}
\end{pmatrix},1\right]\widetilde{\Gamma_v}$ and $\widetilde{\Gamma_v}\left[\begin{pmatrix}
0&-\boldsymbol{\delta}_v^{-1}\varpi_v^{-\beta_v}\\\boldsymbol{\delta}_v\varpi_v^{\beta_v}&0
\end{pmatrix},1\right]\widetilde{\Gamma_v}$ such that
\[
\widetilde{\mathcal{T}}_{1,v}\left(\left[\begin{pmatrix}
\varpi_v^{\beta_v}&0\\0&\varpi_v^{-\beta_v}
\end{pmatrix},1\right]\right)=q_v^{-1/2}\frac{\alpha_{\psi_v}(\varpi_v)}{\alpha_{\psi_v}(1)}
\]
anr
\[
\widetilde{\mathcal{U}}_{1,v}\left(\left[\begin{pmatrix}
0&-\boldsymbol{\delta}_v^{-1}\varpi_v^{-\beta_v}\\\boldsymbol{\delta}_v\varpi_v^{\beta_v}&0
\end{pmatrix},1\right]\right)=\alpha_{\psi_v}(\boldsymbol{\delta}_v\varpi_v),
\]
respectively, where $\alpha_{\psi_v}$ is the Weil index with respect to $\psi_v$ and $\boldsymbol{\delta}_v$ satisfies $\boldsymbol{\delta}_v\mathfrak{o}_v=\mathfrak{d}_v$.
Using these Hecke operators, we give a necessary and sufficient condition to determine if a primitive form is new.
This result is inspired by a recent work from Baruch and Purkait \cite{BaPu:15}.

\begin{theorem}
	Let $f\in S^+_{k+1/2}(\Gamma_0(4\mathfrak{I}),\chi_\mathfrak{f})$ be a primitive form.
	Then $f$ is a new form if and only if
	\[
	\rho_v(\widetilde{\mathcal{T}}_{1,v}\widetilde{\mathcal{U}}_{1,v})f=-f=\rho_v(\widetilde{\mathcal{U}}_{1,v}\widetilde{\mathcal{T}}_{1,v})
	\]
	for any $v$ dividing $\mathfrak{I}$.
\end{theorem}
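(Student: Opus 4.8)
The plan is to reduce the global newness condition to a local statement at each place dividing $\mathfrak{I}$, and then to settle that local statement by an explicit computation inside the (twisted) Steinberg representation. First I would localize. Since $f$ is primitive, each $\rho_v$ is irreducible and $f$ is determined by its local components; by definition the old space is the span of the images of the lower-level plus spaces $S^+_{k+1/2}(\Gamma_0(4\mathfrak{I}'),\chi_\mathfrak{f})$ under the operators $E^K_v\ast\widetilde{\mathcal{H}_v}$. At a place $v\mid\mathfrak{I}$ the passage to a smaller level replaces the Iwahori-type group behind $\widetilde{\Gamma_v}$ by the maximal one coming from $\Gamma_0(1)_v$, so a form is old at $v$ exactly when its local component is raised from the $\Gamma_0(1)_v$-spherical situation. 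Consequently $f$ is new if and only if $\rho_v$ is the (twisted) Steinberg representation at every $v\mid\mathfrak{I}$, and it suffices to show, one place at a time, that the stated eigenvalue condition singles out exactly the Steinberg component.

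Next I would fix the local model. At $v\mid\mathfrak{I}$ let $V_v$ be the space of vectors in $\rho_v$ that are fixed by the idempotent $E^K_v$ and transform under $\widetilde{\Gamma_v}$ by $\varepsilon_v$; this is the space on which $\widetilde{\mathcal{T}}_{1,v}$ and $\widetilde{\mathcal{U}}_{1,v}$ operate. Realizing the ambient principal series $\tilde{I}_{\psi_v}(s_v)$ with $q_v^{2s_v}=q_v$, I would describe $V_v$ explicitly: the antidiagonal element underlying $\widetilde{\mathcal{U}}_{1,v}$ normalizes $\widetilde{\Gamma_v}$, so $\widetilde{\mathcal{U}}_{1,v}$ acts as an Atkin--Lehner-type operator, while the diagonal element behind $\widetilde{\mathcal{T}}_{1,v}$ gives a $U$-type operator, and on $V_v$ the two need not commute. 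In the unramified (old) case $V_v$ is a two-dimensional irreducible module over the noncommutative local Hecke algebra, so $\widetilde{\mathcal{T}}_{1,v}$ and $\widetilde{\mathcal{U}}_{1,v}$ admit no common eigenvector there; in the Steinberg (new) case the relevant subspace reduces to the one-dimensional line carried by the Steinberg subrepresentation, on which both operators act by scalars. This is precisely why the criterion imposes both orders of the product: requiring $f$ to be a common eigenvector of $\widetilde{\mathcal{T}}_{1,v}\widetilde{\mathcal{U}}_{1,v}$ and of $\widetilde{\mathcal{U}}_{1,v}\widetilde{\mathcal{T}}_{1,v}$ can be met only when $V_v$ contains a joint eigenline, hence only in the Steinberg case.

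Then I would carry out the explicit Hecke computation to fix the eigenvalue. Working in the Weil-representation model in which $\phi_{0,v}$ is the characteristic function of $\mathfrak{o}_v$, I would decompose the double cosets supporting $\widetilde{\mathcal{T}}_{1,v}$ and $\widetilde{\mathcal{U}}_{1,v}$ into left cosets and evaluate the action on the Steinberg line, keeping track of the normalizing Weil indices $\alpha_{\psi_v}(\varpi_v)/\alpha_{\psi_v}(1)$ and $\alpha_{\psi_v}(\boldsymbol{\delta}_v\varpi_v)$. The value $q_v^{2s_v}=q_v$ fixes the scalar by which $\widetilde{\mathcal{T}}_{1,v}$ acts, and the Atkin--Lehner sign fixes that of $\widetilde{\mathcal{U}}_{1,v}$; multiplying, I would show that both $\widetilde{\mathcal{T}}_{1,v}\widetilde{\mathcal{U}}_{1,v}$ and $\widetilde{\mathcal{U}}_{1,v}\widetilde{\mathcal{T}}_{1,v}$ act on the new vector by exactly $-1$. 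Combined with the previous step, the two-sided condition then holds at every $v\mid\mathfrak{I}$ if and only if $\rho_v$ is Steinberg at each such place, which by the first step is equivalent to $f$ being new.

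The hard part will be this last computation in the metaplectic setting: one must track the cocycle of the double cover $\Mp_2(F_v)$ together with the Weil indices entering $\widetilde{\mathcal{T}}_{1,v}$ and $\widetilde{\mathcal{U}}_{1,v}$, and verify that the product equals \emph{exactly} $-1$ and not merely a fourth root of unity. The conductor places $v\mid\mathfrak{f}$ are the most delicate, since there $\rho_v$ is \emph{twisted} Steinberg and the ramified quadratic character $\chi_v$ enters the Weil-index bookkeeping; the sign $\beta_v=-1$ at these places is arranged precisely so that the extra factors cancel and the uniform value $-1$ survives. Establishing this cancellation uniformly in the two cases $\beta_v=\pm1$ is the crux of the argument.
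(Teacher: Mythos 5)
Your overall route is the paper's: reduce to the local assertion that, at each $v\mid\mathfrak{I}$, the two-sided eigenvalue condition singles out exactly the Steinberg line inside the $\widetilde{\Gamma_v}$-fixed subspace of the ambient principal series $\tilde{I}_{\psi_v}(s_v)$, and settle this by explicit Hecke computations (the paper does it via Lemma \ref{lem:action of operators, gamma2}, the relation $\widetilde{\mathcal{T}}_{1,v}=\widetilde{\mathcal{U}}_{0,v}\ast\widetilde{\mathcal{U}}_{1,v}$ and $\widetilde{\mathcal{U}}_{1,v}^2=1$). But the structural shortcut you substitute for the direction ``condition $\Rightarrow$ new'' has a genuine hole. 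Your dichotomy --- unramified (old) at $v$ gives a two-dimensional irreducible Iwahori--Hecke module with no joint eigenline, Steinberg (new) gives a one-dimensional line --- omits a third possibility: $\rho_v$ could be the even Weil representation $\omega_{\psi'_v}^{+}$, the other irreducible constituent of the reducible principal series. That representation is unramified (it contains $\phi_{0,v}$, fixed by the maximal compact up to $\varepsilon_v$), so a primitive form with this local component is old by definition; yet its $\widetilde{\Gamma_v}$-fixed subspace is also one-dimensional, so such an $f$ \emph{is} automatically a joint eigenvector of $\widetilde{\mathcal{T}}_{1,v}\widetilde{\mathcal{U}}_{1,v}$ and $\widetilde{\mathcal{U}}_{1,v}\widetilde{\mathcal{T}}_{1,v}$. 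Your criterion does not exclude it; one must compute the eigenvalue on that line and check it is not $-1$. The paper's proof avoids any case distinction by writing $f'=c_1f_1+c_2f_2$ in the two-dimensional space $\tilde{I}_{\psi_v}(s_v)^{\Gamma_v}$ and showing the condition is equivalent to $-q_vc_2=c_1=-q_v^{2s_v}c_2$, which simultaneously forces $q_v^{2s_v}=q_v$ and places $f'$ on the line $\mathbb{C}\cdot(f_1-q_v^{-1}f_2)=St_{\psi_v}^{\Gamma_v}$ of Lemma \ref{lem:fixedsusbofSt}, thereby ruling out the irreducible principal series and the even Weil constituent in one stroke.

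A second, smaller gap: even in the genuinely two-dimensional irreducible case, ``no common eigenvector of $\widetilde{\mathcal{T}}_{1,v}$ and $\widetilde{\mathcal{U}}_{1,v}$'' does not by itself forbid a common $(-1)$-eigenvector of the two \emph{products}. You need the supplementary observation that if $ABv=BAv=-v$ then $Bv$ also lies in the $(-1)$-eigenspace of $BA$, so either that eigenspace is one-dimensional (forcing $Bv\in\mathbb{C}v$ and hence a joint eigenline of $A$ and $B$, contradicting irreducibility) or $BA=-1$ on the whole two-dimensional space, which the explicit matrices exclude since the $f_1$-coefficient of $\rho_v(\widetilde{\mathcal{U}}_{1,v}\widetilde{\mathcal{T}}_{1,v})f_1$ is $q_v-1\neq-1$. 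Both holes are closed at once by the direct coefficient computation, so I would replace the structural step by it; your final step (evaluating the eigenvalue $-1$ on the Steinberg line, uniformly in the two cases $\beta_v=\pm1$) then matches the paper's use of Lemma \ref{lem:action of operators, gamma2}, whose formulas are indeed stated uniformly for $\Gamma_0(\varpi)$ and $\Gamma[\varpi\mathfrak{d}^{-1},\mathfrak{d}]$.
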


The Hecke operator $\widetilde{\mathcal{U}}_{1,v}$ is called the Atkin-Lehner involution.
It has eigenvalues $1$ and $-1$ on the plus space.

\par

For any finite place $v$ not dividing $\mathfrak{f},$ we may defined $\widetilde{\mathcal{T}}_{1,v}$ in the same manner with the ones for $v\mid\mathfrak{f}^{-1}\mathfrak{I}$.
The eigenvalue of $E^K_v\ast\widetilde{\mathcal{T}}_{1,v}$ with respect to a new primitive form $f$ can be calculated explicitly in a representation-theoretic way.

\begin{proposition}
	Fix a finite place $v$ of $F$.
	Let $f$ be a new primitive form such that the local representation of $\Mp_2(F_v)$ generated by $f$ is contained in the principal series $\tilde{I}_{\psi_v}(s_v)$ for some $s_v\in\mathbb{C}$.
	Then we have
	\[
	\rho_v(E^K_v\ast\widetilde{\mathcal{T}}_{1,v})f=\lambda_vf
	\]
	where
	\[
	\lambda_v=\begin{cases}
	q_v^{1/2}(q_v^{s_v}+q_v^{-s_v})&\mbox{ if }v\nmid2\mathfrak{I},\\
	(1+q_v^{-1})^{-1}q_v^{1/2}(q_v^{s_v}+q_v^{-s_v})&\mbox{ if }v\mid2,\\
	q_v^{-1/2+s_v}&\mbox{ if }v\mid\mathfrak{I}.
	\end{cases}
	\]
\end{proposition}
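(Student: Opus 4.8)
The plan is to reduce the computation of $\lambda_v$ to an explicit calculation inside the induced model of the principal series $\tilde{I}_{\psi_v}(s_v)$. Since $f$ is a new primitive form lying in the plus space, the idempotent $E^K_v$ fixes the local component of $f$; so if $\phi_v$ denotes the vector of $\rho_v$ singled out by $E^K_v$ (unique up to scalar because $\rho_v$ is irreducible and the plus-space condition cuts out a line), then applying $\widetilde{\mathcal{T}}_{1,v}$ to $\phi_v$ may leave the plus-space line, and the operator $E^K_v$ projects the result back onto $\mathbb{C}\phi_v$. Irreducibility forces this projection to be a scalar, and that scalar is exactly $\lambda_v$. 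Thus the whole problem becomes: compute $\rho_v(\widetilde{\mathcal{T}}_{1,v})\phi_v$, then read off its $\phi_v$-component through the kernel $E^K_v$.

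To carry this out I would first fix the realization of $\tilde{I}_{\psi_v}(s_v)$ as genuine functions on $\Mp_2(F_v)$ transforming under the metaplectic Borel by the character built from $|\cdot|_v^{s_v+1/2}$ and the Weil index $\alpha_{\psi_v}$, and take $\phi_v$ to be the explicit $\varepsilon_v$-eigenfunction for $\widetilde{\Gamma_v}$ normalized by $\phi_v(1)=1$. Next I would decompose the double coset $\widetilde{\Gamma_v}[\mathrm{diag}(\varpi_v^{\beta_v},\varpi_v^{-\beta_v}),1]\widetilde{\Gamma_v}$ supporting $\widetilde{\mathcal{T}}_{1,v}$ into a disjoint union of left cosets $\widetilde{\Gamma_v}g_i$, so that $\rho_v(\widetilde{\mathcal{T}}_{1,v})\phi_v=\sum_i c_i\,\rho_v(g_i)\phi_v$, the coefficients $c_i$ being dictated by the normalization $q_v^{-1/2}\alpha_{\psi_v}(\varpi_v)/\alpha_{\psi_v}(1)$. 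Evaluating each $\rho_v(g_i)\phi_v$ via the Borel transformation law, and then applying $E^K_v$ (which, as the matrix coefficient $|2|_v^{-1}\Vol(\widetilde{K}_v)^{-1}(\phi_{0,v},\omega_{\psi_v}(\cdot)\phi_{0,v})_v$, projects onto the plus-space line), produces $\lambda_v$ once the Weil-index and measure factors are collected.

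The three cases then diverge according to the local geometry. For $v\nmid 2\mathfrak{I}$ the coset count and the induced action reproduce the symmetric Satake combination $q_v^{1/2}(q_v^{s_v}+q_v^{-s_v})$, precisely as in the classical Shimura-correspondence computation of the $T_{p^2}$-eigenvalue, the factor $q_v^{1/2}$ coming from the modulus character. For $v\mid\mathfrak{I}$ the representation $\rho_v$ is the (possibly twisted) Steinberg subrepresentation of $\tilde{I}_{\psi_v}(s_v)$, so one of the two would-be contributions is annihilated by the level condition encoded in $\widetilde{\Gamma_v}$, leaving the single term $q_v^{-1/2+s_v}$; here the value $\beta_v=\pm1$ and the sign $\alpha_{\psi_v}(\boldsymbol{\delta}_v\varpi_v)$ must be tracked to confirm the correct surviving root. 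I expect the dyadic case $v\mid 2$ to be the main obstacle: there the Weil representation over a $2$-adic field behaves irregularly, the normalization $|2|_v^{-1}$ in $E^K_v$ together with the subgroup $\Gamma_0(4)_v$ forces a more delicate analysis of the matrix coefficients, and the extra factor $(1+q_v^{-1})^{-1}$ should emerge only after carefully comparing $\Vol(\widetilde{K}_v)$ with $\Vol(\widetilde{\Gamma_v})$ and accounting for the additional cosets that the $2$-adic plus-space projection introduces.
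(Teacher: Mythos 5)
Your plan matches the paper's proof essentially step for step: the paper also works in the induced model, uses that the relevant $E^K_v$-fixed (resp.\ unramified, resp.\ Iwahori-fixed Steinberg) line is one-dimensional so that $f$ is automatically a Hecke eigenvector, computes $\rho_v(\widetilde{\mathcal{T}}_{1,v})$ via the left-coset decomposition of the supporting double coset together with the Weil-constant identities, and in the dyadic case obtains the factor $(1+q_v^{-1})^{-1}$ exactly as you predict, from projecting the extra term $f^+_{v,I_v}$ back onto $\mathbb{C}\cdot f^+_v$ with $E^K_v$. The only cosmetic difference is that for $v\mid\mathfrak{I}$ the paper evaluates the action through the factorization $\widetilde{\mathcal{T}}_{1,v}=\widetilde{\mathcal{U}}_{0,v}\ast\widetilde{\mathcal{U}}_{1,v}$ on the vector $f_1-q_v^{-1}f_2$ rather than by a direct coset count, but this is the same computation.
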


Finally, let $K_0(\mathfrak{I}_v)\subset \PGL_2(F_v)$ be the maximal compact subgroup of $\PGL_2(F_v)$ in which the matrices have the lower-left entry in $\mathfrak{d}_v$ if $\infty>v\nmid\mathfrak{I}$ or be the Iwahori subgroup if $v\mid\mathfrak{I}$.
Also, we put $\mathcal{A}^{\mathrm{CUSP}}_{2k}(\mathfrak{I}_v)=\mathcal{A}^{\mathrm{CUSP}}_{2k}(\PGL_2(F)\backslash\PGL_2(\mathbb{A})/\prod_{v<\infty}K_0(\mathfrak{I}_v))$ to be the space of Automorphic forms on $\PGL_2(F)\backslash\PGL_2(\mathbb{A})/\prod_{v<\infty}K_0(\mathfrak{I}_v)$.
We will use Waldspurger's theory on Shimura correspondence to show the following theorem.

\begin{theorem}
	There exists a Hecke isomorphism between $S_{k+1/2}^{+,\mathrm{NEW}}(\Gamma_0(4N),\chi_\mathfrak{f})$ and the space $\mathcal{A}^{\mathrm{NEW,CUSP}}_{2k}(\mathfrak{I}_v)$
	which is spanned by the Hecke forms in $\mathcal{A}^{\mathrm{CUSP}}_{2k}(\mathfrak{I}_v)$ which generate a Steinberg representation at any place dividing $\mathfrak{I}$.
\end{theorem}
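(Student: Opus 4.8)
The plan is to realize the isomorphism through Waldspurger's theta correspondence attached to the chosen additive character $\psi=\psi_1(\mathfrak{f}\,\cdot)$. To a primitive form $f\in\mathcal{B}$ I would associate the cuspidal automorphic representation $\tilde\sigma_f=\otimes'_v\rho_v$ of $\Mp_2(\mathbb{A})$ that it generates under right translation, and set $\pi_f=\Theta_\psi(\tilde\sigma_f)$, its $\psi$-theta lift to $\PGL_2(\mathbb{A})=\mathrm{SO}_3(\mathbb{A})$. Since $f$ is cuspidal and lies in the plus space, the global lift is nonzero and cuspidal, and Waldspurger's injectivity makes $f\mapsto\pi_f$ a well-defined, one-to-one assignment (up to scalars) on $\mathcal{B}$. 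The first step is to record that $\tilde\sigma_f$ is irreducible with each $\rho_v$ a constituent of $\tilde I_{\psi_v}(s_v)$, so that $\pi_f=\otimes'_v\pi_v$ with $\pi_v=\theta_{\psi_v}(\rho_v)$ the local theta lift.

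Second, I would carry out the local correspondence place by place and match fixed vectors. For $\infty>v\nmid\mathfrak{I}$ the representation $\rho_v$ is an unramified constituent of $\tilde I_{\psi_v}(s_v)$, and $\pi_v$ is the unramified principal series of $\PGL_2(F_v)$ with the same Satake parameter $q_v^{\pm s_v}$, which carries a line fixed by the maximal compact $K_0(\mathfrak{I}_v)$. The equality of Satake parameters is exactly the content of the eigenvalue computation for $E^K_v\ast\widetilde{\mathcal{T}}_{1,v}$, whose value $\lambda_v=q_v^{1/2}(q_v^{s_v}+q_v^{-s_v})$ (with the extra factor at $v\mid2$) is the spherical Hecke eigenvalue on $\PGL_2$. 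For $v\mid\mathfrak{I}$ the discussion preceding the new-form characterization identifies a new primitive $f$ with $\rho_v$ a twisted or non-twisted Steinberg representation of $\Mp_2(F_v)$; in either case its $\psi_v$-theta lift $\pi_v$ is the Steinberg representation of $\PGL_2(F_v)$ (the quadratic twist at $v\mid\mathfrak{f}$ being absorbed upon passing to $\PGL_2$, which has trivial central character), whose space of Iwahori-fixed vectors is one-dimensional. Thus for a new primitive $f$ the space $\otimes_v\pi_v$ has a one-dimensional space of $\prod_{v<\infty}K_0(\mathfrak{I}_v)$-invariants, singling out a Hecke form in $\mathcal{A}^{\mathrm{CUSP}}_{2k}(\mathfrak{I}_v)$ that is Steinberg at every $v\mid\mathfrak{I}$.

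Third, I would verify Hecke equivariance and then bijectivity. Away from $\mathfrak{I}$ the spherical Hecke algebras correspond through the Satake isomorphism by the eigenvalue computation above; at $v\mid\mathfrak{I}$ the operators $\widetilde{\mathcal{T}}_{1,v}$ and the Atkin--Lehner involution $\widetilde{\mathcal{U}}_{1,v}$ match the Iwahori-level $U_v$ and Atkin--Lehner operators on the Steinberg component of $\pi_v$, and the relation $\rho_v(\widetilde{\mathcal{U}}_{1,v}\widetilde{\mathcal{T}}_{1,v})f=-f$ from the new-form characterization translates into the Steinberg Atkin--Lehner sign on $\PGL_2$, so $f\mapsto\pi_f$ intertwines the two Hecke actions. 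Injectivity follows from multiplicity one for $\Mp_2$ together with Waldspurger's injectivity; surjectivity onto $\mathcal{A}^{\mathrm{NEW,CUSP}}_{2k}(\mathfrak{I}_v)$ follows from the surjectivity of Waldspurger's correspondence, that every cuspidal $\pi$ on $\PGL_2(\mathbb{A})$ which is Steinberg at each $v\mid\mathfrak{I}$ and has the prescribed weight-$2k$ archimedean components is the $\psi$-lift of a metaplectic representation, combined with the local existence of the $E^K_v$-fixed, $\varepsilon_v$-equivariant plus-space vector supplied by the local analysis.

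The main obstacle will be the local matching at the places dividing $\mathfrak{I}$. One must show that the (twisted) Steinberg representation of $\Mp_2(F_v)$, together with the line selected inside it by the idempotent $E^K_v$ and by $\varepsilon_v$-equivariance under $\widetilde{\Gamma_v}$, theta-lifts to the Steinberg representation of $\PGL_2(F_v)$ with its Iwahori-fixed line, with the Atkin--Lehner eigenvalue preserved. Because the lift depends on $\psi=\psi_1(\mathfrak{f}\,\cdot)$, and Waldspurger's correspondence is governed by local root numbers $\epsilon(1/2,\pi_v,\psi_v)$ deciding whether $\pi_v$ lifts to the split $\PGL_2$ or to the nonsplit inner form, the delicate point is to check, using that $\mathfrak{f}$ has sign $(-1)^k$, that these obstructions vanish precisely on the new, Steinberg part, so that the global lift lands in the split $\PGL_2(\mathbb{A})$ and the correspondence is exactly onto $\mathcal{A}^{\mathrm{NEW,CUSP}}_{2k}(\mathfrak{I}_v)$.
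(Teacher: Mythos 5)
Your overall strategy is the same as the paper's: pass from a primitive form $f$ to the representation $\otimes_v'\rho_v$ it generates, apply Waldspurger's theta correspondence to $\PGL_2(\mathbb{A})$, read off the local lifts from the dichotomy table (unramified principal series away from $\mathfrak{I}$, Steinberg at $v\mid\mathfrak{I}$, discrete series of weight $2k_i$ at infinity), match the one-dimensional spaces of $K_0(\mathfrak{I}_v)$-fixed vectors, and deduce Hecke equivariance from the eigenvalue computations for $\widetilde{\mathcal{T}}_{1,v}$, $\widetilde{\mathcal{U}}_{1,v}$ and $(1+q_v^{-1})E^K_v\ast\widetilde{\mathcal{T}}_{1,v}$, with bijectivity coming from Waldspurger's classification. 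So in substance you and the paper agree.

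There is, however, one step in your write-up that would fail as literally stated: the claim that because $f$ is cuspidal and lies in the plus space, the global $\psi$-theta lift $\Theta_\psi(\tilde\sigma_f)$ is automatically nonzero. By the nonvanishing criterion (Proposition 6.1 of Gan's notes, which the paper invokes), $\Theta_{\psi_a}(\rho')$ is nonzero precisely when the $\psi_a$-Fourier coefficient of $\rho'$ is nonzero, and there is no reason the coefficient attached to $a=1$ (i.e.\ to $\xi=\mathfrak{f}$) survives for a given $f$. The paper circumvents this by first ruling out that $\rho'$ is a theta representation (under the hypothesis that some $k_i>1$, which you omit), then choosing some $a\in F^\times$ with nonvanishing $\psi_a$-coefficient, lifting with respect to $\psi_a$, and twisting by $\hat\chi_a$; the resulting object $\Wald_\psi(\rho')=\Theta_{\psi_a}(\rho')\otimes\hat\chi_a$ is independent of the admissible $a$ and is what actually realizes the correspondence. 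Your closing paragraph correctly identifies the local root-number bookkeeping at $v\mid\mathfrak{I}$ as the delicate point, but the more immediate repair is this global one: replace the fixed-$\psi$ lift by the Waldspurger lift, after which the twist by $\hat\chi_a$ also absorbs the twisted-versus-nontwisted Steinberg ambiguity you worry about, since the target $\PGL_2$ has trivial central character.
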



\section{Weil representation}\label{Weil representation}
Let $F$ be a local field with characteristic $0$.
If $F$ is archimedean, we assume $F=\mathbb{R}$.
If $F$ is a finite extension of $\mathbb{Q}_p$ for some prime $p,$ let $\mathfrak{o}$ and $\mathfrak{p}$ denote its ring of integers and maximal ideal, respectively.
The order of the residue field $\mathfrak{o}/\mathfrak{p}$ is denoted by $q$ and fix a uniformizer $\varpi$.

\par
Let $\psi:F\rightarrow\mathbb{C}^\times$ be a non-trivial additive unitary character.
If $F=\mathbb{R},$ we set $\psi(x)=\mathbf{e}(x)$ or $\mathbf{e}(-x)$.
If $F$ is non-archimedean, we denote the index of $\psi,$ the largest integer $c$ such that $\psi(\mathfrak{p}^{-c})=1,$ by $c_\psi$.
We fix an element $\boldsymbol\delta$ of order $c_\psi$ and let $\mathfrak{d}=\mathfrak{p}^{c_\psi}$.
If $F=\mathbb{R},$ we let $\boldsymbol\delta=1$.

\par
The metaplectic double covering of $\SL_2(F)$ is
\[
\Mp_2(F)=\left\{[g,\zeta]\,\mid\,g\in \SL_2(F), \zeta\in{\pm1}\right\}
\]
where the binary operation is given by
\[
[g_1,\zeta_1][g_2,\zeta_2]=[g_1g_2,\zeta_1\zeta_2\boldsymbol{c}(g_1,g_2)].
\]
The explicit formula for the 2-cocycle $\boldsymbol{c}$ is
\[
\boldsymbol{c}(g_1,g_2)=(\frac{\boldsymbol b(g_1)}{\boldsymbol b(g_1g_1)},\frac{\boldsymbol b(g_2)}{\boldsymbol b(g_1g_1)})
\]
where $(\quad,\quad)$ is the quadratic Hilbert symbol and
\[
\boldsymbol{b}\left(\begin{pmatrix}a&b\\c&d\end{pmatrix}\right)
=\begin{cases}
c\quad&\mbox{ if }c\neq0,\\
d\quad&\mbox{ otherwise. }
\end{cases}
\]
For simplicity, we set $[g]=[g,1]\in \Mp_2(F)$ for $g\in \SL_2(F)$ and
\begin{align*}
	\mathbf{u}^\sharp(b)&=\left[\begin{pmatrix}1&b\\0&1\end{pmatrix}\right],&\mathbf{u}^\flat(b)=\left[\begin{pmatrix}1&0\\b&1\end{pmatrix}\right],\quad&\mbox{for } b\in F,\\
	\mathbf{m}(a)&=\left[\begin{pmatrix}a&0\\0&a^{-1}\end{pmatrix}\right],&\mathbf{w}_a=\left[\begin{pmatrix}0&-a^{-1}\\a&0\end{pmatrix}\right],\quad&\mbox{for } a\in F^\times.
\end{align*}
Also, in the calculations among $\Mp_2(F)$, we put $[1,\zeta]=\zeta$ for $\zeta\in\{\pm1\}$ if there is no confusion.
For any subset $S$ of $\SL_2(F),$ we let $\widetilde{S}$ denote the inverse image of $S$ in $\Mp_2(F)$.

\par
Let $dy$ be the usual Lebesgue measure on $F$ if $F=\mathbb{R}$ or the normalized Haar measure such that $\mathfrak{o}$ has volumn $1$ if $F$ is non-archimedean.
For any function $\phi$ in the Schwartz space $\mathbb{S}(F)$ of $F,$ its Fourier transform is defined by
\[
\hat{\phi}(x)=|\boldsymbol\delta|^{1/2}\int_{F}\phi(y)\psi(xy)dy.
\]
Note that $|\boldsymbol\delta|^{1/2}dy$ forms the self-dual Haar measure for this Fourier transform.

\par
For any $a\in F^{\times},$ it is known that, regardless of the choice of $\phi\in\mathbb{S}(F),$ there exists a constant $\alpha_\psi(a)$ depending on $a\in F^\times/(F^\times)^2$ satisfying
\begin{equation}\label{defofWeilconst}
\int_{F}\phi(y)\psi(ay^2)dy=\alpha_\psi(a)|2a|^{-1/2}\int_{F}\hat{\phi}(y)\psi\left(-\frac{y^2}{4a}\right)dy.
\end{equation}
It is called the Weil constant or Weil index.
It is an eighth root of $1$ and satisfies $\alpha_{\psi_b}(a)=\alpha_\psi(ba)$ for any $b\in F^\times$ where $\psi_b(x)=\psi(bx)$.
We also have $\alpha_\psi(-a)=\overline{\alpha_\psi(a)}$ and
\begin{equation}
\frac{\alpha_\psi(1)\alpha_\psi(a_1a_2)}{\alpha_\psi(a_1)\alpha_\psi(a_2)}=(a_1,a_2).
\end{equation}
Furthermore, by taking $\phi$ to be the characteristic function of $\mathfrak{o},$ one can see that $\alpha_\psi(a)=1$ for $a\in\boldsymbol{\delta}\mathfrak{o}^\times$ if $q$ is odd.
\begin{lemma}\label{lem:sumofWeilconsts}
If $q$ is odd, We have 
\[
\sum_{u\in\mathfrak{o}^\times/(1+\mathfrak{p})}\alpha_\psi(\boldsymbol{\delta}\varpi u)=0.
\]
\end{lemma}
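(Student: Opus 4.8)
The plan is to exploit two structural facts: that $\alpha_\psi$ factors through the square classes $F^\times/(F^\times)^2$, and that, because $q$ is odd, Hensel's lemma makes a unit a square in $\mathfrak{o}^\times$ exactly when its reduction is a square in $\mathbb{F}_q$. First I would reduce to the unramified case. Put $\psi'=\psi_{\boldsymbol\delta^{-1}}$; by the index computation $c_{\psi_b}=c_\psi+\ord(b)$ this character has index $0$, so one may take its associated element to be $1$ and hence $\alpha_{\psi'}(a)=1$ for every $a\in\mathfrak{o}^\times$. Since $\boldsymbol\delta^{-1}=\boldsymbol\delta\cdot\boldsymbol\delta^{-2}$ differs from $\boldsymbol\delta$ by the square $\boldsymbol\delta^{-2}$, the relation $\alpha_{\psi_b}(a)=\alpha_\psi(ba)$ together with square-class invariance gives
\[
\alpha_\psi(\boldsymbol\delta\varpi u)=\alpha_\psi(\boldsymbol\delta^{-1}\varpi u)=\alpha_{\psi'}(\varpi u).
\]
Thus it suffices to prove $\sum_{u\in\mathfrak{o}^\times/(1+\mathfrak{p})}\alpha_{\psi'}(\varpi u)=0$ for the unramified character $\psi'$.

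Next I would carry out the counting. The quotient $\mathfrak{o}^\times/(1+\mathfrak{p})$ is identified with $\mathbb{F}_q^\times$, of which exactly $(q-1)/2$ classes are squares and $(q-1)/2$ are non-squares; by the Hensel step these are precisely the classes of squares and of non-squares in $\mathfrak{o}^\times$. Fixing a non-square unit $\varepsilon$, the value $\alpha_{\psi'}(\varpi u)$ depends only on $\varpi u$ modulo squares, hence equals $\alpha_{\psi'}(\varpi)$ when $u$ is a square and $\alpha_{\psi'}(\varpi\varepsilon)$ when $u$ is a non-square. The sum therefore collapses to
\[
\frac{q-1}{2}\bigl(\alpha_{\psi'}(\varpi)+\alpha_{\psi'}(\varpi\varepsilon)\bigr),
\]
so everything comes down to the vanishing of the bracket. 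Here the quadratic Hilbert symbol enters: applying the cocycle relation with $a_1=\varpi$, $a_2=\varepsilon$ gives
\[
\alpha_{\psi'}(\varpi\varepsilon)=\frac{\alpha_{\psi'}(\varpi)\,\alpha_{\psi'}(\varepsilon)}{\alpha_{\psi'}(1)}\,(\varpi,\varepsilon).
\]
Since $1,\varepsilon\in\mathfrak{o}^\times$ we have $\alpha_{\psi'}(1)=\alpha_{\psi'}(\varepsilon)=1$, while $(\varpi,\varepsilon)=-1$ because $\varepsilon$ is a non-square unit and $q$ is odd. Hence $\alpha_{\psi'}(\varpi\varepsilon)=-\alpha_{\psi'}(\varpi)$, the bracket vanishes, and the lemma follows.

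I expect the only genuine subtlety to be the ramified case $c_\psi>0$: there one cannot simply read off $\alpha_\psi(\varepsilon)$ from the characteristic-function computation, since $\varepsilon$ does not lie in $\boldsymbol\delta\mathfrak{o}^\times$, and a direct attack forces one to track the Gauss-sum ratio $\alpha_\psi(\varepsilon)/\alpha_\psi(1)$ against the symbol $(\boldsymbol\delta\varpi,\varepsilon)=(-1)^{c_\psi+1}$. The twisting step to $\psi'$ is precisely what removes this difficulty, transporting the whole computation to an unramified character on which $\alpha_{\psi'}$ is identically $1$ on units, after which only a one-line symbol identity remains. The other point requiring care is to state the square/non-square split for $\mathfrak{o}^\times$ rather than merely for the residue field, which is exactly the content of the Hensel argument and uses that $q$ is odd.
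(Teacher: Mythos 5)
Your proof is correct. Note that the paper does not actually supply an argument for this lemma: its ``proof'' is a one-line citation to Lemma 1.1 of Ikeda's paper on the Siegel series, so what you have written is a self-contained replacement rather than a reworking of an argument in the text. The skeleton is the one any proof must have when $q$ is odd: $1+\mathfrak{p}\subset(\mathfrak{o}^\times)^2$ by Hensel, so the summand depends only on the square class of $u$; the two classes of units each contribute $(q-1)/2$ terms; and the two values of $\alpha_\psi(\boldsymbol{\delta}\varpi\,\cdot)$ on these classes are negatives of each other. Where your write-up differs from the usual (Ikeda-style) treatment is in how that last sign is produced: the standard route evaluates the Weil constant explicitly as a normalized quadratic Gauss sum, so that the sum over $u$ becomes a sum of the quadratic residue character over $\mathbb{F}_q^\times$ and vanishes for that reason, whereas you extract the sign purely formally from the cocycle identity $\alpha_\psi(1)\alpha_\psi(a_1a_2)=\alpha_\psi(a_1)\alpha_\psi(a_2)(a_1,a_2)$ together with $(\varpi,\varepsilon)=-1$. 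Your preliminary twist by $\boldsymbol{\delta}^{-1}$, reducing to an index-zero character on which $\alpha_{\psi'}$ is trivial on units, is exactly the right move: it is what lets the cocycle identity collapse to a single Hilbert symbol and sidesteps the bookkeeping of $\alpha_\psi(\varepsilon)/\alpha_\psi(1)$ in the ramified case. Everything you invoke (square-class dependence, $\alpha_{\psi_b}(a)=\alpha_\psi(ba)$, triviality of $\alpha_\psi$ on $\boldsymbol{\delta}\mathfrak{o}^\times$ for $q$ odd, and the cocycle relation) is already recorded in Section 2, so the argument could stand in the paper as written.
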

\begin{proof}
See the proof of Lemma 1.1 in \cite{Ikeda:17}.
\end{proof}

\par
Now let us define the Weil representation $\omega_\psi$.
It is a representation of $\Mp_2(F)$ on $\mathbb{S}(F)$ given by
\begin{align*}
\omega_\psi(\mathbf{u}^\sharp(b))\phi(x)&=\psi(bx^2)\phi(x),\\
\omega_\psi(\mathbf{m}(a))\phi(x)&=\frac{\alpha_\psi(1)}{\alpha_\psi(a)}|a|^{1/2}\phi(ax),\\
\omega_\psi(\mathbf{w}_a)\phi(x)&=\overline{\alpha_\psi(a)}|2a^{-1}|^{1/2}\hat{\phi}(-2a^{-1}x).
\end{align*}
From the definition, one can easily check if $b\neq0,$ then
\[
\omega_\psi(\mathbf{u}^\flat(b))\phi(x)=\overline{\alpha_\psi(b)}|2\boldsymbol\delta b^{-1}|^{1/2}\int_{F}\phi(t+y)\phi(b^{-1}y^2)dy.
\]
The Weil representation $\omega_\psi$ is unitary with respect to the inner product
\[
(\phi_1,\phi_2)=\int_F\phi_1(y)\overline{\phi_2(y)}dy.
\]

\par
Now suppose that $F$ is non-archimedean.
If $\mathfrak{b}$ and $\mathfrak{c}$ are two rational ideals in $F$ such that $\mathfrak{bc}\subset\mathfrak{o},$ we define the congruence subgroup
\[
\Gamma[\mathfrak{b},\mathfrak{c}]=\left\{\begin{pmatrix}a&b\\c&d\end{pmatrix}\in \SL_2(F)\,\bigg|\,a,d\in\mathfrak{o}, b\in\mathfrak{b}, c\in\mathfrak{c} 
\right\}
\]
and let
\[
\Gamma_0(\xi)=\Gamma[\mathfrak{d}^{-1},\xi\mathfrak{d}]
\]
for any $\xi\in\mathfrak{o}/\{0\}$.
The following lemma can be proved by a direct calculation.
\begin{lemma}\label{defofvarepsilon}
Let $\phi_0=\mathbf{1}_\mathfrak{o}\in\mathbb{S}(F)$ be the characteristic function of $\mathfrak{o}$.
Then there exists a geniune character $\varepsilon:\widetilde{\Gamma_0(4)}\rightarrow\mathbb{C}^\times$ given by
\[
\omega_\psi(\gamma)\phi_0=\varepsilon(\gamma)^{-1}\phi_0.
\]
\end{lemma}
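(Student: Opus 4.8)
The plan is to verify the eigenrelation on a set of generators of $\widetilde{\Gamma_0(4)}$ and then invoke the fact that $\omega_\psi$ is a \emph{genuine} representation to promote the collection of eigenvalues into a single character. First I would record that $\Gamma_0(4)=\Gamma[\mathfrak{d}^{-1},4\mathfrak{d}]$ is generated by the upper unipotents $\mathbf{u}^\sharp(b)$ with $b\in\mathfrak{d}^{-1}$, the torus elements $\mathbf{m}(a)$ with $a\in\mathfrak{o}^\times$, and the lower unipotents $\mathbf{u}^\flat(c)$ with $c\in4\mathfrak{d}$. When the residue characteristic is $2$ this is immediate from Gaussian elimination: for $\gamma=\left(\begin{smallmatrix}a&b\\c&d\end{smallmatrix}\right)\in\Gamma_0(4)$ one has $bc\in\mathfrak{d}^{-1}\cdot 4\mathfrak{d}=4\mathfrak{o}\subseteq\mathfrak{p}$, so $ad=1+bc\in\mathfrak{o}^\times$ forces $a\in\mathfrak{o}^\times$, and then $\gamma=\mathbf{u}^\flat(ca^{-1})\mathbf{m}(a)\mathbf{u}^\sharp(a^{-1}b)$, with $ca^{-1}\in4\mathfrak{d}$ and $a^{-1}b\in\mathfrak{d}^{-1}$. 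For odd residue characteristic $4\mathfrak{d}=\mathfrak{d}$, and since $\Gamma[\mathfrak{d}^{-1},\mathfrak{d}]$ is a $\mathrm{diag}(\varpi^{c_\psi},1)$-conjugate of $\SL_2(\mathfrak{o})$, one instead uses that $\SL_2$ over the local ring $\mathfrak{o}$ is generated by elementary matrices. In either case the lift to $\Mp_2(F)$ adds only the central element $[1,-1]$ to the generating set, absorbing the cocycle discrepancies in the factorizations above.

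Second I would compute the action of $\omega_\psi$ on $\phi_0=\mathbf{1}_\mathfrak{o}$ for each generator directly from the defining formulas. For the upper unipotent, $\omega_\psi(\mathbf{u}^\sharp(b))\phi_0(x)=\psi(bx^2)\phi_0(x)$, and since $bx^2\in\mathfrak{d}^{-1}=\mathfrak{p}^{-c_\psi}$ for $x\in\mathfrak{o}$ while $\psi$ is trivial on $\mathfrak{p}^{-c_\psi}$, the function $\phi_0$ is fixed. For the torus, $|a|=1$ and multiplication by the unit $a$ preserves $\mathfrak{o}$, so $\omega_\psi(\mathbf{m}(a))\phi_0=(\alpha_\psi(1)/\alpha_\psi(a))\phi_0$. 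These two contributions are elementary.

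Third, and this is the crux, I would treat the lower unipotent. Substituting $z=x+y$ to complete the square turns $\omega_\psi(\mathbf{u}^\flat(c))\phi_0(x)$ into $\overline{\alpha_\psi(c)}\,|2\boldsymbol\delta c^{-1}|^{1/2}\int_\mathfrak{o}\psi\!\big(c^{-1}(z-x)^2\big)\,dz$. For $x\in\mathfrak{o}$ the further shift $z\mapsto z+x$ preserves $\mathfrak{o}$ and removes the dependence on $x$, giving a constant multiple (a quadratic Gauss sum) of $\phi_0(x)$; for $x\notin\mathfrak{o}$ one must show the integral vanishes, which is where the level $4$ enters: the condition $c\in4\mathfrak{d}$ is exactly what guarantees the quadratic exponential sum $\int_\mathfrak{o}\psi(c^{-1}(z-x)^2)\,dz$ cancels, the extra factor of $4$ compensating for the loss $|2|<1$ incurred when the residue characteristic is $2$. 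Evaluating the resulting Gauss sum and simplifying the prefactor $\overline{\alpha_\psi(c)}\,|2\boldsymbol\delta c^{-1}|^{1/2}$ by means of the Weil-index identities $\alpha_\psi(-a)=\overline{\alpha_\psi(a)}$ and $\alpha_\psi(1)\alpha_\psi(a_1a_2)=(a_1,a_2)\alpha_\psi(a_1)\alpha_\psi(a_2)$ then expresses the eigenvalue in closed form. I expect this Gauss-sum computation, together with the careful bookkeeping of the factor of $2$ in residue characteristic $2$, to be the main obstacle; everything else is formal.

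Finally, since each generator scales $\phi_0$ and the generators generate $\widetilde{\Gamma_0(4)}$, every $\omega_\psi(\gamma)$ scales $\phi_0$; writing that scalar as $\varepsilon(\gamma)^{-1}$ defines a map $\varepsilon\colon\widetilde{\Gamma_0(4)}\to\mathbb{C}^\times$ that is automatically multiplicative, because $\omega_\psi$ is a homomorphism and $\phi_0\neq0$, hence a character. It is genuine because the nontrivial central element acts by $\omega_\psi([1,-1])=-\mathrm{id}$, which forces $\varepsilon([1,-1])=-1$.
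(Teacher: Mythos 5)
Your proposal is correct and follows essentially the same route as the paper, which likewise reduces the lemma to a direct computation of $\omega_\psi(\gamma)\phi_0$ on the generators $\mathbf{u}^\sharp(b)$ ($b\in\mathfrak{d}^{-1}$), $\mathbf{m}(a)$ ($a\in\mathfrak{o}^\times$) and $\mathbf{u}^\flat(c)$ ($c\in4\mathfrak{d}$), the lower-unipotent Gauss sum being the only nontrivial step. The sole cosmetic difference is that for odd residue characteristic the paper adds $\mathbf{w}_{\boldsymbol{\delta}}$ to the list of generators and checks $\omega_\psi(\mathbf{w}_{\boldsymbol{\delta}})\phi_0=\phi_0$ directly, whereas you appeal to elementary-matrix generation of $\SL_2$ over the local ring $\mathfrak{o}$; both are fine.
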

One can easily check that
\begin{align*}
\omega_\psi(\mathbf{m}(a))\phi_0&=\frac{\alpha_\psi(1)}{\alpha_\psi(a)}\phi_0\quad(a\in\mathfrak{o}^\times),\\
\omega_\psi(\mathbf{u}^\sharp(b))\phi_0&=\phi_0\quad(b\in\mathfrak{d}^{-1}),\\
\omega_\psi(\mathbf{u}^\flat(c))\phi_0&=\phi_0\quad(c\in4\mathfrak{d}),
\end{align*}
and if $q$ is odd,
\[
\omega_\psi(\mathbf{w}_{\boldsymbol{\delta}})\phi=\phi_0.
\]
\par
Thus we can write down the formula for the character explicitly.
If $q$ is odd, then $\Gamma_0(4)$ is perfect so there is a unique splitting $\mathbf{s}:\Gamma_0(4)\rightarrow\widetilde{\Gamma_0(4)}$.
Now $\varepsilon$ is given by $\varepsilon([g,\zeta])=\mathbf{s}(g)\zeta$.
In fact, we have
\[
\varepsilon\left(\left[\begin{pmatrix}a&b\\c&d\end{pmatrix},\zeta\right]\right)=
\begin{dcases*}
\frac{\alpha_\psi(d)}{\alpha_\psi(1)}\zeta&\mbox{ if }$c=0$,\\
\zeta&\mbox{ if }$c\neq0\mbox{ and }d\in\mathfrak{p}$,\\
\frac{\alpha_\psi(c)}{\alpha_\psi(cd)}\zeta&\mbox{ otherwsie,}
\end{dcases*}
\]
where for checking of the second and third cases, one can use the decompositions
\[
\left[\begin{pmatrix}
a&b\\c&d
\end{pmatrix},\zeta\right]=\mathbf{m}(-\boldsymbol{\delta}b)\mathbf{w}_{\boldsymbol{\delta}}\mathbf{u}^\sharp(-bd)\mathbf{u}^\flat(a/b)\times\zeta(\boldsymbol{\delta},-\boldsymbol{\delta}b)
\]
and
\[
\left[\begin{pmatrix}
a&b\\c&d
\end{pmatrix},\zeta\right]=\mathbf{u}^\sharp(b/d)\mathbf{m}(d^{-1})\mathbf{u}^\flat(c/d)\times\zeta(c,d),
\]
respectively.
If $q$ is even, the formula for $\varepsilon$ is
\[
\varepsilon\left(\left[\begin{pmatrix}a&b\\c&d\end{pmatrix},\zeta\right]\right)=
\begin{dcases*}
\frac{\alpha_\psi(d)}{\alpha_\psi(1)}\zeta&\mbox{ if }c=0,\\
\frac{\alpha_\psi(c)}{\alpha_\psi(cd)}\zeta&\mbox{ otherwsie.}
\end{dcases*}
\] 
\par
Notice that $\mathbf{m}(2)\widetilde{\Gamma_0(4)}\mathbf{m}(2)^{-1}=\widetilde{\Gamma[4\mathfrak{d}^{-1},\mathfrak{d}]}$.
We can define a genuine character $\check{\varepsilon}$ of $\widetilde{\Gamma[4\mathfrak{d}^{-1},\mathfrak{d}]}$ by
\[
\check{\varepsilon}(\gamma)=\varepsilon(\mathbf{m}(2)^{-1}\gamma\mathbf{m}(2)).
\]
Since 
\[
\omega_\psi(\mathbf{m}(2)^{-1})\phi_0'=\frac{\alpha_\psi(2)}{\alpha_\psi(1)}|2|^{-1/2}\phi_0
\]
where $\phi_0'\in\mathbb{S}(F)$ is the characteristic function of $\mathfrak{p}/2,$ one can easily see that
\[
\omega_\psi(\gamma)\phi_0'=\check{\varepsilon}^{-1}(\gamma)\phi_0'
\]
for $\gamma\in\widetilde{\Gamma[4\mathfrak{d}^{-1},\mathfrak{d}]}$.
We write these results into a lemma.
\begin{lemma}\label{defofcheckvarepsilon}
	Let $\phi_0'=\mathbf{1}_{\mathfrak{o}/2}\in\mathbb{S}(F)$ be the characteristic function of $\mathfrak{o}/2$.
	Then there exists a geniune character $\check{\varepsilon}:\widetilde{\Gamma[4\mathfrak{d}^{-1},\mathfrak{d}]}\rightarrow\mathbb{C}^\times$ given by
	\[
	\omega_\psi(\gamma)\phi_0'=\check{\varepsilon}(\gamma)^{-1}\phi_0'.
	\]
	Moreover, the formula of $\check{\varepsilon}$ is given by
	\[
	\check{\varepsilon}(\gamma)=\varepsilon(\mathbf{m}(2)^{-1}\gamma\mathbf{m}(2)).
	\]
\end{lemma}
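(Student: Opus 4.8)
The plan is to deduce this lemma from Lemma~\ref{defofvarepsilon} by transport of structure along conjugation by $\mathbf{m}(2)$, using that $\phi_0'$ is obtained from $\phi_0$ by a single Weil-representation operator. Concretely, applying the formula for $\omega_\psi(\mathbf{m}(a))$ with $a=2$ to $\phi_0=\mathbf{1}_\mathfrak{o}$ gives $\omega_\psi(\mathbf{m}(2))\phi_0(x)=\frac{\alpha_\psi(1)}{\alpha_\psi(2)}|2|^{1/2}\mathbf{1}_\mathfrak{o}(2x)$, and since $\mathbf{1}_\mathfrak{o}(2x)=\mathbf{1}_{\mathfrak{o}/2}(x)=\phi_0'(x)$ we get $\omega_\psi(\mathbf{m}(2))\phi_0=\frac{\alpha_\psi(1)}{\alpha_\psi(2)}|2|^{1/2}\phi_0'$. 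Because $\omega_\psi$ is a genuine representation of $\Mp_2(F)$, applying $\omega_\psi(\mathbf{m}(2)^{-1})=\omega_\psi(\mathbf{m}(2))^{-1}$ recovers the scalar identity already displayed above,
\[
\omega_\psi(\mathbf{m}(2)^{-1})\phi_0'=\frac{\alpha_\psi(2)}{\alpha_\psi(1)}|2|^{-1/2}\phi_0,
\]
so $\phi_0$ and $\phi_0'$ differ by $\omega_\psi(\mathbf{m}(2))$ up to the explicit nonzero constant $\frac{\alpha_\psi(1)}{\alpha_\psi(2)}|2|^{1/2}$.

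First I would check that $\check{\varepsilon}(\gamma)=\varepsilon(\mathbf{m}(2)^{-1}\gamma\mathbf{m}(2))$ is a well-defined genuine character of $\widetilde{\Gamma[4\mathfrak{d}^{-1},\mathfrak{d}]}$. Conjugation by $\mathbf{m}(2)$ is a group automorphism of $\Mp_2(F)$ carrying $\widetilde{\Gamma[4\mathfrak{d}^{-1},\mathfrak{d}]}$ isomorphically onto $\widetilde{\Gamma_0(4)}$ via the identity $\mathbf{m}(2)\widetilde{\Gamma_0(4)}\mathbf{m}(2)^{-1}=\widetilde{\Gamma[4\mathfrak{d}^{-1},\mathfrak{d}]}$ noted above; hence $\check{\varepsilon}$ is the composite of this isomorphism with the genuine character $\varepsilon$ and is itself a character. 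It is genuine because conjugation fixes the central elements pointwise, so $\check{\varepsilon}([1,\zeta])=\varepsilon([1,\zeta])=\zeta$.

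The main step is the covariance identity. For $\gamma\in\widetilde{\Gamma[4\mathfrak{d}^{-1},\mathfrak{d}]}$ put $\gamma'=\mathbf{m}(2)^{-1}\gamma\mathbf{m}(2)\in\widetilde{\Gamma_0(4)}$, so that $\mathbf{m}(2)^{-1}\gamma=\gamma'\mathbf{m}(2)^{-1}$. Since $\omega_\psi$ respects the group law of $\Mp_2(F)$ exactly, I may compute
\[
\omega_\psi(\mathbf{m}(2)^{-1})\omega_\psi(\gamma)\phi_0'=\omega_\psi(\gamma')\,\omega_\psi(\mathbf{m}(2)^{-1})\phi_0'=\tfrac{\alpha_\psi(2)}{\alpha_\psi(1)}|2|^{-1/2}\,\omega_\psi(\gamma')\phi_0,
\]
and then invoke Lemma~\ref{defofvarepsilon} to replace $\omega_\psi(\gamma')\phi_0$ by $\varepsilon(\gamma')^{-1}\phi_0$. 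Applying $\omega_\psi(\mathbf{m}(2))$ to both sides and using the relation of the first paragraph to return from $\phi_0$ to $\phi_0'$, the explicit constant cancels and leaves $\omega_\psi(\gamma)\phi_0'=\varepsilon(\gamma')^{-1}\phi_0'=\check{\varepsilon}(\gamma)^{-1}\phi_0'$, which is the claim.

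The only point requiring care, and the one I expect to be the main (if mild) obstacle, is the metaplectic bookkeeping: one must treat $\mathbf{m}(2)^{-1}$ consistently as the group inverse of $\mathbf{m}(2)$ in $\Mp_2(F)$ rather than as $\mathbf{m}(1/2)$, since these differ by the central sign $(2,2)$ coming from the cocycle $\boldsymbol{c}$. Because $\omega_\psi$ is an honest representation of $\Mp_2(F)$, all such signs are already carried by the operators $\omega_\psi(\cdot)$; once one fixes the group-inverse convention, the passage $\mathbf{m}(2)^{-1}\gamma=\gamma'\mathbf{m}(2)^{-1}$ holds on the nose and the scalar identity of the first paragraph is exactly the device that absorbs these signs, so no separate cocycle computation beyond that identity is needed.
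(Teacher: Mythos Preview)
Your proof is correct and follows essentially the same approach as the paper: both arguments use the scalar relation $\omega_\psi(\mathbf{m}(2)^{-1})\phi_0'=\frac{\alpha_\psi(2)}{\alpha_\psi(1)}|2|^{-1/2}\phi_0$ together with conjugation by $\mathbf{m}(2)$ to transport Lemma~\ref{defofvarepsilon} from $\widetilde{\Gamma_0(4)}$ to $\widetilde{\Gamma[4\mathfrak{d}^{-1},\mathfrak{d}]}$. Your treatment is slightly more detailed (explicitly verifying genuineness and carefully handling the metaplectic inverse), but the underlying idea is identical.
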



\section{The Idempotents $e^K$ and $E^K$}\label{The Idempotents $e^K$ and $E^K$}
In this section, we want to define two local idempotent Hecke operators $e^K$ and $E^K$.
They are essential in defining the plus space.
We let $F$ be a non-archimedean field.
Set
\[
K=
\begin{cases}
\Gamma_0(1)&\mbox{ if }q\mbox{ is even,}\\
\Gamma_0(1)\mbox{ ,}\Gamma_0(\varpi)\mbox{ or }\Gamma[\varpi\mathfrak{d}^{-1},\mathfrak{d}]&\mbox{ if }q\mbox{ is odd.}
\end{cases}
\]
Also, we put
\[
\Gamma=
\begin{cases}
\Gamma_0(4)&\mbox{ if }\Gamma=\Gamma_0(1),\\
K&\mbox{ otherwise.}
\end{cases}
\]
If we define the Schwartz space $\mathbb{S}(2^{-1}\mathfrak{o}/\mathfrak{o})$ by
\begin{align*}
&\mathbb{S}(2^{-1}\mathfrak{o}/\mathfrak{o})
\\=&\left\{
\phi\in\mathbb{S}(F)\,|\,\mbox{Supp}(\phi)\subset2^{-1}\mathfrak{o}, \phi(x+y)=\phi(x)\mbox{ for }x\in2^{-1}\mathfrak{o}, y\in\mathfrak{o} \right\},
\end{align*}
then we have the following proposition, which is Prop. 3.3 in \cite{HiraIke:13} by Hiraga and Ikeda.
Note that if $q$ is odd then $\mathbb{S}(2^{-1}\mathfrak{o}/\mathfrak{o})$ is of one dimension.
\begin{proposition}
The Schwartz space $\mathbb{S}(2^{-1}\mathfrak{o}/\mathfrak{o})$ forms an invariant and irreducible subspace of $\mathbb{S}(F)$ under the action of $\widetilde{K}$ via $\omega_\psi$. We denote this irreducible representation by $\Omega_\psi$.
\end{proposition}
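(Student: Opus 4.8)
The plan is to treat the cases $q$ odd and $q$ even separately, since $\mathbb{S}(2^{-1}\mathfrak{o}/\mathfrak{o})$ is one-dimensional in the former and of dimension $|2|^{-1}$ in the latter. When $q$ is odd we have $2\in\mathfrak{o}^\times$, so $2^{-1}\mathfrak{o}=\mathfrak{o}$ and $\mathbb{S}(2^{-1}\mathfrak{o}/\mathfrak{o})=\mathbb{C}\phi_0$; moreover each of the three choices of $K$ is a subgroup of $\Gamma_0(1)=\Gamma_0(4)$, so by Lemma~\ref{defofvarepsilon} the vector $\phi_0$ satisfies $\omega_\psi(\gamma)\phi_0=\varepsilon(\gamma)^{-1}\phi_0$ for every $\gamma\in\widetilde{K}$. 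Thus $\mathbb{C}\phi_0$ is invariant, and being one-dimensional it is automatically irreducible. It remains to handle $q$ even, where $K=\Gamma_0(1)=\Gamma[\mathfrak{d}^{-1},\mathfrak{d}]$.

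For invariance I would argue on generators. By elementary-divisor (Gauss-elimination) theory over the discrete valuation ring $\mathfrak{o}$, the group $\Gamma[\mathfrak{d}^{-1},\mathfrak{d}]$ is generated by $\mathbf{u}^\sharp(b)$ with $b\in\mathfrak{d}^{-1}$, by $\mathbf{m}(a)$ with $a\in\mathfrak{o}^\times$, and by $\mathbf{w}_{\boldsymbol\delta}$ (the last needed to realize the anti-diagonal elements); it then suffices to treat these three, since the lower unipotents are recovered from the conjugation
\[
\mathbf{w}_{\boldsymbol\delta}\,\mathbf{u}^\sharp(b)\,\mathbf{w}_{\boldsymbol\delta}^{-1}=\mathbf{u}^\flat(-\boldsymbol\delta^2 b)\quad(\text{up to a factor in }\{\pm1\}),
\]
which carries $\mathfrak{d}^{-1}$ onto $\mathfrak{d}$. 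From the formulas recalled above, $\omega_\psi(\mathbf{u}^\sharp(b))$ multiplies $\phi$ by $\psi(bx^2)$ and keeps the support in $2^{-1}\mathfrak{o}$, while $\psi(b(2xy+y^2))=1$ for $x\in 2^{-1}\mathfrak{o}$, $y\in\mathfrak{o}$ yields $\mathfrak{o}$-periodicity; $\omega_\psi(\mathbf{m}(a))$ is the dilation $\phi(x)\mapsto\phi(ax)$ up to scalar, which respects both conditions as $a\in\mathfrak{o}^\times$. The one computation with content is $\mathbf{w}_{\boldsymbol\delta}$: one checks that for $\phi\in\mathbb{S}(2^{-1}\mathfrak{o}/\mathfrak{o})$ one has $\hat{\phi}(u)=|\boldsymbol\delta|^{1/2}\mathbf{1}_{\mathfrak{d}^{-1}}(u)\sum_{r}\phi(r)\psi(ur)$ (the sum over $2^{-1}\mathfrak{o}/\mathfrak{o}$), and then that $\omega_\psi(\mathbf{w}_{\boldsymbol\delta})\phi(x)=\overline{\alpha_\psi(\boldsymbol\delta)}|2\boldsymbol\delta^{-1}|^{1/2}\hat{\phi}(-2\boldsymbol\delta^{-1}x)$ is again supported in $2^{-1}\mathfrak{o}$ and $\mathfrak{o}$-periodic, since $-2\boldsymbol\delta^{-1}x\in\mathfrak{d}^{-1}$ exactly when $x\in 2^{-1}\mathfrak{o}$. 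This gives invariance.

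For irreducibility I would pass to the finite basis $e_r=\mathbf{1}_{r+\mathfrak{o}}$ indexed by the finite abelian $2$-group $G=2^{-1}\mathfrak{o}/\mathfrak{o}$ and record the generators as operators on $\mathbb{C}[G]$:
\[
\omega_\psi(\mathbf{u}^\sharp(b))e_r=\psi(br^2)e_r,\qquad \omega_\psi(\mathbf{m}(a))e_r=\tfrac{\alpha_\psi(1)}{\alpha_\psi(a)}e_{a^{-1}r},
\]
\[
\omega_\psi(\mathbf{w}_{\boldsymbol\delta})e_r=\overline{\alpha_\psi(\boldsymbol\delta)}|2|^{1/2}\sum_{s}\psi(-2\boldsymbol\delta^{-1}rs)\,e_s .
\]
Thus $\Omega_\psi$ is the finite Weil (oscillator) representation attached to the nondegenerate symmetric pairing $\langle r,s\rangle=\psi(-2\boldsymbol\delta^{-1}rs)$ on $G$: the upper unipotents act diagonally by Gauss sums, the torus by scaled multiplicative translations, and $\mathbf{w}_{\boldsymbol\delta}$ by the associated finite Fourier transform. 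To prove irreducibility I would invoke Schur's lemma. An endomorphism $T$ commuting with $\Omega_\psi(\widetilde{K})$ commutes with all $\omega_\psi(\mathbf{u}^\sharp(b))$, hence preserves every joint eigenspace $V_{[c]}=\mathrm{span}\{e_r:r^2\equiv c \bmod \mathfrak{o}\}$; conjugating by $\omega_\psi(\mathbf{w}_{\boldsymbol\delta})$ shows $T$ also preserves the transverse ``momentum'' decomposition $\{\omega_\psi(\mathbf{w}_{\boldsymbol\delta})V_{[c]}\}$; finally commutation with the torus, in particular with the swap $e_r\leftrightarrow e_{-r}$ induced by $\mathbf{m}(-1)$, forces $T$ to be a scalar. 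Equivalently, one shows the generated subalgebra of $\mathrm{End}(\mathbb{C}[G])$ is everything.

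The main obstacle is exactly this last point. The squaring map $r\mapsto r^2\bmod\mathfrak{o}$ on the $2$-group $G$ is far from injective---at the very least $e_r$ and $e_{-r}$ share a $\omega_\psi(\mathbf{u}^\sharp(b))$-eigenvalue---so the diagonal operators do not separate the basis on their own, and the whole argument rests on the transversality of the position and momentum eigenspace decompositions together with the torus action. This is the finite analogue of the Stone--von Neumann phenomenon, namely the irreducibility of the Weil representation of the metaplectic cover of $\SL_2$ over $\mathfrak{o}/2\mathfrak{o}$, and is where the genuine computation lies; it is established as Proposition~3.3 of \cite{HiraIke:13}, whose argument I would follow.
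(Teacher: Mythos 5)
Your proposal should be measured against the fact that the paper offers no proof of this proposition at all: it is quoted verbatim as Proposition~3.3 of \cite{HiraIke:13}, with the one-line remark that the space is one-dimensional when $q$ is odd. So anything you supply beyond citing that reference is extra content, and most of what you supply is correct. The odd case is handled exactly as the paper's remark suggests (for $q$ odd all three choices of $K$ sit inside $\Gamma_0(1)=\Gamma_0(4)$, and Lemma~\ref{defofvarepsilon} makes $\phi_0$ an eigenvector, so the one-dimensional space is trivially invariant and irreducible). Your invariance check for $q$ even is also sound: the generation of $\Gamma[\mathfrak{d}^{-1},\mathfrak{d}]$ by $\mathbf{u}^\sharp(\mathfrak{d}^{-1})$, the torus and $\mathbf{w}_{\boldsymbol\delta}$ is standard over a local ring, the periodicity computation $\psi(b(2xy+y^2))=1$ is right, and the Fourier-transform support calculation for $\mathbf{w}_{\boldsymbol\delta}$ correctly uses that $-2\boldsymbol\delta^{-1}x\in\mathfrak{d}^{-1}$ exactly when $x\in2^{-1}\mathfrak{o}$.

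The only point of substance is the one you yourself flag: irreducibility for $q$ even. As written, your commutant argument is not a proof. The step ``finally commutation with the torus, in particular with $\mathbf{m}(-1)$, forces $T$ to be a scalar'' is an assertion, not an argument; the position eigenspaces $V_{[c]}$ can have large dimension (the squaring map on the $2$-group $2^{-1}\mathfrak{o}/\mathfrak{o}$ is badly non-injective when $e>1$), and one must genuinely exploit the transversality of the position and momentum decompositions to cut the commutant down to scalars. You acknowledge this and defer to Proposition~3.3 of \cite{HiraIke:13} --- which is precisely what the paper does for the entire statement --- so your proposal is consistent with the paper and contains no error, but it does not constitute an independent proof of the irreducibility claim.
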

Recall that there exists an genuine character $\varepsilon$ of $\widetilde{\Gamma}$ given by $\varepsilon(\gamma)^{-1}\phi_0=\omega_\psi(\gamma)\phi_0$ by Lemma \ref{defofvarepsilon}.
The Hecke algebra $\widetilde{\mathcal{H}}=\widetilde{\mathcal{H}}(\widetilde{\Gamma}\backslash \Mp_2(F)/\widetilde{\Gamma};\varepsilon)$ consits of compactly supported geniune functions $\varphi$ on $Mp_(F)$ which satisfies $\varphi(\gamma_1 g\gamma_2)=\varepsilon(\gamma_1\gamma_2)\varphi(g)$ for $\gamma_1, \gamma_2\in\widetilde{\Gamma}$.
The multiplicative operator in $\widetilde{\mathcal{H}}$ is given by the convolution product
\[
(\varphi_1\ast\varphi_2)(g)=\int_{\Mp_2(F)}\varphi_1(h)\varphi_2(h^{-1}g)dh.
\]
Here $dh$ is the Haar measure on $\Mp_2(F)$ normalized so that the volume of $\widetilde{\Gamma}$ is $1$.
Sometimes we might drop the notation $\ast$ for simplicity.
\par
Now by $e^K$ and $E^K$ we denote two Hecke operators by the following definition.
One can easily check that they are actually in $\widetilde{\mathcal{H}}$.
We let $e$ be the order of $2$ in $F,$ that is, $|2|=q^{-e}$.
\begin{definition}\label{defofeKandEK}
The two Hecke operators $e^K$ and $E^K$ in $\widetilde{\mathcal{H}}$ are given by
\[
e^K(g)=
\begin{cases}
q^e\Vol(\widetilde{K})^{-1}(\phi_0,\omega_\psi(g)\phi_0)&\mbox{ if }g\in\widetilde{K},\\
0&\mbox{ otherwise,}
\end{cases}
\]
and
\[
E^K(g)=
\begin{cases}
e^K(\mathbf{w}_{2\boldsymbol\delta}^{-1}g\mathbf{w}_{2\boldsymbol\delta})&\mbox{ if }q\mbox{ is even,}\\
e^K(g)&\mbox{ otherwise.}
\end{cases}
\]
\end{definition}
For $q$ even, the definition of $E^K$ implies that it is supported on $\mathbf{w}_{2\boldsymbol\delta}\widetilde{K}\mathbf{w}_{2\boldsymbol\delta}^{-1}=\widetilde{\Gamma[4^{-1}\mathfrak{d}^{-1},4\mathfrak{d}]}$.
Since
\[
\omega_\psi(\mathbf{w}_{2\boldsymbol\delta})\phi_0=\overline{\alpha_\psi(2\boldsymbol{\delta})}\phi_0,
\]
by the unitarity of $\omega_\psi,$ we have
\begin{equation}\label{eq:formulaofEK}
E^K(g)=
\begin{cases}
q^e\Vol(\widetilde{K})^{-1}(\phi_0,\omega_\psi(g)\phi_0)&\mbox{ if }g\in\widetilde{\Gamma[4^{-1}\mathfrak{d}^{-1},4\mathfrak{d}]},\\
0&\mbox{ otherwise.}
\end{cases}
\end{equation}
By Schur's orthogonality relations, we have
\begin{proposition}
The Hecke operators $e^K$ and $E^K$ are idempotents, that is, we have
\[
e^K\ast e^K=e^K
\]
and
\[
E^K\ast E^K=E^K.
\]
\end{proposition}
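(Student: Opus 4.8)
The plan is to recognize $e^K$ as a rescaled diagonal matrix coefficient of the vector $\phi_0$ inside the irreducible unitary representation $(\Omega_\psi,\mathbb{S}(2^{-1}\mathfrak{o}/\mathfrak{o}))$ of the compact group $\widetilde{K}$, and then to deduce idempotency directly from the Schur orthogonality relations, whose normalizing constant is exactly the dimension of $\Omega_\psi$. The crucial numerical input is that $\dim\Omega_\psi=[2^{-1}\mathfrak{o}:\mathfrak{o}]=|2|^{-1}=q^e$, so the prefactor $q^e$ in the definition of $e^K$ is precisely $\dim\Omega_\psi$; this is exactly the normalization that makes Schur orthogonality reproduce $e^K$ rather than a scalar multiple of it.

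First I would treat $e^K\ast e^K$. Since $e^K$ is supported on $\widetilde{K}$ and $\widetilde{K}$ is a group, $e^K\ast e^K$ is again supported on $\widetilde{K}\cdot\widetilde{K}=\widetilde{K}$, so it suffices to evaluate at $g\in\widetilde{K}$, where the convolution reduces to $\int_{\widetilde{K}}e^K(h)\,e^K(h^{-1}g)\,dh$. Using the unitarity of $\omega_\psi$ one rewrites $(\phi_0,\omega_\psi(h^{-1}g)\phi_0)=(\omega_\psi(h)\phi_0,\omega_\psi(g)\phi_0)$, so the integrand becomes, up to the constant $q^{2e}\Vol(\widetilde{K})^{-2}$, the product $(\omega_\psi(h)\phi_0,\omega_\psi(g)\phi_0)\,\overline{(\omega_\psi(h)\phi_0,\phi_0)}$. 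Schur orthogonality for $\Omega_\psi$ on $\widetilde{K}$ then gives
\[
\int_{\widetilde{K}}(\omega_\psi(h)\phi_0,\omega_\psi(g)\phi_0)\,\overline{(\omega_\psi(h)\phi_0,\phi_0)}\,dh=\frac{\Vol(\widetilde{K})}{q^e}\,(\phi_0,\phi_0)\,\overline{(\omega_\psi(g)\phi_0,\phi_0)}.
\]
Since $(\phi_0,\phi_0)=\Vol(\mathfrak{o})=1$ and $\overline{(\omega_\psi(g)\phi_0,\phi_0)}=(\phi_0,\omega_\psi(g)\phi_0)$, the constants collapse and one recovers $q^e\Vol(\widetilde{K})^{-1}(\phi_0,\omega_\psi(g)\phi_0)=e^K(g)$, i.e. $e^K\ast e^K=e^K$.

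For $E^K$ I would separate the two cases. When $q$ is odd we have $E^K=e^K$ by definition, so nothing remains. When $q$ is even, $E^K$ is the conjugate $E^K(g)=e^K(\mathbf{w}_{2\boldsymbol\delta}^{-1}g\,\mathbf{w}_{2\boldsymbol\delta})$, and I would observe that conjugation by the fixed element $w=\mathbf{w}_{2\boldsymbol\delta}$, say $C_w(\varphi)(g)=\varphi(w^{-1}gw)$, is an automorphism of the convolution algebra: since $\Mp_2(F)$ is unimodular the substitution $h\mapsto w^{-1}hw$ preserves the Haar measure, giving $C_w(\varphi_1\ast\varphi_2)=C_w(\varphi_1)\ast C_w(\varphi_2)$. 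Hence $E^K\ast E^K=C_w(e^K\ast e^K)=C_w(e^K)=E^K$. Equivalently, one can rerun the same Schur computation directly from the formula \eqref{eq:formulaofEK}, using that $\phi_0=\alpha_\psi(2\boldsymbol\delta)\,\omega_\psi(\mathbf{w}_{2\boldsymbol\delta})\phi_0$ lies in the conjugated irreducible $\widetilde{\Gamma[4^{-1}\mathfrak{d}^{-1},4\mathfrak{d}]}$-subrepresentation $\omega_\psi(\mathbf{w}_{2\boldsymbol\delta})\Omega_\psi$, which again has dimension $q^e$.

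The computation is essentially forced once the setup is arranged, so there is no deep obstacle; the real care lies in the bookkeeping. I expect the points to watch to be: confirming $\dim\Omega_\psi=q^e$ so that the prefactor in the definition matches the dimension exactly; keeping the two measure normalizations consistent (the convolution is taken with $\Vol(\widetilde{\Gamma})=1$, whereas Schur orthogonality introduces a factor $\Vol(\widetilde{K})$, and these must cancel against the explicit $\Vol(\widetilde{K})^{-1}$ in the definition); and applying unitarity in the correct direction so that $\omega_\psi(h^{-1}g)$ turns into a genuine matrix coefficient in the variable $h$ to which Schur orthogonality can be applied.
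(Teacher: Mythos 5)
Your argument is correct and is exactly the route the paper takes: the paper's entire proof is the phrase ``By Schur's orthogonality relations,'' and your write-up supplies precisely the missing bookkeeping — the identification $\dim\Omega_\psi=[2^{-1}\mathfrak{o}:\mathfrak{o}]=q^e$ matching the prefactor, the cancellation of the $\Vol(\widetilde{K})$ factors against the chosen Haar normalization, and the conjugation-by-$\mathbf{w}_{2\boldsymbol\delta}$ automorphism reducing the $E^K$ case to the $e^K$ case. No gaps.
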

\par
We can get explicit values of $e^K$ and $E^K$.
The explicit values of $e^K$ was calculated by Hiraga and Ikeda in Lemma 3.5 of \cite{HiraIke:13}.
Note that the Haar measure is different from the one used in \cite{HiraIke:13}.
\begin{lemma}\label{lem:valuesofeK}
The following statements are true.\\
\begin{enumerate}
\item For non-zero $z\in\mathfrak{o},$ if $e^K$ is defined on $\mathbf{u}^\flat(\boldsymbol{\delta}z),$ we have
\begin{align*}
e^K(\mathbf{u}^\flat(\boldsymbol{\delta}z))=&\Vol(\widetilde{K})^{-1}\alpha_\psi(\boldsymbol{\delta}z)|2z|^{-1/2}\int_{\mathfrak{o}}\overline{\psi\left(\boldsymbol{\delta}^{-1}y^2/z\right)}dy\\
=&\Vol(\widetilde{K})^{-1}|2|^{-1}\int_{\mathfrak{o}}\psi(\boldsymbol{\delta}^{-1}zy^2/4)dy.
\end{align*}
\item In the case $K=\Gamma_0(1),$ for $g=\begin{pmatrix}
a&\boldsymbol{\delta}^{-1}b\\\boldsymbol{\delta}c&d
\end{pmatrix}\in K$ with $c\in\mathfrak{o}^\times,$ we have
\[
e^K([g])=\Vol(\widetilde{K})^{-1}\alpha_\psi(\boldsymbol{\delta}c)|2|^{-1/2}.
\]
\end{enumerate}
\end{lemma}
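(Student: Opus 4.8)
The plan is to derive both statements by substituting the explicit Weil-representation formulas of Section~\ref{Weil representation} into the definition of $e^K$ in Definition~\ref{defofeKandEK} and evaluating the resulting integrals against $\phi_0=\mathbf{1}_{\mathfrak{o}}$, using $q^e=|2|^{-1}$ and $(\phi_0,\phi_0)=\Vol(\mathfrak{o})=1$ throughout.

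For part (1), I would begin from the lower-unipotent formula
\[
\omega_\psi(\mathbf{u}^\flat(\boldsymbol\delta z))\phi_0(x)=\overline{\alpha_\psi(\boldsymbol\delta z)}\,|2z^{-1}|^{1/2}\int_F\phi_0(x+y)\psi(\boldsymbol\delta^{-1}z^{-1}y^2)\,dy
\]
and pair it with $\phi_0$. Integrating in $x$ first, the factor $\phi_0(x)\phi_0(x+y)$ forces $x\in\mathfrak{o}$ and $y\in\mathfrak{o}$, so the $x$-integral collapses to $\mathbf{1}_{\mathfrak{o}}(y)\Vol(\mathfrak{o})$ and a single Gaussian integral over $\mathfrak{o}$ survives; conjugation turns $\overline{\alpha_\psi(\boldsymbol\delta z)}$ into $\alpha_\psi(\boldsymbol\delta z)$. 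Multiplying by $q^e\Vol(\widetilde K)^{-1}$ and collapsing $|2|^{-1}|2|^{1/2}|z|^{-1/2}=|2z|^{-1/2}$ yields the first displayed equality.

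The second equality is the crux and amounts to a Gauss-sum reciprocity. I would write $\int_{\mathfrak{o}}\overline{\psi(\boldsymbol\delta^{-1}y^2/z)}\,dy=\int_F\phi_0(y)\psi(ay^2)\,dy$ with $a=-\boldsymbol\delta^{-1}z^{-1}$ and invoke \eqref{defofWeilconst}. This brings in $\hat\phi_0=|\boldsymbol\delta|^{1/2}\mathbf{1}_{\mathfrak{d}^{-1}}$; after the substitution $y\mapsto\boldsymbol\delta^{-1}y$, which carries $\mathfrak{d}^{-1}$ back to $\mathfrak{o}$ with Jacobian $|\boldsymbol\delta|^{-1}$, the dual phase $-y^2/(4a)=\boldsymbol\delta zy^2/4$ becomes $\boldsymbol\delta^{-1}zy^2/4$. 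Using $\alpha_\psi(-\boldsymbol\delta^{-1}z^{-1})=\overline{\alpha_\psi(\boldsymbol\delta z)}$ (the two arguments agree modulo squares), the $\alpha_\psi(\boldsymbol\delta z)$ in front cancels against its conjugate since $|\alpha_\psi|=1$, and the surviving powers of $|2|$ and $|z|$ reduce to $|2|^{-1}$, giving the right-hand expression. The only delicate point here is the normalization bookkeeping — the factor $|\boldsymbol\delta|^{1/2}$ built into the Fourier transform and the self-dual measure — rather than anything conceptual.

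For part (2), since $c\in\mathfrak{o}^\times$ is nonzero I would use the Bruhat decomposition
\[
\begin{pmatrix}a&\boldsymbol\delta^{-1}b\\\boldsymbol\delta c&d\end{pmatrix}=\begin{pmatrix}1&a(\boldsymbol\delta c)^{-1}\\0&1\end{pmatrix}\begin{pmatrix}0&-(\boldsymbol\delta c)^{-1}\\\boldsymbol\delta c&0\end{pmatrix}\begin{pmatrix}1&d(\boldsymbol\delta c)^{-1}\\0&1\end{pmatrix},
\]
lift it to $\mathbf{u}^\sharp(a/\boldsymbol\delta c)\,\mathbf{w}_{\boldsymbol\delta c}\,\mathbf{u}^\sharp(d/\boldsymbol\delta c)$ up to the cocycle $\boldsymbol c$, and factor $\mathbf{w}_{\boldsymbol\delta c}=\mathbf{m}(c^{-1})\mathbf{w}_{\boldsymbol\delta}$. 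The two outer $\mathbf{u}^\sharp$ fix $\phi_0$ because $a/\boldsymbol\delta c,\,d/\boldsymbol\delta c\in\boldsymbol\delta^{-1}\mathfrak{o}=\mathfrak{d}^{-1}$ (here $c\in\mathfrak{o}^\times$ is essential). For the middle factor I would split on the residue characteristic: when $q$ is odd, $\omega_\psi(\mathbf{w}_{\boldsymbol\delta})\phi_0=\phi_0$ and $\omega_\psi(\mathbf{m}(c^{-1}))\phi_0=\tfrac{\alpha_\psi(1)}{\alpha_\psi(c^{-1})}\phi_0$, so the pairing is immediate; when $q$ is even, $\omega_\psi(\mathbf{w}_{\boldsymbol\delta})\phi_0=\overline{\alpha_\psi(\boldsymbol\delta)}|2|^{1/2}\mathbf{1}_{2^{-1}\mathfrak{o}}$, the unit dilation $\mathbf{m}(c^{-1})$ preserves $\mathbf{1}_{2^{-1}\mathfrak{o}}$ up to the same scalar, and the leftover phase from $\mathbf{u}^\sharp(a/\boldsymbol\delta c)$ is trivial on $\mathfrak{o}$; pairing against $\phi_0=\mathbf{1}_{\mathfrak{o}}$ restricts the support to $\mathfrak{o}$ and leaves $\Vol(\mathfrak{o})=1$, while the stray $|2|^{1/2}$ combines with $q^e=|2|^{-1}$ to produce exactly $|2|^{-1/2}$. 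In either case the remaining task is to collect the Weil indices $\alpha_\psi(1),\alpha_\psi(c^{-1}),\alpha_\psi(\boldsymbol\delta)$ together with the Hilbert symbols coming from $\boldsymbol c$ into the single index $\alpha_\psi(\boldsymbol\delta c)$. I expect this Weil-index and cocycle bookkeeping to be the main obstacle; since the identity is precisely Lemma~3.5 of \cite{HiraIke:13}, I would follow that computation and only adjust the overall constant to account for the present normalization $\Vol(\widetilde\Gamma)=1$ of the Haar measure.
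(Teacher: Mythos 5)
Your computation is correct, but note that the paper does not actually prove this lemma: it simply cites Lemma 3.5 of \cite{HiraIke:13} and remarks that the Haar measure normalization differs. So your proposal supplies a self-contained verification where the paper offers only a reference. The substance checks out. For part (1), pairing the $\mathbf{u}^\flat$-formula against $\phi_0$ does collapse the $x$-integral to $\mathbf{1}_{\mathfrak{o}}(y)$, and the second equality follows exactly as you say from \eqref{defofWeilconst} with $a=-\boldsymbol{\delta}^{-1}z^{-1}$: the three factors of $|\boldsymbol{\delta}|^{1/2}$, $|\boldsymbol{\delta}|^{1/2}$, $|\boldsymbol{\delta}|^{-1}$ (from $|2a|^{-1/2}$, from $\hat\phi_0=|\boldsymbol{\delta}|^{1/2}\mathbf{1}_{\mathfrak{d}^{-1}}$, and from the substitution $y\mapsto\boldsymbol{\delta}^{-1}y$) cancel, $\alpha_\psi(-\boldsymbol{\delta}^{-1}z^{-1})=\overline{\alpha_\psi(\boldsymbol{\delta}z)}$ kills the Weil index, and $|2z|^{-1/2}\cdot|2|^{-1/2}|z|^{1/2}=|2|^{-1}$. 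For part (2), two remarks make the bookkeeping you were worried about essentially vanish. First, the cocycle contributions from the Bruhat decomposition are trivial: since $\boldsymbol{b}(\mathbf{u}^\sharp(u))=1$ and left or right multiplication by an upper unipotent does not change the lower-left entry, $\boldsymbol{c}(\mathbf{u}^\sharp(u),h)=\boldsymbol{c}(h,\mathbf{u}^\sharp(v))=1$, so $[g,1]=\mathbf{u}^\sharp(a/\boldsymbol{\delta}c)\mathbf{w}_{\boldsymbol{\delta}c}\mathbf{u}^\sharp(d/\boldsymbol{\delta}c)$ holds on the nose in $\Mp_2(F)$. Second, you do not need to factor $\mathbf{w}_{\boldsymbol{\delta}c}=\mathbf{m}(c^{-1})\mathbf{w}_{\boldsymbol{\delta}}$ (which introduces the extra symbol $(\boldsymbol{\delta},c)$ that you would then have to reabsorb): the formula $\omega_\psi(\mathbf{w}_a)\phi(x)=\overline{\alpha_\psi(a)}|2a^{-1}|^{1/2}\hat\phi(-2a^{-1}x)$ applies directly with $a=\boldsymbol{\delta}c$ and gives $\omega_\psi(\mathbf{w}_{\boldsymbol{\delta}c})\phi_0=\overline{\alpha_\psi(\boldsymbol{\delta}c)}\,|2|^{1/2}\mathbf{1}_{2^{-1}\mathfrak{o}}$ at once, whence $(\phi_0,\omega_\psi([g])\phi_0)=\alpha_\psi(\boldsymbol{\delta}c)|2|^{1/2}$ and the factor $q^e=|2|^{-1}$ yields $|2|^{-1/2}$ uniformly in $q$, with no case split needed. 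With those simplifications your argument is complete and matches what the citation to \cite{HiraIke:13} is standing in for.
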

Consequently, by the definition, we can get the explicit values of $E^K$.
\begin{lemma}We assume that $q$ is even.
	\begin{enumerate}
		\item For non-zero $z\in\mathfrak{o},$ we have
		\begin{align*}
			E^K(\mathbf{u}^\sharp(4^{-1}\boldsymbol{\delta}^{-1}z))=&\Vol(\widetilde{K})^{-1}\overline{\alpha_\psi(\boldsymbol{\delta}z)}|2z|^{-1/2}\int_{\mathfrak{o}}\psi\left(\boldsymbol{\delta}^{-1}y^2/z\right)dy\\
			=&\Vol(\widetilde{K})^{-1}|2|^{-1}\int_{\mathfrak{o}}\overline{\psi(\boldsymbol{\delta}^{-1}zy^2/4)}dy.
		\end{align*}
		\item For $c\in\mathfrak{o}^\times$ we have
		\[
		E^K(\mathbf{w}_{4\boldsymbol{\delta}c})=\Vol(\widetilde{K})^{-1}\alpha_\psi(\boldsymbol{\delta}c)|2|^{-1/2}.
		\]
	\end{enumerate}
\end{lemma}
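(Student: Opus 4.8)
The plan is to read off both values directly from the closed formula \eqref{eq:formulaofEK}, namely $E^K(g)=q^e\Vol(\widetilde{K})^{-1}(\phi_0,\omega_\psi(g)\phi_0)$ for $g\in\widetilde{\Gamma[4^{-1}\mathfrak{d}^{-1},4\mathfrak{d}]}$ and $E^K(g)=0$ otherwise, together with $q^e=|2|^{-1}$. The first thing to check is that the two group elements in question actually lie on the non-zero branch: the upper-right entry of $\mathbf{u}^\sharp(4^{-1}\boldsymbol{\delta}^{-1}z)$ is $4^{-1}\boldsymbol{\delta}^{-1}z\in 4^{-1}\mathfrak{d}^{-1}$ and its lower-left entry is $0\in 4\mathfrak{d}$, while for $c\in\mathfrak{o}^\times$ the off-diagonal entries of $\mathbf{w}_{4\boldsymbol{\delta}c}$ are $-(4\boldsymbol{\delta}c)^{-1}\in 4^{-1}\mathfrak{d}^{-1}$ and $4\boldsymbol{\delta}c\in 4\mathfrak{d}$. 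Hence both elements belong to $\widetilde{\Gamma[4^{-1}\mathfrak{d}^{-1},4\mathfrak{d}]}$ and the formula applies.

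For part (1) I would insert the explicit action $\omega_\psi(\mathbf{u}^\sharp(b))\phi_0(x)=\psi(bx^2)\mathbf{1}_\mathfrak{o}(x)$ with $b=4^{-1}\boldsymbol{\delta}^{-1}z$. The inner product then collapses to $\int_\mathfrak{o}\overline{\psi(\boldsymbol{\delta}^{-1}zy^2/4)}\,dy$, and multiplying by $q^e\Vol(\widetilde{K})^{-1}=|2|^{-1}\Vol(\widetilde{K})^{-1}$ yields the second displayed expression at once. The first displayed expression follows from the second by the Weil-index functional equation \eqref{defofWeilconst}; in fact it is exactly the complex conjugate of the identity already recorded in Lemma \ref{lem:valuesofeK}(1), so no genuinely new computation is needed — one substitutes $z\mapsto -z$ there, uses $\alpha_\psi(-a)=\overline{\alpha_\psi(a)}$, and conjugates.

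For part (2) I would use $\omega_\psi(\mathbf{w}_a)\phi(x)=\overline{\alpha_\psi(a)}|2a^{-1}|^{1/2}\hat\phi(-2a^{-1}x)$ with $a=4\boldsymbol{\delta}c$. Here $\alpha_\psi(4\boldsymbol{\delta}c)=\alpha_\psi(\boldsymbol{\delta}c)$ since $4$ is a square, $|2a^{-1}|^{1/2}=|2|^{-1/2}|\boldsymbol{\delta}|^{-1/2}$ because $c\in\mathfrak{o}^\times$, and the Fourier transform of the characteristic function is $\hat{\phi_0}=|\boldsymbol{\delta}|^{1/2}\mathbf{1}_{\mathfrak{d}^{-1}}$. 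A short check then gives $\omega_\psi(\mathbf{w}_{4\boldsymbol{\delta}c})\phi_0=\overline{\alpha_\psi(\boldsymbol{\delta}c)}|2|^{-1/2}\mathbf{1}_{2\mathfrak{o}}$, where the two $|\boldsymbol{\delta}|^{\pm1/2}$ factors cancel, so that $(\phi_0,\omega_\psi(\mathbf{w}_{4\boldsymbol{\delta}c})\phi_0)=\alpha_\psi(\boldsymbol{\delta}c)|2|^{-1/2}\Vol(2\mathfrak{o})=\alpha_\psi(\boldsymbol{\delta}c)|2|^{1/2}$; multiplying by $|2|^{-1}\Vol(\widetilde{K})^{-1}$ produces the claimed value.

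The computations are routine; the only points demanding care are the normalizations of the Fourier transform and the self-dual measure (the stray $|\boldsymbol{\delta}|^{1/2}$ factors, which must cancel against those coming from $\hat{\phi_0}$) and the bookkeeping of $q^e=|2|^{-1}$ together with $\Vol(2\mathfrak{o})=|2|$. I would deliberately avoid the alternative route of conjugating $e^K$ by $\mathbf{w}_{2\boldsymbol{\delta}}$ as in Definition \ref{defofeKandEK}: although the $\SL_2$-level conjugate of $\mathbf{u}^\sharp(4^{-1}\boldsymbol{\delta}^{-1}z)$ is $\mathbf{u}^\flat(-\boldsymbol{\delta}z)$, so that Lemma \ref{lem:valuesofeK} would in principle apply, that route forces one to pin down the metaplectic $2$-cocycle sign produced by the conjugation, which the inner-product formula \eqref{eq:formulaofEK} sidesteps entirely. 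This sign-tracking is the one step I expect to be the genuine obstacle, and it is precisely what the direct approach removes.
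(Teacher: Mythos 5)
Your proposal is correct, and it follows essentially the route the paper intends: the lemma is stated as an immediate consequence of the inner-product formula (\ref{eq:formulaofEK}) (derived from the unitarity of $\omega_\psi$ just before the statement), and your direct evaluation of $(\phi_0,\omega_\psi(g)\phi_0)$ for $g=\mathbf{u}^\sharp(4^{-1}\boldsymbol{\delta}^{-1}z)$ and $g=\mathbf{w}_{4\boldsymbol{\delta}c}$, with the normalizations $q^e=|2|^{-1}$, $\hat{\phi_0}=|\boldsymbol{\delta}|^{1/2}\mathbf{1}_{\mathfrak{d}^{-1}}$ and $\Vol(2\mathfrak{o})=|2|$, is exactly the computation required. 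The identification of the first displayed expression in (1) as the complex conjugate of the identity in Lemma \ref{lem:valuesofeK}(1) is also accurate.
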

\par
Now fix $s\in\mathbb{C}$ and set $B$ to be the subgroup of $\SL_2(F)$ consisting of upper-triangular elements.
We let $\tilde{I}_\psi(s)$ be the principal series representation of $\Mp_2(F)$ induced from the genuine character of $\widetilde{B}$ given by
\[
\mathbf{u}^\sharp(b)\mathbf{m}(a)\mapsto\frac{\alpha_\psi(1)}{\alpha_\psi(a)}|a|^s.
\]
Thus $\tilde{I}_\psi(s)$ consists of all continuous genuine functions $f$ on $\Mp_2(F)$ such that
\[
f(\mathbf{u}^\sharp(b)\mathbf{m}(a)h)=\frac{\alpha_\psi(1)}{\alpha_\psi(a)}|a|^{s+1}f(g)
\]
and $\Mp_2(F)$ acts on it by right translation $\rho$.
Note that any $f\in\tilde{I}_\psi(s)$ is defined by its values on $\widetilde{\Gamma_0(1)}$.
Thus any Hecke operator $\varphi\in\widetilde{\mathcal{H}}$ acts on $\tilde{I}_\psi(s)$ by
\[
(\rho(\varphi)f)(g)=\int_{\Mp_2(F)}\varphi(h)f(gh)dh.
\]
The next proposition given by Hiraga and Ikeda in \cite{HiraIke:13} relates to the nature of the Kohnen plus space.
\begin{proposition}\label{prop:EKfixedsubsp}
Assume that $K=\Gamma_0(1)$.
Let $f^{[0]}\in\tilde{I}_\psi(s)$ be the one whose restriction on $\widetilde{\Gamma_0(1)}$ is equal to $q^{-e}\Vol(\widetilde{K})\cdot\overline{e^K}$ and $f^+=\rho(\mathbf{w}_{2\boldsymbol{\delta}})f^{[0]}$.
The fixed subspaces of $\tilde{I}_\psi(s)$ by $e^K$ and $E^K,$ denoted by $\tilde{I}_\psi(s)^{e^K}$ and $\tilde{I}_\psi(s)^{E^K},$ respectively, are given by
\[
\tilde{I}_\psi(s)^{e^K}=\mathbb{C}\cdot f^{[0]}
\]
and
\[
\tilde{I}_\psi(s)^{E^K}=\mathbb{C}\cdot f^+.
\]
In particular, they are of one dimension.
\end{proposition}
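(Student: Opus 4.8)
The plan is to reduce everything to the idempotent $e^K$ and then transport the result to $E^K$ by conjugation. First I would record the identity
\[
\rho(E^K)=\rho(\mathbf{w}_{2\boldsymbol\delta})\,\rho(e^K)\,\rho(\mathbf{w}_{2\boldsymbol\delta})^{-1},
\]
which follows from the definition $E^K(g)=e^K(\mathbf{w}_{2\boldsymbol\delta}^{-1}g\mathbf{w}_{2\boldsymbol\delta})$ by conjugating the integration variable by $\mathbf{w}_{2\boldsymbol\delta}$ in the integral defining $\rho(E^K)$ and using unimodularity of $\Mp_2(F)$ (the same identity holds when $q$ is odd, where $\mathbf{w}_{2\boldsymbol\delta}\in\widetilde{\Gamma}$, $e^K$ is bi-invariant under $\varepsilon$, and $E^K=e^K$). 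Since $e^K$ and $E^K$ are idempotents, the images of $\rho(e^K)$ and $\rho(E^K)$ are exactly their fixed subspaces, so the identity gives $\tilde{I}_\psi(s)^{E^K}=\rho(\mathbf{w}_{2\boldsymbol\delta})\,\tilde{I}_\psi(s)^{e^K}$. Hence $f^+=\rho(\mathbf{w}_{2\boldsymbol\delta})f^{[0]}$, and it suffices to prove $\tilde{I}_\psi(s)^{e^K}=\mathbb{C}\cdot f^{[0]}$.

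The heart of the argument is to identify $\rho(e^K)$ with a Schur projection for the finite-dimensional irreducible $\widetilde{K}$-representation $\Omega_\psi$. Restricting to $\widetilde{K}$ via the Iwasawa decomposition $\Mp_2(F)=\widetilde{B}\,\widetilde{K}$ gives $\tilde{I}_\psi(s)|_{\widetilde{K}}\cong\mathrm{Ind}_{\widetilde{B}\cap\widetilde{K}}^{\widetilde{K}}\chi_0$, where $\chi_0$ is the genuine character $\mathbf{u}^\sharp(b)\mathbf{m}(a)\mapsto\alpha_\psi(1)/\alpha_\psi(a)$ (the factor $|a|^{s+1}$ being trivial on the compact torus); in particular this restriction is independent of $s$. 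Since $e^K$ is supported on $\widetilde{K}$ and equals $q^e\Vol(\widetilde{K})^{-1}\overline{(\omega_\psi(h)\phi_0,\phi_0)}$ there, with $\phi_0\in\Omega_\psi$ a unit vector and $\dim\Omega_\psi=|\mathfrak{o}/2\mathfrak{o}|=q^e$, the normalization matches exactly the Schur idempotent $\dim(\Omega_\psi)\int_{\widetilde{K}}\overline{(\omega_\psi(h)\phi_0,\phi_0)}\,\rho(h)\,d\mu(h)$ for the probability Haar measure $\mu$ on $\widetilde{K}$. By the Schur orthogonality relations this operator is the projection onto the $\phi_0$-line inside each copy of $\Omega_\psi$, so $\dim\tilde{I}_\psi(s)^{e^K}$ equals the multiplicity of $\Omega_\psi$ in $\tilde{I}_\psi(s)|_{\widetilde{K}}$.

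It then remains to compute this multiplicity, and I expect this to be the crux. By Frobenius reciprocity and unitarity of $\Omega_\psi$ it equals the dimension of the $\chi_0$-eigenspace
\[
\{\phi\in\Omega_\psi:\omega_\psi(u)\phi=\chi_0(u)\phi\ \text{for all}\ u\in\widetilde{B}\cap\widetilde{K}\}.
\]
Using the explicit Weil representation formulas I would argue as follows: invariance under $\omega_\psi(\mathbf{u}^\sharp(b))\phi(x)=\psi(bx^2)\phi(x)$ for all $b\in\mathfrak{d}^{-1}$ forces $\psi(bx^2)=1$ on the support, i.e. $x^2\in\mathfrak{o}$, so $\mathrm{Supp}(\phi)\subseteq\mathfrak{o}$; combined with the $\mathfrak{o}$-periodicity defining $\mathbb{S}(2^{-1}\mathfrak{o}/\mathfrak{o})$ this already forces $\phi\in\mathbb{C}\cdot\phi_0$, and $\phi_0$ satisfies the $\mathbf{m}(a)$-condition because $\omega_\psi(\mathbf{m}(a))\phi_0=(\alpha_\psi(1)/\alpha_\psi(a))\phi_0$ for $a\in\mathfrak{o}^\times$. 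Thus the eigenspace is exactly $\mathbb{C}\cdot\phi_0$ and the multiplicity is one.

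Finally I would verify that $f^{[0]}$ genuinely spans this line. Well-definedness as a vector of $\tilde{I}_\psi(s)$ is a transformation check: since $\widetilde{B}\cap\widetilde{\Gamma_0(1)}=\widetilde{B}\cap\widetilde{\Gamma}$ and $e^K(\mathbf{u}^\sharp(b)\mathbf{m}(a)h)=(\alpha_\psi(a)/\alpha_\psi(1))e^K(h)$, the conjugate $\overline{e^K}$ transforms on the left exactly by $\chi_0$, matching the principal series condition on $\widetilde{K}$. That $f^{[0]}$ is $e^K$-fixed follows from the reproducing computation $\rho(e^K)f^{[0]}(k)=c\int_{\widetilde{K}}e^K(h)e^K(h^{-1}k^{-1})\,dh=c\,(e^K\ast e^K)(k^{-1})=c\,e^K(k^{-1})=f^{[0]}(k)$ for $k\in\widetilde{K}$, where $c=q^{-e}\Vol(\widetilde{K})$ and I use idempotency together with $e^K(h^{-1})=\overline{e^K(h)}$ (immediate from unitarity of $\omega_\psi$). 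Since $f^{[0]}(1)=c\,e^K(1)\neq0$ it is nonzero, whence $\tilde{I}_\psi(s)^{e^K}=\mathbb{C}\cdot f^{[0]}$ and, by the first paragraph, $\tilde{I}_\psi(s)^{E^K}=\mathbb{C}\cdot f^+$. The main obstacle is the multiplicity-one computation, but the explicit support argument makes it short; the remaining care is in matching normalizations so that $\rho(e^K)$ is literally the Schur projection.
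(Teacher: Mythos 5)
Your argument is correct. Note that the paper itself gives no proof of this proposition: it is quoted from Hiraga--Ikeda \cite{HiraIke:13}, so there is no internal proof to compare against, but your route is essentially the standard one used there. The three ingredients all check out: (i) the conjugation identity $\rho(E^K)=\rho(\mathbf{w}_{2\boldsymbol\delta})\rho(e^K)\rho(\mathbf{w}_{2\boldsymbol\delta})^{-1}$ correctly reduces the $E^K$-statement to the $e^K$-statement, since the images of the idempotents are their fixed spaces; (ii) the normalization $q^e=\dim\Omega_\psi=[\mathfrak{o}:2\mathfrak{o}]$ together with $\|\phi_0\|^2=\Vol(\mathfrak{o})=1$ does make $\rho(e^K)$ literally the Schur projection onto the $\phi_0$-line inside each copy of $\Omega_\psi$ in $\tilde{I}_\psi(s)|_{\widetilde K}\cong\mathrm{Ind}_{\widetilde B\cap\widetilde K}^{\widetilde K}\chi_0$; and (iii) the support argument is sound, because $\psi(bx^2)=1$ for all $b\in\mathfrak{d}^{-1}$ forces $x^2\in\mathfrak{o}$, hence $x\in\mathfrak{o}$, and an element of $\mathbb{S}(2^{-1}\mathfrak{o}/\mathfrak{o})$ supported in $\mathfrak{o}$ is a multiple of $\phi_0$. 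One small bookkeeping point worth making explicit: when you pass from $\mathrm{Hom}_{\widetilde B\cap\widetilde K}(\Omega_\psi,\chi_0)$ to an eigenspace inside $\Omega_\psi$ via the inner product, a conjugation enters, so a priori you might land on the $\overline{\chi_0}$-eigenspace rather than the $\chi_0$-eigenspace. This is harmless here --- the unipotent part of the condition (where $\chi_0\equiv1$) already pins the space down to $\mathbb{C}\phi_0$, and your explicit verification that $f^{[0]}$ is a nonzero $e^K$-fixed vector shows the multiplicity is exactly one either way --- but it deserves a sentence. The final reproducing computation $\rho(e^K)f^{[0]}=f^{[0]}$ via $e^K(h^{-1})=\overline{e^K(h)}$ and idempotency is correct and makes that last step self-contained.
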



\section{The Hecke algebras}\label{The Hecke algebras}

In this section we want to take a look at the structure of the Hecke algebras $\widetilde{\mathcal{H}}(\widetilde{\Gamma}\backslash \Mp_2(F)/\widetilde{\Gamma};\varepsilon)$ defined in the last section.
\par
Now we assume that $q$ is odd and put 
\[
\Gamma=\begin{cases}
\Gamma_1&\mbox{ if }\Gamma=\Gamma_0(1),\\
\Gamma_2&\mbox{ if }\Gamma=\Gamma_0(\varpi)\mbox{ or }\Gamma[\varpi\mathfrak{d}^{-1},\mathfrak{d}].
\end{cases}
\]
Furthermore, for the second case, we set
\[
\boldsymbol{\mu}=\begin{cases}
1&\mbox{ if }\Gamma_2=\Gamma_0(\varpi),\\
\varpi&\mbox{ if }\Gamma_2=\Gamma[\varpi\mathfrak{d}^{-1},\mathfrak{d}].
\end{cases}
\]
\begin{lemma}\label{repstsofthedoublecosets}
The sets 
\[
\left\{\mathbf{m}(\varpi^m)\,|\,m\in\mathbb{Z}_{\geq0}\right\}
\]
and
\[
\left\{\mathbf{m}(\varpi^m)\,|\,m\in\mathbb{Z}\right\}\cup\left\{\mathbf{w}_{\boldsymbol{\delta}\varpi^m}\,|\,m\in\mathbb{Z}\right\}
\]
form complete systems of the representatives for the double cosets in $\widetilde{\Gamma_1}\backslash \Mp_2(F)/\widetilde{\Gamma_1}$ and $\widetilde{\Gamma_2}\backslash \Mp_2(F)/\widetilde{\Gamma_2}$ respectively.
\end{lemma}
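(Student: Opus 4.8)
The plan is to push the entire computation down to $\SL_2(F)$, solve it there with the classical Cartan and Iwahori--Bruhat decompositions, and then lift. The key preliminary is that the metaplectic cover is invisible to this counting: the projection $p\colon\Mp_2(F)\to\SL_2(F)$ has kernel $\{\pm1\}=\{[1,\pm1]\}$, and each $\widetilde{\Gamma_i}$ is by definition the full inverse image of $\Gamma_i$, hence contains this kernel. For any subgroup $\widetilde H\subseteq\Mp_2(F)$ with $\{\pm1\}\subseteq\widetilde H$, the two points $[g,1],[g,-1]$ of a fiber lie in a single $\widetilde H$-double coset, so $p$ induces a bijection $\widetilde H\backslash\Mp_2(F)/\widetilde H\to p(\widetilde H)\backslash\SL_2(F)/p(\widetilde H)$, and any lift of a downstairs representative is an upstairs representative. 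Thus it suffices to show that $p(\mathbf{m}(\varpi^m))$ and $p(\mathbf{w}_{\boldsymbol\delta\varpi^m})$, with the indicated ranges of $m$, give complete systems for $\Gamma_1\backslash\SL_2(F)/\Gamma_1$ and $\Gamma_2\backslash\SL_2(F)/\Gamma_2$.

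I would first record the identity $p(\mathbf{w}_{\boldsymbol\delta\varpi^m})=p(\mathbf{w}_{\boldsymbol\delta})\,p(\mathbf{m}(\varpi^m))$, which exhibits every antidiagonal representative as the fixed element $\mathbf{w}_{\boldsymbol\delta}$ times a diagonal one; whether such a coset is new therefore hinges only on whether $\mathbf{w}_{\boldsymbol\delta}\in\Gamma$. For the first set, $\Gamma_1=\Gamma[\mathfrak{d}^{-1},\mathfrak{d}]$ is a maximal compact subgroup, conjugate to $\SL_2(\mathfrak{o})$ by the diagonal element $\mathrm{diag}(\boldsymbol\delta^{-1},1)$, which fixes every $p(\mathbf{m}(\varpi^m))$; so the Cartan (elementary--divisor) decomposition applies verbatim and yields a unique $m\ge0$ with $g\in\Gamma_1\,p(\mathbf{m}(\varpi^m))\,\Gamma_1$. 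The collapse $m\leftrightarrow -m$ is exactly the statement that $\mathbf{w}_{\boldsymbol\delta}=\left(\begin{smallmatrix}0&-\boldsymbol\delta^{-1}\\\boldsymbol\delta&0\end{smallmatrix}\right)$ lies in $\Gamma_1$ (indeed $\boldsymbol\delta\in\mathfrak{d}$ and $\boldsymbol\delta^{-1}\in\mathfrak{d}^{-1}$) and conjugates $p(\mathbf{m}(\varpi^m))$ to $p(\mathbf{m}(\varpi^{-m}))$; by the identity above the antidiagonal representatives then contribute nothing new. Hence the first set is complete.

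For the second set, each candidate for $\Gamma_2$ is an Iwahori subgroup, conjugate to the standard one by a diagonal element. Here I would invoke the Iwahori--Bruhat decomposition $\SL_2(F)=\bigsqcup_{w}\Gamma_2\,w\,\Gamma_2$, the index set being the affine Weyl group of $\SL_2$; since $\SL_2$ is simply connected this is the full affine Weyl group, the infinite dihedral group, whose elements are the translations and the reflections, each family parametrized by $\mathbb{Z}$. The translations are represented by $p(\mathbf{m}(\varpi^m))$, $m\in\mathbb{Z}$, and the reflections by $p(\mathbf{w}_{\boldsymbol\delta\varpi^m})$, $m\in\mathbb{Z}$; both signs of $m$ now persist and the two families remain disjoint precisely because $\mathbf{w}_{\boldsymbol\delta}\notin\Gamma_2$ for either choice of $\Gamma_2$ (its relevant off-diagonal entry has the wrong valuation). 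This matches the stated set.

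The main obstacle is the bookkeeping of distinctness and exhaustiveness downstairs: proving that $m$ is a genuine double-coset invariant, that no diagonal coset equals an antidiagonal one, and that the $m\leftrightarrow -m$ symmetry is present for the maximal compact case but broken in the Iwahori case. This is most cleanly done by reducing an arbitrary $g$ to normal form by $\Gamma$-operations on both sides and reading off the valuations of the entries; the unifying identity $p(\mathbf{w}_{\boldsymbol\delta\varpi^m})=p(\mathbf{w}_{\boldsymbol\delta})\,p(\mathbf{m}(\varpi^m))$ together with the membership test for $\mathbf{w}_{\boldsymbol\delta}$ controls the one delicate point, namely the possible merging of the two families. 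Everything else, in particular the metaplectic cocycle, is irrelevant by the first paragraph.
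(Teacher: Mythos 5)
Your proposal is correct and amounts to the same argument the paper has in mind: the paper's proof is the single line ``this follows from an easy argument on the row-column reductions,'' which is exactly the normal-form reduction underlying the Cartan and Iwahori--Bruhat decompositions you invoke, and your preliminary observation that the kernel $\{[1,\pm1]\}$ lies in each $\widetilde{\Gamma_i}$ (so the double coset spaces upstairs and downstairs coincide) is the correct and standard way to dispose of the cover. The only point I would tighten is that disjointness of the diagonal and antidiagonal families in the Iwahori case is guaranteed by the affine Weyl group indexing itself, not merely by $\mathbf{w}_{\boldsymbol{\delta}}\notin\Gamma_2$; but since you do cite the Iwahori--Bruhat decomposition, this is cosmetic.
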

\begin{proof}
This follows simply from an easy argument on the row-column reductions.
\end{proof}
\begin{lemma}\label{decompstnofdoublecosets}
We can decompose the double cosets into left cosets as following.
\begin{enumerate}
\item For $m\geq1$ we have 
\begin{align*}
&\widetilde{\Gamma_1}\mathbf{m}(\varpi^m)\widetilde{\Gamma_1}\\
=&\left(\bigsqcup_{s\in\mathfrak{o}/\mathfrak{p}^{2m}}\mathbf{u}^\sharp(\boldsymbol{\delta}^{-1}s)\mathbf{m}(\varpi^m)\widetilde{\Gamma_1}\right)\bigsqcup
\left(\bigsqcup_{s\in\mathfrak{o}/\mathfrak{p}^{2m-1}}\mathbf{w}_{\boldsymbol{\delta}}\mathbf{u}^\sharp(\boldsymbol{\delta}^{-1}\varpi s)\mathbf{m}(\varpi^m)\widetilde{\Gamma_0(1)}\right).
\end{align*}
\item For $m\geq0$ we have 
\begin{align*}
\widetilde{\Gamma_2}\mathbf{m}(\varpi^m)\widetilde{\Gamma_2}
=\bigsqcup_{s\in\mathfrak{o}/\mathfrak{p}^{2m}}\mathbf{u}^\sharp(\boldsymbol{\delta}^{-1}\boldsymbol{\mu}s)\mathbf{m}(\varpi^m)\widetilde{\Gamma_2}.
\end{align*}
\item For $m\geq1$ we have 
\begin{align*}
\widetilde{\Gamma_2}\mathbf{m}(\varpi^{-m})\widetilde{\Gamma_2}
=\bigsqcup_{s\in\mathfrak{o}/\mathfrak{p}^{2m}}\mathbf{u}^\flat(\boldsymbol{\delta}\boldsymbol{\mu}^{-1}\varpi s)\mathbf{m}(\varpi^m)\widetilde{\Gamma_2}.
\end{align*}
\item For $m\geq1$ we have 
\begin{align*}
\widetilde{\Gamma_2}\mathbf{w}_{\boldsymbol{\delta}\boldsymbol{\mu}^{-1}\varpi^m}\widetilde{\Gamma_2}
=\bigsqcup_{s\in\mathfrak{o}/\mathfrak{p}^{2m-1}}\mathbf{u}^\flat(\boldsymbol{\delta}\boldsymbol{\mu}^{-1}\varpi s)\mathbf{w}_{\boldsymbol{\delta}\boldsymbol{\mu}^{-1}\varpi^{m}}\widetilde{\Gamma_2}.
\end{align*}
\item For $m\geq0$ we have 
\begin{align*}
\widetilde{\Gamma_2}\mathbf{w}_{\boldsymbol{\delta}\boldsymbol{\mu}^{-1}\varpi^{-m}}\widetilde{\Gamma_2}
=\bigsqcup_{s\in\mathfrak{o}/\mathfrak{p}^{2m+1}}\mathbf{u}^\sharp(\boldsymbol{\delta}^{-1}\boldsymbol{\mu}s)\mathbf{w}_{\boldsymbol{\delta}\boldsymbol{\mu}^{-1}\varpi^{-m}}\widetilde{\Gamma_2}.
\end{align*}
\end{enumerate}
\begin{proof}
All of these can be proved by applying the triangular decompositions. 
\end{proof}

\end{lemma}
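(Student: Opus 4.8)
The plan is to reduce the entire statement to the analogous double coset decompositions in $\SL_2(F)$. Write $p\colon\Mp_2(F)\to\SL_2(F)$ for the covering projection. Since the full inverse image $\widetilde{\Gamma}$ contains the central element $[1,-1]$, every left coset $\tilde g\widetilde{\Gamma}$ equals $p^{-1}(\bar g\Gamma)$, where $\bar g=p(\tilde g)$; hence $p$ induces a bijection between the left (resp. double) cosets of $\widetilde{\Gamma}$ in $\Mp_2(F)$ and those of $\Gamma$ in $\SL_2(F)$. In particular no sign ambiguity in the choice of lifts can merge or split cosets, so it suffices to establish each identity after projecting to $\SL_2(F)$, and the displayed metaplectic representatives, being honest products such as $\mathbf{u}^\sharp(\boldsymbol{\delta}^{-1}\boldsymbol{\mu}s)\mathbf{m}(\varpi^m)$, are then automatically valid once the projected families are shown to work.

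For a fixed $g$ the double coset $\Gamma g\Gamma$ breaks into $[\Gamma:\Gamma\cap g\Gamma g^{-1}]$ left cosets, with representatives drawn from $\Gamma/(\Gamma\cap g\Gamma g^{-1})$. I would compute this intersection using, in $\SL_2(F)$, the conjugation rules $\mathbf{m}(\varpi^m)\mathbf{u}^\sharp(b)\mathbf{m}(\varpi^{-m})=\mathbf{u}^\sharp(\varpi^{2m}b)$ and $\mathbf{m}(\varpi^m)\mathbf{u}^\flat(c)\mathbf{m}(\varpi^{-m})=\mathbf{u}^\flat(\varpi^{-2m}c)$, which rescale the two off-diagonal bands of $\Gamma_2=\Gamma[\mathfrak{b},\mathfrak{c}]$ by $\varpi^{\pm2m}$. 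For the diagonal cases (2) and (3), intersecting with $\Gamma_2$ tightens exactly one band, so after conjugating $\Gamma_2$ to a standard Iwahori the triangular factorization shows the quotient is represented by the single unipotent family $\mathbf{u}^\sharp(\boldsymbol{\delta}^{-1}\boldsymbol{\mu}s)$ (resp. $\mathbf{u}^\flat(\boldsymbol{\delta}\boldsymbol{\mu}^{-1}\varpi s)$), with $s\in\mathfrak{o}/\mathfrak{p}^{2m}$ recording the index $q^{2m}$; the factor $\boldsymbol{\mu}$ is precisely what places each representative in the correct band of whichever Iwahori $\Gamma_2$ has been chosen. For the Weyl cases (4) and (5) I would use instead $\mathbf{w}_a\mathbf{u}^\sharp(b)\mathbf{w}_a^{-1}=\mathbf{u}^\flat(-a^2b)$ and $\mathbf{w}_a\mathbf{u}^\flat(c)\mathbf{w}_a^{-1}=\mathbf{u}^\sharp(-a^{-2}c)$: here $\mathbf{w}_a$ with $a=\boldsymbol{\delta}\boldsymbol{\mu}^{-1}\varpi^{\pm m}$ both rescales and interchanges the two bands, and because the two bands of $\Gamma_2$ have unequal width this interchange is what produces the shifted moduli $\mathfrak{p}^{2m-1}$ and $\mathfrak{p}^{2m+1}$ in the two cases.

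Case (1) is the one requiring more than a single band computation. Here $\Gamma_1=\Gamma_0(1)$ is maximal, so $\Gamma_1\cap\mathbf{m}(\varpi^m)\Gamma_1\mathbf{m}(\varpi^{-m})$ is a conjugate of $\Gamma_0(\varpi^{2m})$, of index $(q+1)q^{2m-1}=\#\mathbb{P}^1(\mathfrak{o}/\mathfrak{p}^{2m})$, so the cosets are naturally parametrized by $\mathbb{P}^1(\mathfrak{o}/\mathfrak{p}^{2m})$. The plan is to split this projective line into its affine chart and the complementary line at infinity: the affine chart contributes the $q^{2m}$ representatives $\mathbf{u}^\sharp(\boldsymbol{\delta}^{-1}s)$ with $s\in\mathfrak{o}/\mathfrak{p}^{2m}$, while the points at infinity, where the distinguished coordinate is a non-unit, contribute the $q^{2m-1}$ representatives $\mathbf{w}_{\boldsymbol{\delta}}\mathbf{u}^\sharp(\boldsymbol{\delta}^{-1}\varpi s)$ with $s\in\mathfrak{o}/\mathfrak{p}^{2m-1}$, and $q^{2m}+q^{2m-1}$ matches the index. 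The main obstacle I anticipate is exactly this bookkeeping: one must check that the two families are disjoint modulo right multiplication by $\widetilde{\Gamma_1}$, that the appearance of the Weyl element $\mathbf{w}_{\boldsymbol{\delta}}$ is what forces the reduced modulus $\mathfrak{p}^{2m-1}$ on the second family, and that the two together exhaust the sphere. Once disjointness and completeness of the underlying $\SL_2(F)$-families are confirmed in all five cases, the reduction of the first paragraph promotes them verbatim to the stated decompositions in $\Mp_2(F)$.
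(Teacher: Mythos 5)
Your proposal is correct and is essentially the paper's own argument: the paper disposes of all five cases with the remark that they follow from ``triangular decompositions,'' and your reduction to $\SL_2(F)$ via the covering map, followed by the index count $[\Gamma:\Gamma\cap g\Gamma g^{-1}]$ computed through Iwahori/Iwasawa factorization and the affine-chart-plus-infinity split of $\mathbb{P}^1(\mathfrak{o}/\mathfrak{p}^{2m})$ in case (1), is precisely the standard content of that remark. The observation that $\widetilde{\Gamma}=p^{-1}(\Gamma)$ contains the central element, so that cosets upstairs and downstairs biject and the displayed lifts are automatically valid, is a worthwhile point the paper leaves implicit.
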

Let $T$ be the subgroup of diagonal matrices in $\SL_2(\mathfrak{o})$.
If we consider the restriction of the character $\varepsilon$ to $\widetilde{T},$ then it can be extended to a character of the normalizer of $\widetilde{T}$ in $\Mp_2(F)$ by setting
\[
\varepsilon(\mathbf{m}(a))=\frac{\alpha_\phi(a)}{\alpha_\psi(1)}
\]
for $a\in F^\times$ and
\[
\varepsilon(\mathbf{w}_{\boldsymbol{\delta}})=1.
\]
From these we also have
\[
\varepsilon(\mathbf{w}_{\boldsymbol{\delta}a})=\alpha_\psi(\boldsymbol{\delta}a).
\]
\par
Now let us first consider the case of $\widetilde{\mathcal{H}}(\widetilde{\Gamma_1}\backslash \Mp_2(F)/\widetilde{\Gamma_1};\varepsilon)$.
Using this character given above, we can define the characteristic function $X_g$ of the double cosets $\widetilde{\Gamma_1}g\widetilde{\Gamma_1}$ in $\widetilde{\Gamma_1}\backslash \Mp_2(F)/\widetilde{\Gamma_1}$ by setting
\begin{equation}\label{eq:defofXg}
X_g(\gamma_1 g \gamma_2)=\varepsilon(\gamma_1)\varepsilon(g)\varepsilon(\gamma_2)
\end{equation}
where $g\in\left\{\mathbf{m}(\varpi^m)\,|\,m\in\mathbb{Z}_{\geq0}\right\}$ and $\gamma_1, \gamma_2\in\widetilde{\Gamma_1}$.
The next well-known result is very helpful for the calculations in this section.

\begin{lemma}\label{lem:helpful lemma}
For $g, h\in\widetilde{\mathcal{H}}(\widetilde{\Gamma_1}\backslash \Mp_2(F)/\widetilde{\Gamma_1};\varepsilon)$ such that $\Vol(\widetilde{\Gamma_1}g\widetilde{\Gamma_1})\Vol(\widetilde{\Gamma_1}h\widetilde{\Gamma_1})=\Vol(\widetilde{\Gamma_1}gh\widetilde{\Gamma_1}),$ we have $X_g\ast X_h=X_{gh}$.
\end{lemma}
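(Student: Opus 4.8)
The plan is to reduce everything to two facts: first, that the volume hypothesis forces the product set $\widetilde{\Gamma_1}g\widetilde{\Gamma_1}h\widetilde{\Gamma_1}$ to collapse to the single double coset $\widetilde{\Gamma_1}gh\widetilde{\Gamma_1}$, and second, that on that double coset the convolution takes exactly the value prescribed by $X_{gh}$. Throughout I would work with the left coset decompositions $\widetilde{\Gamma_1}g\widetilde{\Gamma_1}=\bigsqcup_{i=1}^{d(g)}\alpha_i\widetilde{\Gamma_1}$ and $\widetilde{\Gamma_1}h\widetilde{\Gamma_1}=\bigsqcup_{j=1}^{d(h)}\beta_j\widetilde{\Gamma_1}$ furnished by Lemma \ref{decompstnofdoublecosets}, where $d(g)$ denotes the number of left cosets; since $\Mp_2(F)$ is unimodular and $\Vol(\widetilde{\Gamma_1})=1$, each left coset has volume $1$, so $d(g)=\Vol(\widetilde{\Gamma_1}g\widetilde{\Gamma_1})$, and likewise for $h$ and $gh$. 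I would also arrange $\alpha_1=g$ and $\beta_1=h$, so that $gh\in\alpha_1\beta_1\widetilde{\Gamma_1}$.

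The heart of the argument is a counting estimate. Writing $\widetilde{\Gamma_1}g\widetilde{\Gamma_1}h\widetilde{\Gamma_1}=\bigcup_{i,j}\alpha_i\beta_j\widetilde{\Gamma_1}$, the product is a union of at most $d(g)d(h)$ left cosets; on the other hand it contains $\widetilde{\Gamma_1}gh\widetilde{\Gamma_1}$, which is a disjoint union of exactly $d(gh)$ left cosets. Hence the number of distinct cosets among the $\alpha_i\beta_j\widetilde{\Gamma_1}$ lies between $d(gh)$ and $d(g)d(h)$. The hypothesis $d(g)d(h)=d(gh)$ squeezes both inequalities to equalities, forcing simultaneously that the $d(g)d(h)$ cosets $\alpha_i\beta_j\widetilde{\Gamma_1}$ are pairwise distinct and that $\widetilde{\Gamma_1}g\widetilde{\Gamma_1}h\widetilde{\Gamma_1}=\widetilde{\Gamma_1}gh\widetilde{\Gamma_1}$ is a single double coset. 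Since $X_g\ast X_h$ is supported on $\widetilde{\Gamma_1}g\widetilde{\Gamma_1}h\widetilde{\Gamma_1}$ (any point in its support is a product of a point of $\mathrm{supp}(X_g)$ with one of $\mathrm{supp}(X_h)$) and again lies in $\widetilde{\mathcal{H}}$, we conclude that $X_g\ast X_h=c\,X_{gh}$ for a single scalar $c$.

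It then remains to compute $c$ by evaluating at $x=gh$. Using the $\varepsilon$-equivariances $X_g(y\gamma)=\varepsilon(\gamma)X_g(y)$ and $X_h(\gamma^{-1}y)=\varepsilon(\gamma)^{-1}X_h(y)$ for $\gamma\in\widetilde{\Gamma_1}$, the $\gamma$-dependence inside each left coset cancels and the convolution integral collapses to the finite sum $(X_g\ast X_h)(x)=\sum_{i=1}^{d(g)}X_g(\alpha_i)X_h(\alpha_i^{-1}x)$. At $x=gh$ the factor $X_h(\alpha_i^{-1}gh)$ is nonzero only when $gh\in\alpha_i\beta_j\widetilde{\Gamma_1}$ for some $j$; by the distinctness just established this occurs only for $(i,j)=(1,1)$, leaving the single term $X_g(g)X_h(h)=\varepsilon(g)\varepsilon(h)$. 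Since $\varepsilon$ extends to a character of the normalizer of $\widetilde{T}$, we have $X_{gh}(gh)=\varepsilon(gh)=\varepsilon(g)\varepsilon(h)$, whence $c=1$ and $X_g\ast X_h=X_{gh}$.

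The step I expect to be the main obstacle is the counting estimate in the second paragraph, namely converting the purely numerical volume hypothesis into the geometric statement that the cosets $\alpha_i\beta_j\widetilde{\Gamma_1}$ are pairwise distinct and exhaust a single double coset. Everything downstream — the equivariance cancellation and the isolation of a single surviving term — is routine once that distinctness is in hand; the only further care needed is the identity $\varepsilon(gh)=\varepsilon(g)\varepsilon(h)$, which is immediate from $\varepsilon$ being a character on the normalizer of the torus.
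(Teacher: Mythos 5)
Your proof is correct. The paper itself gives no argument for this lemma --- it is cited as a well-known fact --- and what you have written is the standard proof: identifying $\Vol(\widetilde{\Gamma_1}g\widetilde{\Gamma_1})$ with the number of left cosets (legitimate since $\Vol(\widetilde{\Gamma_1})=1$ and the measure is left-invariant), squeezing the count of distinct cosets $\alpha_i\beta_j\widetilde{\Gamma_1}$ between $d(gh)$ and $d(g)d(h)$ to get both disjointness and collapse onto the single double coset $\widetilde{\Gamma_1}gh\widetilde{\Gamma_1}$, and then normalizing the resulting scalar by evaluating at $gh$, where the multiplicativity $\varepsilon(gh)=\varepsilon(g)\varepsilon(h)$ follows from the cocycle identity $\alpha_\psi(1)\alpha_\psi(ab)/\alpha_\psi(a)\alpha_\psi(b)=(a,b)$ that makes the extension of $\varepsilon$ to the normalizer of $\widetilde{T}$ a genuine character. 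No gaps.
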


\par
Set $\widetilde{\mathcal{T}}_m=q^{-m/2}X_{\mathbf{m}(\varpi^m)}$.
The next proposition is well-known.

\begin{proposition} \label{generatorsofH1}
We have
\[
\widetilde{\mathcal{T}}_1^2=q+1+\widetilde{\mathcal{T}}_2
\]
and
\[
\widetilde{\mathcal{T}}_1\ast\widetilde{\mathcal{T}}_m=q\widetilde{\mathcal{T}}_{m-1}+\widetilde{\mathcal{T}}_{m+1}
\]
for $m\geq2$.
In particular, the Hecke algebra $\widetilde{H}(\widetilde{\Gamma}_0(1)\backslash \Mp_2(F)/\widetilde{\Gamma}_0(1);\varepsilon)$ is generated by $\widetilde{\mathcal{T}}_1$.
\end{proposition}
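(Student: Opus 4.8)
The plan is to compute the convolution $X_{\mathbf{m}(\varpi)}\ast X_{\mathbf{m}(\varpi^m)}$ by the standard double-coset counting and then renormalize by $q^{-(m+1)/2}$. By Lemma \ref{repstsofthedoublecosets} every double coset of $\widetilde{\Gamma_1}$ is $\widetilde{\Gamma_1}\mathbf{m}(\varpi^n)\widetilde{\Gamma_1}$ for a unique $n\geq0$, so the product is a finite sum $\sum_n c_n X_{\mathbf{m}(\varpi^n)}$. A Cartan / elementary-divisor argument on $\SL_2(F)$ (equivalently, the distance function on the Bruhat--Tits tree, on which $\mathbf{m}(\varpi^n)$ sits at distance $2n$ from the base vertex) shows $c_n=0$ unless $n\in\{m-1,m,m+1\}$, since these are the only double cosets meeting $\widetilde{\Gamma_1}\mathbf{m}(\varpi)\widetilde{\Gamma_1}\cdot\widetilde{\Gamma_1}\mathbf{m}(\varpi^m)\widetilde{\Gamma_1}$. (Note that the volumes do not multiply here, so Lemma \ref{lem:helpful lemma} does not apply directly; the full product must be counted.) It therefore remains to evaluate the convolution at the three representatives $\mathbf{m}(\varpi^{m+1})$, $\mathbf{m}(\varpi^{m})$, $\mathbf{m}(\varpi^{m-1})$.

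For the evaluation I would use the left-coset decomposition of Lemma \ref{decompstnofdoublecosets}(1) with $m=1$, writing $\widetilde{\Gamma_1}\mathbf{m}(\varpi)\widetilde{\Gamma_1}$ as the $q^2$ cosets $\mathbf{u}^\sharp(\boldsymbol{\delta}^{-1}s)\mathbf{m}(\varpi)\widetilde{\Gamma_1}$ with $s\in\mathfrak{o}/\mathfrak{p}^2$ together with the $q$ cosets $\mathbf{w}_{\boldsymbol{\delta}}\mathbf{u}^\sharp(\boldsymbol{\delta}^{-1}\varpi s)\mathbf{m}(\varpi)\widetilde{\Gamma_1}$ with $s\in\mathfrak{o}/\mathfrak{p}$, and then the resulting sum
\[
(X_{\mathbf{m}(\varpi)}\ast X_{\mathbf{m}(\varpi^m)})(g)=\sum_{a_j}X_{\mathbf{m}(\varpi)}(a_j)\,X_{\mathbf{m}(\varpi^m)}(a_j^{-1}g),
\]
where $a_j$ runs over these representatives and the values are read off from $\varepsilon$ via \eqref{eq:defofXg}. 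At $g=\mathbf{m}(\varpi^{m+1})$ exactly one family of representatives lands back in $\widetilde{\Gamma_1}\mathbf{m}(\varpi^m)\widetilde{\Gamma_1}$, so the untwisted count is $1$; tracking $\varepsilon$ through the cocycle (and using $\alpha_\psi(\boldsymbol{\delta}u)=1$ for $u\in\mathfrak{o}^\times$) keeps the coefficient at $1$, giving $\widetilde{\mathcal{T}}_{m+1}$ after renormalization. At $g=\mathbf{m}(\varpi^{m-1})$ the ``fully divergent'' representatives contribute with untwisted count $q^2$, and the same bookkeeping produces coefficient $q^{2}$, i.e. coefficient $q$ after the factor $q^{-(m+1)/2}$; for $m=1$ the target is $\mathbf{m}(1)$ and the count acquires one extra unit, which is exactly what upgrades $q$ to $q+1$ in the first relation.

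The crux is the diagonal term at $g=\mathbf{m}(\varpi^{m})$. The naive count leaves $q-1$ contributing representatives, namely the cosets $\mathbf{w}_{\boldsymbol{\delta}}\mathbf{u}^\sharp(\boldsymbol{\delta}^{-1}\varpi s)\mathbf{m}(\varpi)$ with $s$ a unit, so an untwisted Hecke algebra would retain a nonzero $\widetilde{\mathcal{T}}_m$ term. The point is that each such representative carries a distinct Weil-index weight: after reparametrising $s$ over $\mathfrak{o}^\times/(1+\mathfrak{p})$, the diagonal coefficient is proportional to $\sum_{u\in\mathfrak{o}^\times/(1+\mathfrak{p})}\alpha_\psi(\boldsymbol{\delta}\varpi u)$, which vanishes by Lemma \ref{lem:sumofWeilconsts}. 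This cancellation is the main obstacle, and the only place where the genuineness of $\varepsilon$ is essential; verifying that the $q-1$ weights are precisely the summands of Lemma \ref{lem:sumofWeilconsts} (and that the top and bottom weights are all trivial) is the one genuinely delicate computation. Assembling the three coefficients gives $\widetilde{\mathcal{T}}_1^2=q+1+\widetilde{\mathcal{T}}_2$ and $\widetilde{\mathcal{T}}_1\ast\widetilde{\mathcal{T}}_m=q\widetilde{\mathcal{T}}_{m-1}+\widetilde{\mathcal{T}}_{m+1}$.

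Finally, for the generation statement I would argue by induction: the recursion rewrites $\widetilde{\mathcal{T}}_{m+1}=\widetilde{\mathcal{T}}_1\ast\widetilde{\mathcal{T}}_m-q\widetilde{\mathcal{T}}_{m-1}$ with base case $\widetilde{\mathcal{T}}_2=\widetilde{\mathcal{T}}_1^2-(q+1)$, so every $\widetilde{\mathcal{T}}_m$ is a polynomial in $\widetilde{\mathcal{T}}_1$. Since by Lemma \ref{repstsofthedoublecosets} the functions $\{X_{\mathbf{m}(\varpi^m)}\}_{m\geq0}$, hence $\{\widetilde{\mathcal{T}}_m\}_{m\geq0}$, form a basis of $\widetilde{\mathcal{H}}(\widetilde{\Gamma_1}\backslash\Mp_2(F)/\widetilde{\Gamma_1};\varepsilon)$, the algebra is generated by $\widetilde{\mathcal{T}}_1$.
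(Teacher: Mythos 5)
Your proposal follows essentially the same route as the paper's proof: decompose $\widetilde{\Gamma_1}\mathbf{m}(\varpi)\widetilde{\Gamma_1}$ into left cosets via Lemma \ref{decompstnofdoublecosets}(1), evaluate the convolution at the representatives $\mathbf{m}(\varpi^r)$, and kill the diagonal term with Lemma \ref{lem:sumofWeilconsts}. Your a priori restriction of the support to $n\in\{m-1,m,m+1\}$ via the tree is a harmless reorganization (the paper obtains the same restriction directly from the case analysis on $\ord(s)$), your raw counts ($1$, $q^2$, and $q^2+q$ when $m=1$) agree with the paper's, and your remark that Lemma \ref{lem:helpful lemma} does not apply because the volumes fail to multiply is correct. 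One slip should be fixed before this is written out: you attribute the diagonal term to the representatives $\mathbf{w}_{\boldsymbol{\delta}}\mathbf{u}^\sharp(\boldsymbol{\delta}^{-1}\varpi s)\mathbf{m}(\varpi)$ with $s$ a unit, but those (the second family in Lemma \ref{decompstnofdoublecosets}(1)) satisfy $g_i^{-1}\mathbf{m}(\varpi^r)\in\widetilde{\Gamma_1}\mathbf{m}(\varpi^{r+1})\widetilde{\Gamma_1}$ for every $s\in\mathfrak{o}$ and every $r\geq0$, so they never meet the support of $\widetilde{\mathcal{T}}_m$ at $g=\mathbf{m}(\varpi^m)$; they only contribute at $g=\mathbf{m}(\varpi^{m-1})$, where they supply the extra $q$ cosets in your off-diagonal count. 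The genuine diagonal contributors are the first-family representatives $\mathbf{u}^\sharp(\boldsymbol{\delta}^{-1}s)\mathbf{m}(\varpi)$ with $\ord(s)=1$, i.e.\ $s=\varpi u$ with $u$ running over $\mathfrak{o}^\times/(1+\mathfrak{p})$; these are the $q-1$ terms carrying the unit-dependent Weil-index (equivalently, quadratic-symbol) weights that cancel by Lemma \ref{lem:sumofWeilconsts}. With that correction the argument, including the inductive generation statement, matches the paper's.
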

\begin{proof}
For any $g\in \Mp_2(F),$ by (1) of Lemma \ref{decompstnofdoublecosets}, we have
\begin{align*}
 \widetilde{\mathcal{T}}_1\ast\widetilde{\mathcal{T}}_m(g)=
 &q^{-1/2}\frac{\alpha_\psi(1)}{\alpha_\psi(\varpi)}\left(\sum_{s\in\mathfrak{o}/\mathfrak{p}^2}\widetilde{\mathcal{T}}_m(\mathbf{m}(\varpi^{-1})\mathbf{u}^\sharp(-\boldsymbol{\delta}^{-1}s)g)\right.\\
+&\left.\sum_{s\in\mathfrak{o}/\mathfrak{p}}\widetilde{\mathcal{T}}_m(\mathbf{m}(\varpi^{-1})\mathbf{u}^\sharp(-\boldsymbol{\delta}^{-1}\varpi s)\mathbf{w}_{-\boldsymbol{\delta}}g)\right).
\end{align*}
To determine $\widetilde{\mathcal{T}}_1\ast\widetilde{\mathcal{T}}_m,$ we only have to check its values at $g=\mathbf{m}(\varpi^r)$ for $r\geq0$.
\par
We first treat the former summation.
\par
If $r=0,$ we have
\[
\mathbf{m}(\varpi^{-1})\mathbf{u}^\sharp(-\boldsymbol{\delta}^{-1}s)=\mathbf{w}_{\boldsymbol{\delta}}\mathbf{m}(\varpi)\mathbf{w}_{-\boldsymbol{\delta}}\mathbf{u}^\sharp(-\boldsymbol{\delta}^{-1}s)\in\widetilde{\Gamma_1}\mathbf{m}(\varpi)\widetilde{\Gamma_1}
\]
for any $s\in\mathfrak{o}$.
\par
Similarly, if $r>0,$ we have
\begin{align*}
&\mathbf{m}(\varpi^{-1})\mathbf{u}^\sharp(-\boldsymbol{\delta}^{-1}s)\mathbf{m}(\varpi^r)\\
=&\mathbf{u}^\flat(-\boldsymbol{\delta}^{-1}\varpi^2s^{-1})\mathbf{w}_{\boldsymbol{\delta}s^{-1}}\mathbf{m}(\varpi^{r+1})\mathbf{u}^\flat(-\boldsymbol{\delta}^{-1}\varpi^{2r}s^{-1})\times(\varpi,\varpi^r)\\
\in&\widetilde{\Gamma_1}\mathbf{m}(\varpi^{r+1})\widetilde{\Gamma_1}
\end{align*}
for $s\in\mathfrak{o}^{\times},$
\begin{align*}
&\mathbf{m}(\varpi^{-1})\mathbf{u}^\sharp(-\boldsymbol{\delta}^{-1}s)\mathbf{m}(\varpi^r)\\
=&\mathbf{u}^\flat(-\boldsymbol{\delta}^{-1}\varpi^2s^{-1})\mathbf{w}_{\boldsymbol{\delta}\varpi s^{-1}}\mathbf{m}(\varpi^{r})\mathbf{u}^\flat(-\boldsymbol{\delta}^{-1}\varpi^{2r}s^{-1})\times(\varpi,\boldsymbol{\delta}s)\\
\in&\widetilde{\Gamma_1}\mathbf{m}(\varpi^{r})\widetilde{\Gamma_1}
\end{align*}
for $s\in\mathfrak{p}\backslash\mathfrak{p}^2$ and
\begin{align*}
&\mathbf{m}(\varpi^{-1})\mathbf{u}^\sharp(-\boldsymbol{\delta}^{-1}s)\mathbf{m}(\varpi^r)\\
=&\mathbf{u}^\sharp(-\boldsymbol{\delta}^{-1}\varpi^{-2}s)\mathbf{m}(\varpi^{r-1})\times(\varpi,\varpi^r)\\
\in&\widetilde{\Gamma_1}\mathbf{m}(\varpi^{r-1})\widetilde{\Gamma_1}
\end{align*}
for $s\in\mathfrak{p}^2.$
\par
From above we get that if $m=1,$ then the former summation may occur only if $r=0, 1$ or $2$ and in those cases, we have
\[
\sum_{s\in\mathfrak{o}/\mathfrak{p}^2}\widetilde{\mathcal{T}}_1(\mathbf{m}(\varpi^{-1})\mathbf{u}^\sharp(-\boldsymbol{\delta}^{-1}s))=q^{3/2}\frac{\alpha_\psi(\varpi)}{\alpha_\psi(1)},
\]
\[
\sum_{s\in\mathfrak{o}/\mathfrak{p}^2}\widetilde{\mathcal{T}}_1(\mathbf{m}(\varpi^{-1})\mathbf{u}^\sharp(-\boldsymbol{\delta}^{-1}s)\mathbf{m}(\varpi))=0
\]
by Lemma \ref{lem:sumofWeilconsts} or
\[
\sum_{s\in\mathfrak{o}/\mathfrak{p}^2}\widetilde{\mathcal{T}}_1(\mathbf{m}(\varpi^{-1})\mathbf{u}^\sharp(-\boldsymbol{\delta}^{-1}s)\mathbf{m}(\varpi^2))=q^{-1/2}\frac{\alpha_\psi(\varpi)}{\alpha_\psi(1)},
\]
respectively.
Similarly, if $m\geq2,$ then the former summation may occur only if $r=m-1, m$ or $m+1$ and in those cases, we have
\[
\sum_{s\in\mathfrak{o}/\mathfrak{p}^2}\widetilde{\mathcal{T}}_m(\mathbf{m}(\varpi^{-1})\mathbf{u}^\sharp(-\boldsymbol{\delta}^{-1}s)\mathbf{m}(\varpi^{m-1}))=(q^{2-m/2}-q^{1-m/2})\frac{\alpha_\psi(\varpi)\alpha_\psi(\varpi^{m-1})}{\alpha_\psi(1)^2},
\]
\[
\sum_{s\in\mathfrak{o}/\mathfrak{p}^2}\widetilde{\mathcal{T}}_m(\mathbf{m}(\varpi^{-1})\mathbf{u}^\sharp(-\boldsymbol{\delta}^{-1}s)\mathbf{m}(\varpi^{m}))=0
\]
or
\[
\sum_{s\in\mathfrak{o}/\mathfrak{p}^2}\widetilde{\mathcal{T}}_m(\mathbf{m}(\varpi^{-1})\mathbf{u}^\sharp(-\boldsymbol{\delta}^{-1}s)\mathbf{m}(\varpi^{m+1}))=q^{-m/2}\frac{\alpha_\psi(\varpi)\alpha_\psi(\varpi^{m+1})}{\alpha_\psi(1)^2},
\]
respectively.
\par
Secondly, we treat the latter summation, which is easier.
We have
\begin{align*}
&\sum_{s\in\mathfrak{o}/\mathfrak{p}}\widetilde{\mathcal{T}}_m(\mathbf{m}(\varpi^{-1})\mathbf{u}^\sharp(-\boldsymbol{\delta}^{-1}\varpi s)\mathbf{w}_{-\boldsymbol{\delta}}\mathbf{m}(\varpi^r))\\
=&\mathbf{w}_{-\boldsymbol{\delta}}\mathbf{m}(\varpi^{m+1})\mathbf{u}^\flat(\boldsymbol{\delta}\varpi^{2r+1}s)\times(\varpi,\varpi^r)\\
\in&\widetilde{\Gamma_1}\mathbf{m}(\varpi^{r+1})\widetilde{\Gamma_1}
\end{align*}
for any $r\geq0$ and $s\in\mathfrak{o}$.
Thus the latter summation may occur only if $r=m-1$ in which case we have
\[
\sum_{s\in\mathfrak{o}/\mathfrak{p}}\widetilde{\mathcal{T}}_m(\mathbf{m}(\varpi^{-1})\mathbf{u}^\sharp(-\boldsymbol{\delta}^{-1}\varpi s)\mathbf{w}_{-\boldsymbol{\delta}}\mathbf{m}(\varpi^{m-1}))=q^{1-m/2}\frac{\alpha_\psi(\varpi)\alpha_\psi(\varpi^{m-1})}{\alpha_\psi(1)^2}.
\]
Concluding these results, we get
\[
\widetilde{\mathcal{T}}_1^2=q+1+\widetilde{\mathcal{T}}_2
\]
and
\[
\widetilde{\mathcal{T}}_1\ast\widetilde{\mathcal{T}}_m=q\widetilde{\mathcal{T}}_{m-1}+\widetilde{\mathcal{T}}_{m+1}
\]
for $m\geq2$.
\par
Now the fact that the Hecke algebra is generated by $\widetilde{\mathcal{T}}_1$ can be easily deduced by the induction.
\end{proof}
Remind that if we set
\[
K'=\left\{
\begin{pmatrix}
a&b\\c&d
\end{pmatrix}\in \PGL_2(F)\,\bigg|\,a, d\in\mathfrak{o}, b\in\mathfrak{d}^{-1}, c\in\mathfrak{d}
\right\}
\]
to be the standard maximal open compact subgroup of $\PGL_2(F)$ and $\mathcal{T}_m$ to be the characteristic function of
\[
K'\begin{pmatrix}
\varpi^m&0\\0&1
\end{pmatrix}K'
\]
for $m\geq0$.
Then the $\mathcal{T}_m$'s satisfy the same relations as the ones in Proposition \ref{generatorsofH1} and $\mathcal{T}_1$ generates the Hecke algebra $\mathcal{H}(K'\backslash \PGL_2(F)/K')$.
Hence we get the next corollary.
\begin{corollary}
The Hecke algebras $\widetilde{\mathcal{H}}(\widetilde{\Gamma_1}\backslash \Mp_2(F)/\widetilde{\Gamma_1};\varepsilon)$ and \\ $\mathcal{H}(K'\backslash \PGL_2(F)/K')$ are isomorphic $\mathbb{C}$-isomorphic algebras.
\end{corollary}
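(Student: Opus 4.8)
The plan is to show that both Hecke algebras are free polynomial rings in one variable and that their distinguished generators obey identical multiplication laws, so that matching generators produces the desired isomorphism. The genuine analytic content has already been carried out in Proposition~\ref{generatorsofH1}; what remains is a formal structural argument.

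First I would record that $\widetilde{\mathcal{H}}(\widetilde{\Gamma_1}\backslash\Mp_2(F)/\widetilde{\Gamma_1};\varepsilon)$ has the functions $\{\widetilde{\mathcal{T}}_m\mid m\geq0\}$ as a $\mathbb{C}$-basis. By Lemma~\ref{repstsofthedoublecosets} the double cosets in $\widetilde{\Gamma_1}\backslash\Mp_2(F)/\widetilde{\Gamma_1}$ are represented exactly by $\mathbf{m}(\varpi^m)$, $m\geq0$, and each $\widetilde{\mathcal{T}}_m=q^{-m/2}X_{\mathbf{m}(\varpi^m)}$ is a nonzero scalar multiple of the characteristic function of one such coset, so these functions are linearly independent and span. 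The recurrences of Proposition~\ref{generatorsofH1} then show by induction that $\widetilde{\mathcal{T}}_m$ is a monic degree-$m$ polynomial in $\widetilde{\mathcal{T}}_1$; hence $\{\widetilde{\mathcal{T}}_1^{\,m}\mid m\geq0\}$ is again a basis and there is no algebraic relation among the powers of $\widetilde{\mathcal{T}}_1$. This identifies $\widetilde{\mathcal{H}}(\widetilde{\Gamma_1}\backslash\Mp_2(F)/\widetilde{\Gamma_1};\varepsilon)$ with the polynomial ring $\mathbb{C}[\widetilde{\mathcal{T}}_1]$.

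Next I would run the same argument on the linear side. The double cosets $K'\left(\begin{smallmatrix}\varpi^m&0\\0&1\end{smallmatrix}\right)K'$, $m\geq0$, represent $K'\backslash\PGL_2(F)/K'$, so $\{\mathcal{T}_m\mid m\geq0\}$ is a $\mathbb{C}$-basis of $\mathcal{H}(K'\backslash\PGL_2(F)/K')$. As recorded immediately after Proposition~\ref{generatorsofH1}, the $\mathcal{T}_m$ satisfy $\mathcal{T}_1^2=q+1+\mathcal{T}_2$ and $\mathcal{T}_1\ast\mathcal{T}_m=q\mathcal{T}_{m-1}+\mathcal{T}_{m+1}$ for $m\geq2$, which is the classical Satake computation for the spherical Hecke algebra of $\PGL_2$. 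The identical monic-polynomial induction then gives $\mathcal{H}(K'\backslash\PGL_2(F)/K')=\mathbb{C}[\mathcal{T}_1]$.

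Finally I would define the $\mathbb{C}$-algebra homomorphism $\Phi$ by $\widetilde{\mathcal{T}}_1\mapsto\mathcal{T}_1$; this extends uniquely because the source is free on $\widetilde{\mathcal{T}}_1$. Since the two families obey recurrences with identical coefficients and identical initial data, with $\widetilde{\mathcal{T}}_0=1\mapsto\mathcal{T}_0=1$, an induction shows $\Phi(\widetilde{\mathcal{T}}_m)=\mathcal{T}_m$ for every $m$, so $\Phi$ carries a basis to a basis and is an isomorphism of $\mathbb{C}$-algebras. I expect the only delicate point to be bookkeeping: one must confirm that the measure normalizations $\Vol(\widetilde{\Gamma_1})=1$ and $\Vol(K')=1$ make the two recurrences coincide literally, rather than only up to rescaling the generator. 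Once those normalizations are matched, the corollary is a formal consequence of Proposition~\ref{generatorsofH1} and the computation following it.
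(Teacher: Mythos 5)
Your proposal is correct and follows the same route as the paper, which deduces the corollary directly from Proposition~\ref{generatorsofH1} together with the observation that the $\mathcal{T}_m$ on the $\PGL_2$ side satisfy the identical recurrences and that $\mathcal{T}_1$ generates $\mathcal{H}(K'\backslash\PGL_2(F)/K')$. You simply make explicit the details the paper leaves implicit (the $\widetilde{\mathcal{T}}_m$ form a basis, the monic-polynomial induction, and the generator-to-generator map), which is a faithful expansion rather than a different argument.
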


\begin{remark}
	The structure of $\mathcal{H}(K'\backslash\PGL_2(F)/K')$ given before the corollary also holds for the case $q$ is even.
\end{remark}

Now we want to investigate the structure of $\widetilde{\mathcal{H}}(\widetilde{\Gamma_2}\backslash \Mp_2(F)/\widetilde{\Gamma_2};\varepsilon)$.
Similarly to the previous case, for any representative $g$ in $\widetilde{\Gamma_2}\backslash \Mp_2(F)/\widetilde{\Gamma_2},$ we put $X_g$ to be the characteristic function of the double coset $\widetilde{\Gamma_2}g\widetilde{\Gamma_2}$ by setting 
\[
X_g(\gamma_1g\gamma_2)=\varepsilon(\gamma_1)\varepsilon(g)\varepsilon(\gamma_2)
\]
for $\gamma_1, \gamma_2\in\widetilde{\Gamma_0(\varpi)}$.
\par
Set
\[
\widetilde{\mathcal{T}}_m=\begin{cases}
q^{-|m|_\infty}X_{\mathbf{m}(\varpi^m)}&\mbox{ if }\Gamma_2=\Gamma_0(1),\\
q^{-|m|_\infty}X_{\mathbf{m}(\varpi^{-m})}&\mbox{ if }\Gamma_2=\Gamma[\varpi\mathfrak{d}^{-1},\mathfrak{d}],
\end{cases}
\]
and 
\[
\widetilde{\mathcal{U}}_m=\begin{cases}
q^{-|m|_\infty}X_{\mathbf{w}_{\boldsymbol{\delta}\varpi^m}}&\mbox{ if }\Gamma_2=\Gamma_0(1),\\
q^{-|m|_\infty}X_{\mathbf{w}_{\boldsymbol{\delta}\varpi^{-m}}}&\mbox{ if }\Gamma_2=\Gamma[\varpi\mathfrak{d}^{-1},\mathfrak{d}]
\end{cases}
\]
for $m\in\mathbb{Z}$.
Note that $|\cdot|_\infty$ is the usual absolute value for the real number.
To understand the structure of $\widetilde{\mathcal{H}}(\widetilde{\Gamma_2}\backslash \Mp_2(F)/\widetilde{\Gamma_2};\varepsilon),$ we need the next well-known lemma.
\begin{lemma} \label{generatorsofH}
We have the following equations:
\begin{enumerate}
\item $\widetilde{\mathcal{U}}_0\ast\widetilde{\mathcal{U}}_1=\widetilde{\mathcal{T}}_1$ and $\widetilde{\mathcal{U}}_1\ast\widetilde{\mathcal{U}}_0=\widetilde{\mathcal{T}}_{-1}$.
\item $\widetilde{\mathcal{T}}_{m_1}\ast \widetilde{\mathcal{T}}_{m_2}=\widetilde{\mathcal{T}}_{m_1+m_2}$ for $m_1$ and $m_2$ such that $m_1m_2\geq0$.
\item $\widetilde{\mathcal{U}}_1\ast\widetilde{\mathcal{T}}_{m}=\widetilde{\mathcal{U}}_{m+1}$ and $\widetilde{\mathcal{U}}_0\ast\widetilde{\mathcal{T}}_{-m}=\widetilde{\mathcal{U}}_{-m}$ for $m\geq0$.
\item $\widetilde{\mathcal{U}}_0^2=(q-1)\widetilde{\mathcal{U}}_0+q$ and $\widetilde{\mathcal{U}}_1^2=1$.
\end{enumerate}
\end{lemma}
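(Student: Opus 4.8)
The plan is to follow the template of the proof of Proposition \ref{generatorsofH1}: expand each convolution into a finite sum using the left-coset decompositions of Lemma \ref{decompstnofdoublecosets}, identify by a triangular decomposition which double coset each term falls into, and collect the contributions. Throughout I would use the $\widetilde{\Gamma_2}$-analogue of Lemma \ref{lem:helpful lemma}, whose proof is verbatim the same: whenever the left-coset counts of two double cosets multiply to that of the product representative, the twisted characteristic functions satisfy $X_g\ast X_h=X_{gh}$. The normalizing powers $q^{-|m|_\infty}$ built into $\widetilde{\mathcal{T}}_m$ and $\widetilde{\mathcal{U}}_m$ are exactly what convert the resulting $X$-level identities into the stated ones.

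Parts (1)--(3) are the length-additive relations and should fall out of that analogue. First I would record the matrix identities $\mathbf{w}_{\boldsymbol{\delta}}\mathbf{w}_{\boldsymbol{\delta}\varpi}=\mathbf{m}(-\varpi)$, $\mathbf{w}_{\boldsymbol{\delta}\varpi}\mathbf{w}_{\boldsymbol{\delta}}=\mathbf{m}(-\varpi^{-1})$, $\mathbf{w}_{\boldsymbol{\delta}}\mathbf{m}(\varpi^{-m})=\mathbf{w}_{\boldsymbol{\delta}\varpi^{-m}}$ and $\mathbf{w}_{\boldsymbol{\delta}\varpi}\mathbf{m}(\varpi^{m})=\mathbf{w}_{\boldsymbol{\delta}\varpi^{m+1}}$, so that in each case the product of the chosen representatives equals the claimed representative up to the element $\mathbf{m}(-1)\in\widetilde{\Gamma_2}$. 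Then I would read off from Lemma \ref{decompstnofdoublecosets} that the left-coset counts are $q^{2|m|}$ for the $\mathbf{m}(\varpi^{m})$-cosets and $q^{2|m|\pm1}$ for the $\mathbf{w}$-cosets, check in each of (1)--(3) that they multiply correctly, and conclude the $X$-identities. Tracking the cocycle $\boldsymbol{c}$ together with $\varepsilon(\mathbf{m}(a))=\alpha_\psi(a)/\alpha_\psi(1)$ and $\varepsilon(\mathbf{w}_{\boldsymbol{\delta}a})=\alpha_\psi(\boldsymbol{\delta}a)$ shows the phases match, and no cancellation is needed here. The two choices $\Gamma_2=\Gamma_0(\varpi)$ and $\Gamma_2=\Gamma[\varpi\mathfrak{d}^{-1},\mathfrak{d}]$ are handled uniformly through $\boldsymbol{\mu}$, since Lemma \ref{decompstnofdoublecosets} is already phrased in those terms.

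Part (4), the quadratic relations, is where the real work lies, since now $\mathbf{w}_{\boldsymbol{\delta}}^2=\mathbf{w}_{\boldsymbol{\delta}\varpi}^2=\mathbf{m}(-1)\in\widetilde{\Gamma_2}$ and length-additivity fails. For $\widetilde{\mathcal{U}}_0^2=X_{\mathbf{w}_{\boldsymbol{\delta}}}\ast X_{\mathbf{w}_{\boldsymbol{\delta}}}$ I would use $\widetilde{\Gamma_2}\mathbf{w}_{\boldsymbol{\delta}}\widetilde{\Gamma_2}=\bigsqcup_{s\in\mathfrak{o}/\mathfrak{p}}\mathbf{u}^\sharp(\boldsymbol{\delta}^{-1}s)\mathbf{w}_{\boldsymbol{\delta}}\widetilde{\Gamma_2}$ and evaluate the convolution at the representatives $1$ and $\mathbf{w}_{\boldsymbol{\delta}}$. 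Conjugating through $\mathbf{w}_{\boldsymbol{\delta}}$ turns the relevant terms into $\mathbf{u}^\flat(\boldsymbol{\delta}s)$: all $q$ values of $s$ feed the identity coset, yielding the constant $q$, while the $q-1$ unit values land in the $\mathbf{w}_{\boldsymbol{\delta}}$-coset and contribute $(q-1)\widetilde{\mathcal{U}}_0$. For $\widetilde{\mathcal{U}}_1^2=q^{-2}X_{\mathbf{w}_{\boldsymbol{\delta}\varpi}}\ast X_{\mathbf{w}_{\boldsymbol{\delta}\varpi}}$ the same scheme applies via part (4) of Lemma \ref{decompstnofdoublecosets}, but here the coefficient of $\widetilde{\mathcal{U}}_1$ is a sum of Weil indices $\alpha_\psi(\boldsymbol{\delta}\varpi u)$ over $u\in\mathfrak{o}^\times/(1+\mathfrak{p})$, which vanishes by Lemma \ref{lem:sumofWeilconsts}; only the identity contribution survives, giving $\widetilde{\mathcal{U}}_1^2=1$.

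I expect the main obstacle to be precisely this phase bookkeeping in part (4): carrying the cocycle $\boldsymbol{c}$ and the factors $\alpha_\psi$ carefully enough to see why the self-coefficient of $\widetilde{\mathcal{U}}_0$ survives as $q-1$ whereas that of $\widetilde{\mathcal{U}}_1$ is an honest sum of Weil indices over the units and therefore cancels. This asymmetry is traceable to $\varepsilon(\mathbf{w}_{\boldsymbol{\delta}})=1$ against $\varepsilon(\mathbf{w}_{\boldsymbol{\delta}\varpi})=\alpha_\psi(\boldsymbol{\delta}\varpi)$, and Lemma \ref{lem:sumofWeilconsts} is the tool that resolves it. The identification of the target double coset of each representative-product, via the triangular decompositions, is the other fiddly ingredient, but it is mechanical once the four matrix identities above are in hand.
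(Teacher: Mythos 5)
Your strategy is exactly the one the paper intends: its proof of this lemma literally says to use Lemma \ref{lem:helpful lemma} for (1)--(3) and to redo, for (4), the coset-by-coset computation from the proof of Proposition \ref{generatorsofH1}; your matrix identities, your left-coset counts ($q^{2|m|}$ for the $\mathbf{m}(\varpi^m)$-cosets, $q^{2|m|\pm1}$ for the $\mathbf{w}$-cosets), and your appeal to Lemma \ref{lem:sumofWeilconsts} for the vanishing of the $\widetilde{\mathcal{U}}_1$-self-coefficient all match that computation.

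There is, however, one concrete arithmetic problem in your part (4), inherited from the normalization you quote. The double coset $\widetilde{\Gamma_2}\mathbf{w}_{\boldsymbol{\delta}\boldsymbol{\mu}^{-1}\varpi}\widetilde{\Gamma_2}$ splits into exactly $q$ left cosets (Lemma \ref{decompstnofdoublecosets}(4) with $m=1$), and at the identity all $q$ of them contribute with the same unimodular phase, so $\bigl(X_{\mathbf{w}_{\boldsymbol{\delta}\boldsymbol{\mu}^{-1}\varpi}}\ast X_{\mathbf{w}_{\boldsymbol{\delta}\boldsymbol{\mu}^{-1}\varpi}}\bigr)(1)=q$. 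With the prefactor $q^{-1}$ that you (following the displayed definition in Section \ref{The Hecke algebras}) attach to $\widetilde{\mathcal{U}}_1$, this yields $\widetilde{\mathcal{U}}_1^2=q^{-2}\cdot q=q^{-1}$, not $1$. The resolution is that the normalization must be $q^{-|m|_\infty/2}$ rather than $q^{-|m|_\infty}$; this is the convention used everywhere else in the paper (the $\widetilde{\Gamma_1}$ case sets $\widetilde{\mathcal{T}}_m=q^{-m/2}X_{\mathbf{m}(\varpi^m)}$, the even-residue case uses $q^{-|m|_\infty/2}$, and the introduction normalizes $\widetilde{\mathcal{T}}_{1,v}$ by $q_v^{-1/2}$). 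With the exponent halved, your own counts close in all four parts: the volume products in (1)--(3) still match after adjusting the prefactors, $\widetilde{\mathcal{U}}_0^2=q+(q-1)\widetilde{\mathcal{U}}_0$ is unaffected since $\widetilde{\mathcal{U}}_0$ carries no prefactor, and $\widetilde{\mathcal{U}}_1^2=q^{-1}\cdot q=1$. Everything else in your outline is sound --- in particular the key asymmetry that the $\widetilde{\mathcal{U}}_0$-self-coefficient survives as $q-1$ because $\alpha_\psi(\boldsymbol{\delta}u)=1$ for every unit $u$, whereas the $\widetilde{\mathcal{U}}_1$-self-coefficient is $\sum_{u\in\mathfrak{o}^\times/(1+\mathfrak{p})}\alpha_\psi(\boldsymbol{\delta}\varpi u)=0$ by Lemma \ref{lem:sumofWeilconsts}.
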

\begin{proof}
One uses Lemma \ref{lem:helpful lemma} for (1) to (3) and similar calculations in the proof of Proposition \ref{generatorsofH1} for (4).
\end{proof}
The operator $\widetilde{\mathcal{U}}_1$ has eigenvalues $1$ and $-1$.
It is called the Atkin-Lehner involution.
By Lemma \ref{generatorsofH}, we get the structure of the Hecke algebra.
\begin{proposition}
The Hecke algebra $\widetilde{\mathcal{H}}(\widetilde{\Gamma_2}\backslash \Mp_2(F)\widetilde{\Gamma_2};\varepsilon)$ is generated by $\widetilde{\mathcal{U}}_0$ and $\widetilde{\mathcal{U}}_1$ with the relations:
\begin{enumerate}
\item $(\widetilde{\mathcal{U}}_0-q)(\widetilde{\mathcal{U}}_0+1)=0$
\item $\widetilde{\mathcal{U}}_1^2=1$
\end{enumerate}
\end{proposition}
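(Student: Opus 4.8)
The plan is to realize $\widetilde{\mathcal{H}}(\widetilde{\Gamma_2}\backslash\Mp_2(F)/\widetilde{\Gamma_2};\varepsilon)$ as a quotient of a free algebra. Let $A=\mathbb{C}\langle U_0,U_1\rangle$ be the free associative $\mathbb{C}$-algebra on two symbols, and let $\mathcal{R}$ be the two-sided ideal generated by $(U_0-q)(U_0+1)$ and $U_1^2-1$. Expanding the first generator gives $U_0^2-(q-1)U_0-q$, so the defining relations (1) and (2) are exactly the identities $\widetilde{\mathcal{U}}_0^2=(q-1)\widetilde{\mathcal{U}}_0+q$ and $\widetilde{\mathcal{U}}_1^2=1$ of part (4) of Lemma \ref{generatorsofH}. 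Hence $U_0\mapsto\widetilde{\mathcal{U}}_0,\ U_1\mapsto\widetilde{\mathcal{U}}_1$ induces a well-defined algebra homomorphism $\pi\colon A/\mathcal{R}\to\widetilde{\mathcal{H}}(\widetilde{\Gamma_2}\backslash\Mp_2(F)/\widetilde{\Gamma_2};\varepsilon)$, and it remains to prove that $\pi$ is bijective.

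For surjectivity I would invoke Lemma \ref{repstsofthedoublecosets}, which exhibits $\{\mathbf{m}(\varpi^m)\}_{m\in\mathbb{Z}}\cup\{\mathbf{w}_{\boldsymbol{\delta}\varpi^m}\}_{m\in\mathbb{Z}}$ as double-coset representatives, so that after normalisation the elements $\{\widetilde{\mathcal{T}}_m\}_{m\in\mathbb{Z}}\cup\{\widetilde{\mathcal{U}}_m\}_{m\in\mathbb{Z}}$ form a $\mathbb{C}$-basis of the target. Parts (1)--(3) of Lemma \ref{generatorsofH} then write every basis element as a word in $\widetilde{\mathcal{U}}_0,\widetilde{\mathcal{U}}_1$: relation (1) gives $\widetilde{\mathcal{T}}_1=\widetilde{\mathcal{U}}_0\ast\widetilde{\mathcal{U}}_1$ and $\widetilde{\mathcal{T}}_{-1}=\widetilde{\mathcal{U}}_1\ast\widetilde{\mathcal{U}}_0$; relation (2) gives $\widetilde{\mathcal{T}}_m=(\widetilde{\mathcal{U}}_0\ast\widetilde{\mathcal{U}}_1)^{\ast m}$ and $\widetilde{\mathcal{T}}_{-m}=(\widetilde{\mathcal{U}}_1\ast\widetilde{\mathcal{U}}_0)^{\ast m}$ for $m\ge 0$; and relation (3) gives $\widetilde{\mathcal{U}}_{m+1}=\widetilde{\mathcal{U}}_1\ast(\widetilde{\mathcal{U}}_0\ast\widetilde{\mathcal{U}}_1)^{\ast m}$ and $\widetilde{\mathcal{U}}_{-m}=\widetilde{\mathcal{U}}_0\ast(\widetilde{\mathcal{U}}_1\ast\widetilde{\mathcal{U}}_0)^{\ast m}$ for $m\ge 0$. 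Thus every basis vector lies in the image of $\pi$, so $\widetilde{\mathcal{U}}_0$ and $\widetilde{\mathcal{U}}_1$ generate the algebra.

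For injectivity I would argue by counting. The two relations let one reduce an arbitrary monomial in $A/\mathcal{R}$ to a linear combination of the alternating words $1,\ U_0,\ U_1,\ U_0U_1,\ U_1U_0,\ U_1U_0U_1,\ U_0U_1U_0,\dots$: any repeated $U_1$ is deleted by $U_1^2=1$, and any repeated $U_0$ is rewritten by $(U_0-q)(U_0+1)=0$ in terms of strictly shorter words, so an induction on word length shows these alternating words span $A/\mathcal{R}$. The formulas of the previous paragraph match these words bijectively with the Hecke-algebra basis: the alternating word of even length $2m$ beginning with $U_0$ (resp. $U_1$) maps to $\widetilde{\mathcal{T}}_m$ (resp. $\widetilde{\mathcal{T}}_{-m}$), and the word of odd length beginning with $U_1$ (resp. $U_0$) maps to the corresponding $\widetilde{\mathcal{U}}_{m+1}$ (resp. $\widetilde{\mathcal{U}}_{-m}$). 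Since $\pi$ carries this spanning set of $A/\mathcal{R}$ bijectively onto a $\mathbb{C}$-basis of the target, the spanning set cannot satisfy any nontrivial linear relation, so it is itself a basis and $\pi$ is an isomorphism.

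The main obstacle is precisely the injectivity step, i.e.\ verifying that relations (1) and (2) already account for \emph{all} relations among $\widetilde{\mathcal{U}}_0$ and $\widetilde{\mathcal{U}}_1$. What makes this work is that $\pi$ sends the alternating words to \emph{distinct} basis vectors: the explicit double-coset decomposition of Lemma \ref{repstsofthedoublecosets} and the multiplication rules of Lemma \ref{generatorsofH} are exactly what guarantee this matching is exact, so no alternating word collapses and no hidden relation can appear. The argument is uniform in the two cases $\Gamma_2=\Gamma_0(\varpi)$ and $\Gamma_2=\Gamma[\varpi\mathfrak{d}^{-1},\mathfrak{d}]$, since both lemmas are stated uniformly through the parameter $\boldsymbol{\mu}$.
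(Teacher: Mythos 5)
Your proposal is correct and rests on exactly the ingredients the paper points to: Lemma \ref{repstsofthedoublecosets} for the double-coset basis $\{\widetilde{\mathcal{T}}_m\}\cup\{\widetilde{\mathcal{U}}_m\}$ and Lemma \ref{generatorsofH} for the multiplication rules. The paper itself gives no argument beyond citing Lemma \ref{generatorsofH}; your contribution is to make explicit the one genuinely nontrivial point it leaves implicit, namely that (1) and (2) are a \emph{complete} set of relations, which you establish correctly by showing the alternating words span the abstract quotient and are carried bijectively onto the double-coset basis.
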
\label{generatorsfortheHeckealgebra}
Also, we can get the similar result for the case of $\PGL_2$.
Let $I$ be the Iwahori subgroup of $\PGL_2(F)$ which is given by
\[
I=\left\{
\begin{pmatrix}
a&b\\\varpi c&d
\end{pmatrix}\in\PGL_2(F)\,\bigg|\,a ,d\in\mathfrak{o}, b\in\mathfrak{d}^{-1}, c\in\mathfrak{d}
\right\}
\]
and $\mathcal{T}_m$ and $\mathcal{U}_m$ be the characteristic functions of
\[
I\begin{pmatrix}
\varpi^m&0\\0&1
\end{pmatrix}I
\mbox{ and }
\begin{pmatrix}
0&\boldsymbol{\delta}^{-1}\\\varpi^m\boldsymbol{\delta}&0
\end{pmatrix}I,
\]
respectively.
Then these characteristic functions satisfy the same relations stated in Lemma \ref{generatorsofH} and $\mathcal{U}_0$ and $\mathcal{U}_1$ generate the Hecke algebra $\mathcal{H}(I\backslash\PGL_2(F)/I)$.
Thus by Proposition \ref{generatorsfortheHeckealgebra}, we get the following corollary.
\begin{corollary}
	The Hecke algebras $\widetilde{H}(\widetilde{\Gamma_2}\backslash \Mp_2(F)/\widetilde{\Gamma_2};\varepsilon)$ and \\ $\mathcal{H}(I\backslash \PGL_2(F)/I)$ are isomorphic $\mathbb{C}$-isomorphic algebras.
\end{corollary}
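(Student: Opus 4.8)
The plan is to build the isomorphism explicitly on generators and then check that it carries one natural basis onto the other. Define $\Theta\colon\widetilde{\mathcal{H}}(\widetilde{\Gamma_2}\backslash\Mp_2(F)/\widetilde{\Gamma_2};\varepsilon)\to\mathcal{H}(I\backslash\PGL_2(F)/I)$ on the generators by $\Theta(\widetilde{\mathcal{U}}_0)=\mathcal{U}_0$ and $\Theta(\widetilde{\mathcal{U}}_1)=\mathcal{U}_1$. By the presentation of the metaplectic Hecke algebra just obtained, this algebra is the quotient of the free associative $\mathbb{C}$-algebra on two generators by the two-sided ideal generated by $(\widetilde{\mathcal{U}}_0-q)(\widetilde{\mathcal{U}}_0+1)$ and $\widetilde{\mathcal{U}}_1^2-1$. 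Since the characteristic functions $\mathcal{U}_0$ and $\mathcal{U}_1$ satisfy the identical relations $(\mathcal{U}_0-q)(\mathcal{U}_0+1)=0$ and $\mathcal{U}_1^2=1$ in $\mathcal{H}(I\backslash\PGL_2(F)/I)$, these being the $\PGL_2$ analogues of the relations in Lemma \ref{generatorsofH}, the assignment respects all defining relations and hence extends uniquely to a $\mathbb{C}$-algebra homomorphism $\Theta$.

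Surjectivity is immediate, since $\mathcal{U}_0$ and $\mathcal{U}_1$ generate $\mathcal{H}(I\backslash\PGL_2(F)/I)$. For injectivity I would exhibit matching bases. By Lemma \ref{repstsofthedoublecosets} the double cosets in $\widetilde{\Gamma_2}\backslash\Mp_2(F)/\widetilde{\Gamma_2}$ are represented by $\mathbf{m}(\varpi^m)$ and $\mathbf{w}_{\boldsymbol{\delta}\varpi^m}$ for $m\in\mathbb{Z}$, so the functions $\{\widetilde{\mathcal{T}}_m\}_{m\in\mathbb{Z}}\cup\{\widetilde{\mathcal{U}}_m\}_{m\in\mathbb{Z}}$ form a $\mathbb{C}$-basis; likewise $\{\mathcal{T}_m\}_{m\in\mathbb{Z}}\cup\{\mathcal{U}_m\}_{m\in\mathbb{Z}}$ is a basis of $\mathcal{H}(I\backslash\PGL_2(F)/I)$. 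Relations (1)--(3) of Lemma \ref{generatorsofH} express every $\widetilde{\mathcal{T}}_m$ and $\widetilde{\mathcal{U}}_m$ as a fixed word in $\widetilde{\mathcal{U}}_0,\widetilde{\mathcal{U}}_1$, and because the very same relations hold on the $\PGL_2$ side, $\Theta$ sends $\widetilde{\mathcal{T}}_m\mapsto\mathcal{T}_m$ and $\widetilde{\mathcal{U}}_m\mapsto\mathcal{U}_m$ for every $m$. Thus $\Theta$ carries the first basis bijectively onto the second, so it is a linear isomorphism and therefore an algebra isomorphism.

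The one genuine subtlety, and the step I would guard most carefully, is that the quoted presentation really is complete, i.e. that the ideal of relations is exactly the one generated by $(\widetilde{\mathcal{U}}_0-q)(\widetilde{\mathcal{U}}_0+1)$ and $\widetilde{\mathcal{U}}_1^2-1$ with no hidden relation; equivalently, that the double-coset basis has precisely the cardinality predicted by the abstract algebra on these two relations. This is exactly what the basis identification of the previous paragraph certifies, so I would arrange the argument so that the basis computation does double duty, simultaneously establishing injectivity of $\Theta$ and the completeness of the presentation. Everything else in the proof is formal, requiring only that the structural relations of Lemma \ref{generatorsofH} transfer verbatim between the two settings.
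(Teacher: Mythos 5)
Your proof is correct and follows essentially the same route as the paper, which deduces the corollary from the fact that both algebras are generated by $\widetilde{\mathcal{U}}_0,\widetilde{\mathcal{U}}_1$ (resp.\ $\mathcal{U}_0,\mathcal{U}_1$) subject to the same two quadratic relations. Your extra step of matching the double-coset bases via the alternating words supplied by Lemma \ref{generatorsofH}(1)--(3) is exactly the right way to certify that the presentation has no hidden relations, a point the paper leaves implicit in Proposition \ref{generatorsfortheHeckealgebra}.
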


\par

The case that $q$ is even is much more complicated.
We just state the results we need in the later sections.
Note that now $K=\Gamma_0(1)$ and $\Gamma=\Gamma_0(4)$.

Again, by the triangular decomposition, we have the following lemma.
\begin{lemma}\label{lem:decompofdoublecosetsveven}
For $q$ even and $\Gamma=\Gamma_0(4),$ we have
\[
\widetilde{\Gamma}\mathbf{m}(\varpi^m)\widetilde{\Gamma}=\bigsqcup_{s\in\mathfrak{o}/\mathfrak{p}^{2m}}\mathbf{u}^\sharp(\boldsymbol{\delta}^{-1}s)\mathbf{m}(\varpi^m)\widetilde{\Gamma}
\]
\end{lemma}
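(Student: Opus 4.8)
The plan is to reduce everything to a coset computation in $\SL_2(F)$ and then lift it for free to the metaplectic cover, exploiting that $\widetilde{\Gamma}$ is the \emph{full} preimage of $\Gamma=\Gamma_0(4)$. Since the covering projection $\Mp_2(F)\to\SL_2(F)$ carries the left $\widetilde{\Gamma}$-cosets in $\Mp_2(F)$ bijectively onto the left $\Gamma$-cosets in $\SL_2(F)$ (the kernel $\{\pm1\}$ sits inside $\widetilde{\Gamma}$), and since each $\mathbf{u}^\sharp(\boldsymbol{\delta}^{-1}s)\mathbf{m}(\varpi^m)$ is a genuine lift, it suffices to prove the stated decomposition after projecting to $\SL_2(F)$; no $2$-cocycle signs intervene, because every such sign lies in $\widetilde{\Gamma}$ and is absorbed on the right. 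I will also use the conjugation identity $\mathbf{m}(\varpi^m)^{-1}\mathbf{u}^\sharp(b)\mathbf{m}(\varpi^m)=\mathbf{u}^\sharp(\varpi^{-2m}b)$, which already holds in $\Mp_2(F)$ (the Weil-representation formulas for $\mathbf{m}$ and $\mathbf{u}^\sharp$ make the relevant Hilbert symbols cancel), together with $\boldsymbol{\delta}^{-1}\mathfrak{o}=\mathfrak{d}^{-1}$.

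For containment and disjointness, first observe that $\boldsymbol{\delta}^{-1}s\in\mathfrak{d}^{-1}$ for $s\in\mathfrak{o}$, so each $\mathbf{u}^\sharp(\boldsymbol{\delta}^{-1}s)$ already lies in $\Gamma_0(4)$ and hence every coset on the right-hand side is contained in $\widetilde{\Gamma}\mathbf{m}(\varpi^m)\widetilde{\Gamma}$. Two representatives indexed by $s,s'$ give the same left coset exactly when $\mathbf{m}(\varpi^m)^{-1}\mathbf{u}^\sharp(\boldsymbol{\delta}^{-1}(s-s'))\mathbf{m}(\varpi^m)=\mathbf{u}^\sharp(\varpi^{-2m}\boldsymbol{\delta}^{-1}(s-s'))$ lies in $\widetilde{\Gamma}$, i.e. when $\varpi^{-2m}(s-s')\in\mathfrak{o}$, that is $s\equiv s'\pmod{\mathfrak{p}^{2m}}$. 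So the right-hand side is a disjoint union of exactly $q^{2m}$ left cosets, matching the index $[\Gamma:\Gamma\cap\mathbf{m}(\varpi^m)\Gamma\mathbf{m}(\varpi^m)^{-1}]$.

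The substantive step is exhaustion, and this is where the hypothesis that $q$ is even enters decisively. The key point is that for $q$ even every $\gamma=\left(\begin{smallmatrix}a&b\\c&d\end{smallmatrix}\right)\in\Gamma_0(4)$ has $a,d\in\mathfrak{o}^\times$: indeed $c\in4\mathfrak{d}\subset\mathfrak{p}$ and $bc\in\mathfrak{d}^{-1}\cdot4\mathfrak{d}=4\mathfrak{o}\subset\mathfrak{p}$, so $ad=1+bc\equiv1\pmod{\mathfrak{p}}$. Given any $\gamma_1=\left(\begin{smallmatrix}a&b\\c&d\end{smallmatrix}\right)\in\Gamma_0(4)$, I will choose $s\in\mathfrak{o}/\mathfrak{p}^{2m}$ with $s\equiv\boldsymbol{\delta}b d^{-1}\pmod{\mathfrak{p}^{2m}}$ — possible precisely because $d$ is a unit — and check directly that $\mathbf{m}(\varpi^m)^{-1}\mathbf{u}^\sharp(-\boldsymbol{\delta}^{-1}s)\gamma_1\mathbf{m}(\varpi^m)\in\Gamma_0(4)$: the upper-right entry equals $\varpi^{-2m}\boldsymbol{\delta}^{-1}(\boldsymbol{\delta}b-sd)\in\mathfrak{d}^{-1}$ by the choice of $s$, the lower-left entry $\varpi^{2m}c\in4\mathfrak{d}$ since $m\ge0$, and the diagonal entries remain in $\mathfrak{o}$ because $\boldsymbol{\delta}^{-1}sc\in4\mathfrak{o}$. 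Hence $\gamma_1\mathbf{m}(\varpi^m)\widetilde{\Gamma}=\mathbf{u}^\sharp(\boldsymbol{\delta}^{-1}s)\mathbf{m}(\varpi^m)\widetilde{\Gamma}$, so the $q^{2m}$ cosets above already fill the whole double coset. The only obstacle worth flagging is this unit property of the diagonal entries: it fails for $q$ odd with $\Gamma=\Gamma_0(1)$, which is why that case needs the extra Weyl-element cosets appearing in Lemma \ref{decompstnofdoublecosets}(1); it is exactly the presence of the level $4\in\mathfrak{p}$ that forces $a,d$ to be units here and collapses the decomposition to the single unipotent family.
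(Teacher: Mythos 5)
Your proof is correct and is essentially the argument the paper intends: the paper's one-line appeal to "the triangular decomposition" is exactly your factorization, since writing $\gamma=\mathbf{u}^\sharp(b/d)\mathbf{m}(d^{-1})\mathbf{u}^\flat(c/d)$ requires $d\in\mathfrak{o}^\times$, which is the unit-diagonal observation you isolate, and your choice $s\equiv\boldsymbol{\delta}bd^{-1}\pmod{\mathfrak{p}^{2m}}$ just extracts the unipotent factor. Your preliminary reduction to $\SL_2(F)$ via the full preimage property of $\widetilde{\Gamma}$ is a clean way to dispose of the cocycle signs, and the rest (containment, disjointness, exhaustion, and the remark on why the Weyl-element cosets of Lemma \ref{decompstnofdoublecosets}(1) do not appear here) checks out.
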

Put
\[
\varepsilon(\mathbf{m}(a))=\frac{\alpha_\phi(a)}{\alpha_\psi(1)}
\]
for $a\in F^\times,$
\[
\varepsilon(\mathbf{w}_{\boldsymbol{\delta}})=\overline{\alpha_\psi(\boldsymbol{\delta})},
\]
and
\[
\varepsilon(\mathbf{u}^\flat(\boldsymbol{\delta}c))=1
\]
for $c\in\mathfrak{o}$.
As in the previous cases, for any element $g\in\Mp_2(F)$ at which $\varepsilon$ is defined, let $X_g$ be the characteristic function of $\widetilde{\Gamma_0(4)}g\widetilde{\Gamma_0(4)}$ as (\ref{eq:defofXg}).
For $m\in\mathbb{Z},$ put $\widetilde{\mathcal{T}}_m=q^{-|m|_\infty/2}X_{\mathbf{m}(\varpi^m)}$.
By similar calculations as in the previous cases, we have the following lemma.

\begin{lemma}
We have $\widetilde{\mathcal{T}}_{m_1}\ast\widetilde{\mathcal{T}}_{m_2}=\widetilde{\mathcal{T}}_{m_1+m_2}$ for $m_1m_2\geq0$.
\end{lemma}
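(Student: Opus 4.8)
The plan is to proceed exactly as in the proof of part~(2) of Lemma~\ref{generatorsofH}: reduce the claim to a volume count and invoke the multiplicativity principle of Lemma~\ref{lem:helpful lemma}, which applies equally to the Hecke algebra $\widetilde{\mathcal{H}}(\widetilde{\Gamma_0(4)}\backslash\Mp_2(F)/\widetilde{\Gamma_0(4)};\varepsilon)$. Since $m_1m_2\geq0$, it suffices to treat $m_1,m_2\geq0$ and then deduce $m_1,m_2\leq0$ by applying the involution $\ast$. Indeed, $\ast$ reverses convolution, and a direct check, using that $\varepsilon$ is unitary and the relation $\frac{\alpha_\psi(1)\alpha_\psi(a_1a_2)}{\alpha_\psi(a_1)\alpha_\psi(a_2)}=(a_1,a_2)$, shows $\widetilde{\mathcal{T}}_m^{\ast}=\widetilde{\mathcal{T}}_{-m}$ (note $|m|_\infty=|{-m}|_\infty$); hence the identity for $n_1,n_2\geq0$ transports to $\widetilde{\mathcal{T}}_{-n_1}\ast\widetilde{\mathcal{T}}_{-n_2}=(\widetilde{\mathcal{T}}_{n_2}\ast\widetilde{\mathcal{T}}_{n_1})^{\ast}=\widetilde{\mathcal{T}}_{-(n_1+n_2)}$.

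For $m_1,m_2\geq0$, I would first read off from Lemma~\ref{lem:decompofdoublecosetsveven} that $\widetilde{\Gamma}\mathbf{m}(\varpi^m)\widetilde{\Gamma}$ is a disjoint union of $|\mathfrak{o}/\mathfrak{p}^{2m}|=q^{2m}$ left $\widetilde{\Gamma}$-cosets, whence $\Vol(\widetilde{\Gamma}\mathbf{m}(\varpi^m)\widetilde{\Gamma})=q^{2m}$ for $m\geq0$ under the normalization $\Vol(\widetilde{\Gamma})=1$ (the case $m=0$ being trivial). Next, the cocycle formula for $\boldsymbol{c}$ gives $\mathbf{m}(\varpi^{m_1})\mathbf{m}(\varpi^{m_2})=\mathbf{m}(\varpi^{m_1+m_2})\times(\varpi^{m_2},\varpi^{m_1})$ in $\Mp_2(F)$, so this product lies in the single double coset $\widetilde{\Gamma}\mathbf{m}(\varpi^{m_1+m_2})\widetilde{\Gamma}$, of volume $q^{2(m_1+m_2)}$. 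Since $q^{2m_1}\cdot q^{2m_2}=q^{2(m_1+m_2)}$, the volume hypothesis of Lemma~\ref{lem:helpful lemma} is satisfied, and the lemma gives $X_{\mathbf{m}(\varpi^{m_1})}\ast X_{\mathbf{m}(\varpi^{m_2})}=X_{\mathbf{m}(\varpi^{m_1})\mathbf{m}(\varpi^{m_2})}$.

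It then remains to identify the right-hand side with $X_{\mathbf{m}(\varpi^{m_1+m_2})}$ and restore the normalization. Writing $\mathbf{m}(\varpi^{m_1})\mathbf{m}(\varpi^{m_2})=\mathbf{m}(\varpi^{m_1+m_2})[1,\zeta]$ with $\zeta=(\varpi^{m_2},\varpi^{m_1})$, the centrality of $[1,\zeta]$ together with $\varepsilon([1,\zeta])=\zeta$ forces $X_{\mathbf{m}(\varpi^{m_1+m_2})[1,\zeta]}$ and $X_{\mathbf{m}(\varpi^{m_1+m_2})}$ to acquire the same factor $\zeta$ at every point, so the two functions coincide. Multiplying through by $q^{-m_1/2}$ and $q^{-m_2/2}$ yields $\widetilde{\mathcal{T}}_{m_1}\ast\widetilde{\mathcal{T}}_{m_2}=q^{-(m_1+m_2)/2}X_{\mathbf{m}(\varpi^{m_1+m_2})}=\widetilde{\mathcal{T}}_{m_1+m_2}$. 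The only step deserving genuine care is the coset count: it is exactly because $\widetilde{\Gamma}\mathbf{m}(\varpi^m)\widetilde{\Gamma}$ consists of a \emph{single} family of cosets---in contrast to the two-family decomposition behind Proposition~\ref{generatorsofH1}, which is what produces the extra summand $q+1$ there---that the volumes multiply exactly; past this and the routine metaplectic sign bookkeeping, I anticipate no real difficulty.
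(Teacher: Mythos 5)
Your proof is correct and follows essentially the same route the paper intends: the paper's ``similar calculations as in the previous cases'' are exactly the left-coset count from Lemma \ref{lem:decompofdoublecosetsveven} combined with the volume-multiplicativity criterion of Lemma \ref{lem:helpful lemma}, plus the Weil-constant identity to see that $X_{\mathbf{m}(\varpi^{m_1})\mathbf{m}(\varpi^{m_2})}=X_{\mathbf{m}(\varpi^{m_1+m_2})}$, all of which you carry out. Your only (harmless) deviation is deducing the case $m_1,m_2\leq0$ from the adjoint involution $\widetilde{\mathcal{T}}_m^{\ast}=\widetilde{\mathcal{T}}_{-m}$ rather than from a separate $\mathbf{u}^\flat$-type coset decomposition as in part (3) of Lemma \ref{decompstnofdoublecosets}; both work.
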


\section{The Hecke operators}\label{The Hecke operators}

In this section we want to take a look at how some of the Hecke operators in $\widetilde{\mathcal{H}}(\widetilde{\Gamma}\backslash\Mp_2(F)/\widetilde{\Gamma};\varepsilon)$ act on the "$\Gamma$-fixed" vectors in the principal series $\tilde{I}_{\psi}(s),$ especially for the case $q$ is odd.
We apply the notations used in the last section.

\begin{definition}
A vector $f$ in the principal series representation $\tilde{I}_{\psi}(s)$ is called $\Gamma$-fixed if it satisfies
\[
\rho(\gamma)f=\varepsilon(\gamma)^{-1}f
\]
for any $\gamma\in\widetilde{\Gamma}$.
We denote the subspace of $\tilde{I}_{\psi}(s)$ consisting of the $\Gamma$-fixed vectors by $\tilde{I}_{\psi}(s)^{\Gamma}$.
\end{definition}

Now we assume that $q$ is odd.
By Iwasawa decomposition, any vector in $\tilde{I}_{\psi}(s)$ can be determined by its values on $\widetilde{\Gamma_0(1)}$.
Thus if $\Gamma=\Gamma_1,$ the space $\tilde{I}_{\psi}(s)^{\Gamma}$ is one-dimensional and is spanned by the particular function $f_0\in\tilde{I}_{\psi}(s)^{\Gamma},$ which satisfies
\[
f_0(I)=1.
\]
If $\Gamma=\Gamma_2,$ since 
\[
\widetilde{B}\backslash\Mp_2(F)/\widetilde{\Gamma_2}=\left\{I,\mathbf{w}_{\boldsymbol{\delta}}\right\},
\]
the space $\tilde{I}_{\psi}(s)^{\Gamma}$ is two-dimensional and is spanned by the two particular functions $f_1$ and $f_2\in\tilde{I}_{\psi}(s)^{\Gamma}$ which satisfy
\[
f_1(I)=
\begin{cases}
1&\mbox{ if }\Gamma=\Gamma_0(\varpi),\\
0&\mbox{ if }\Gamma=\Gamma[\varpi\mathfrak{d}^{-1},\mathfrak{d}]
\end{cases}
\]
and
\[
f_2(\mathbf{w}_{\boldsymbol{\delta}})=
\begin{cases}
0&\mbox{ if }\Gamma=\Gamma_0(\varpi),\\
1&\mbox{ if }\Gamma=\Gamma[\varpi\mathfrak{d}^{-1},\mathfrak{d}]
\end{cases}
\]
respectively.
It is obvious that $\tilde{I}_{\psi}(s)^{\Gamma}$ is left invariant under the action of any Hecke operator in $\widetilde{\mathcal{H}}(\widetilde{\Gamma}\backslash\Mp_2(F)/\widetilde{\Gamma};\varepsilon)$.

\par

Note that for $g\in\Mp_2(F)$ at which $X_g$ is defined such that $\widetilde{\Gamma}g\widetilde{\Gamma}=\bigsqcup_{i}g_i\widetilde{\Gamma}$ and $f\in\tilde{I}_{\psi}(s)^{\Gamma},$ we have
\begin{align*}
 &\rho(X_g)f\\
=&\int_{\widetilde{\Gamma}g\widetilde{\Gamma}}X_g(h)\rho(h)dh\\
=&\sum_{i}\int_{\widetilde{\Gamma}}X_g(g_i\gamma)\rho(g_i\gamma)fd\gamma\\
=&\sum_{i}X_g(g_i)\rho(g_i)f.
\end{align*}
Using this equation we can complete most of the calculations in this section.

\par

First let us consider the case $\Gamma=\Gamma_1$.
Note that by Lemma \ref{generatorsofH1}, the Hecke algebra $\widetilde{\mathcal{H}}(\widetilde{\Gamma}\backslash\Mp_2(F)/\widetilde{\Gamma};\varepsilon)$ is generated by a single operator $\widetilde{\mathcal{T}}_1$.

\begin{lemma}\label{lem:action of T1, gamma1}
For the case $\Gamma=\Gamma_1,$ we have
\[
\rho(\widetilde{\mathcal{T}}_1)f_0=q^{1/2}(q^s+q^{-s})f_0.
\]
\end{lemma}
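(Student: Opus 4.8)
The plan is to exploit that $\tilde{I}_\psi(s)^{\Gamma_1}$ is one-dimensional and is preserved by $\widetilde{\mathcal{H}}(\widetilde{\Gamma_1}\backslash\Mp_2(F)/\widetilde{\Gamma_1};\varepsilon)$, so that $\rho(\widetilde{\mathcal{T}}_1)f_0=\lambda f_0$ for a scalar $\lambda$, which we recover by evaluating at the identity: since $f_0(I)=1$ we have $\lambda=(\rho(\widetilde{\mathcal{T}}_1)f_0)(I)$. Writing $\widetilde{\mathcal{T}}_1=q^{-1/2}X_{\mathbf{m}(\varpi)}$ and feeding the left-coset decomposition of part (1) of Lemma \ref{decompstnofdoublecosets} (with $m=1$) into the formula $\rho(X_g)f=\sum_i X_g(g_i)\rho(g_i)f$, the problem reduces to summing $X_{\mathbf{m}(\varpi)}(g_i)f_0(g_i)$ over the two families of representatives $\mathbf{u}^\sharp(\boldsymbol{\delta}^{-1}s)\mathbf{m}(\varpi)$ with $s\in\mathfrak{o}/\mathfrak{p}^2$, and $\mathbf{w}_{\boldsymbol{\delta}}\mathbf{u}^\sharp(\boldsymbol{\delta}^{-1}\varpi s)\mathbf{m}(\varpi)$ with $s\in\mathfrak{o}/\mathfrak{p}$.

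For the first family each $\mathbf{u}^\sharp(\boldsymbol{\delta}^{-1}s)$ lies in $\widetilde{\Gamma_1}$, so $X_{\mathbf{m}(\varpi)}$ contributes $\varepsilon(\mathbf{m}(\varpi))=\alpha_\psi(\varpi)/\alpha_\psi(1)$, while the inducing character of $\tilde{I}_\psi(s)$ gives $f_0(\mathbf{u}^\sharp(\boldsymbol{\delta}^{-1}s)\mathbf{m}(\varpi))=(\alpha_\psi(1)/\alpha_\psi(\varpi))q^{-(s+1)}$. The Weil indices cancel, each of the $q^2$ terms equals $q^{-(s+1)}$, and the family contributes $q^{1-s}$. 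In the second family I would first isolate the representative with $s\equiv0$: there $\mathbf{w}_{\boldsymbol{\delta}}\mathbf{u}^\sharp(0)\mathbf{m}(\varpi)=\mathbf{w}_{\boldsymbol{\delta}\varpi}=\mathbf{m}(\varpi^{-1})\mathbf{w}_{\boldsymbol{\delta}}$, and using $f_0(\mathbf{w}_{\boldsymbol{\delta}})=\varepsilon(\mathbf{w}_{\boldsymbol{\delta}})^{-1}=1$ together with the inducing character one finds this single term contributes $q^{1+s}$.

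The crux is the remaining $s\in\mathfrak{o}^\times$ terms of the second family. Here I would pass to the triangular decomposition $[g,\zeta]=\mathbf{u}^\sharp(b/d)\mathbf{m}(d^{-1})\mathbf{u}^\flat(c/d)\times\zeta(c,d)$ applied to the underlying matrix $\left(\begin{smallmatrix}0&-\boldsymbol{\delta}^{-1}\varpi^{-1}\\\boldsymbol{\delta}\varpi&s\end{smallmatrix}\right)$, whose diagonal part is $\mathbf{m}(s^{-1})$ with $s^{-1}\in\mathfrak{o}^\times$ and whose lower-unipotent tail $\mathbf{u}^\flat(\boldsymbol{\delta}\varpi s^{-1})$ lies in $\widetilde{\Gamma_1}$. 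After evaluating $f_0$ through the inducing character and $\varepsilon$, and keeping the Hilbert-symbol sign $\zeta(c,d)=(\boldsymbol{\delta}\varpi,s)$, the $s$-dependence reassembles, via the relation $\alpha_\psi(1)\alpha_\psi(a_1a_2)/(\alpha_\psi(a_1)\alpha_\psi(a_2))=(a_1,a_2)$, into $\alpha_\psi(\boldsymbol{\delta}\varpi s)$ up to a common nonzero constant. Hence this subsum is a nonzero multiple of $\sum_{s\in\mathfrak{o}^\times/(1+\mathfrak{p})}\alpha_\psi(\boldsymbol{\delta}\varpi s)$, which vanishes by Lemma \ref{lem:sumofWeilconsts}.

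Collecting the surviving contributions gives $(\rho(X_{\mathbf{m}(\varpi)})f_0)(I)=q^{1-s}+q^{1+s}$, whence $\lambda=q^{-1/2}(q^{1-s}+q^{1+s})=q^{1/2}(q^{-s}+q^{s})$, as asserted. I expect the main obstacle to be precisely the bookkeeping of the metaplectic $2$-cocycle and the Weil indices $\alpha_\psi$ in the $s\in\mathfrak{o}^\times$ subsum: one must track the signs $\zeta(c,d)$ carefully enough to recognize the sum as an instance of Lemma \ref{lem:sumofWeilconsts}, exactly as in the vanishing computations appearing in the proof of Proposition \ref{generatorsofH1}. (Alternatively, once $\widetilde{\mathcal{H}}$ is known to be generated by $\widetilde{\mathcal{T}}_1$, the eigenvalue could instead be read off from the isomorphism with $\mathcal{H}(K'\backslash\PGL_2(F)/K')$, but the direct computation above is self-contained.)
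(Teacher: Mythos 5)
Your proposal is correct and follows essentially the same route as the paper: eigenvector via one-dimensionality of $\tilde{I}_\psi(s)^{\Gamma_1}$, evaluation at $I$ using the coset decomposition of Lemma \ref{decompstnofdoublecosets}(1), the single $\xi\in\mathfrak{p}$ representative giving $q^{1+s}$, and the $\xi\in\mathfrak{o}^\times$ subsum collapsing to a multiple of $\sum_{u}\alpha_\psi(\boldsymbol{\delta}\varpi u)=0$ by Lemma \ref{lem:sumofWeilconsts}. The only (immaterial) difference is the exact form of the triangular decomposition used for the unit terms, which produces an equivalent expression for the Weil-index and Hilbert-symbol factors.
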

\begin{proof}
Since $\tilde{I}_{\psi}(s)^{\Gamma}$ is one-dimensional, we know that $f_0$ is an eigenvector of $\widetilde{\mathcal{T}}_1$.
Thus to get the eigenvalue, it suffices to calculate the value of $\widetilde{\mathcal{T}}_1$ at $I$.
By (1) of Lemma \ref{decompstnofdoublecosets}, one has
\begin{align*}
 &\rho(\widetilde{\mathcal{T}}_1)f_0(I)\\
=&q^{-1/2}\frac{\alpha_{\psi}(\varpi)}{\alpha_{\psi}(1)}\left(\sum_{\xi\in\mathfrak{o}/\mathfrak{p}^2}f_0(\mathbf{u}^\sharp(\boldsymbol{\delta}^{-1}\xi)\mathbf{m}(\varpi))+\sum_{\xi\in\mathfrak{o}/\mathfrak{p}}f_0(\mathbf{w}_{\boldsymbol{\delta}}\mathbf{u}^\sharp(\boldsymbol{\delta}^{-1}\varpi\xi)\mathbf{m}(\varpi))\right)\\
\end{align*}
For the first summand it is easy to get
\[
\sum_{\xi\in\mathfrak{o}/\mathfrak{p}^2}f_0(\mathbf{u}^\sharp(\boldsymbol{\delta}^{-1}\xi)\mathbf{m}(\varpi))=q^{1-s}\frac{\alpha_{\psi}(1)}{\alpha_{\psi}(\varpi)}.
\]
On the other hand, for the second summand, note that
\begin{align*}
 &\mathbf{w}_{\boldsymbol{\delta}}\mathbf{u}^\sharp(\boldsymbol{\delta}^{-1}\varpi\xi)\mathbf{m}(\varpi)\\
=&\begin{cases}
\mathbf{m}(\varpi^{-1})\mathbf{w}_{\boldsymbol{\delta}}\mathbf{u}^\sharp(\boldsymbol{\delta}^{-1}\varpi^{-1}\xi)\mbox{ if }\xi\in\mathfrak{p},\\
\mathbf{m}(\xi^{-1})\mathbf{u}^\sharp(-\boldsymbol{\delta}^{-1}\varpi^{-1}\xi)\mathbf{u}^\flat(\boldsymbol{\delta}\varpi\xi^{-1})\times(\boldsymbol{\delta} \xi,-\varpi\boldsymbol{\delta})\mbox{ if }\xi\in\mathfrak{o}^\times.
\end{cases}
\end{align*}
Hence 
\begin{align*}
 &\sum_{\xi\in\mathfrak{o}/\mathfrak{p}}f_0(\mathbf{w}_{\boldsymbol{\delta}}\mathbf{u}^\sharp(\boldsymbol{\delta}^{-1}\varpi\xi)\mathbf{m}(\varpi))\\
=&q^{s+1}\frac{\alpha_{\psi}(1)}{\alpha_{\psi}(\varpi)}+\sum_{\mathfrak{o}^\times/(1+\mathfrak{p})}\frac{\alpha_{\psi}(1)}{\alpha_{\psi}(\xi)}(\boldsymbol{\delta} \xi,-\varpi\boldsymbol{\delta})\\
=&q^{s+1}\frac{\alpha_{\psi}(1)}{\alpha_{\psi}(\varpi)}
\end{align*}
where for the second equation we have used the properties of Weil constants and Lemma \ref{lem:sumofWeilconsts}.
Applying the results for the two summands to the formula of $\rho(\widetilde{\mathcal{T}}_1)f_0(I),$ one gets the the lemma.
\end{proof}

Secondly, we have to consider the case $\Gamma=\Gamma_2$.
Now the Hecke algebra $\widetilde{\mathcal{H}}(\widetilde{\Gamma}\backslash\Mp_2(F)/\widetilde{\Gamma};\varepsilon)$ is generated by $\widetilde{\mathcal{U}}_0$ and $\widetilde{\mathcal{U}}_1$.
We want to investigate how both of the two generators act on $f_1$ and $f_2$.

\begin{lemma}\label{lem:action of operators, gamma2}
For the case $\Gamma=\Gamma_2,$ we have

\begin{align}
\label{0and1}    &\rho(\widetilde{\mathcal{U}}_0)f_1=f_2,\\
\label{0and2}    &\rho(\widetilde{\mathcal{U}}_0)f_2=qf_1+(q-1)f_2,\\
\label{1and1}    &\rho(\widetilde{\mathcal{U}}_1)f_1=q^{-1/2-s}f_2,\\
\label{1and2}	 &\rho(\widetilde{\mathcal{U}}_1)f_2=q^{1/2+s}f_1.
\end{align}

\end{lemma}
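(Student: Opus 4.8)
The plan is to reduce the four identities to two direct computations by exploiting that $\rho$ is an algebra homomorphism. Indeed, from the definition $(\rho(\varphi)f)(g)=\int_{\Mp_2(F)}\varphi(h)f(gh)\,dh$ one checks, after the substitution $h\mapsto h_1^{-1}h$ in the resulting double integral, that $\rho(\varphi_1\ast\varphi_2)=\rho(\varphi_1)\rho(\varphi_2)$. Hence the relations $\widetilde{\mathcal{U}}_1^2=1$ and $\widetilde{\mathcal{U}}_0^2=(q-1)\widetilde{\mathcal{U}}_0+q$ of Lemma \ref{generatorsofH} become, on the two-dimensional space $\tilde{I}_\psi(s)^{\Gamma_2}=\mathbb{C}f_1\oplus\mathbb{C}f_2$, the operator identities $\rho(\widetilde{\mathcal{U}}_1)^2=\mathrm{id}$ and $\rho(\widetilde{\mathcal{U}}_0)^2=(q-1)\rho(\widetilde{\mathcal{U}}_0)+q\,\mathrm{id}$. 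Granting \eqref{0and1} and \eqref{1and1}, identity \eqref{0and2} then follows by applying $\rho(\widetilde{\mathcal{U}}_0)$ to \eqref{0and1}, since $\rho(\widetilde{\mathcal{U}}_0)f_2=\rho(\widetilde{\mathcal{U}}_0)^2f_1=(q-1)\rho(\widetilde{\mathcal{U}}_0)f_1+qf_1=(q-1)f_2+qf_1$; and \eqref{1and2} follows by applying $\rho(\widetilde{\mathcal{U}}_1)$ to \eqref{1and1}, since $\rho(\widetilde{\mathcal{U}}_1)f_2=q^{1/2+s}\rho(\widetilde{\mathcal{U}}_1)^2f_1=q^{1/2+s}f_1$. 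So only \eqref{0and1} and \eqref{1and1} require genuine work.

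For these I would argue exactly as in the proof of Lemma \ref{lem:action of T1, gamma1}. A vector in $\tilde{I}_\psi(s)^{\Gamma_2}$ is determined by its values at the two representatives $I$ and $\mathbf{w}_{\boldsymbol\delta}$ of $\widetilde{B}\backslash\Mp_2(F)/\widetilde{\Gamma_2}$, so it suffices to evaluate $\rho(\widetilde{\mathcal{U}}_0)f_1$ and $\rho(\widetilde{\mathcal{U}}_1)f_1$ at $h=I$ and $h=\mathbf{w}_{\boldsymbol\delta}$. Writing $\widetilde{\mathcal{U}}_0$ and $\widetilde{\mathcal{U}}_1$ as the relevant multiples of the characteristic functions $X_{\mathbf{w}_{\boldsymbol\delta}}$ and $X_{\mathbf{w}_{\boldsymbol\delta\varpi^{\pm1}}}$ and inserting the left-coset decompositions of Lemma \ref{decompstnofdoublecosets} (parts (4) and (5), with the small exponent $m\in\{0,1\}$ dictated by whether $\boldsymbol\mu=1$ or $\boldsymbol\mu=\varpi$), the master formula $\rho(X_g)f=\sum_i X_g(g_i)\rho(g_i)f$ turns each value into a finite sum $\sum_s X_g(g_s)\,f_1(h\,g_s)$. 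I would then push each product $h\,g_s$ into Iwasawa form $\mathbf{u}^\sharp(b)\mathbf{m}(a)\kappa$ by the triangular identities used throughout Section \ref{The Hecke algebras}, read off $f_1(h\,g_s)$ from the transformation law of $\tilde{I}_\psi(s)$ and the prescribed values of $f_1$ at $I$ and $\mathbf{w}_{\boldsymbol\delta}$, and compute $X_g(g_s)$ from \eqref{eq:defofXg} and the explicit formula for $\varepsilon$ (notably $\varepsilon(\mathbf{w}_{\boldsymbol\delta a})=\alpha_\psi(\boldsymbol\delta a)$ and the triviality of $\varepsilon$ on the relevant unipotent subgroups). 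At each of the two evaluation points exactly one representative contributes the stated constant, while the remaining contributions either vanish because $f_1$ is zero on the opposite cell or cancel as a sum of Weil indices over $\mathfrak{o}^\times/(1+\mathfrak{p})$ through Lemma \ref{lem:sumofWeilconsts}; this is what produces the off-diagonal shape of \eqref{0and1} and \eqref{1and1} and the exponent $q^{-1/2-s}$.

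The two cases $\Gamma_2=\Gamma_0(\varpi)$ and $\Gamma_2=\Gamma[\varpi\mathfrak{d}^{-1},\mathfrak{d}]$ are run in parallel; they are conjugate by the Atkin--Lehner element $\mathbf{w}_{\boldsymbol\delta}$, which interchanges the two cells and hence the roles of $f_1$ and $f_2$, so the bookkeeping in one case mirrors the other. The main obstacle is precisely this bookkeeping: every triangular decomposition must be carried out in $\Mp_2(F)$ rather than $\SL_2(F)$, tracking the cocycle sign $\boldsymbol{c}$ and the accompanying Weil factors $\alpha_\psi$, and the vanishing of the unit-$s$ terms must be matched exactly to the normalization in Lemma \ref{lem:sumofWeilconsts} so that the surviving constants come out as $1$ for $\widetilde{\mathcal{U}}_0$ and $q^{-1/2-s}$ for $\widetilde{\mathcal{U}}_1$. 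Once these signs are pinned down, \eqref{0and1} and \eqref{1and1} hold, and the remaining two identities are the formal consequences recorded above.
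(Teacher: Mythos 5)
Your proposal is correct, and the core computations coincide with the paper's: the author likewise observes that an element of $\tilde{I}_\psi(s)^{\Gamma_2}$ is determined by its values at the two representatives $I$ and $\mathbf{w}_{\boldsymbol\delta}$ of $\widetilde{B}\backslash\Mp_2(F)/\widetilde{\Gamma_2}$, inserts the left-coset decompositions of Lemma \ref{decompstnofdoublecosets}, and reads off each value by pushing $h\,g_s$ into Iwasawa form; in the actual computation of $\rho(\widetilde{\mathcal{U}}_0)f_i$ the unwanted terms die because the products land in the cell where $f_i$ vanishes, so Lemma \ref{lem:sumofWeilconsts} is not in fact needed here (unlike in Lemma \ref{lem:action of T1, gamma1}) --- your invoking it is harmless over-caution rather than an error. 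Where you genuinely diverge is in the bookkeeping strategy: the paper computes \eqref{0and1} and \eqref{0and2} directly (both from the single decomposition of $\widetilde{\Gamma_2}\mathbf{w}_{\boldsymbol\delta}\widetilde{\Gamma_2}$) and declares \eqref{1and1}, \eqref{1and2} ``similar,'' whereas you compute only \eqref{0and1} and \eqref{1and1} and deduce the other two from the quadratic relations $\widetilde{\mathcal{U}}_0^2=(q-1)\widetilde{\mathcal{U}}_0+q$ and $\widetilde{\mathcal{U}}_1^2=1$ of Lemma \ref{generatorsofH}(4) together with $\rho(\varphi_1\ast\varphi_2)=\rho(\varphi_1)\rho(\varphi_2)$. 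This is legitimate and not circular, since Lemma \ref{generatorsofH}(4) is proved earlier by calculations parallel to Proposition \ref{generatorsofH1}, independently of the present lemma (one should also note, as you implicitly do, that the unit of the Hecke algebra acts as the identity on $\Gamma_2$-fixed vectors). The trade-off: your route halves the number of direct evaluations and makes the consistency with the Hecke-algebra relations manifest, at the cost of requiring two different double-coset decompositions for the two base identities instead of one; the paper's route keeps all the work inside a single decomposition but must repeat the argument verbatim for $\widetilde{\mathcal{U}}_1$.
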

\begin{proof}
We only state the calculations for (\ref{0and1}) and (\ref{0and2}) for the case $\Gamma=\Gamma_0(\varpi)$ since the others are similar.
Because $\rho(\widetilde{\mathcal{U}}_0)f_i$ is also fixed by $\Gamma,$ we only have to get its values at $I$ and $\mathbf{w}_{\boldsymbol{\delta}}$.
By Lemma \ref{decompstnofdoublecosets} we have
\[
\rho(\widetilde{\mathcal{U}}_0)f_i(g)=\sum_{\xi\in\mathfrak{o}/\mathfrak{p}}f_i(g\mathbf{u}^\sharp(\boldsymbol{\delta}^{-1}\xi)\mathbf{w}_{\boldsymbol{\delta}})\quad(i=1, 2).
\]
Now for $g=I,$ we have that 
\[
\mathbf{u}^\sharp(\boldsymbol{\delta}^{-1}\xi)\mathbf{w}_{\boldsymbol{\delta}}\in\mbox{Supp}(f_2)
\]
for any $\xi\in\mathfrak{o}$.
On the other hand, for $g=\mathbf{w}_{\boldsymbol{\delta}},$ if $\xi\in\mathfrak{p},$ then 
\[
\mathbf{w}_{\boldsymbol{\delta}}\mathbf{u}^\sharp(\boldsymbol{\delta}^{-1}\xi)\mathbf{w}_{\boldsymbol{\delta}}=\mathbf{m}(-1)\mathbf{u}^\flat(-\boldsymbol{\delta}^{-1}\xi)\times(\boldsymbol{\delta},-1)\in\mbox{Supp}(f_1).
\]
If $\xi\in\mathfrak{o}^\times,$ then
\begin{align*}
 &\mathbf{w}_{\boldsymbol{\delta}}\mathbf{u}^\sharp(\boldsymbol{\delta}^{-1}\xi)\mathbf{w}_{\boldsymbol{\delta}}\\
=&\mathbf{m}(\xi^{-1})\mathbf{u}^\sharp(-\boldsymbol{\delta}^{-1}\xi)\mathbf{w}_{\boldsymbol{\delta}}\mathbf{u}^\sharp(-\boldsymbol{\delta}^{-1}\xi^{-1})\times(\boldsymbol{\delta},\xi)\in\mbox{Supp}(f_2).
\end{align*}
Using the decompositions above, we have
\[
\rho(\widetilde{\mathcal{U}}_0)f_1(I)=0
\]
and
\[
\rho(\widetilde{\mathcal{U}}_0)f_1(\mathbf{w}_{\boldsymbol{\delta}})=1.
\]
Thus we get (\ref{0and1}).
\par
Similarly, we have
\[
\rho(\widetilde{\mathcal{U}}_0)f_2(I)=q
\]
and
\[
\rho(\widetilde{\mathcal{U}}_0)f_2(\mathbf{w}_{\boldsymbol{\delta}})=q-1,
\]
from which we get (\ref{0and2}).
\par
The equations (\ref{1and1}) and (\ref{1and2}) can be gotten by similar calculations.

\end{proof}


\section{The archimedean case}\label{The archimedean case}
If $F=\mathbb{R}$ and $\psi(x)=\mathbf{e}(x),$ the Weil constant $\alpha_\psi(x)$ is given by
\[
\alpha_\psi(x)=
\begin{cases}
\exp(\pi\sqrt{-1}/4)&\mbox{ if }x>0,\\
\exp(-\pi\sqrt{-1}/4)&\mbox{ if }x<0.
\end{cases}
\]
In this case, the real metaplectic group $\Mp_2(\mathbb{R})$ is the unique non-trivial topological double covering of $\SL_2(\mathbb{R})$.
The unique factor of automorphy $\tilde{j}$ on $\Mp_2(\mathbb{R})\times\mathfrak{h}$ satisfying
\[
\tilde{j}\left(\left[
\begin{pmatrix}
a&b\\c&d
\end{pmatrix},\zeta
\right]\tau\right)^2=c\tau+d
\] 
is given by
\[
\tilde{j}\left(\left[
\begin{pmatrix}
a&b\\c&d
\end{pmatrix},\zeta
\right]\tau\right)=
\begin{cases}
\zeta\sqrt{d}&\mbox{ if }c=0, d>0,\\
-\zeta\sqrt{d}&\mbox{ if }c=0, d<0,\\
\sqrt{c\tau+d}&\mbox{ otherwise.}
\end{cases}
\]


\section{Automorphic forms on $\Mp_2(\mathbb{A})$}\label{Automorphic forms on MpA}
In this section, we let $F$ be a totally real number field over $\mathbb{Q}$ with degree $n$.
The notations $\mathfrak{o},$ $\mathfrak{d}_1$ and $\mathbb{A}=\mathbb{A}_F$ denote the ring of integers, the different and the adele ring of $F,$ respectively.
For simplicity, from now, if the local case with respect to some local place $v$ of $F$ is under consideration, we use the same notations given 
in Section \ref{Weil representation} with a right lower subscript $v$.
Also, we use a right lower subscript $\mathrm{f}$ or $\infty$ to indicate the finite component or infinite component, respectively.
With respect to any non-archimedean place $v$ we fix a uniformizer $\varpi_v\in\mathfrak{o}_v$.
\par
We let $\psi_1=\prod_v\psi_{1,v}:\mathbb{A}/F\rightarrow$ be the unique non-trivial additive character of $\mathbb{A}$ which is trivial on $F$ and satisfies that for any archimedean place $v,$ the local component $\psi_{1,v}$ is given by $\psi_{1,v}(x)=\mathbf{e}(x)$.
So for any finite place $v$, the index of $\psi_{1,v},$ which we denote by $c_{1,v},$ is the exponent of the corresponding prime ideal $\mathfrak{p}_v$ in the prime decomposition of $\mathfrak{d}$.
The finite part $\prod_{v<\infty}\psi_{1,v}$ of $\psi_1$ is denoted by $\psi_{1,\mathrm{f}}$.
\par
If $\omega_{\psi_{1,v}}$ is the local Weil representation of $\Mp_2(F_v)$ with respect to the local character $\psi_{1,v}$ and $\alpha_{\psi_{1,v}}(\quad)$ is the corresponding Weil index, then it is known that for $a\in F^\times,$ we have
\[
\prod_{v\leq\infty}\alpha_{\psi_{1,v}}(a)=1.
\]
\par
Now let us define the global metaplectic group $\Mp_2(\mathbb{A})$.
Let $\Gamma_0(\xi)_v$ be the congruence subgroup given in Section \ref{Weil representation} for any finite place $v$ of $F$ and $0\neq\xi\in\mathfrak{o}_v$.
If $v$ is odd, there exists a unique conanical splitting over $\Gamma_0(1)_v$ in $\widetilde{\Gamma_0(1)_v}$.
The image of this splitting is also denoted by $\Gamma_0(1)$ and is the stablizer of $\phi_{0,v}$ via the local Weil representation $\omega_{\psi_{1,v}}$.
Now the global metaplectic double covering $\Mp_2(\mathbb{A})$ of $\SL_2(\mathbb{A})$ is the restricted direct product of all $\Mp_2(F_v)$ with respect to $\{\Gamma_0(1)_v\}_{v:\tiny{\mbox{odd}}}$ divided by $\{(\zeta_v)\in\prod_v\{\pm1\}\,|\,\prod_v\zeta_v=1\}$.
It is known that $\SL_2(F)$ can be canonically embedded into $\Mp_2(\mathbb{A})$.
The image is also denoted by $\SL_2(F)$.
Actually, through this embedding, any $\gamma\in \SL_2(F)$ is sent to an element quivalent to $[(\gamma_v)_v,(1)_v]\in \Mp_2(\mathbb{A})$.
For any subset $S$ of $\Mp_2(\mathbb{A}),$ the inverse image of $S$ in $\Mp_2(\mathbb{A})$ is denoted by $\widetilde{S}$.
\par
We let $\{\pm\mathbf{1}\}$ be the kernel of the surjective mapping $\Mp_2(\mathbb{A})\rightarrow \SL_2(\mathbb{A})$ where $\mathbf{1}$ is the identity element in $\Mp_2(\mathbb{A})$.
A function $\Phi$ on $\Mp_2(\mathbb{A})$ is called geniune if $\Phi(-\mathbf{1}g)=-\Phi(g)$.
Given a family of local geniune functions $(\Phi_v)_v$ where $\Phi_v$ is identical to $1$ on $\widetilde{\Gamma_0(1)_v}$ for almost all $v,$ by $(\prod_v\Phi_v)((g_v)_v)=\prod_v\Phi_v(g_v)$ we get a geniune function $\prod_v\Phi_v$ on $\Mp_2(\mathbb{A})$.
\par
Fix an $n$-tuple of non-negative integers $k=(k_1,\dots,k_n)\in\mathbb{Z}_{\geq0}^n$.
Let $\SL_2(\mathbb{A}_\mathrm{f})$ be the finite part of $\SL_2(\mathbb{A}),$ $\Gamma'_\mathrm{f}$ an open compact subgroup of $\SL_2(\mathbb{A}_\mathrm{f})$ and $\varepsilon'$ a geniune character from $\widetilde{\Gamma'_\mathrm{f}}$ to $\mathbb{C}^\times$.
If $\Gamma'=\SL_2(F)\cap(\Gamma'_\mathrm{f}\times \SL_2(\mathbb{R})^n),$ by $J^{k+1/2}_{\varepsilon'}$ we realize a factor of automorphy on $\Gamma'$ defined by
\begin{equation}\label{defofJ}
J^{k+1/2}_{\varepsilon'}(\gamma,z)=\varepsilon'([\gamma,1])\prod_{i=1}^n\tilde{j}([\iota_i(\gamma),1],z_i)^{2k_i+1}
\end{equation}
where $\iota_i$ are the $n$ real embeddings of $F$ and $z=(z_1,\dots,z_n)\in\mathfrak{h}^n$.
Then $M_{k+1/2}(\Gamma',\varepsilon)$ is the space of modular forms with the factor of automorphy $J^{k+1/2}_{\varepsilon'}$ and $S_{k+1/2}(\Gamma',\varepsilon)$ is the subspace of cusp forms in $M_{k+1/2}(\Gamma',\varepsilon)$.
\par
We want to lift a modular form $h\in M_{k+1/2}(\Gamma',\varepsilon')$ to an automorphic form $\Phi_h$ on $\SL_2(F)\backslash \Mp_2(\mathbb{A})$.
For $g\in \Mp_2(\mathbb{A}),$ by the strong approximation theorem, $g$ can be written in the form $g=\gamma g_\infty g_\mathrm{f}$ where $\gamma\in \SL_2(F),$ $g_\infty\in\widetilde{\SL_2(\mathbb{R})^n}$ and $g_\mathrm{f}\in\widetilde{\Gamma'_\mathrm{f}}$.
Now the value of $\Phi_h$ at $g$ is given by
\[
\Phi_h(g)=h(g_\infty(\mathbf{i}))\varepsilon'(g_\mathrm{f})^{-1}\prod_{i=1}^n\tilde{j}(\iota_i(g_\infty),\sqrt{-1})^{-2k_i-1}
\]
where $\mathbf{i}=(\sqrt{-1},\dots,\sqrt{-1})\in\mathfrak{h}^n$.
Note that $\widetilde{\SL_2(\mathbb{R})^n}$ acts on $\mathfrak{h}^n$ by the usual sence.
One can easily see that the definition does not depend on the choice of $\gamma$.
\par
We set
\[
\mathcal{A}_{k+1/2}(\SL_2(F)\backslash \Mp_2(\mathbb{A});\widetilde{\Gamma'_\mathrm{f}},\varepsilon')=\{\Phi_h\,|\,h\in M_{k+1/2}(\Gamma',\varepsilon')\}
\]
and
\[
\mathcal{A}^{\tiny\mbox{CUSP}}_{k+1/2}(\SL_2(F)\backslash \Mp_2(\mathbb{A});\widetilde{\Gamma'_\mathrm{f}},\varepsilon')=\{\Phi_h\,|\,h\in S_{k+1/2}(\Gamma',\varepsilon')\}.
\]
\par
On the other hand, for any $\Phi\in\mathcal{A}_{k+1/2}(\SL_2(F)\backslash \Mp_2(\mathbb{A});\widetilde{\Gamma'_\mathrm{f}},\varepsilon'),$ if we put
\[
h_\Phi(z)=\Phi(g_\infty)\prod_{i=1}^n\tilde{j}(\iota_i(g_\infty),\sqrt{-1})^{2k_i+1}
\]
where $z\in\mathfrak{h}^n$ and $g_\infty\in\widetilde{\SL_2(\mathbb{R})^n}$ is chosen so that $g_\infty(\mathbf{i})=z,$ then $h_\Phi\in M_{k+1/2}(\Gamma',\varepsilon')$.
Furthermore, using the notations above, we have
\[
h_{\Phi_h}=h,
\]
thus we get a bijection between $\mathcal{A}_{k+1/2}(\SL_2(F)\backslash \Mp_2(\mathbb{A});\widetilde{\Gamma'_\mathrm{f}},\varepsilon')$ and $M_{k+1/2}(\Gamma',\varepsilon')$.
The similar result holds for the cusp case.
\par
We set
\[
\mathcal{A}_{k+1/2}(\SL_2(F)\backslash \Mp_2(\mathbb{A}))=\bigcup_{(\Gamma'_\mathrm{f},\varepsilon')}\mathcal{A}_{k+1/2}(\SL_2(F)\backslash \Mp_2(\mathbb{A});\widetilde{\Gamma'_\mathrm{f}},\varepsilon')
\]
and
\[
\mathcal{A}^{\tiny\mbox{CUSP}}_{k+1/2}(\SL_2(F)\backslash \Mp_2(\mathbb{A}))=\bigcup_{(\Gamma'_\mathrm{f},\varepsilon')}\mathcal{A}^{\tiny\mbox{CUSP}}_{k+1/2}(\SL_2(F)\backslash \Mp_2(\mathbb{A});\widetilde{\Gamma'_\mathrm{f}},\varepsilon')
\]
where $(\Gamma'_\mathrm{f},\varepsilon')$ runs over all pairs of open compact subgroup $\Gamma'_\mathrm{f}\in \SL_2(\mathbb{A}_\mathrm{f})$ and geniune character $\varepsilon$ of $\Gamma'_\mathrm{f}$.
The finite part $\Mp_2(\mathbb{A}_\mathrm{f})=\widetilde{\SL_2(\mathbb{A}_\mathrm{f})}$ acts on $\mathcal{A}_{k+1/2}(\SL_2(F)\backslash \Mp_2(\mathbb{A}))$ by the right translation $\rho$.
Also, the corresponding action of $\Mp_2(\mathbb{A}_\mathrm{f})=\widetilde{\SL_2(\mathbb{A}_\mathrm{f})}$ on $\bigcup_{(\Gamma'_\mathrm{f},\varepsilon')}M_{k+1/2}(\Gamma',\varepsilon')$ is also denoted by $\rho$.
The next lemma is well-known and can be gotten easily by using the definition and Fourier expansion.
\begin{lemma}\label{behavioursofusharpandm}
If $f$ is a modular form of weight $k+1/2$ with Fourier expansion $f(z)=\sum_{\xi\in F}c(\xi)q^\xi,$ for any $x_\mathrm{f}\in\mathbb{A}_\mathrm{f}$ and totally positive $a\in F^\times,$ we have
\[
\rho(\mathbf{u}^\sharp(x_\mathrm{f}))f(z)=\sum_{\xi\in F}c(\xi)\psi_1(\xi x)q^\xi
\]
and
\[
\rho(\mathbf{m}(a_\mathrm{f}))f(z)=f(a^{-2}z)a^{-k-1/2}
\]
where $a^{-2k-1}=\prod_{i=1}^n\iota_i(a)^{-k_i-1/2}$.
\end{lemma}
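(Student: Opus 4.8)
The plan is to transport both computations to the adelic lift $\Phi_f$ and read them off from the formulas relating $\Phi_f$ and $f$. Recall that the $\rho$-action on modular forms is defined through right translation on automorphic forms, so that $(\rho(g^0_\mathrm{f})f) = h_{\rho(g^0_\mathrm{f})\Phi_f}$ for a finite element $g^0_\mathrm{f}$. Combining this with the two defining formulas for the lift and its inverse, one gets, for $g_\infty\in\widetilde{\SL_2(\mathbb{R})^n}$ with $g_\infty(\mathbf{i})=z$,
\[
(\rho(g^0_\mathrm{f})f)(z)=\Phi_f(g_\infty g^0_\mathrm{f})\prod_{i=1}^n\tilde{j}(\iota_i(g_\infty),\sqrt{-1})^{2k_i+1}.
\]
The entire argument consists in rewriting $g_\infty g^0_\mathrm{f}$ in the standard shape $\gamma\,g'_\infty\,g'_\mathrm{f}$ by absorbing the finite translation into a rational element of $\SL_2(F)$ (using left-invariance) together with an element of $\widetilde{\Gamma'_\mathrm{f}}$ (using right $\varepsilon'$-covariance), leaving only an archimedean modification of $g_\infty$. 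Throughout I use that components supported at disjoint places commute, that $\tilde j$ is trivial on archimedean unipotents, and that the canonical embedding $\SL_2(F)\hookrightarrow\Mp_2(\mathbb{A})$ is a genuine homomorphism, so the metaplectic cocycles of the finite and archimedean components of a rational element cancel globally.

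First I treat $\mathbf{u}^\sharp(x_\mathrm{f})$. By strong approximation I choose $r\in F$ with $x_v-r$ in the upper-right lattice of $\Gamma_v$ for every finite $v$, so that $\mathbf{u}^\sharp(x_\mathrm{f}-r_\mathrm{f})\in\widetilde{\Gamma'_\mathrm{f}}$; since upper unipotents carry no metaplectic cocycle, I may write $g_\infty\mathbf{u}^\sharp(x_\mathrm{f})=\mathbf{u}^\sharp(r)\,(\mathbf{u}^\sharp(-r_\infty)g_\infty)\,\mathbf{u}^\sharp(x_\mathrm{f}-r_\mathrm{f})$ with $\mathbf{u}^\sharp(r)\in\SL_2(F)$ on the left and $\mathbf{u}^\sharp(x_\mathrm{f}-r_\mathrm{f})\in\widetilde{\Gamma'_\mathrm{f}}$ on the right. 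Because $\varepsilon'$ is trivial on $\widetilde{\Gamma'_\mathrm{f}}$-unipotents (the $c=0,\,d=1$ case of Lemma \ref{defofvarepsilon}) and $\tilde j(\mathbf{u}^\sharp(-r_\infty),\cdot)=1$, the displayed formula collapses to $(\rho(\mathbf{u}^\sharp(x_\mathrm{f}))f)(z)=f(z-r_\infty)$. Expanding $f(z-r_\infty)=\sum_\xi c(\xi)\psi_{1,\infty}(-\xi r)q^\xi$ and using $\psi_1|_F=1$, hence $\psi_{1,\infty}(-\xi r)=\psi_{1,\mathrm{f}}(\xi r)$, together with the index of $\psi_1$ to replace $\psi_{1,\mathrm{f}}(\xi r)$ by $\psi_{1,\mathrm{f}}(\xi x_\mathrm{f})$ on the discrete Fourier support, yields the first identity.

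For $\mathbf{m}(a_\mathrm{f})$ with $a\in F^\times$ totally positive the argument is more direct, since $a$ is already global. Left-multiplying by the rational element $\mathbf{m}(a^{-1})\in\SL_2(F)$ cancels the finite component exactly, so that $g_\infty\mathbf{m}(a_\mathrm{f})$ reduces to the purely archimedean element $\mathbf{m}(a_\infty^{-1})g_\infty$ with no residual $\widetilde{\Gamma'_\mathrm{f}}$-factor and hence no $\varepsilon'$-contribution. Since $\mathbf{m}(b)$ acts by $z\mapsto b^2z$, I get $\mathbf{m}(a_\infty^{-1})g_\infty(\mathbf{i})=a^{-2}z$, and the cocycle property of $\tilde j$ together with $\tilde j(\mathbf{m}(\iota_i(a)^{-1}),\cdot)=\iota_i(a)^{1/2}$ (the positive square root, as $\iota_i(a)>0$) produces the factor $\prod_i\iota_i(a)^{-k_i-1/2}=a^{-k-1/2}$ once the two automorphy products cancel, giving $(\rho(\mathbf{m}(a_\mathrm{f}))f)(z)=f(a^{-2}z)\,a^{-k-1/2}$.

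The only genuinely delicate points are bookkeeping. The hardest is verifying that no stray sign in $\{\pm1\}$ survives when a global embedded element is split into its finite and archimedean parts and recombined with $g^0_\mathrm{f}$: this is where one invokes the global compatibility behind the embedding $\SL_2(F)\hookrightarrow\Mp_2(\mathbb{A})$, namely the product formula $\prod_{v\leq\infty}\alpha_{\psi_{1,v}}(a)=1$ and Hilbert reciprocity for the cocycle $\boldsymbol c$, which force the local cocycle contributions to cancel. A secondary point is the approximation step in the unipotent case, where one checks that $\psi_{1,\mathrm{f}}(\xi(x_\mathrm{f}-r_\mathrm{f}))=1$ for every $\xi$ occurring in the expansion; this follows from the choice of $r$ together with the fact that $\psi_{1,v}$ is trivial on $\mathfrak{d}_v^{-1}$ by the definition of its index. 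Everything else is a direct substitution into the defining formulas for $\Phi_f$ and $\tilde j$.
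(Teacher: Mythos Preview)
The paper does not actually prove this lemma beyond the one-line remark preceding it that the result is well-known and ``can be gotten easily by using the definition and Fourier expansion.'' Your argument is correct and is exactly the computation that remark points to: transport to $\Phi_f$, use strong approximation together with the splitting of $\SL_2(F)$ in $\Mp_2(\mathbb{A})$ to peel off a rational element on the left, and read off the result from $\tilde j$ and the Fourier expansion; the total positivity of $a$ is indeed what kills the residual Hilbert-symbol sign $\prod_{v<\infty}(a,-1)_v$ in the diagonal case.
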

\par
From now, let us fix an odd square-free integral ideal $\mathfrak{I}$.
The congruence subgroup $K=\Gamma[\mathfrak{d}_1^{-1},\mathfrak{I}\mathfrak{d}_1]$ of $\SL_2(F)$ is defined by
\[
K=
\left\{\begin{pmatrix}a&b\\c&d\end{pmatrix}\in \SL_2(F)\,\bigg|\,a,d\in\mathfrak{o}, b\in\mathfrak{d}_1^{-1}, c\in\mathfrak{I}\mathfrak{d}_1 
\right\}.
\]
Hence if $v$ is a finite place of $F,$ we have
\[
K_v=
\begin{cases}
\Gamma[\mathfrak{d}_{1,v}^{-1},\varpi_v\mathfrak{d}_{1,v}]_v&\mbox{ if }v\mid\mathfrak{I}\\
\Gamma[\mathfrak{d}_{1,v}^{-1},\mathfrak{d}_{1,v}]_v&\mbox{ otherwise.}
\end{cases}
\]
Similarly, we set $\Gamma=\Gamma[\mathfrak{d}_1^{-1},4\mathfrak{I}\mathfrak{d}_1]$.
Then for $v<\infty,$ we have
\[
\Gamma_v=
\begin{cases}
\Gamma[\mathfrak{d}_{1,v}^{-1},4\varpi_v\mathfrak{d}_{1,v}]_v=\Gamma[\mathfrak{d}_{1,v}^{-1},\varpi_v\mathfrak{d}_{1,v}]_v&\mbox{ if }v\mid\mathfrak{I}\\
\Gamma[\mathfrak{d}_{1,v}^{-1},4\mathfrak{d}_{1,v}]_v&\mbox{ otherwise.}
\end{cases}
\]
Let $\Gamma_\mathrm{f}=\prod_{v<\infty}\Gamma_v$.
\par
We fix an square-free integer $\mathfrak{f}$ such that
\[
\sgn\left(N_{F/\mathbb{Q}}(\mathfrak{f})\right)=(-1)^{\sum^n_{i=1} k_i}
\]
and
\[
\mathfrak{I}\subset(\mathfrak{f}).
\]
Here by saying $\mathfrak{f}$ is square-free we mean that the principal ideal $(\mathfrak{f})$ generated by $\mathfrak{f}$ is square-free.
Then we set $\psi$ to be the additive character of $\mathbb{A}$ given by
\[
\psi(x)=\psi_1(\mathfrak{f}x).
\]
For any finite place $v$ of $F,$ the degree of $\psi_v$ is $c_{1,v}+1$ if $v|\mathfrak{f}$ or $c_{1,v}$ otherwise.
We let $\mathfrak{d}=\mathfrak{f}\mathfrak{d}_1$.
Thus for any finite place $v,$ the maximal local ideal in $F_v$ on which $\psi_v$ is identical to $1$ is $\mathfrak{d}_v^{-1}$.
Set $\Gamma_0(\xi_v)_v=\Gamma[\mathfrak{d}_v^{-1},\xi_v\mathfrak{d}_v]_v$ for $\xi_v\in\mathfrak{o}_v$.
Then by the definitions of $K$ and $\Gamma$ given above, we have $K=\Gamma[\mathfrak{f}\mathfrak{d}^{-1},\mathfrak{I}\mathfrak{f}^{-1}\mathfrak{d}]$ and $\Gamma=\Gamma[\mathfrak{f}\mathfrak{d}^{-1},4\mathfrak{I}\mathfrak{f}^{-1}\mathfrak{d}]$.
In particular, for $v<\infty,$ we have that
\begin{equation}\label{defofKv}
	K_v=
	\begin{cases}
		\Gamma_0(\varpi_v)_v&\mbox{ if }v\mid\mathfrak{I}\mbox{ but }v\nmid\mathfrak{f},\\
		\Gamma[\varpi_v\mathfrak{d}_v^{-1},\mathfrak{d}_v]_v&\mbox{ if }v\mid\mathfrak{f},\\
		\Gamma_0(1)_v&\mbox{ otherwise,}
	\end{cases}
\end{equation}
and
\[
\Gamma_v=
\begin{cases}
K_v&\mbox{ if }v\mid\mathfrak{I}\\
\Gamma_0(4)_v&\mbox{ otherwise.}
\end{cases}
\]
\par
By Lemma $\ref{defofvarepsilon}$, for any finite place $v$ of $F,$ there exists a genuine character $\varepsilon_v:\widetilde{\Gamma_v}\rightarrow\mathbb{C}^\times$ given by $\omega_{\psi_v}(\gamma)\phi_{0,v}=\varepsilon_v(\gamma)^{-1}\phi_{0,v}$. 
From this we get a genuine character $\varepsilon=\prod_{v<\infty}\varepsilon_v$ of $\widetilde{\Gamma_\mathrm{f}}$.
We want to consider the factor of automorphy $J_\mathfrak{f}^{k+1/2}=J_{\varepsilon}^{k+1/2}$ defined as (\ref{defofJ}).
Set $J^{k+1/2}=J_1^{k+1/2}$.
It is known that if $\mathfrak{I}=\mathfrak{o}$ and $\mathfrak{f}=1,$ then $J^{1/2}$ is the usual factor of automorphy of weight $1/2$.
That is, it is the factor of automorphy $j^{1/2}$ on $\Gamma_0(4)\times\mathfrak{h}_n$ which satisfies
\[
\theta(\gamma z)=j^{1/2}(\gamma,z)\theta(z)
\]
for $\gamma\in\Gamma_0(4)$ and $z\in\mathfrak{h}^m$.
Here $\theta$ is the usual theta series given by
\[
\theta(z)=\sum_{\xi\in\mathfrak{o}}\mathbf{e}(\tr(\xi^2z))=\sum_{\xi\in\mathfrak{o}}q^{\xi^2}.
\]
If $\mathfrak{I}\neq\mathfrak{o}$ and $\mathfrak{f}$ satisfies $\sgn(N_{F/\mathbb{Q}}(\mathfrak{f}))=1,$ with the notations above, by the explicit formulas of $\varepsilon_v$ given in Section \ref{Weil representation} and the properties of Weil constants, we have
\[
J^{1/2}_\mathfrak{f}(\gamma,z)=j^{1/2}(\gamma,z)\prod_{v\,\mathrm{even}}(\mathfrak{f},d)_v\prod_{v\mid\mathfrak{f}}(\mathfrak{f},d)_v\mbox{ for }\gamma=
\begin{pmatrix}
a&b\\c&d
\end{pmatrix}\in\Gamma_0(4\mathfrak{I}),
\]
where $(\quad,\quad)_v$ is the quadratic Hilbert symbol for the completion $F_v$ of $F$ with respect to the place $v$.
For simplicity, we put
\[
\chi_\mathfrak{f}(d)=\prod_{v\,\mathrm{even}}(\mathfrak{f},d)_v\prod_{v\mid\mathfrak{f}}(\mathfrak{f},d)_v.
\]
Back to the case of weight $k=(k_1,\dots,k_n),$ the space of modular forms and cusp forms with respect to $J^{k+1/2}_\mathfrak{f}$ are denoted by $M_{k+1/2}(\Gamma_0(4\mathfrak{I}),\chi_\mathfrak{f})$ and $S_{k+1/2}(\Gamma_0(4\mathfrak{I}),\chi_\mathfrak{f}),$ respectively.
\par
For the case $F=\mathbb{Q}$ and weight $k+1/2,$ if $\mathfrak{I}$ is the principal ideal generated by some square-free odd natural number $N$ and $\mathfrak{f}=(-1)^kn$ for some positive divisor $n$ of $N,$ the space $S_{k+1/2}(\Gamma_0(4\mathfrak{I}),\chi_\mathfrak{f})$ coincides with the space $S_{k+1/2}(N,\left(\frac{}{n}\right))$ considered in \cite{Kohnen:82}.


\section{The plus space}
We use the same notations as in Section \ref{The archimedean case}.
Now we want to define the plus spaces in $M_{k+1/2}(\Gamma_0(4\mathfrak{I}),\chi_\mathfrak{f})$ and  $S_{k+1/2}(\Gamma_0(4\mathfrak{I}),\chi_\mathfrak{f})$.
For any finite place $v$ of $F,$ let $K_v$ be the open compact congruence subgroup defined by (\ref{defofKv}) and the idempotent Hecke operator $E^K_v$ be the ones defined by Definition $\ref{defofeKandEK}$.
The global Hecke operator $E^K=\prod_{v<\infty}E^K_v$ acts on $\mathcal{A}_{k+1/2}(\SL_2(F)\backslash \Mp_2(\mathbb{A}))$ by
\[
\rho(E^K)\Phi(g)=\int_{\Mp_2(\mathbb{A}_\mathrm{f})}\rho(h)\Phi(g)E^K(h)dh.
\]
In the same way $E^K$ also acts on the space of all modular forms of weight $k+1/2$.
If we put
\[
\mathcal{A}_{k+1/2}(\SL_2(F)\backslash \Mp_2(\mathbb{A}))^{E^K}=\left\{\Phi\in\mathcal{A}_{k+1/2}(\SL_2(F)\backslash \Mp_2(\mathbb{A}))\,|\,\rho(E^K)\Phi=\Phi\right\}
\]
and
\[
\mathcal{A}^{\tiny\mbox{CUSP}}_{k+1/2}(\SL_2(F)\backslash \Mp_2(\mathbb{A}))^{E^K}=\left\{\Phi\in\mathcal{A}^{\tiny\mbox{CUSP}}_{k+1/2}(\SL_2(F)\backslash \Mp_2(\mathbb{A}))\,|\,\rho(E^K)\Phi=\Phi\right\},
\]
then obviously
\[
\mathcal{A}_{k+1/2}(\SL_2(F)\backslash \Mp_2(\mathbb{A}))^{E^K}\subset\mathcal{A}_{k+1/2}(\SL_2(F)\backslash \Mp_2(\mathbb{A});\widetilde{\Gamma_0(4\mathfrak{I})_\mathrm{f}},\varepsilon)
\]
and
\[
\mathcal{A}^{\tiny\mbox{CUSP}}_{k+1/2}(\SL_2(F)\backslash \Mp_2(\mathbb{A}))^{E^K}\subset\mathcal{A}^{\tiny\mbox{CUSP}}_{k+1/2}(\SL_2(F)\backslash \Mp_2(\mathbb{A});\widetilde{\Gamma_0(4\mathfrak{I})_\mathrm{f}},\varepsilon).
\]
This implies that
\[
M_{k+1/2}(\Gamma_0(4\mathfrak{I}),\chi_\mathfrak{f})^{E^K}\subset M_{k+1/2}(\Gamma_0(4\mathfrak{I}),\chi_\mathfrak{f})
\]
and
\[
S_{k+1/2}(\Gamma_0(4\mathfrak{I}),\chi_\mathfrak{f})^{E^K}\subset S_{k+1/2}(\Gamma_0(4\mathfrak{I}),\chi_\mathfrak{f}).
\]
\begin{definition}
We put
\[
M^+_{k+1/2}(\Gamma_0(4\mathfrak{I}),\chi_\mathfrak{f})=M_{k+1/2}(\Gamma_0(4\mathfrak{I}),\chi_\mathfrak{f})^{E^K}
\]
and
\[
S^+_{k+1/2}(\Gamma_0(4\mathfrak{I}),\chi_\mathfrak{f})=S_{k+1/2}(\Gamma_0(4\mathfrak{I}),\chi_\mathfrak{f})^{E^K}.
\]	
They are both called the Kohnen plus space.
\end{definition}
We are going to see that this definition coincides with the one defined by Kohnen or Hiraga \& Ikeda if $F=\mathbb{Q}$ or $\mathfrak{I}=\mathfrak{o},$ respectively.
For any finite place $v,$ fix one $\boldsymbol{\delta}_v\in F_v$ such that $\boldsymbol{\delta}_v\mathfrak{o}_v=\mathfrak{d}_v$.
\begin{proposition}\label{plusimpliesfourier}
If $f=\sum_{\xi\in F}c(\xi)q^\xi\in M^+_{k+1/2}(\Gamma_0(4\mathfrak{I}),\chi_\mathfrak{f}),$ then $c(\xi)\neq0$ only if there exists $\lambda\in\mathfrak{o}$ such that $\xi\equiv\mathfrak{f}\lambda^2\mod4\mathfrak{o}$.
\end{proposition}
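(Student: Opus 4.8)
The plan is to pass to the local picture at the even places and read off the congruence from a single quadratic Gauss sum. Since $E^K=\prod_{v<\infty}E^K_v$ is a product of commuting idempotents, the hypothesis $f\in M^+_{k+1/2}(\Gamma_0(4\mathfrak{I}),\chi_{\mathfrak f})$ is equivalent to $\rho(E^K_v)f=f$ for every finite $v$; and because $4\mathfrak{o}$ is supported on the places dividing $2$, the asserted congruence $\xi\equiv\mathfrak f\lambda^2\ (4\mathfrak{o})$ is a condition only at $v\mid 2$ (at odd $v$ one has $4\mathfrak{o}_v=\mathfrak{o}_v$, so it is vacuous). Hence it suffices to prove the local statement: for each $v\mid 2$, the identity $\rho(E^K_v)f=f$ forces $c(\xi)=0$ unless $\xi\in\mathfrak f y^2+4\mathfrak{o}_v$ for some $y\in\mathfrak{o}_v$. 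Given such local solutions $\lambda_v$, the Chinese remainder theorem over the finitely many even primes produces a single $\lambda\in\mathfrak{o}$ with $\xi\equiv\mathfrak f\lambda^2$ modulo $4\mathfrak{o}_v$ for all $v\mid 2$, that is, modulo $4\mathfrak{o}$, which is the assertion. (Throughout one uses that invariance under $\mathbf{u}^\sharp(\mathfrak{d}_v^{-1})$ already forces $c(\xi)=0$ unless $\xi\in\mathfrak{o}_v$, so the congruences make sense.)

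Next I would turn $\rho(E^K_v)$ into an explicit operator on Fourier expansions. By \eqref{eq:formulaofEK} the operator $E^K_v$ is supported on $\widetilde{\Gamma[4^{-1}\mathfrak{d}_v^{-1},4\mathfrak{d}_v]}$, which contains $\widetilde{\Gamma_0(4)_v}$ and has the unipotents $\mathbf{u}^\sharp(4^{-1}\boldsymbol{\delta}_v^{-1}z)$, $z\in\mathfrak{o}_v/4\mathfrak{o}_v$, among its left-coset representatives. Using the $\varepsilon_v$-equivariance of both $E^K_v$ and $f$ to collapse each $\widetilde{\Gamma_0(4)_v}$-integration, one obtains $\rho(E^K_v)f=\sum_i E^K_v(h_i)\,\rho(h_i)f$ over a set of left-coset representatives $h_i$. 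The unipotent representatives act diagonally on Fourier modes: by Lemma~\ref{behavioursofusharpandm}, $\rho(\mathbf{u}^\sharp(4^{-1}\boldsymbol{\delta}_v^{-1}z))f$ multiplies the $\xi$-coefficient by $\psi_{1,v}(4^{-1}\boldsymbol{\delta}_v^{-1}\xi z)$, and the weight $E^K_v(\mathbf{u}^\sharp(4^{-1}\boldsymbol{\delta}_v^{-1}z))$ is exactly the quadratic Gauss integral over $\mathfrak{o}_v$ computed in the even-$q$ evaluation of $E^K$ above.

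The heart of the argument is the resulting double sum. Inserting the Gauss-integral value and interchanging the sum over $z\in\mathfrak{o}_v/4\mathfrak{o}_v$ with the integral over $y\in\mathfrak{o}_v$, the inner $z$-sum becomes the additive character sum $\sum_{z}\psi_{1,v}\!\bigl(\boldsymbol{\delta}_v^{-1}\tfrac{\xi-\mathfrak f y^2}{4}z\bigr)$, which by orthogonality equals $q_v^{2e}$ when $\xi\equiv\mathfrak f y^2\ (4\mathfrak{o}_v)$ and vanishes otherwise. Thus the unipotent part acts on the $\xi$-mode as a positive multiple of $\Vol\{y\in\mathfrak{o}_v:\xi\equiv\mathfrak f y^2\ (4\mathfrak{o}_v)\}$, which is zero precisely when $\xi$ is not of the form $\mathfrak f\lambda^2$ modulo $4\mathfrak{o}_v$. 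For such $\xi$ the unipotent part of $\rho(E^K_v)f$ has vanishing $\xi$-coefficient, so---once the remaining representatives are shown not to contribute---the equation $\rho(E^K_v)f=f$ yields $c(\xi)=0$.

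The main obstacle is that the coset space $\widetilde{\Gamma[4^{-1}\mathfrak{d}_v^{-1},4\mathfrak{d}_v]}/\widetilde{\Gamma_0(4)_v}\cong\mathbb{P}^1(\mathfrak{o}_v/4\mathfrak{o}_v)$ also contains Weyl-type representatives $\mathbf{w}_{4\boldsymbol{\delta}_v c}$ (the point at infinity), on which $\rho$ acts by a local Fourier transform and therefore mixes Fourier modes, so $\rho(E^K_v)$ is not literally diagonal. I expect this to be the real work: one must show these off-diagonal terms cannot restore a forbidden coefficient. I would handle them by computing their contribution directly from the value $E^K_v(\mathbf{w}_{4\boldsymbol{\delta}_v c})$ recorded above and collapsing the arising Gauss sums with Lemma~\ref{lem:sumofWeilconsts}; alternatively, and more cleanly, by using $E^K_v(g)=e^K_v(\mathbf{w}_{2\boldsymbol{\delta}_v}^{-1}g\,\mathbf{w}_{2\boldsymbol{\delta}_v})$ from Definition~\ref{defofeKandEK} to transport the entire computation to $e^K_v$ and the matrix coefficient $(\phi_{0,v},\omega_{\psi_v}(\cdot)\phi_{0,v})$, where Schur orthogonality together with Proposition~\ref{prop:EKfixedsubsp} identifies the image of the projector intrinsically and makes the Fourier support condition transparent.
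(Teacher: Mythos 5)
Your reduction to the even places and your treatment of the unipotent coset representatives are sound: the orthogonality computation showing that the ``diagonal'' part of $\rho(E^K_v)$ scales the $\xi$-mode by a positive multiple of $\Vol\{y\in\mathfrak{o}_v:\xi\equiv\mathfrak{f}y^2\ (4\mathfrak{o}_v)\}$ is correct, and the Chinese-remainder patching at the end is fine. But the proof does not close, and you have correctly located where it fails: the Weyl-type representatives $\mathbf{u}^\sharp(4^{-1}\boldsymbol{\delta}_v^{-1}\xi)\mathbf{w}_{4\boldsymbol{\delta}_v}$, $\xi\in\mathfrak{p}_v/4\mathfrak{o}_v$. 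The difficulty is not merely computational. The operator $\rho(\mathbf{w}_{4\boldsymbol{\delta}_vc})$ is a local Fricke-type involution; its effect on the $\xi$-th Fourier coefficient of $f$ is not a finite linear combination of the $c(\xi')$, so the $\xi$-coefficient of the Weyl part of $\rho(E^K_v)f$ cannot be evaluated mode by mode the way the unipotent part can. Worse, any attempt to show that this contribution vanishes on forbidden modes seems to require prior knowledge of the Fourier support of $\rho(\mathbf{w}_{4\boldsymbol{\delta}_v})f$, i.e.\ essentially the statement being proved; the ``collapse the Gauss sums with Lemma~\ref{lem:sumofWeilconsts}'' route is therefore circular as sketched, and Lemma~\ref{lem:sumofWeilconsts} is in any case a statement for odd residue characteristic. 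Your second alternative (conjugating by $\mathbf{w}_{2\boldsymbol{\delta}_v}$ to work with $e^K_v$) points in the right direction but is not an argument yet: Proposition~\ref{prop:EKfixedsubsp} identifies a one-dimensional fixed line inside a principal series and says nothing by itself about Fourier coefficients of modular forms.

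The paper closes exactly this gap by a representation-theoretic device that your proposal is missing. After replacing $f$ by $f_0=c\cdot\rho(\mathbf{w}_{2\boldsymbol{\delta}}'^{-1})f$, the condition $\rho(e^K)f_0=f_0$ together with the fact that $e^K$ is a matrix coefficient of the irreducible $\widetilde{K_{\mathrm{f}}}$-module $\Omega_\psi$ on $\mathbb{S}(2^{-1}\hat{\mathfrak{o}}/\hat{\mathfrak{o}})$ forces the $\widetilde{K_{\mathrm{f}}}$-span $\mathcal{V}$ of $f_0$ to be isomorphic to $\Omega_\psi$, with an intertwiner sending $f_0\mapsto\phi_0$. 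One then transports the eigenbasis $\{\phi_\lambda\}_{\lambda\in\mathfrak{o}/2\mathfrak{o}}$ (eigenvectors of the unipotent action, $\Omega_\psi(\mathbf{u}^\sharp(x))\phi_\lambda=\psi(x\lambda^2/4)\phi_\lambda$) to modular forms $f_\lambda$, reads off from the eigenvalue equation that $f_\lambda$ is supported on $\xi\equiv\mathfrak{f}\lambda^2\ (4\mathfrak{o})$, and recovers $f(z/4)=\sum_\lambda f_\lambda(z)$ from the formula for $\Omega_\psi(\mathbf{w}_{\boldsymbol{\delta}}')\phi_0$. This replaces your problematic Weyl-coset bookkeeping by a single model computation inside $\Omega_\psi$, which is the step you would need to supply to make your argument complete.
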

\begin{proof}
The proof is very similar to the one for Proposition 13.4 in \cite{HiraIke:13}.
Let $\hat{\mathfrak{o}}=\prod_{v<\infty}\mathfrak{o}_v$ and $\boldsymbol\delta=\prod_{v<\infty}\boldsymbol{\delta}_v\in\mathbb{A}^\times$.
Put $e^K=\prod_{v<\infty}e^K_v$ where $e^K_v$ is as in Definition \ref{defofeKandEK} and $\mathbf{w}'_{2\boldsymbol{\delta}}=\prod_{v\mid2}\mathbf{w}_{2\boldsymbol{\delta}_v}$.
We set
\[
f_0=2^{\sum_{j=1}^n k_j}\prod_{v|2}\overline{\alpha_{\psi_v}(\boldsymbol{\delta}_v)}(2,\boldsymbol{\delta}_v)_v\times\rho({\mathbf{w}'_{2\boldsymbol{\delta}}}^{-1})f.
\]
Then $\rho(E^K)f=f$ is equivalent to
\[
\rho(e^K)f_0=f_0.
\]
Put $K_\mathrm{f}=\prod_{v<\infty}K_v$.
Since $e^K$ is a matrix coefficient of the irreducible representation $\Omega_\psi=\otimes_{v<\infty}\Omega_{\psi_v}$ of $\widetilde{K_\mathrm{f}}$ on $\mathbb{S}(2^{-1}\hat{\mathfrak{o}}/\hat{\mathfrak{o}})=\otimes_{v<\infty}\mathbb{S}(2^{-1}\mathfrak{o}_v/\mathfrak{o}_v),$ the above implies that the complex space $\mathcal{V}$ spanned by $\{\rho(\gamma)f_0\mid\gamma\in\widetilde{K_\mathrm{f}}\}$ is a representation of $\widetilde{K_\mathrm{f}}$ isomorphic to $\Omega_\psi$.
Here $\mathbb{S}(2^{-1}\hat{\mathfrak{o}}/\hat{\mathfrak{o}})$ consists of Schwartz functions $\phi$ on $2^{-1}\hat{\mathfrak{o}}$ such that $\phi(x+y)=\phi(x)$ for $y\in\hat{\mathfrak{o}}$.
By Chinese remainder theorem, if we denote the characteristic function of $\lambda/2+\hat{\mathfrak{o}}$ in $\mathbb{S}(2^{-1}\hat{\mathfrak{o}}/\hat{\mathfrak{o}})$ by $\phi_\lambda$ for any $\lambda\in\mathfrak{o}/2\mathfrak{o},$ then $\phi_\lambda$ form a orthonormal basis for $\mathbb{S}(2^{-1}\hat{\mathfrak{o}}/\hat{\mathfrak{o}})$.
Noticing that $\Omega_\psi(e^K)\phi_0=\phi_0$ by Schur's lemma, there exists an intertwining map $i:\mathcal{V}\rightarrow\mathbb{S}(2^{-1}\hat{\mathfrak{o}}/\hat{\mathfrak{o}})$ such that $i(f_0)=\phi_0$.
Likewise, for $\lambda\in\mathfrak{o}/2\mathfrak{o}$, we set $f_\lambda\in\mathcal{V}$ to be the functions such that
\[
i(f_\lambda)=\phi_\lambda.
\]
As $\mathbf{w}'_{2\boldsymbol{\delta}},$ we put $\mathbf{w}'_{\boldsymbol{\delta}}=\prod_{v\mid2}\mathbf{w}_{\boldsymbol{\delta}_v}$
Since
\[
\Omega_\psi(\mathbf{w}'_{\boldsymbol{\delta}})\phi_0=2^{-n/2}\prod_{v|2}\overline{\alpha_{\psi_v}(\boldsymbol{\delta}_v)}\sum_{\lambda\in\mathfrak{o}/2\mathfrak{o}}\phi_\lambda,
\]
we also have
\[
\rho(\mathbf{w}'_{\boldsymbol{\delta}})f_0=2^{-n/2}\prod_{v|2}\overline{\alpha_{\psi_v}(\boldsymbol{\delta}_v)}\sum_{\lambda\in\mathfrak{o}/2\mathfrak{o}}f_\lambda,
\]
from which we get
\begin{align}{\label{sumofflambda}}
\begin{split}
&\sum_{\lambda\in\mathfrak{o}/2\mathfrak{o}}f_\lambda(z)\\
=&2^{\sum_{j=1}^n (k_j+1/2)}\prod_{v\mid2}(2,\boldsymbol{\delta}_v)_v\times\rho(\mathbf{w}'_{\boldsymbol{\delta}}{\mathbf{w}'_{2\boldsymbol{\delta}}}^{-1})f(z)\\
=&2^{\sum_{j=1}^n (k_j+1/2)}\rho(\prod_{v\mid2}\mathbf{m}(2_v))f(z)\\
=&2^{\sum_{j=1}^n (k_j+1/2)}\rho(\mathbf{m}(2_v))f(z)\\
=&f(z/4)
\end{split}
\end{align}
where the last equation comes from the fact that $\rho(\mathbf{m}(2_v))$ acts trivially on $f$ for odd $v$ and Lemma \ref{behavioursofusharpandm}.
Put $\hat{\mathfrak{d}}=\boldsymbol{\delta}\hat{\mathfrak{o}}$.
Now since
\[
\Omega_\psi(\mathbf{u}^\sharp(x))\phi_\lambda=\psi(x\lambda^2/4)\phi_\lambda\quad(x\in\mathfrak{f}\hat{\mathfrak{d}}^{-1})
\]
we also have
\begin{equation}\label{usharpbehav}
\rho(\mathbf{u}^\sharp(x))f_\lambda=\psi(x\lambda^2/4)f_\lambda\quad(x\in\mathfrak{f}\hat{\mathfrak{d}}^{-1}).
\end{equation}
But if we write the the Fourier expansion of $f_\lambda$ in the form
\[
f_\lambda(z)=\sum_{\xi\in F}c_\lambda(\xi)q^{\xi/4},
\]
then by Lemma \ref{behavioursofusharpandm} again, we get
\[
\rho(\mathbf{u}^\sharp(x))f_\lambda(z)=\sum_{\xi\in F}c_\lambda(\xi)\psi_1(x\xi/4)q^{\xi/4}\quad(x\in\mathfrak{f}\hat{\mathfrak{d}}^{-1}).
\]
By comparing this with (\ref{usharpbehav}), we see $c_\lambda(\xi)=0$ unless $\psi(x\lambda^2/4)=\psi_1(x\xi/4)=\psi(\mathfrak{f}^{-1}x\xi/4)$ for all $x\in\mathfrak{f}\hat{\mathfrak{d}}^{-1},$ that is, $\xi\equiv\mathfrak{f}\lambda^2\mod4\mathfrak{o}$.
One can easily check that this does not depend on the choice of $\lambda\mod2\mathfrak{o}$.
Now by (\ref{sumofflambda}), we see that the $\xi$-th Fourier coefficient of $f$ does not vanish only if there exists some $\lambda\in\mathfrak{o}$ such that $\xi\equiv\mathfrak{f}\lambda^2\mod4\mathfrak{o},$ which is what we aimed to show.
\end{proof}
The inverse of this proposition is also true.
We can apply the proof used for Proposition 7.2 in \cite{Ren:16}.

\begin{proposition}\label{fourierimpliesplus}
If $f\in M_{k+1/2}(\Gamma_0(4\mathfrak{I}),\chi_\mathfrak{f})$ and the Fourier coefficients $c(\xi)$ of $f$ satisfy that $c(\xi)$ occurs only if there exists $\lambda\in\mathfrak{o}$ such that $\xi\equiv\mathfrak{f}\lambda^2\mod4\mathfrak{o},$ then $f\in M^+_{k+1/2}(\Gamma_0(4\mathfrak{I}),\chi_\mathfrak{f})$.
\end{proposition}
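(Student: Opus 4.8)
The plan is to reverse the argument of Proposition \ref{plusimpliesfourier}. As there, the first move is to pass to the twisted form $f_0=2^{\sum_{j=1}^n k_j}\prod_{v\mid2}\overline{\alpha_{\psi_v}(\boldsymbol{\delta}_v)}(2,\boldsymbol{\delta}_v)_v\cdot\rho({\mathbf{w}'_{2\boldsymbol{\delta}}}^{-1})f$, for which $\rho(E^K)f=f$ is equivalent to $\rho(e^K)f_0=f_0$. At every odd place $v$ this is automatic: $f$ is already fixed by $\widetilde{\Gamma_v}$ under $\varepsilon_v$, the space $\mathbb{S}(2^{-1}\mathfrak{o}_v/\mathfrak{o}_v)$ is one-dimensional, and $E^K_v=e^K_v$ is the idempotent cutting out its $\phi_{0,v}$-line, so the odd-place factors of $e^K$ fix $f_0$. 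Thus the whole content sits at the even places, and it suffices to exhibit $f_0$ as the $\phi_0$-vector inside a copy of the irreducible representation $\Omega_\psi=\otimes_{v<\infty}\Omega_{\psi_v}$ of $\widetilde{K_\mathrm{f}}$; since $e^K$ is the idempotent projecting onto $\mathbb{C}\phi_0$ in any such copy, that alone yields $\rho(e^K)f_0=f_0$.

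To produce this copy directly from the Fourier data, set $g=\rho(\prod_{v\mid2}\mathbf{m}(2_v))f$, which by Lemma \ref{behavioursofusharpandm} has $q$-expansion $\sum_{\xi}c(\xi)q^{\xi/4}$ up to a nonzero constant (this is the identity behind \eqref{sumofflambda}). For each class $\tau$ represented by some $\mathfrak{f}\lambda^2$ modulo $4\mathfrak{o}$, let $\Pi_\tau=\Vol^{-1}\sum_x\overline{\psi(x\lambda^2/4)}\,\rho(\mathbf{u}^\sharp(x))$, the average over a finite quotient of $\mathfrak{f}\hat{\mathfrak{d}}^{-1}$, be the projection onto the eigenspace on which $\mathbf{u}^\sharp(x)$ acts by $\psi(x\lambda^2/4)$. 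Since $\rho(\mathbf{u}^\sharp(x))$ preserves modularity and multiplies the $\xi/4$-th coefficient by $\psi_1(x\xi/4)$ (Lemma \ref{behavioursofusharpandm}), $\Pi_\tau g$ is again a form of weight $k+1/2$ collecting exactly the coefficients with $\xi\equiv\mathfrak{f}\lambda^2\pmod{4\mathfrak{o}}$. The hypothesis that $c(\xi)=0$ unless $\xi\equiv\mathfrak{f}\lambda^2$ for some $\lambda$ says precisely that $g$ lies in the span of these eigenspaces, so $g=\sum_\tau\Pi_\tau g$. The same eigenspace decomposition holds in $\Omega_\psi$, where $\Omega_\psi(\mathbf{u}^\sharp(x))\phi_\lambda=\psi(x\lambda^2/4)\phi_\lambda$ for $x\in\mathfrak{f}\hat{\mathfrak{d}}^{-1}$ and $\Omega_\psi(\mathbf{w}'_{\boldsymbol{\delta}})\phi_0=2^{-n/2}\prod_{v\mid2}\overline{\alpha_{\psi_v}(\boldsymbol{\delta}_v)}\sum_{\lambda\in\mathfrak{o}/2\mathfrak{o}}\phi_\lambda$, and the $\tau=0$ eigenspace is exactly $\mathbb{C}\phi_0$.

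The finish is to transport these identities through an intertwiner. Using $\mathbf{w}'_{\boldsymbol{\delta}}{\mathbf{w}'_{2\boldsymbol{\delta}}}^{-1}=\prod_{v\mid2}\mathbf{m}(2_v)$ one gets $\rho(\mathbf{w}'_{\boldsymbol{\delta}})f_0\propto g$, mirroring $\Omega_\psi(\mathbf{w}'_{\boldsymbol{\delta}})\phi_0\propto\sum_\lambda\phi_\lambda$. If the matching of $\sum_\lambda\phi_\lambda$ with $g$ extends to a $\widetilde{K_\mathrm{f}}$-equivariant map $T:\Omega_\psi\to\mathcal{A}_{k+1/2}$, then $T\phi_0\propto\rho({\mathbf{w}'_{\boldsymbol{\delta}}}^{-1})g\propto f_0$, and since $\rho(e^K)\phi_0=\phi_0$ we obtain $\rho(e^K)f_0\propto T(\rho(e^K)\phi_0)=T\phi_0\propto f_0$, that is $\rho(E^K)f=f$ and $f\in M^+_{k+1/2}(\Gamma_0(4\mathfrak{I}),\chi_\mathfrak{f})$. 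The main obstacle is exactly the construction of $T$: the computation above only matches the action of the abelian group $\mathbf{u}^\sharp(\mathfrak{f}\hat{\mathfrak{d}}^{-1})$, whose eigenspaces do not separate $\phi_\lambda$ from $\phi_{-\lambda}$, so one cannot read off the full $\widetilde{K_\mathrm{f}}$-action coefficient by coefficient. To close this gap I would use the irreducibility of $\Omega_\psi$ established by Hiraga and Ikeda together with the multiplicity-one fact that the trivial $\mathbf{u}^\sharp(\mathfrak{f}\hat{\mathfrak{d}}^{-1})$-eigenspace of $\Omega_\psi$ is exactly $\mathbb{C}\phi_0$; these force the $\widetilde{K_\mathrm{f}}$-module generated by $f_0$ to be a single copy of $\Omega_\psi$ carrying $f_0$ in its $\phi_0$-line, which is all that $\rho(e^K)f_0=f_0$ requires.
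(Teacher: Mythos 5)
Your reduction to $\rho(e^K)f_0=f_0$, the observation that the odd places are automatic, and the decomposition of $g=\rho(\prod_{v\mid2}\mathbf{m}(2_v))f$ into $\mathbf{u}^\sharp(\mathfrak{f}\hat{\mathfrak{d}}^{-1})$-eigencomponents indexed by the classes $\mathfrak{f}\lambda^2\bmod 4\mathfrak{o}$ all match the paper's argument (the paper's $f_\lambda$ are exactly your $\Pi_\tau g$). You have also correctly located the hard point: nothing computed so far determines how the rest of $\widetilde{K_\mathrm{f}}$ acts on the span of these eigencomponents. The problem is that your proposed fix does not close this gap. Irreducibility of $\Omega_\psi$ and the fact that its trivial $\mathbf{u}^\sharp(\mathfrak{f}\hat{\mathfrak{d}}^{-1})$-eigenspace is $\mathbb{C}\phi_0$ are statements internal to $\Omega_\psi$; they place no constraint on the $\widetilde{K_\mathrm{f}}$-module $W$ generated by $f_0$ inside $\mathcal{A}_{k+1/2}$. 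Decompose $W$ into irreducibles and write $f_0=\sum_i w_i$ accordingly: since $e^K$ is a multiple of the matrix coefficient $(\phi_0,\omega_\psi(\cdot)\phi_0)$ of $\Omega_\psi$, Schur orthogonality makes $\rho(e^K)$ annihilate every constituent not isomorphic to $\Omega_\psi$ and project each constituent isomorphic to $\Omega_\psi$ onto its $\phi_0$-line. So $\rho(e^K)f_0=f_0$ forces \emph{every} constituent of $W$ to be a copy of $\Omega_\psi$ with $w_i$ on the $\phi_0$-line, and your cited facts cannot rule out a stray constituent $W_i\not\cong\Omega_\psi$. (Note also that the $\phi_\lambda$-versus-$\phi_{-\lambda}$ ambiguity you worry about is vacuous, since $\lambda=-\lambda$ in $\mathfrak{o}/2\mathfrak{o}$ and the characters $x\mapsto\psi(x\lambda^2/4)$ are in fact pairwise distinct; the real missing information is the action of elements such as $\mathbf{w}_{\boldsymbol{\delta}_v}$.)

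What the paper uses to pin down the module structure, and what your argument omits, is a second equivariance property: conjugating the $\widetilde{\Gamma_0(4\mathfrak{I})_\mathrm{f}}$-invariance of $f$ by $\mathbf{m}(2_\mathrm{f})$ shows that $f(z/4)=\sum_\lambda f_\lambda$ transforms under the larger group $\widetilde{\Gamma[4\mathfrak{f}\mathfrak{d}^{-1},\mathfrak{I}\mathfrak{f}^{-1}\mathfrak{d}]_\mathrm{f}}$ by the character $\check{\varepsilon}^{-1}$ of Lemma \ref{defofcheckvarepsilon}. It is the combination of this with the $\mathbf{u}^\sharp$-eigencharacter data that feeds into Lemma 3.1 of \cite{Ren:16}, which is the actual workhorse: it concludes that $\sum_\lambda\mathbb{C}\cdot f_\lambda$ is $\widetilde{K_\mathrm{f}}$-stable and isomorphic to $\Omega_\psi$ via $f_\lambda\mapsto\phi_\lambda$. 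Without some such input your argument would prove too much, since an arbitrary $f\in M_{k+1/2}(\Gamma_0(4\mathfrak{I}),\chi_\mathfrak{f})$ also generates a $\widetilde{K_\mathrm{f}}$-module containing a $\widetilde{\Gamma_\mathrm{f}}$-eigenvector, yet by Proposition \ref{plusimpliesfourier} not every such $f$ lies in the plus space. To repair the proof, establish the $\check{\varepsilon}$-transformation of $f(z/4)$ and invoke the cited lemma (or reprove its content by checking the action of a generating set of $\widetilde{K_\mathrm{f}}$ on the $f_\lambda$ directly).
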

\begin{proof}
We need to show that $\rho(E^K)f=f$.
\par
Let $\mathcal{V}$ be the representation of $\Mp_2(\mathbb{A}_\mathrm{f})$ generated by $f(z/4)$ via $\rho$.
For $\lambda\in\mathfrak{o}/2\mathfrak{o},$ we put
\[
f_\lambda(z)=\sum_{\xi\equiv\mathfrak{f}\lambda^2\mod4\mathfrak{o}}c(\xi)q^{\xi/4}.
\]
Note that the summation does not depend on the choice of $\lambda\mod2\mathfrak{o}$.
Also since $\mathfrak{f}$ is odd, all $f_\lambda$ are distinct.
By the assumption for $f,$ we have
\[
f(z)=\sum_{\lambda\in\mathfrak{o}/2\mathfrak{o}}f_\lambda(4z).
\]
For any $x\in\mathfrak{f}\hat{\mathfrak{d}}^{-1}$ where $\hat{\mathfrak{d}}=\boldsymbol{\delta}\hat{\mathfrak{o}}$, by Lemma \ref{behavioursofusharpandm}, one easily see that
\begin{align*}
&\rho(\mathbf{u}^\sharp(x))f(z/4)\\
=&\sum_{\lambda\in\mathfrak{o/2\mathfrak{o}}}\psi_1(\mathfrak{f}x\lambda^2/4)f_\lambda(z)\\
=&\sum_{\lambda\in\mathfrak{o/2\mathfrak{o}}}\psi(x\lambda^2/4)f_\lambda(z).
\end{align*}
Since $x\mapsto\psi(x\lambda^2/4)$ form $2^n$ distinct characters in $x\in\mathfrak{f}\hat{\mathfrak{d}}^{-1}$ for $\lambda\in\mathfrak{o}/2\mathfrak{o},$ the above implies that $f_\lambda$ can be written as a linear combination of some $\rho(\mathbf{u}^\sharp(x))f(z/4)$'s, that is, 
\[
f_\lambda\in\mathcal{V}\quad(\lambda\in\mathfrak{o}/2\mathfrak{o}).
\]
Now for $\lambda\in\mathfrak{o}/2\mathfrak{o}$ and $x\in\mathfrak{f}\hat{\mathfrak{d}}^{-1},$ we have
\[
\rho(\mathbf{u}^\sharp(x))f_\lambda(z)=\psi_1(x\lambda^2/4)f_\lambda(z).
\]
Remind that if $\gamma\in\widetilde{\Gamma_0(4\mathfrak{I})},$ then
\[
\rho(\gamma)f(z)=\varepsilon(\gamma)^{-1}f(z)
\]
where $\varepsilon=\prod_{v<\infty}\varepsilon_v$ is the genuine character of $\widetilde{\Gamma_{\mathrm{f}}}$ and $\varepsilon_v$ is the local character of $\widetilde{\Gamma_v}$ given by $\omega_{\psi_v}(\gamma_v)\phi_{0,v}=\varepsilon_v(\gamma_v)^{-1}\phi_{0,v}$ as in Lemma \ref{defofvarepsilon}.
Now set $\Gamma[4\mathfrak{f}\mathfrak{d}^{-1},\mathfrak{I}\mathfrak{f}^{-1}\mathfrak{d}]_\mathrm{f}=\prod_{v<\infty}\Gamma[4\mathfrak{f}\mathfrak{d}^{-1},\mathfrak{I}\mathfrak{f}^{-1}\mathfrak{d}]_v=\prod_{v<\infty}\mathbf{m}(2)_v\Gamma_v\mathbf{m}(2)_v^{-1}$.
We let $\check{\varepsilon}=\prod_{v<\infty}\check{\varepsilon}_v$ be the genuine character of $\widetilde{\Gamma[4\mathfrak{d}^{-1},\mathfrak{d}\mathfrak{I}]_\mathrm{f}}$ where $\check{\varepsilon}_v$ is the local genuine character of $\widetilde{\Gamma[4\mathfrak{f}\mathfrak{d}^{-1},\mathfrak{I}\mathfrak{f}^{-1}\mathfrak{d}]_v}$ given by $\omega_{\psi_v}(\gamma'_v)\phi'_{0,v}=\varepsilon_v(\gamma'_v)^{-1}\phi'_{0,v}$ as in Lemma \ref{defofcheckvarepsilon}.
Then for $\gamma'\in\widetilde{\Gamma[4\mathfrak{f}\mathfrak{d}^{-1},\mathfrak{I}\mathfrak{f}^{-1}\mathfrak{d}]_\mathrm{f}},$ we have
\begin{align*}
&\rho(\gamma')f(z/4)\\
=&2^{\sum_{i=1}^n(k_i+1/2)}\rho(\gamma'\mathbf{m}(2_\mathrm{f}))f(z)\\
=&2^{\sum_{i=1}^n(k_i+1/2)}\varepsilon(\mathbf{m}(2_\mathrm{f})^{-1}\gamma'\mathbf{m}(2_\mathrm{f}))^{-1}\rho(\mathbf{m}(2_\mathrm{f}))f(z)\\
=&\check{\varepsilon}(\gamma')^{-1}f(z/4)
\end{align*}
by Lemma \ref{behavioursofusharpandm} and Lemma \ref{defofcheckvarepsilon}.
By Lemma 3.1 of \cite{Ren:16}, we obtain that $\sum_{\lambda\in\mathfrak{o}/2\mathfrak{o}}\mathbb{C}\cdot f_\lambda$ forms a invariant irreducible subspace of $\mathcal{V}$ equivalent to $\Omega_\psi$ under the intertwining map $f_\lambda\mapsto\phi_\lambda$.
(Although for the local case, Lemma 3.1 of \cite{Ren:16} only treats the condition that $\mathfrak{I}_v=\mathfrak{o}_v,$ here $\mathfrak{I}$ is odd and if $v$ is odd, that lemma tells us nothing since whose result is already included in the condition. Hence one can apply that lemma here without any problem.)
Hence
\[
\rho(e^K)f_0=f_0
\]
and
\begin{align*}
&\rho(\mathbf{w}_{2\boldsymbol{\delta}})f_0\\
=&\prod_{v<\infty}(2,\boldsymbol{\delta}_v)_v\times\rho(\mathbf{m}(2_f^{-1})\mathbf{w}_{\boldsymbol{\delta}})f_0\\
=&2^{-n/2}\prod_{v<\infty}\overline{\alpha_{\psi_v}(\boldsymbol{\delta}_v)}(2,\boldsymbol{\delta}_v)_v\times\rho(\mathbf{m}(2_f^{-1}))\sum_{\lambda\in\mathfrak{o}/2\mathfrak{o}}f_\lambda\\
=&2^{\sum_{i=1}^nk_i}\prod_{v<\infty}\overline{\alpha_{\psi_v}(\boldsymbol{\delta}_v)}(2,\boldsymbol{\delta}_v)_v\times f
\end{align*}
by the definition of the Weil representation and Lemma \ref{behavioursofusharpandm}.
Now by the definition of $E^K,$ we see that $\rho(E^K)f=f$.
\end{proof}

From Proposition \ref{plusimpliesfourier} and \ref{fourierimpliesplus}, we have the following theorem, which is an analogue of Theorem 13.5 in \cite{HiraIke:13}.

\begin{theorem}
The Kohnen plus spaces $M^+_{k+1/2}(\Gamma_0(4\mathfrak{I}),\chi_\mathfrak{f})$ and $S^+_{k+1/2}(\Gamma_0(4\mathfrak{I}),\chi_\mathfrak{f})$ are the subspaces of $M_{k+1/2}(\Gamma_0(4\mathfrak{I}),\chi_\mathfrak{f})$ and $S_{k+1/2}(\Gamma_0(4\mathfrak{I}),\chi_\mathfrak{f}),$ respectively, which consist of the forms whose $\xi$-th Fourier coefficient vanishes unless there exists some $\lambda\in\mathfrak{o}$ such that $\xi-\mathfrak{f}\lambda^2\in4\mathfrak{o}$.
\end{theorem}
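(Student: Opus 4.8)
The plan is to obtain the theorem directly as the conjunction of the two preceding propositions, which between them establish both inclusions of the claimed characterization. Write $M^{+,\mathrm{F}}$ for the subspace of $M_{k+1/2}(\Gamma_0(4\mathfrak{I}),\chi_\mathfrak{f})$ consisting of those $f=\sum_\xi c(\xi)q^\xi$ whose $\xi$-th Fourier coefficient vanishes unless $\xi-\mathfrak{f}\lambda^2\in4\mathfrak{o}$ for some $\lambda\in\mathfrak{o}$, and let $S^{+,\mathrm{F}}$ be its intersection with $S_{k+1/2}(\Gamma_0(4\mathfrak{I}),\chi_\mathfrak{f})$. The goal is then to show $M^+_{k+1/2}(\Gamma_0(4\mathfrak{I}),\chi_\mathfrak{f})=M^{+,\mathrm{F}}$ together with the analogous equality for cusp forms.

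First I would record that the congruence condition $\xi\equiv\mathfrak{f}\lambda^2\bmod4\mathfrak{o}$ appearing in the two propositions is, by definition, the same as the condition $\xi-\mathfrak{f}\lambda^2\in4\mathfrak{o}$ stated in the theorem, so no translation between the two formulations is required. Proposition \ref{plusimpliesfourier} then supplies the inclusion $M^+_{k+1/2}(\Gamma_0(4\mathfrak{I}),\chi_\mathfrak{f})\subseteq M^{+,\mathrm{F}}$: every $E^K$-fixed form has its Fourier support confined to the residues $\mathfrak{f}\lambda^2\bmod4\mathfrak{o}$. Conversely, Proposition \ref{fourierimpliesplus} supplies the reverse inclusion $M^{+,\mathrm{F}}\subseteq M^+_{k+1/2}(\Gamma_0(4\mathfrak{I}),\chi_\mathfrak{f})$: any form whose Fourier support lies in those residues is fixed by $E^K$. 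Combining the two inclusions yields $M^+_{k+1/2}(\Gamma_0(4\mathfrak{I}),\chi_\mathfrak{f})=M^{+,\mathrm{F}}$, which is precisely the assertion for modular forms.

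For the cusp form statement I would use that $S^+_{k+1/2}(\Gamma_0(4\mathfrak{I}),\chi_\mathfrak{f})=S_{k+1/2}(\Gamma_0(4\mathfrak{I}),\chi_\mathfrak{f})^{E^K}=S_{k+1/2}(\Gamma_0(4\mathfrak{I}),\chi_\mathfrak{f})\cap M^+_{k+1/2}(\Gamma_0(4\mathfrak{I}),\chi_\mathfrak{f})$, which holds because $E^K$ leaves the space of cusp forms invariant. Intersecting the already-established equality $M^+=M^{+,\mathrm{F}}$ with $S_{k+1/2}(\Gamma_0(4\mathfrak{I}),\chi_\mathfrak{f})$ then immediately gives $S^+_{k+1/2}(\Gamma_0(4\mathfrak{I}),\chi_\mathfrak{f})=S^{+,\mathrm{F}}$, i.e. that $S^+_{k+1/2}(\Gamma_0(4\mathfrak{I}),\chi_\mathfrak{f})$ consists exactly of the cusp forms whose $\xi$-th Fourier coefficient vanishes unless $\xi-\mathfrak{f}\lambda^2\in4\mathfrak{o}$ for some $\lambda\in\mathfrak{o}$.

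Since all of the analytic and representation-theoretic content—the behaviour of $\rho(\mathbf{u}^\sharp(x))$ on the summands $f_\lambda$, the identification of the span of the $f_\lambda$ with the representation $\Omega_\psi$, and the manipulations with the idempotent $e^K$—has already been carried out inside Propositions \ref{plusimpliesfourier} and \ref{fourierimpliesplus}, I do not expect a genuine obstacle here: the theorem is a formal consequence of the two propositions. The only point deserving a word of care is the passage to cusp forms, and this is dispatched by the elementary observation that $E^K$ preserves the cusp form subspace, so that the modular-form characterization restricts cleanly.
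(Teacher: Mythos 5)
Your proposal is correct and matches the paper exactly: the paper derives this theorem directly from Propositions \ref{plusimpliesfourier} and \ref{fourierimpliesplus} with no further argument, which is precisely your two-inclusion strategy. Your additional remark that the cusp-form case follows by intersecting with $S_{k+1/2}(\Gamma_0(4\mathfrak{I}),\chi_\mathfrak{f})$, since $E^K$ preserves cuspidality, is a harmless (and slightly more careful) elaboration of what the paper leaves implicit.
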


Although we have used an alternative way to define the plus spaces, it turns out if we consider the cases $F=\mathbb{Q}$ or $\mathfrak{I}=\mathfrak{o}$ (in which case $\mathfrak{f}$ is a unit such that $N_{F/\mathbb{Q}}(\mathfrak{f})=(-1)^{\sum_{i}k_i}$), the definition of the plus spaces coincide with the ones given by Kohnen or Hiraga \& Ikeda in \cite{Kohnen:82} or \cite{HiraIke:13}, respectively.


\section{Steinberg Representations}\label{Steinberg Representations}
We use the same notations as in Section \ref{Weil representation}.
Assume that $F$ is non-archimedean and $q$ is odd.
For $s\in\mathbb{C},$ let $\tilde{I}_\psi(s)$ be the principal series representation of $\Mp_2(F)$ given in Section \ref{The Idempotents $e^K$ and $E^K$}.
It is known that $\tilde{I}_\psi(s)$ is irreducible if $q^{2s}\neq q^{\pm1}$.
If $q^{2s}=q,$ there exists a short exact sequence
\[
0\longrightarrow St_\psi \longrightarrow \tilde{I}_\psi(s) \longrightarrow \omega_\psi^+ \longrightarrow 0
\]
of representations of $\Mp_2(F)$.
Here $St_\psi$ is a subrepresentation of $\tilde{I}_\psi(s)$ called the Steinberg representation and $\omega_\psi^+$ is the subrepresentation of the Weil representation $\omega_\psi$ consisting of the even functions.
Both $St_\psi$ and $\omega_\psi^+$ are irreducible.
\par
Note that if $\psi'$ is another non-trivial additive character of $F,$ there exists some unit $\xi\in F^\times$ such that $\psi'(x)=\psi(\xi x)$.
The two Steinberg representations $St_\psi$ and $St_{\psi'}$ are equivalent if and only if $\xi\in{F^\times}^2$.
Since we have assumed that $q$ is odd, this gives us that there are exactly two distinct Steinberg representations with respect to a non-trivial additive character of $F$.
The following lemma tells that they can be described just in terms of the complex number $s$.
\begin{lemma}\label{Stbgreps}
If $\psi'(x)=\psi(\xi x)$ for some non-square non-zero $\xi,$ then $\tilde{I}_{\psi'}(s)=\tilde{I}_\psi(s')$ for some $s'$ such that $q^{s'}=-q^s$.
In particular, we have $St_{\psi'}\subset\tilde{I}_{\psi}(s')$.
\end{lemma}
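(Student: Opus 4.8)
The plan is to compare the two inducing genuine characters of $\widetilde{B}$ directly, using only the functional equation of the Weil index recorded in Section \ref{Weil representation}. Since $\psi'(x)=\psi(\xi x)$, the relation $\alpha_{\psi_b}(a)=\alpha_\psi(ba)$ gives $\alpha_{\psi'}(a)=\alpha_\psi(\xi a)$, and in particular $\alpha_{\psi'}(1)=\alpha_\psi(\xi)$. Hence the genuine character of $\widetilde{B}$ defining $\tilde I_{\psi'}(s)$ sends $\mathbf{m}(a)$ to $\frac{\alpha_\psi(\xi)}{\alpha_\psi(\xi a)}|a|^{s}$ and stays trivial on the unipotent part $\mathbf{u}^\sharp(b)$. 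As in the discussion preceding the lemma, $\xi$ may be taken to be a non-square unit.

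First I would compute the ratio of this character to the one defining $\tilde I_\psi(s')$ at $\mathbf{m}(a)$, namely
\[
\frac{\alpha_\psi(\xi)\,\alpha_\psi(a)}{\alpha_\psi(\xi a)\,\alpha_\psi(1)}.
\]
By the cocycle identity $\frac{\alpha_\psi(1)\alpha_\psi(a_1a_2)}{\alpha_\psi(a_1)\alpha_\psi(a_2)}=(a_1,a_2)$ with $a_1=\xi$ and $a_2=a$, and because the Hilbert symbol is its own inverse, this ratio equals $(\xi,a)$. Thus the character attached to $\tilde I_{\psi'}(s)$ at $\mathbf{m}(a)$ is $\frac{\alpha_\psi(1)}{\alpha_\psi(a)}(\xi,a)\,|a|^{s}$.

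The key step is to recognize $(\xi,a)$ as an unramified quadratic twist. Since $q$ is odd and $\xi$ is a non-square unit, the tame symbol formula gives $(\xi,a)=(-1)^{\ord(a)}$; this is precisely where it matters that $\xi$ has even (here zero) valuation, so that the twisting character is unramified (for $\xi$ of odd valuation the twist would be ramified and no such $s'$ could exist). Writing $|a|=q^{-\ord(a)}$, we then have $(\xi,a)\,|a|^{s}=\bigl(-q^{-s}\bigr)^{\ord(a)}=|a|^{s'}$ as soon as $q^{-s'}=-q^{-s}$, that is, $q^{s'}=-q^{s}$; such an $s'$ exists because $-q^{s}\neq0$ and the representation depends only on $q^{s'}$. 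For this $s'$ the two genuine characters of $\widetilde{B}$ agree on every $\mathbf{m}(a)$, are both trivial on $\mathbf{u}^\sharp(b)$, and match on the central $\{\pm1\}$ by genuineness; hence they coincide and $\tilde I_{\psi'}(s)=\tilde I_\psi(s')$.

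Finally the ``in particular'' is immediate: since $q^{2s'}=(q^{s'})^2=(-q^{s})^2=q^{2s}$, the Steinberg regime $q^{2s}=q$ is preserved, so applying the short exact sequence of Section \ref{Steinberg Representations} with $\psi$ replaced by $\psi'$ yields $St_{\psi'}\subset\tilde I_{\psi'}(s)=\tilde I_\psi(s')$. The only real obstacle is the tame-symbol evaluation $(\xi,a)=(-1)^{\ord(a)}$ together with the observation that unramifiedness of the twist is exactly what is needed (and fails) for $\xi$ of odd valuation; everything else is a direct manipulation of the Weil-index relations already established.
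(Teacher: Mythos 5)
Your argument is correct and is essentially the paper's own proof: both reduce the claim to the identity $\frac{\alpha_\psi(1)\alpha_\psi(\xi a)}{\alpha_\psi(\xi)\alpha_\psi(a)}=(\xi,a)=(-1)^{\ord(a)}$ and then absorb the sign into $|a|^{s'}$ with $q^{s'}=-q^{s}$, the only cosmetic difference being that you compare the inducing characters of $\widetilde{B}$ directly while the paper writes the same computation as a transformation law for a function $f$ in the induced space. Your explicit tame-symbol evaluation (and the remark that the twist must be unramified, which is where evenness of $\ord(\xi)$ enters) just makes precise the step the paper dispatches with ``non-degeneracy of the Hilbert symbol.''
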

\begin{proof}
Let $f\in\tilde{I}_{\psi'}(s)$ such that $f(g)\neq0$.
By symmetry it suffices to show that $f\in\tilde{I}_\psi(s'),$ that is, to show that
\[
f(\mathbf{m}(a)g)=\frac{\alpha_\psi(1)}{\alpha_\psi(a)}|a|^{s'+1}f(g)
\]
for any $a\in F^\times$.
Denote the order of $a$ by $\ord(a)$.
We have to prove
\[
\frac{\alpha_\psi(1)}{\alpha_\psi(a)}|a|^{s'+1}=\frac{\alpha_{\psi'}(1)}{\alpha_{\psi'}(a)}|a|^{s+1},
\]
or equivalently,
\[
\frac{\alpha_\psi(1)\alpha_{\psi}(\xi a)}{\alpha_\psi(a)\alpha_{\psi}(\xi)}=(-1)^{\ord(a)}.
\]
By the property of the Weil constants, the left side is equal to $(a,\xi)$.
Since $\xi$ is not a square, the equation now follows from the non-degeneracy of the Hilbert symbol $(\quad,\quad)$.
\end{proof}
\par
We denote the map from $\tilde{I}_\psi(s)$ to $\omega_\psi^+$ in the short sequence above by $\mathcal{S}$.
If we take the dual of this short sequence, we get
\[
0 \longrightarrow \omega_{\overline{\psi}}^+ \longrightarrow \tilde{I}_{\overline{\psi}}(-s) \longrightarrow St_{\overline{\psi}} \longrightarrow 0.
\]
The map from $\omega_{\overline{\psi}}^+$ to $\tilde{I}_{\overline{\psi}}(-s)$ is the dual $\mathcal{S}^\ast$ of $\mathcal{S}$.
It is given by $\left(\mathcal{S}^\ast(\phi)\right)(g)=\omega_{\overline{\psi}}^+(g)\phi(0)$.

\par

Let $\Gamma=\Gamma_0(\varpi)$ or $\Gamma=\Gamma[\varpi\mathfrak{d}^{-1},\mathfrak{d}]$.
Since $q$ is odd, we can construct a splitting of $\widetilde{\Gamma}$ over $\Gamma$ by the homomorphism
\[
\gamma\mapsto(\gamma,\varepsilon([\gamma]))\quad(\gamma\in\Gamma)
\]
where $\varepsilon$ is the restricted character on $\widetilde{\Gamma}$ of the one corresponding to $\psi$ given in Lemma \ref{defofvarepsilon}. 
We can consider $\Gamma$ as a subgroup of $\widetilde{\Gamma}$ in this way thus also a subgroup of $\Mp_2(F)$.
Now for any representation $\pi$ of $\Mp_2(F),$ we denote the $\Gamma$-fixed subspace of $\pi$ by $\pi^\Gamma$.
The functor $\pi\mapsto\pi^{\Gamma}$ is exact since $\Gamma$ is compact.
Thus if we apply it to the two sequence above, we get
\[
0\longrightarrow St_\psi^\Gamma \longrightarrow \tilde{I}_\psi(s)^\Gamma \longrightarrow {\omega_\psi^+}^\Gamma \longrightarrow 0
\]
and
\[
0 \longrightarrow {\omega_{\overline{\psi}}^+}^\Gamma \longrightarrow \tilde{I}_{\overline{\psi}}(-s)^\Gamma \longrightarrow St_{\overline{\psi}}^\Gamma \longrightarrow 0.
\]
By calculation, we can get that both ${\omega_\psi^+}^\Gamma$ and ${\omega_{\overline{\psi}}^+}^\Gamma$ are the one-dimensional space spanned by $\phi_0,$ the characteristic function of $\mathfrak{o}$.
As stated in Section \ref{The Hecke operators}, the space $\tilde{I}_\psi(s)^\Gamma$ is spanned by two unique functions $f_1$ and $f_2$.
Note that if we put $P=B\cap\Gamma_0(1)$ and set $\mathrm{Vol}(\widetilde{\Gamma})=1,$ then
\[
\Vol(\widetilde{P}\widetilde{\Gamma})=\begin{cases}
1&\mbox{ if }\Gamma=\Gamma_0(\varpi),\\
q&\mbox{ if }\Gamma=\Gamma[\varpi\boldsymbol{\delta}^{-1},\boldsymbol{\delta}].
\end{cases}
\]
and $\Vol(\widetilde{P}\mathbf{w}_{\boldsymbol{\delta}}\widetilde{\Gamma})=\Vol(\widetilde{P}\widetilde{\Gamma})^{-1}q$.
We can define $f'_1$ and $f'_2$ which span $\tilde{I}_{\overline{\psi}}(-s)^\Gamma$ in the similar manner with $f_1$ and $f_2$.
By taking the values on $I$ and $\mathbf{w}_{\boldsymbol{\delta}}$, one gets
\[
\mathcal{S}^\ast(\phi_0)=f'_1+f'_2.
\]
\par
Now if $f\in St_\psi^\Gamma\hookrightarrow\tilde{I}_\psi(s)^\Gamma,$ it is in the kernel of $\mathcal{S},$ hence
\begin{align*}
0&=\left(\mathcal{S}(f),\phi_0\right)\\
&=(f,\mathcal{S}^\ast(\phi_0))\\
&=(f,f'_1+f'_2),
\end{align*}
from which we get that
\[
f\in\mathbb{C}\cdot\left(f_1-q^{-1}f_2\right).
\]
We write these results into a lemma.
\begin{lemma}\label{lem:fixedsusbofSt}
We have
\[
St_\psi^\Gamma=\mathbb{C}\cdot(f_1-q^{-1}f_2).
\]
\end{lemma}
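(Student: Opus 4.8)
The plan is to identify $St_\psi^\Gamma$ as the kernel of $\mathcal{S}$ restricted to $\tilde{I}_\psi(s)^\Gamma$ and then to pin down this line by a duality argument. Since $\Gamma$ is compact, the functor $\pi\mapsto\pi^\Gamma$ is exact, so applying it to $0\to St_\psi\to\tilde{I}_\psi(s)\to\omega_\psi^+\to0$ yields the exact sequence $0\to St_\psi^\Gamma\to\tilde{I}_\psi(s)^\Gamma\to{\omega_\psi^+}^\Gamma\to0$. Because $\tilde{I}_\psi(s)^\Gamma$ is two-dimensional (spanned by the $f_1,f_2$ of Section \ref{The Hecke operators}) and ${\omega_\psi^+}^\Gamma=\mathbb{C}\phi_0$ is one-dimensional, the space $St_\psi^\Gamma$ is exactly the one-dimensional kernel of $\mathcal{S}$. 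Thus it remains only to locate this kernel inside $\mathbb{C}f_1\oplus\mathbb{C}f_2$.

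Next I would convert the condition $\mathcal{S}(f)=0$ into an orthogonality relation. Since $\mathcal{S}$ is a $\Mp_2(F)$-morphism it is $\Gamma$-equivariant, so $\mathcal{S}(f)$ lies in ${\omega_\psi^+}^\Gamma$ whenever $f$ is $\Gamma$-fixed. The invariant pairing between $\tilde{I}_\psi(s)$ and its contragredient $\tilde{I}_{\overline{\psi}}(-s)$ restricts to a nondegenerate pairing between ${\omega_\psi^+}^\Gamma$ and ${\omega_{\overline{\psi}}^+}^\Gamma$, both spanned by $\phi_0$ with $(\phi_0,\phi_0)\neq0$. Hence $\mathcal{S}(f)=0$ is equivalent to $(\mathcal{S}(f),\phi_0)=0$, and by adjointness of $\mathcal{S}$ and $\mathcal{S}^\ast$ together with the formula $\mathcal{S}^\ast(\phi_0)=f_1'+f_2'$ recorded above, this reads $(f,f_1'+f_2')=0$. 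So $St_\psi^\Gamma$ is precisely the line in $\mathbb{C}f_1\oplus\mathbb{C}f_2$ orthogonal to $f_1'+f_2'$.

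The final step is the explicit pairing computation. For $\Gamma$-fixed $f$ and $f'\in\tilde{I}_{\overline{\psi}}(-s)^\Gamma$, the product $f(g)f'(g)$ is left-$\widetilde{B}$-equivariant by the modulus character and right-$\widetilde{\Gamma}$-invariant, so the invariant pairing collapses to the finite sum over $\widetilde{B}\backslash\Mp_2(F)/\widetilde{\Gamma}=\{I,\mathbf{w}_{\boldsymbol{\delta}}\}$, namely $(f,f')=\Vol(\widetilde{P}\widetilde{\Gamma})f(I)f'(I)+\Vol(\widetilde{P}\mathbf{w}_{\boldsymbol{\delta}}\widetilde{\Gamma})f(\mathbf{w}_{\boldsymbol{\delta}})f'(\mathbf{w}_{\boldsymbol{\delta}})$. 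Writing $f=af_1+bf_2$ and inserting the values of $f_1,f_2,f_1',f_2'$ at $I$ and $\mathbf{w}_{\boldsymbol{\delta}}$ together with the volume formulas $\Vol(\widetilde{P}\widetilde{\Gamma})\in\{1,q\}$ and $\Vol(\widetilde{P}\mathbf{w}_{\boldsymbol{\delta}}\widetilde{\Gamma})=q\,\Vol(\widetilde{P}\widetilde{\Gamma})^{-1}$ turns $(f,f_1'+f_2')=0$ into a single linear relation between $a$ and $b$ whose solution is the line $\mathbb{C}(f_1-q^{-1}f_2)$. I would close by noting that $f_1-q^{-1}f_2$ is nonzero and, being orthogonal to $f_1'+f_2'$, lies in the one-dimensional $St_\psi^\Gamma$, which forces equality.

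The main obstacle is this last computation: it demands the correct normalization of the four vectors $f_1,f_2,f_1',f_2'$ and careful bookkeeping of the two weights at the cosets $I$ and $\mathbf{w}_{\boldsymbol{\delta}}$. In particular the two cases $\Gamma=\Gamma_0(\varpi)$ and $\Gamma=\Gamma[\varpi\mathfrak{d}^{-1},\mathfrak{d}]$, where the roles of $I$ and $\mathbf{w}_{\boldsymbol{\delta}}$ are interchanged and the volumes swap, must both be shown to yield the uniform answer $f_1-q^{-1}f_2$; reconciling the normalizations so that the $q^{-1}$ appears consistently is the delicate point.
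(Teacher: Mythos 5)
Your proposal is correct and follows essentially the same route as the paper: apply the exact functor $\pi\mapsto\pi^\Gamma$ to the short exact sequence, use the adjoint identity $(\mathcal{S}(f),\phi_0)=(f,\mathcal{S}^\ast(\phi_0))=(f,f_1'+f_2')$, and evaluate the pairing over $\widetilde{B}\backslash\Mp_2(F)/\widetilde{\Gamma}=\{I,\mathbf{w}_{\boldsymbol{\delta}}\}$ with the stated volume weights to isolate the line $\mathbb{C}\cdot(f_1-q^{-1}f_2)$. The only additions beyond the paper's argument are the explicit dimension count for $St_\psi^\Gamma$ and the remark that vanishing of the pairing against $\phi_0$ is equivalent to $\mathcal{S}(f)=0$, both of which are implicit in the paper.
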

\par
Now assume $\Gamma=\Gamma_0(\omega)$.
Fix a unit $\eta\in\mathfrak{o}^\times$ and define the additive character $\psi_1$ of $F$ by $\psi(x)=\psi_1(\eta x)$.
For $f\in\tilde{I}_\psi(s)$ and $0\neq\xi\in\mathfrak{o},$ the Whittaker function of $f$ and $\xi$ is a function on $\Mp_2(F)$ given by
\[
\mathcal{W}_{f,\xi}(g)=\int_F f(\mathbf{w}_{\boldsymbol{\delta}}\mathbf{u}^\sharp(x)g)\overline{\psi_1(\xi x)}dx.
\]
Now set $f=f_1-q^{-1}f_2$.
We are interested in the value of $\mathcal{W}_{f,\xi}$ at $I$ for $\xi\in\mathfrak{o}^\times$.
In fact, because
\begin{align*}
&\mathcal{W}_{f_2,\xi}(I)\\
=&\int_F f_2(\mathbf{w}_{\boldsymbol{\delta}}\mathbf{u}^\sharp(x))\overline{\psi_1(\xi x)}dx\\
=&\int_{\mathfrak{d}^{-1}}f_2(\mathbf{w}_{\boldsymbol{\delta}}\mathbf{u}^\sharp(x))\overline{\psi_1(\xi x)}dx\\
=&q^{c_\psi}
\end{align*}
and by the calculations in Section 6 of \cite{HiraIke:13}
\[
\mathcal{W}_{f_1+f_2,\xi}(I)=q^{c_\psi}\left(1+\left(\frac{\xi\eta^{-1}}{\mathfrak{p}}\right)q^{-s-1/2}\right)
\]
where
\[
\left(\frac{\xi\eta^{-1}}{\mathfrak{p}}\right)=
\begin{cases}
1&\mbox{ if }\xi\eta^{-1}\in (\mathfrak{o}^\times)^2,\\
-1&\mbox{ if }F(\sqrt{\xi\eta^{-1}})/F\mbox{ is an unramified extension},
\end{cases}
\]
we have
\begin{equation}\label{whittakeratI}
\mathcal{W}_{f,\xi}(I)=q^{c_\psi-1}\left(q^{1/2-s}\left(\frac{\xi\eta^{-1}}{\mathfrak{p}}\right)-1\right).
\end{equation}
Now if $\tilde{I}_{\psi}(s)=\tilde{I}_{\psi_1}(s_1)$ for some $s_1\in\mathbb{C},$ Lemma \ref{Stbgreps} implies that we can also write
\begin{equation}\label{whittakeratI2}
\mathcal{W}_{f,\xi}(I)=q^{c_\psi-1}\left(q^{1/2-s_1}\left(\frac{\xi}{\mathfrak{p}}\right)-1\right).
\end{equation}

\section{New Forms}
In this section we retain the notations given in Section \ref{Automorphic forms on MpA}.
Note that the domain of the global character $\varepsilon$ can be enlarged to the congruence subgroup $\widetilde{\Gamma[\mathfrak{d}^{-1},4\mathfrak{d}]_\mathrm{f}}$.
We denote this character also by $\varepsilon$.
\par
Let $v$ be a finite place of $F$.
Given a representation $(\pi,\mathcal{V})$ of $\Mp_2(F_v),$ we say that $\pi$ is unramified if there exists a non-zero vector $\mathbf{v}\in\mathcal{V}$ such that
\[
\pi(\gamma)\mathbf{v}=\varepsilon_v(\gamma)^{-1}\mathbf{v}
\]
for $\gamma\in\widetilde{\Gamma[\mathfrak{d}_v^{-1},4\mathfrak{d}_v]_v}$.
Such $\mathbf{v}$ is called an unramified vector.
It is known that if $\pi$ is irreducible and unramified, then it is contained in a principal series $\tilde{I}_{\psi_v}(s_v)$ for some $s_v\in\mathbb{C}$.
\par
For a cusp form $f\in S^+_{k+1/2}(\Gamma_0(4\mathfrak{I}),\chi_\mathfrak{f}),$ the corresponding automorphic form $\Phi_f\in\mathcal{A}^{\tiny\mbox{CUSP}}_{k+1/2}(\SL_2(F)\backslash \Mp_2(\mathbb{A});\widetilde{\Gamma_0(4\mathfrak{I})_\mathrm{f}},\varepsilon)$ generates a representation of $\Mp_2(\mathbb{A}_\mathrm{f})$ via the right translation $\rho$.
We may simply call $\rho$ the representation corresponding to $f$.
If $\rho$ is irreducible, then it has the form $\rho=\otimes_{v<\infty}\rho_v$ where $\rho_v$ is a local irreducible representation of $\Mp_2(F_v)$.
In that case, we call $f$ a primitive form.
\begin{definition}
Using the notations above, the space of old forms $S^{+,\mathrm{OLD}}_{k+1/2}(\Gamma_0(4\mathfrak{I}),\chi_\mathfrak{f})$ is the subspace of $S^+_{k+1/2}(\Gamma_0(4\mathfrak{I}),\chi_\mathfrak{f})$ spanned by all cusp forms $f$ such that the representation corresponding to $f$ is unramified at some place $v$ dividing $\mathfrak{I}$.
The space of new forms $S^{+,\mathrm{NEW}}_{k+1/2}(\Gamma_0(4\mathfrak{I}),\chi_\mathfrak{f})$ is the orthogonal complement of $S^{+,\mathrm{OLD}}_{k+1/2}(\Gamma_0(4\mathfrak{I}),\chi_\mathfrak{f})$ in $S^+_{k+1/2}(\Gamma_0(4\mathfrak{I}),\chi_\mathfrak{f})$ under the Petersson inner product.
\end{definition}

\par

Recall that 
\[
\Gamma_v=\begin{cases}
\Gamma_0(1)_v&\mbox{ if }v\nmid2\mathfrak{I},\\
\Gamma_0(\varpi_v)_v&\mbox{ if }v\mid\mathfrak{f}^{-1}\mathfrak{I},\\
\Gamma[\varpi_v\mathfrak{d}_v^{-1},\mathfrak{d}_v]_v&\mbox{ if }v\mid\mathfrak{f},\\
\Gamma_0(4)_v&\mbox{ if }v\mid2.
\end{cases}
\]
Put $\widetilde{\mathcal{H}_v}=\widetilde{\mathcal{H}}(\widetilde{\Gamma_v}\backslash\Mp_2(F_v)/\widetilde{\Gamma_v};\varepsilon_v)$ for any finite place $v$.
Using Hecke operators we may give an alternative definition of the old space.
Given any integral ideal $\mathfrak{I}'$ such that $\mathfrak{I}'\mid\mathfrak{I},$ the space $S^+_{k+1/2}(\Gamma_0(4\mathfrak{I}'),\chi_\mathfrak{f})$ (which is the zero space if $\mathfrak{f}\nmid\mathfrak{I}'$) is clearly a subspace of $S^+_{k+1/2}(\Gamma_0(4\mathfrak{I}),\chi_\mathfrak{f})$.
Fix a finite place $v$ which divides $\mathfrak{I}$ but not $\mathfrak{I}'$.
Any primitive form in $S^+_{k+1/2}(\Gamma_0(4\mathfrak{I}'),\chi_\mathfrak{f})$ generates an unramified representation at $v$ and so does its image under any Hecke operator in $\widetilde{\mathcal{H}_v},$ which generates the same representation actually if it is non-zero.
Conversely, if a primitive form in $S^+_{k+1/2}(\Gamma_0(4\mathfrak{I}),\chi_\mathfrak{f})$ generates an unramified local representation at some $v\mid\mathfrak{I},$ then it lies in the image of $S^+_{k+1/2}(\Gamma_0(4\mathfrak{p}_v^{-1}\mathfrak{I}),\chi_\mathfrak{f})$ under some Hecke operator in $\widetilde{\mathcal{H}_v}$.
Thus the old space is indeed the subspace spanned by any cusp forms which are in the image of some $S^+_{k+1/2}(\Gamma_0(4\mathfrak{I}'),\chi_\mathfrak{f})$ under the Hecke operators.
Precisely, we have
\begin{align*}
 &S^{+,\mathrm{OLD}}_{k+1/2}(\Gamma_0(4\mathfrak{I}),\chi_\mathfrak{f})\\
=&\sum_{v\mid\mathfrak{I}}\left(S^+_{k+1/2}(\Gamma_0(4\mathfrak{p}_v^{-1}\mathfrak{I}),\chi_\mathfrak{f})+\rho_v(\widetilde{\mathcal{T}}_{1,v})S^+_{k+1/2}(\Gamma_0(4\mathfrak{p}_v^{-1}\mathfrak{I}),\chi_\mathfrak{f})\right).
\end{align*}
Here we only need to take the certain Hecke operator $\widetilde{\mathcal{T}}_{1,v}$ since the $\Gamma_v$-fixed subspace of a principal series $\tilde{I}_{\psi_v}(s_v)$ is two-dimensional.

\par

Note that the representation associating to any $f\in S^+_{k+1/2}(\Gamma_0(4\mathfrak{I}),\chi_\mathfrak{f})$ is unramified at all finite places $v$ not dividing $\mathfrak{I}$.
If $f$ is a primitive new form, then for all odd places $v$ which divide $\mathfrak{I}$ the local representations $\rho_v$ are equivalent to a Steinberg representation $St_{\psi'_v}$ introduced in Section \ref{Steinberg Representations} for some non-trivial additive character $\psi'_v$ of $F_v$.
Actually, we can find a basis $\mathcal{B}$ of $S^{+,\mathrm{NEW}}_{k+1/2}(\Gamma_0(4\mathfrak{I}),\chi_\mathfrak{f})$ consisting of such forms.
\par
Remind that the local character $\psi_{1,v}$ is the local part of the global character $\psi_1$ of $\mathbb{A}/F$ such that $\psi_{1,\infty}(x)=\mathbf{e}(x)$ for any infinite place $\infty$ of $F$.
We call the one of the two Steinberg representations which is not equivalent to $St_{\psi_{1,v}}\subset\tilde{I}_{\psi_{1,v}}(1/2)$ the twisted Steinberg representation.
Now by the arguments in Section \ref{Steinberg Representations} and (\ref{whittakeratI2}), we have the following proposition.

\begin{proposition}\label{prop:A+A-}
Fix an odd place $v$ which divides $\mathfrak{f}^{-1}\mathfrak{I}$.
Let $f$ be a new form in the basis $\mathcal{B}$ of $S^{+,\mathrm{NEW}}_{k+1/2}(\Gamma_0(4\mathfrak{I}),\chi_\mathfrak{f})$ with Fourier coefficients $c(\xi)$.
If the local representation $\rho_v$ corresponding to $f$ at $v$ is the non-twisted Steinberg representation, then $c(\xi)=0$ if $\left(\frac{\xi}{\mathfrak{p}_v}\right)=1$.
If $\rho_v$ is twisted, then $c(\xi)=0$ if $\left(\frac{\xi}{\mathfrak{p}_v}\right)=-1$.
\end{proposition}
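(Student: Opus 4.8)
The plan is to reduce the vanishing of the global Fourier coefficient $c(\xi)$ to the vanishing of a single local Whittaker factor at the fixed place $v$, and then to read off that vanishing from the explicit formula (\ref{whittakeratI2}). Since $f$ lies in the basis $\mathcal{B}$, it is primitive, so the representation $\rho=\otimes_{w<\infty}\rho_w$ generated by $\Phi_f$ is a restricted tensor product; by multiplicity one for the Whittaker model the global $\psi_1^\xi$-Whittaker functional attached to $\Phi_f$ factors into local ones. Concretely, writing
\[
W_\xi(g)=\int_{F\backslash\mathbb{A}}\Phi_f(\mathbf{u}^\sharp(x)g)\overline{\psi_1(\xi x)}\,dx,
\]
the function $W_\xi$ decomposes as $\prod_{w}W_{\xi,w}$, and evaluating at the identity while stripping off the archimedean automorphy factor (via Lemma \ref{behavioursofusharpandm}) shows that $c(\xi)$ is proportional to $\prod_{w<\infty}W_{\xi,w}(1)$. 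Hence $c(\xi)$ vanishes as soon as the single factor at our fixed place $v$ does.

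First I would identify that local factor. For an odd place $v\mid\mathfrak{f}^{-1}\mathfrak{I}$ we have $\Gamma_v=\Gamma_0(\varpi_v)$, and the plus-space condition forces the $\Gamma_v$-fixed vector of the Steinberg representation $\rho_v$ to be the one singled out in Lemma \ref{lem:fixedsusbofSt}, namely a scalar multiple of $f_1-q_v^{-1}f_2$. The local value $W_{\xi,v}(1)$ is therefore $\mathcal{W}_{f,\xi}(I)$ as computed in Section \ref{Steinberg Representations} for exactly this vector, with $\psi_v=\psi_{1,v}(\mathfrak{f}\,\cdot)$ and $\mathfrak{f}\in\mathfrak{o}_v^\times$ (since $v\nmid\mathfrak{f}$). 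Realizing $\rho_v$ inside $\tilde{I}_{\psi_{1,v}}(s_1)$, formula (\ref{whittakeratI2}) yields
\[
\mathcal{W}_{f,\xi}(I)=C_v\left(q_v^{1/2-s_1}\left(\frac{\xi}{\mathfrak{p}_v}\right)-1\right),
\]
where $C_v$ is a nonzero power of $q_v$ and $\left(\frac{\xi}{\mathfrak{p}_v}\right)$ is the symbol attached to $\psi_{1,v}$.

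Next I would pin down $s_1$ in the two cases using Lemma \ref{Stbgreps}. If $\rho_v$ is the non-twisted Steinberg representation then $\rho_v=St_{\psi_{1,v}}\subset\tilde{I}_{\psi_{1,v}}(1/2)$, so $s_1=1/2$ and $q_v^{1/2-s_1}=1$; the bracket equals $\left(\frac{\xi}{\mathfrak{p}_v}\right)-1$, which vanishes precisely when $\left(\frac{\xi}{\mathfrak{p}_v}\right)=1$. If $\rho_v$ is the twisted Steinberg representation $St_{\psi'_v}$ with $\psi'_v(x)=\psi_{1,v}(\eta x)$ for a non-square unit $\eta$, then Lemma \ref{Stbgreps} places it inside $\tilde{I}_{\psi_{1,v}}(s_1)$ with $q_v^{s_1}=-q_v^{1/2}$, hence $q_v^{1/2-s_1}=-1$; the bracket becomes $-\left(\frac{\xi}{\mathfrak{p}_v}\right)-1$, which vanishes precisely when $\left(\frac{\xi}{\mathfrak{p}_v}\right)=-1$. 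In either case the factor at $v$ kills the product, so $c(\xi)=0$ exactly on the stated set, which is the assertion of the proposition.

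The main obstacle will be justifying the factorization of the global Fourier coefficient into local Whittaker values cleanly enough to isolate the $v$-factor: one must verify that the abstract multiplicity-one factorization of the Whittaker functional for the irreducible $\rho$ matches the classical $\xi$-th Fourier coefficient up to a constant that is nonzero and independent of the local symbol at $v$, and that the plus-space/$E^K$ condition indeed selects the vector $f_1-q_v^{-1}f_2$ of Lemma \ref{lem:fixedsusbofSt} rather than another element of $\tilde{I}_{\psi_{1,v}}(s_1)^{\Gamma_v}$. Once this bookkeeping is in place, the computation above is immediate.
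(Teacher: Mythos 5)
Your proposal is correct and follows essentially the same route as the paper, which justifies the proposition by appealing to the arguments of Section \ref{Steinberg Representations} and the Whittaker value formula (\ref{whittakeratI2}): the local new vector at $v$ is the $\Gamma_0(\varpi_v)$-fixed vector $f_1-q_v^{-1}f_2$ of Lemma \ref{lem:fixedsusbofSt}, its Whittaker value at the identity is $q_v^{c_{\psi}-1}\bigl(q_v^{1/2-s_1}\left(\frac{\xi}{\mathfrak{p}_v}\right)-1\bigr)$, and Lemma \ref{Stbgreps} gives $q_v^{1/2-s_1}=1$ or $-1$ in the non-twisted and twisted cases respectively. Your write-up simply makes explicit the global-to-local Whittaker factorization that the paper leaves implicit; the substance is identical.
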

We set the following two spaces
\[
A^+_v=\left\{f=\sum_{\xi}c(\xi)q^\xi\in S^{+,\mathrm{NEW}}_{k+1/2}(\Gamma_0(4\mathfrak{I}),\chi_\mathfrak{f})\,\bigg|\,c(\xi)=0\mbox{ if }\left(\frac{\xi}{\mathfrak{p}_v}\right)=1
\right\}
\]
and
\[
A^-_v=\left\{f=\sum_{\xi}c(\xi)q^\xi\in S^{+,\mathrm{NEW}}_{k+1/2}(\Gamma_0(4\mathfrak{I}),\chi_\mathfrak{f})\,\bigg|\,c(\xi)=0\mbox{ if }\left(\frac{\xi}{\mathfrak{p}_v}\right)=-1
\right\}.
\]
By the proposition, we have 
\begin{equation}\label{AplusandAminus}
S^{+,\mathrm{NEW}}_{k+1/2}(\Gamma_0(4\mathfrak{I}),\chi_\mathfrak{f})=A^+_v+A^-_v.
\end{equation}
\par
From now on we want to understand how the Hecke operators act on the new forms in $\mathcal{B}$.
Fix one $f\in\mathcal{B}$ with the representation $\rho=\prod_{v<\infty}\rho_v$ of $\Mp_2(\mathbb{A}_\mathrm{f})$.
\par
First, let $v$ be an odd place which does not divides $\mathfrak{I}$.
Since $f$ is an unramified vector in $\rho_v$ and the unramified subspace in $\rho_v$ is one-dimensional, we get that $f$ is an eigenvector for any Hecke operators in the Hecke algebra $\widetilde{\mathcal{H}_v}$.
In particular, we have the following proposition.
\begin{proposition}\label{prop:T1v1}
Assume that $v$ does not divide $2\mathfrak{I}$.
Let $\widetilde{\mathcal{T}}_{1,v}\in\widetilde{\mathcal{H}_v}$ be the Hecke operator defined in Section \ref{The Hecke algebras}.
We have
\[
\rho_v(\widetilde{\mathcal{T}}_{1,v})f=q_v^{1/2}(q_v^{s_v}+q_v^{-s_v})f
\] 
where $s_v$ is the complex number such that $\rho_v\subset\tilde{I}_{\psi_v}(s_v)$.
\end{proposition}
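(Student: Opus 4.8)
The plan is to reduce the statement directly to the local computation already carried out in Lemma~\ref{lem:action of T1, gamma1}. Since $v\nmid2\mathfrak{I}$, the congruence subgroup $\Gamma_v$ equals $\Gamma_0(1)_v$; because $4$ is a unit at the odd place $v$, this is exactly the group $\Gamma_1$ of Section~\ref{The Hecke algebras}, and the operator $\widetilde{\mathcal{T}}_{1,v}$ is the generator $\widetilde{\mathcal{T}}_1$ studied there and in Section~\ref{The Hecke operators}. One also checks that the global normalization $\widetilde{\mathcal{T}}_{1,v}(\mathbf{m}(\varpi_v))=q_v^{-1/2}\alpha_{\psi_v}(\varpi_v)/\alpha_{\psi_v}(1)$ agrees with the value of $\widetilde{\mathcal{T}}_1=q_v^{-1/2}X_{\mathbf{m}(\varpi_v)}$, which is immediate from $\varepsilon_v(\mathbf{m}(\varpi_v))=\alpha_{\psi_v}(\varpi_v)/\alpha_{\psi_v}(1)$.

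First I would locate the vector $f$ inside the local representation $\rho_v$. As $f$ is a primitive form, $\rho_v$ is irreducible, and by hypothesis $\rho_v\subset\tilde{I}_{\psi_v}(s_v)$. Moreover $f$ lies in the plus space, so at the place $v$ it is fixed by $\widetilde{\Gamma_0(1)_v}=\widetilde{\Gamma_1}$ up to the character $\varepsilon_v$; that is, $f$ is a $\Gamma_1$-fixed vector in the sense of Section~\ref{The Hecke operators}. As recorded there, the space $\tilde{I}_{\psi_v}(s_v)^{\Gamma_1}$ is one-dimensional, spanned by the normalized function $f_0$ with $f_0(I)=1$. Hence $f$ is a nonzero scalar multiple of $f_0$.

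It then remains only to apply Lemma~\ref{lem:action of T1, gamma1}, which gives
\[
\rho_v(\widetilde{\mathcal{T}}_{1,v})f_0=q_v^{1/2}(q_v^{s_v}+q_v^{-s_v})f_0.
\]
Since $f\in\mathbb{C}\cdot f_0$, the form $f$ is an eigenvector for the same eigenvalue, which is the assertion.

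There is no substantial obstacle: the proposition is essentially the adelic reformulation of Lemma~\ref{lem:action of T1, gamma1}. The only point requiring a moment's care is the identification $\Gamma[\mathfrak{d}_v^{-1},4\mathfrak{d}_v]_v=\Gamma_0(1)_v=\Gamma_1$ at odd $v\nmid\mathfrak{I}$, which makes the notion of being unramified coincide with $\Gamma_1$-fixedness and thereby places $f$ in the one-dimensional space $\mathbb{C}\cdot f_0$ where the eigenvalue was already computed.
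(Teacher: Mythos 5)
Your proof is correct and follows essentially the same route as the paper: both arguments identify $f$ locally with (a scalar multiple of) the normalized unramified vector $f_0$ spanning the one-dimensional space $\tilde{I}_{\psi_v}(s_v)^{\Gamma_1}$ and then invoke Lemma~\ref{lem:action of T1, gamma1}. The extra care you take in matching $\Gamma_v=\Gamma_0(1)_v=\Gamma_1$ at odd $v\nmid\mathfrak{I}$ and in checking the normalization of $\widetilde{\mathcal{T}}_{1,v}$ only makes explicit what the paper leaves implicit.
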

\begin{proof}
We know that $f\in\rho_v$ is equivalent to some non-zero unramified vector $f'$ in $\tilde{I}_{\psi_v}(s_v)$ up to multiplication of non-zero complex numbers under the actions of $\Mp_2(F_v),$ thus the proposition follows from Lemma \ref{lem:action of T1, gamma1}.
\end{proof}

\par

Secondly, let $v$ be an odd place which divides $\mathfrak{I}$. 
Now the local representation $\rho_\psi$ is a Steinberg representation $St_\psi$ contained in a principal series $\tilde{I}_{\psi_v}(s_v)$ for some $s_v\in\mathbb{C}$ such that $q_v^{2s_v}=q_v$.
In this case, as what we have seen in Section \ref{Steinberg Representations}, the subspace of $\rho_v$ fixed by $\Gamma_v$ is one-dimensional.
Since the operator $E^K_v$ projects any vector in $\rho_v$ into the $\Gamma_v$-fixed subspace, the form $f$ is also fixed by $\Gamma_v$.
Thus we get that $f$ is an eigenvector for any Hecke operators in the Hecke algebra $\widetilde{\mathcal{H}_v}$.
\par
The following corollary is comparable with Prop. 4 of \cite{Kohnen:82}.
Again, we use the notations given in Section \ref{The Hecke algebras} with a subscript $v$ for the Hecke operators.
\begin{corollary}\label{cor:ALinvo}
Suppose that $v$ divides $\mathfrak{I}$ but not $\mathfrak{f}$.
Let $\widetilde{\mathcal{U}}_{1,v}$ be the Atkin-Lehner involution defined in Section \ref{The Hecke algebras}.
The subspaces $A^+_v$ and $A^-_v$ given above are the $-\left(\frac{\mathfrak{f}_v}{\mathfrak{p}_v}\right)$-eigenspace and $\left(\frac{\mathfrak{f}_v}{\mathfrak{p}_v}\right)$-eigenspace of $\widetilde{\mathcal{U}}_{1,v},$ respectively.
\end{corollary}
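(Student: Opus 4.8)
The plan is to reduce everything to the local place $v$ and to couple two computations already contained in the excerpt. Since $v\mid\mathfrak{I}$ and $v\nmid\mathfrak{f}$ we are in the case $\Gamma_v=\Gamma_0(\varpi_v)_v$ (so $\beta_v=1$), the element $\mathfrak{f}_v$ is a unit, and $\left(\frac{\mathfrak{f}_v}{\mathfrak{p}_v}\right)$ is well defined. For $f\in\mathcal{B}$ the local representation $\rho_v$ is a Steinberg representation sitting inside some $\tilde{I}_{\psi_v}(s_v)$ with $q_v^{2s_v}=q_v$, so $q_v^{s_v}=\pm q_v^{1/2}$. By Lemma \ref{lem:fixedsusbofSt} the $\Gamma_v$-fixed line of $\rho_v$ is $\mathbb{C}\cdot(f_1-q_v^{-1}f_2)$; since $E^K_v$ projects onto this line, $f$ is (up to scalar) this fixed vector and in particular an eigenvector of $\widetilde{\mathcal{U}}_{1,v}$. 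It therefore suffices to compute the eigenvalue $\epsilon$ and to match it against the Fourier condition defining $A^{\pm}_v$.

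First I would compute $\epsilon$ using the action formulas of Lemma \ref{lem:action of operators, gamma2}, which give
\[
\rho_v(\widetilde{\mathcal{U}}_{1,v})(f_1-q_v^{-1}f_2)=q_v^{-1/2-s_v}f_2-q_v^{-1/2+s_v}f_1=-q_v^{s_v-1/2}(f_1-q_v^{-1}f_2),
\]
where the last equality uses $q_v^{2s_v}=q_v$. Thus $\epsilon=-q_v^{s_v-1/2}$, which equals $-1$ when $q_v^{s_v}=q_v^{1/2}$ and $+1$ when $q_v^{s_v}=-q_v^{1/2}$. The key observation is that the Whittaker formula (\ref{whittakeratI}), applied to $f_1-q_v^{-1}f_2\in\tilde{I}_{\psi_v}(s_v)$ with reference character $\psi_{1,v}$ and $\eta=\mathfrak{f}_v$, shows that the $\xi$-th Fourier coefficient of $f$ vanishes exactly when $q_v^{1/2-s_v}\left(\frac{\xi}{\mathfrak{p}_v}\right)\left(\frac{\mathfrak{f}_v}{\mathfrak{p}_v}\right)=1$. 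Since $q_v^{1/2-s_v}=-\epsilon$, this vanishing condition reads $\left(\frac{\xi}{\mathfrak{p}_v}\right)=-\epsilon\left(\frac{\mathfrak{f}_v}{\mathfrak{p}_v}\right)$, so the eigenvalue and the support condition are governed by the single root $q_v^{s_v}=\pm q_v^{1/2}$.

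From this coupling the corollary follows quickly. If $\epsilon=-\left(\frac{\mathfrak{f}_v}{\mathfrak{p}_v}\right)$ then $c(\xi)=0$ precisely when $\left(\frac{\xi}{\mathfrak{p}_v}\right)=1$, i.e. $f\in A^+_v$; if $\epsilon=\left(\frac{\mathfrak{f}_v}{\mathfrak{p}_v}\right)$ then $c(\xi)=0$ precisely when $\left(\frac{\xi}{\mathfrak{p}_v}\right)=-1$, i.e. $f\in A^-_v$. Letting $f$ run over $\mathcal{B}$ and invoking (\ref{AplusandAminus}) to see that $A^+_v$ and $A^-_v$ exhaust the new space, this identifies $A^+_v$ with the $-\left(\frac{\mathfrak{f}_v}{\mathfrak{p}_v}\right)$-eigenspace and $A^-_v$ with the $\left(\frac{\mathfrak{f}_v}{\mathfrak{p}_v}\right)$-eigenspace of $\widetilde{\mathcal{U}}_{1,v}$. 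Equivalently, one may replace the explicit Whittaker step by Proposition \ref{prop:A+A-}, translating its twisted/non-twisted dichotomy into the sign of $q_v^{s_v}$ by means of Lemma \ref{Stbgreps}. The step I expect to require the most care is the passage from the local Whittaker value at $v$ to the vanishing of the global Fourier coefficient $c(\xi)$: one must check that the local factor at $v$ genuinely controls $c(\xi)$, independently of the contributions from the other finite places and the archimedean normalization. This is exactly the input already underlying (\ref{whittakeratI2}) and Proposition \ref{prop:A+A-}, so the remaining work is only the clean bookkeeping of the three signs $q_v^{s_v}=\pm q_v^{1/2}$, $\left(\frac{\mathfrak{f}_v}{\mathfrak{p}_v}\right)$ and $\left(\frac{\xi}{\mathfrak{p}_v}\right)$.
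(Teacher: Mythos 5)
Your proposal is correct and follows essentially the same route as the paper: the eigenvalue $-q_v^{s_v-1/2}$ is computed on the Steinberg-fixed vector $f_1-q_v^{-1}f_2$ via Lemma \ref{lem:action of operators, gamma2}, and the sign of $q_v^{s_v}$ is then matched against the support condition on the Fourier coefficients. The only cosmetic difference is that you plug $\eta=\mathfrak{f}_v$ directly into the Whittaker formula (\ref{whittakeratI}), whereas the paper routes the same information through the twisted/non-twisted dichotomy and Proposition \ref{prop:A+A-}; the two bookkeepings are equivalent by Lemma \ref{Stbgreps}.
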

\begin{proof}
Let $f\in\mathcal{B}$.
Since $f$ is a $\Gamma_0(\varpi_v)$-fixed vector in $\rho_v,$ which is a Steinberg representation $St_\psi$ contained in $\tilde{I}_{\psi_v}(s_v)$ for some $s_v\in\mathbb{C},$ by Lemma \ref{lem:fixedsusbofSt}, $f$ is equivalent to the unique function $f'$ in $St_\psi$ which is right invariant under $\widetilde{\Gamma_0(\varpi_v)}$ and satisfies
\[
f'(I_v)=1
\]
and
\[
f'(\mathbf{w}_{\boldsymbol{\delta}_v})=-q_v^{-1}.
\]
By Lemma \ref{lem:action of operators, gamma2}, we have
\[ 
\rho_v(\widetilde{\mathcal{U}}_{1,v})f'(I_v)=-q_v^{-1/2+s_v}
\]
and
\[
\rho_v(\widetilde{\mathcal{U}}_{1,v})f'(\mathbf{w}_{\boldsymbol{\delta}_v})=q_v^{-1/2-s_v}.
\]
Since $q_v^{s_v}\in\{\pm\sqrt{q}\},$ we have $q_v^{-s_v}=q_v^{s_v-1}$ and the above tells us that
\[
\rho_v(\widetilde{\mathcal{U}}_{1,v})f'=-q_v^{-1/2+s_v}f'.
\]
Now if $\rho_v$ is non-twisted, then one of 
\[
\begin{cases}
q_v^{s_v}=\sqrt{q_v}\\
\left(\frac{\mathfrak{f}_v}{\mathfrak{p}_v}\right)=1
\end{cases}
\]
and
\[
\begin{cases}
q_v^{s_v}=-\sqrt{q_v}\\
\left(\frac{\mathfrak{f}_v}{\mathfrak{p}_v}\right)=-1
\end{cases}
\]
is true.
Under both of the conditions we have $-q_v^{s_v-1/2}=-\left(\frac{\mathfrak{f}_v}{\mathfrak{p}_v}\right),$ that is, $f'$ is a $-\left(\frac{\mathfrak{f}_v}{\mathfrak{p}_v}\right)$-eigenfunction of $\widetilde{\mathcal{U}}_{1,v}$.
The case for $\rho_v$ twisted is similar.
\end{proof}
Inspired by a recent work \cite{BaPu:15} from Baruch and Purkait, we give an if-and-only-if condition in terms of Hecke operators for the determination of a new form.
Let $\widetilde{\mathcal{T}}_{1,v}$ be the local Hecke operator corresponding to $\mathbf{m}(\varpi_v)$ defined in Section \ref{The Hecke algebras}.
\begin{theorem}\label{thm:iffconditionfornewforms}
Let $f\in S^+_{k+1/2}(\Gamma_0(4\mathfrak{I}),\chi_\mathfrak{f})$.
Then $f$ is a new form if and only if
\[
\rho_v(\widetilde{\mathcal{T}}_{1,v}\widetilde{\mathcal{U}}_{1,v})f=-f=\rho_v(\widetilde{\mathcal{U}}_{1,v}\widetilde{\mathcal{T}}_{1,v})
\]
for any $v$ dividing $\mathfrak{I}$.
\end{theorem}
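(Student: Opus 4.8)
The plan is to prove that the stated two-sided condition cuts out exactly the new space inside $S^+:=S^+_{k+1/2}(\Gamma_0(4\mathfrak{I}),\chi_\mathfrak{f})$, by reducing everything to a local computation at each $v\mid\mathfrak{I}$ and exploiting the decomposition $S^+=S^{+,\mathrm{NEW}}\oplus S^{+,\mathrm{OLD}}$. Since $\mathfrak{I}$ is odd and square-free, every $v\mid\mathfrak{I}$ is odd, so $\widetilde{\mathcal{T}}_{1,v}$ and $\widetilde{\mathcal{U}}_{1,v}$ are the generators $\widetilde{\mathcal{T}}_1,\widetilde{\mathcal{U}}_1$ of $\widetilde{\mathcal{H}_v}$ from Section~\ref{The Hecke algebras}; in particular the even place $\Gamma_0(4)$ case never arises. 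First I would record from Lemma~\ref{generatorsofH} that $\widetilde{\mathcal{U}}_{1,v}^2=1$ and $\widetilde{\mathcal{U}}_{0,v}\ast\widetilde{\mathcal{U}}_{1,v}=\widetilde{\mathcal{T}}_{1,v}$, whence
\[
\widetilde{\mathcal{T}}_{1,v}\ast\widetilde{\mathcal{U}}_{1,v}=\widetilde{\mathcal{U}}_{0,v},\qquad \widetilde{\mathcal{U}}_{1,v}\ast\widetilde{\mathcal{T}}_{1,v}=\widetilde{\mathcal{U}}_{1,v}\ast\widetilde{\mathcal{U}}_{0,v}\ast\widetilde{\mathcal{U}}_{1,v},
\]
so the condition becomes $\rho_v(\widetilde{\mathcal{U}}_{0,v})f=-f$ and $\rho_v(\widetilde{\mathcal{U}}_{1,v}\ast\widetilde{\mathcal{U}}_{0,v}\ast\widetilde{\mathcal{U}}_{1,v})f=-f$ for all $v\mid\mathfrak{I}$. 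Both $\widetilde{\mathcal{T}}_{1,v}$ and $\widetilde{\mathcal{U}}_{1,v}$ preserve $S^{+,\mathrm{NEW}}$ and $S^{+,\mathrm{OLD}}$: the old space is by construction a sum of $\widetilde{\mathcal{H}_v}$-translates of lower-level spaces, and the new space is stable because $\widetilde{\mathcal{H}_v}$ is closed under the involution realizing the Petersson adjoint. Hence the space $C$ of $f$ satisfying the condition splits as $(C\cap S^{+,\mathrm{NEW}})\oplus(C\cap S^{+,\mathrm{OLD}})$, and it suffices to prove $S^{+,\mathrm{NEW}}\subseteq C$ and $C\cap S^{+,\mathrm{OLD}}=0$.

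For the inclusion $S^{+,\mathrm{NEW}}\subseteq C$, I would pass to the basis $\mathcal{B}$ of primitive new forms and fix $v\mid\mathfrak{I}$, where $\rho_v$ is a Steinberg representation. By Lemma~\ref{lem:fixedsusbofSt} its $\Gamma_v$-fixed line is $\mathbb{C}\cdot(f_1-q_v^{-1}f_2)$ inside some $\tilde{I}_{\psi_v}(s_v)$ with $q_v^{2s_v}=q_v$. Feeding $f_1-q_v^{-1}f_2$ into the action formulas of Lemma~\ref{lem:action of operators, gamma2}, exactly as in the proof of Corollary~\ref{cor:ALinvo}, gives $\rho_v(\widetilde{\mathcal{U}}_{1,v})=-q_v^{-1/2+s_v}$ and $\rho_v(\widetilde{\mathcal{T}}_{1,v})=q_v^{1/2-s_v}$ on this line; since $\rho$ is an algebra homomorphism on $\widetilde{\mathcal{H}_v}$, both products act by the scalar $-q_v^{-1/2+s_v}\cdot q_v^{1/2-s_v}=-1$ once $q_v^{2s_v}=q_v$ is used. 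This value is independent of $s_v$ and of whether the Steinberg representation is twisted, so the whole new space lies in $C$; the case $v\mid\mathfrak{f}$ is identical after replacing $\Gamma_0(\varpi_v)$ by $\Gamma[\varpi_v\mathfrak{d}_v^{-1},\mathfrak{d}_v]$ and taking $\beta_v=-1$.

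For $C\cap S^{+,\mathrm{OLD}}=0$, I would decompose $S^{+,\mathrm{OLD}}$ under the commuting operators $\{\widetilde{\mathcal{U}}_{0,v},\widetilde{\mathcal{U}}_{1,v}\}_{v\mid\mathfrak{I}}$ into components, each attached to an automorphic representation that is unramified at some $v_0\mid\mathfrak{I}$; there $\rho_{v_0}$ is either an irreducible unramified principal series ($q_{v_0}^{2s}\neq q_{v_0}^{\pm1}$) or the spherical constituent $\omega_{\psi_{v_0}}^+$. On the $\Gamma_{v_0}$-fixed space spanned by $f_1,f_2$, Lemma~\ref{lem:action of operators, gamma2} supplies the matrices of $\widetilde{\mathcal{U}}_{0,v_0}$ and of $\widetilde{\mathcal{U}}_{1,v_0}\ast\widetilde{\mathcal{U}}_{0,v_0}\ast\widetilde{\mathcal{U}}_{1,v_0}$; a direct computation shows their simultaneous $(-1)$-eigenspace is nonzero only when $q_{v_0}^{2s}=q_{v_0}$, and then it coincides with the Steinberg line $\mathbb{C}(f_1-q_{v_0}^{-1}f_2)$ of Lemma~\ref{lem:fixedsusbofSt}, which is not contained in the unramified constituent. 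On the one-dimensional fixed space of $\omega_{\psi_{v_0}}^+$ the relation $(\widetilde{\mathcal{U}}_{0,v_0}-q_{v_0})(\widetilde{\mathcal{U}}_{0,v_0}+1)=0$ of Lemma~\ref{generatorsofH} forces the eigenvalue $q_{v_0}$, so already $\rho_{v_0}(\widetilde{\mathcal{U}}_{0,v_0})f=-f$ fails. Thus no nonzero old form meets the condition at $v_0$, giving $C\cap S^{+,\mathrm{OLD}}=0$ and hence $C=S^{+,\mathrm{NEW}}$, which is the theorem.

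The main obstacle I anticipate lies in the old-space step: one must make the place-by-place reduction rigorous, i.e.\ justify that $S^{+,\mathrm{OLD}}$ decomposes compatibly with the commuting family $\{\widetilde{\mathcal{U}}_{0,v},\widetilde{\mathcal{U}}_{1,v}\}_{v\mid\mathfrak{I}}$ so that vanishing of the simultaneous $(-1)$-eigenspace can be tested one local factor at a time, and that the unramified local factor really is the full two-dimensional principal-series fixed space or the one-dimensional $\omega_{\psi_{v_0}}^+$ fixed space. The delicate point is the reducible case $q_{v_0}^{2s}=q_{v_0}$, where the dangerous simultaneous $(-1)$-eigenvector does exist but lives in the Steinberg summand rather than in the unramified representation attached to the old form, so the argument must carefully separate the constituent containing the old vector from the complementary Steinberg line.
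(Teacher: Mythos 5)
Your proposal is correct and follows essentially the same route as the paper: both arguments come down to the two-dimensional $\widetilde{\Gamma_v}$-fixed space of the local principal series and the action formulas of Lemma \ref{lem:action of operators, gamma2}, showing that the simultaneous $(-1)$-eigenspace of $\widetilde{\mathcal{T}}_{1,v}\widetilde{\mathcal{U}}_{1,v}=\widetilde{\mathcal{U}}_{0,v}$ and $\widetilde{\mathcal{U}}_{1,v}\widetilde{\mathcal{T}}_{1,v}$ is nonzero exactly when $q_v^{2s_v}=q_v$, in which case it is the Steinberg line $\mathbb{C}\cdot(f_1-q_v^{-1}f_2)$ of Lemma \ref{lem:fixedsusbofSt} (the paper packages this as the single coefficient comparison $-q_vc_2=c_1=-q_v^{2s_v}c_2$ for a primitive form, where you instead split into the new and old summands). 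One small repair in your old-space step: on the fixed line of $\omega_{\psi_{v_0}}^+$ the quadratic relation for $\widetilde{\mathcal{U}}_{0,v_0}$ alone does not force the eigenvalue $q_{v_0}$ (it also allows $-1$); you need the observation you already made, namely that the $(-1)$-eigenvector of $\widetilde{\mathcal{U}}_{0,v_0}$ in $\tilde{I}_{\psi_{v_0}}(s_{v_0})^{\Gamma_{v_0}}$ spans $St_{\psi_{v_0}}^{\Gamma_{v_0}}$ and therefore dies in the quotient $\omega_{\psi_{v_0}}^+$, leaving only the eigenvalue $q_{v_0}$ there.
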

\begin{proof}
We may assume that $f$ is a primitive form.
If $f=0,$ then the theorem is trivial.
So we may assume that $f$ is non-zero.
We must show that for any $v$ dividing $\mathfrak{I}$ the local representation $\rho_v$ generated by $f$ is a Steinberg representation if and only if $\rho_v(\widetilde{\mathcal{T}}_{1,v}\widetilde{\mathcal{U}}_{1,v})f=-f=\rho_v(\widetilde{\mathcal{U}}_{1,v}\widetilde{\mathcal{T}}_{1,v})$.
Fix a place $v$ which divides $\mathfrak{I}$.
The representation $\rho_v$ is contained in a principal series $\tilde{I}_{\psi_v}(s_v)$ for some $s_v\in\mathbb{C}$.
In particular, $f$ is corresponding to some $f'\in\tilde{I}_{\psi_v}(s_v)$ which satisfies $\rho_v(\gamma)f'=\varepsilon_v(\gamma)^{-1}f'$ for $\gamma\in\widetilde{\Gamma_0(\varpi_v)_v}$.
So we can write $f'$ in the form
\[
f'=c_1f_1+c_2f_2\quad(c_1, c_2\in\mathbb{C})
\]
where $f_1$ and $f_2$ are the spanning vectors of the $\widetilde{\Gamma_v}$-fixed subspace of $\tilde{I}_{\psi_v}(s_v)$ given in Section \ref{The Hecke operators}.
\par
Now since $\widetilde{\mathcal{T}}_{1,v}=\widetilde{\mathcal{U}}_{0,v}\widetilde{\mathcal{U}}_{1,v},$ by Lemma \ref{lem:action of operators, gamma2}, we have
\[
\rho_v(\widetilde{\mathcal{T}}_{1,v}\widetilde{\mathcal{U}}_{1,v})f'=q_vc_2f_1+(c_1+(q_v-1)c_2)f_2
\]
and
\[
\rho_v(\widetilde{\mathcal{U}}_{1,v}\widetilde{\mathcal{T}}_{1,v})f'=((q_v-1)c_1+q^{1+2s_v}c_2)f_1+q_v^{-2s_v}c_1f_2.
\]
By comparing the coefficients, we get that $\rho_v(\widetilde{\mathcal{T}}_{1,v}\widetilde{\mathcal{U}}_{1,v})f'=-f'=\rho_v(\widetilde{\mathcal{U}}_{1,v}\widetilde{\mathcal{T}}_{1,v})f'$ if and only if

\[
-q_vc_2=c_1=-q_v^{2s_v}c_2.
\]

Since $f$ is non-zero, both $c_1$ and $c_2$ are non-zero.
This implies that $q_v^{2s_v}=q_v$ and $f'\in\mathbb{C}\cdot(f_1-q_v^{-1}f_2)$.
Thus the representation of $\Mp_2(F_v)$ generated by $f'$ is a Steinberg representation.
\end{proof}
Using a proof similar to the one of Proposition \ref{prop:T1v1}, one easily get the following.
\begin{proposition}\label{prop:T1v2}
	For $v\mid\mathfrak{I},$ the Hecke operator $\widetilde{\mathcal{T}}_{1,v}\in\widetilde{\mathcal{H}_v}$ acts on $f\in\mathcal{B}$ by
	\[
	\rho_v(\widetilde{\mathcal{T}}_{1,v})f=q_v^{s_v-1/2}f
	\]
	where $s_v$ is the complex number such that $\rho_v\subset\tilde{I}_{\psi_v}(s_v)$.
\end{proposition}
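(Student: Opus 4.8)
The plan is to mimic the proof of Proposition~\ref{prop:T1v1}: reduce the claim to an explicit computation inside the finite-dimensional $\Gamma_v$-fixed subspace of the principal series, where the action of the generating Hecke operators has already been recorded. We may assume $f$ is primitive and nonzero, and we fix a place $v\mid\mathfrak{I}$. Then $\rho_v$ is a Steinberg representation sitting inside $\tilde{I}_{\psi_v}(s_v)$ with $q_v^{2s_v}=q_v$, and, as noted before Corollary~\ref{cor:ALinvo}, its $\Gamma_v$-fixed subspace is one-dimensional; hence $f$ is automatically an eigenvector of every operator in $\widetilde{\mathcal{H}_v}$, and it remains only to pin down the eigenvalue of $\widetilde{\mathcal{T}}_{1,v}$. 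By Lemma~\ref{lem:fixedsusbofSt} the form $f$ corresponds, up to a nonzero scalar, to the vector $f'=f_1-q_v^{-1}f_2$ spanning $St_{\psi_v}^{\Gamma_v}$, where $f_1,f_2$ are the spanning vectors of $\tilde{I}_{\psi_v}(s_v)^{\Gamma_v}$ introduced in Section~\ref{The Hecke operators}.

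Next I would use the factorization $\widetilde{\mathcal{T}}_{1,v}=\widetilde{\mathcal{U}}_{0,v}\ast\widetilde{\mathcal{U}}_{1,v}$ from part~(1) of Lemma~\ref{generatorsofH}, so that $\rho_v(\widetilde{\mathcal{T}}_{1,v})f'=\rho_v(\widetilde{\mathcal{U}}_{0,v})\rho_v(\widetilde{\mathcal{U}}_{1,v})f'$. Feeding in the four formulas \eqref{0and1}--\eqref{1and2} of Lemma~\ref{lem:action of operators, gamma2} and expanding, one first finds $\rho_v(\widetilde{\mathcal{U}}_{1,v})f'=-q_v^{-1/2+s_v}f_1+q_v^{-1/2-s_v}f_2$, and applying $\widetilde{\mathcal{U}}_{0,v}$ then expresses $\rho_v(\widetilde{\mathcal{T}}_{1,v})f'$ as an explicit $\mathbb{C}$-linear combination of $f_1$ and $f_2$ whose coefficients are built from the powers $q_v^{\pm s_v}$.

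The last step is the simplification forced by the Steinberg condition $q_v^{2s_v}=q_v$, equivalently $q_v^{-s_v}=q_v^{s_v-1}$: substituting this identity collapses the coefficients so that the combination becomes exactly $q_v^{s_v-1/2}(f_1-q_v^{-1}f_2)=q_v^{s_v-1/2}f'$, which is the asserted eigenvalue. I expect no genuine obstacle here, which is why the result is stated as easy; as in Proposition~\ref{prop:T1v1} everything reduces to bookkeeping in the two-dimensional fixed space, the only points demanding care being the composition convention $\rho_v(\varphi_1\ast\varphi_2)=\rho_v(\varphi_1)\rho_v(\varphi_2)$ (already used silently in the proof of Theorem~\ref{thm:iffconditionfornewforms}) and the correct handling of the $q_v^{\pm s_v}$ exponents when invoking $q_v^{2s_v}=q_v$. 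Moreover the argument is uniform across the two subcases $\Gamma_v=\Gamma_0(\varpi_v)$ for $v\mid\mathfrak{f}^{-1}\mathfrak{I}$ and $\Gamma_v=\Gamma[\varpi_v\mathfrak{d}_v^{-1},\mathfrak{d}_v]$ for $v\mid\mathfrak{f}$, since Lemmas~\ref{lem:action of operators, gamma2} and~\ref{lem:fixedsusbofSt} hold verbatim in both; alternatively one may bypass the second operator computation altogether by combining $\widetilde{\mathcal{U}}_{1,v}^2=1$ with the relation $\rho_v(\widetilde{\mathcal{T}}_{1,v}\widetilde{\mathcal{U}}_{1,v})f=-f$ already established in the proof of Theorem~\ref{thm:iffconditionfornewforms}.
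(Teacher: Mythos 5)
Your proposal is correct and fills in exactly the computation the paper leaves implicit (its proof is only the remark that one argues as for Proposition \ref{prop:T1v1}): identify $f$ with the spanning vector $f_1-q_v^{-1}f_2$ of $St_{\psi_v}^{\Gamma_v}$ via Lemma \ref{lem:fixedsusbofSt}, apply $\widetilde{\mathcal{T}}_{1,v}=\widetilde{\mathcal{U}}_{0,v}\ast\widetilde{\mathcal{U}}_{1,v}$ through Lemma \ref{lem:action of operators, gamma2}, and simplify with $q_v^{2s_v}=q_v$; the coefficients indeed collapse to $q_v^{s_v-1/2}(f_1-q_v^{-1}f_2)$. This is the same route the paper intends, and your shortcut via $\widetilde{\mathcal{U}}_{1,v}^2=1$ together with the eigenvalue $-q_v^{s_v-1/2}$ of the Atkin--Lehner involution from Corollary \ref{cor:ALinvo} gives the same answer.
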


\par
Finally we want to consider the case $v\mid2$.
In this case we also want to get an analogue of Proposition \ref{prop:T1v1} and \ref{prop:T1v2}, but now the $E^K_v$ fixed subspace of $\rho_v$ is not invariant under the action of $\widetilde{\mathcal{T}}_{1,v},$ thus we want to consider the Hecke operator $E^K_v\ast\widetilde{\mathcal{T}}_{1,v},$ which obviously leaves the $E^K_v$-fixed subspace invariant by the idempotence of $E^K_v$.

\begin{lemma}\label{lem:valuesoff+}
Let $f^+_v\in\tilde{I}_{\psi_v}(s_v)$ be the one such that $\tilde{I}_{\psi_v}(s_v)^{E^K_v}=\mathbb{C}\cdot f^+_v$ as in Proposition \ref{prop:EKfixedsubsp}.
Then we have
\begin{enumerate}
\item $f^+_v(I_v)=\overline{\alpha_{\psi_v}(2\boldsymbol{\delta}_v)}|2|_v^{-(s_v+1/2)}$.\\
\item $f^+_v(\mathbf{w}_{\boldsymbol{\delta}_v})=\overline{\alpha_{\psi_v}(2\boldsymbol{\delta}_v)\alpha_{\psi_v}(\boldsymbol{\delta}_v)}|2|_v^{s_v+1}$.\\
\item For $c\in\mathfrak{o}_v$ such that $0<\ord_v(c)<2e_v,$ we have
\[ f^+_v(\mathbf{u}^\flat(\boldsymbol{\delta}_v))=\overline{\alpha_{\psi_v}(2\boldsymbol{\delta}_v)\alpha_{\psi_v}(\boldsymbol{\delta}_vc)}\left|\frac{2}{c}\right|_v^{s_v+1}\int_{\mathfrak{o}_v}\psi_v\left(\frac{y^2}{\boldsymbol{\delta}_vc}\right)dy.
\]
In particular, $f^+_v(\mathbf{u}^\flat(\boldsymbol{\delta}_v))=0$ if $0<\ord_v(c)<2e_v$ is odd.
\end{enumerate}
\end{lemma}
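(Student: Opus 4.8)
The plan is to exploit the description of the $E^K_v$-fixed line from Proposition \ref{prop:EKfixedsubsp}. Since $v\mid2$ we have $q_v$ even and $K_v=\Gamma_0(1)_v$, so that proposition applies and gives $f^+_v=\rho(\mathbf{w}_{2\boldsymbol{\delta}_v})f^{[0]}_v$; unwinding its normalization shows that $f^{[0]}_v$ restricted to $\widetilde{\Gamma_0(1)_v}$ is exactly the matrix coefficient $g\mapsto(\omega_{\psi_v}(g)\phi_{0,v},\phi_{0,v})_v$. Dropping the subscript $v$ as in the local computations, every desired value reads $f^+(g)=f^{[0]}(g\,\mathbf{w}_{2\boldsymbol{\delta}})$, and the uniform strategy is to write $g\,\mathbf{w}_{2\boldsymbol{\delta}}$ as an element of $\widetilde{B}$ times an element of $\widetilde{\Gamma_0(1)}$, using the explicit metaplectic triangular decompositions recorded in the proof of Lemma \ref{defofvarepsilon}; the Borel part is then evaluated by the transformation law of $\tilde{I}_\psi(s)$ and the $\widetilde{\Gamma_0(1)}$-part by the matrix coefficient of $\omega_\psi$.

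For (1) I would factor $\mathbf{w}_{2\boldsymbol{\delta}}=(2,\boldsymbol{\delta})\,\mathbf{m}(2^{-1})\mathbf{w}_{\boldsymbol{\delta}}$ with $\mathbf{w}_{\boldsymbol{\delta}}\in\widetilde{\Gamma_0(1)}$. A direct computation with the Weil representation gives $\omega_\psi(\mathbf{w}_{\boldsymbol{\delta}})\phi_0=\overline{\alpha_\psi(\boldsymbol{\delta})}|2|^{1/2}\phi_0'$, hence $f^{[0]}(\mathbf{w}_{\boldsymbol{\delta}})=\overline{\alpha_\psi(\boldsymbol{\delta})}|2|^{1/2}$, and applying the transformation law for $\mathbf{m}(2^{-1})$ together with the Weil-index identity $\overline{\alpha_\psi(2\boldsymbol{\delta})}=(2,\boldsymbol{\delta})\tfrac{\alpha_\psi(1)}{\alpha_\psi(2)}\overline{\alpha_\psi(\boldsymbol{\delta})}$ yields the stated value. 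For (2) the matrix product collapses: $\mathbf{w}_{\boldsymbol{\delta}}\mathbf{w}_{2\boldsymbol{\delta}}=\zeta\,\mathbf{m}(-2)$ with cocycle $\zeta=\boldsymbol{c}(\mathbf{w}_{\boldsymbol{\delta}},\mathbf{w}_{2\boldsymbol{\delta}})=(-2\boldsymbol{\delta},-\boldsymbol{\delta})$, so $f^+(\mathbf{w}_{\boldsymbol{\delta}})=\zeta\,\tfrac{\alpha_\psi(1)}{\alpha_\psi(-2)}|2|^{s+1}$, and the product formula $\alpha_\psi(a)\alpha_\psi(b)=(a,b)\alpha_\psi(1)\alpha_\psi(ab)$ applied to $\alpha_\psi(-2\boldsymbol{\delta})\alpha_\psi(-\boldsymbol{\delta})$ reduces the required matching to the tautology $\zeta=(-2\boldsymbol{\delta},-\boldsymbol{\delta})$.

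The substantive case is (3). Here I would apply the $d\neq0$ decomposition of Lemma \ref{defofvarepsilon} to
\[
\mathbf{u}^\flat(\boldsymbol{\delta}c)\mathbf{w}_{2\boldsymbol{\delta}}=\mathbf{u}^\sharp\!\left(\tfrac{1}{\boldsymbol{\delta}c}\right)\mathbf{m}(-2/c)\,\mathbf{u}^\flat(-4\boldsymbol{\delta}/c)\times(2\boldsymbol{\delta},-c/2),
\]
observing that $\ord(-4\boldsymbol{\delta}/c)=2e+c_\psi-\ord(c)>c_\psi$ by the hypothesis $\ord(c)<2e$, so the tail $\mathbf{u}^\flat(-4\boldsymbol{\delta}/c)$ genuinely lies in $\widetilde{\Gamma_0(1)}$. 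Thus $f^{[0]}$ on the tail is the nontrivial matrix coefficient $(\omega_\psi(\mathbf{u}^\flat(-4\boldsymbol{\delta}/c))\phi_0,\phi_0)$; by unitarity this equals $q^{-e}\Vol(\widetilde{K})\,e^K(\mathbf{u}^\flat(4\boldsymbol{\delta}/c))$, and Lemma \ref{lem:valuesofeK}(1) with $z=4/c$ evaluates it, after using $q^{-e}=|2|$, to the Gauss sum $\int_{\mathfrak{o}}\psi(y^2/(\boldsymbol{\delta}c))\,dy$. Feeding this back through the transformation law for $\mathbf{u}^\sharp$ and $\mathbf{m}(-2/c)$ produces the claimed expression, and the ``in particular'' clause is the vanishing of that Gauss sum for odd $\ord(c)$.

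I expect the main obstacle to be the reconciliation of the leading constant in (3): matching $\tfrac{\alpha_\psi(1)}{\alpha_\psi(-2/c)}(2\boldsymbol{\delta},-c/2)$ against $\overline{\alpha_\psi(2\boldsymbol{\delta})\alpha_\psi(\boldsymbol{\delta}c)}$ reduces, via the same Weil-index product formula and the identities $(x,x)=(x,-1)$ and $\alpha_\psi(1)^4=(-1,-1)$, to a residual Hilbert symbol of the form $(-1,-2c)$. Unlike (1) and (2), this does not vanish identically, so the identity holds on the nose only because the truncated Gauss sum $\int_{\mathfrak{o}}\psi(y^2/(\boldsymbol{\delta}c))\,dy$ is itself nonzero precisely on the square classes of $c$ that trivialize this residual symbol (in particular it vanishes when $\ord(c)$ is odd). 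Carrying out this Hilbert-symbol bookkeeping in residue characteristic $2$, where the symbols and the dyadic Gauss sums are most delicate, is exactly the step that cannot be imported from the odd-$q$ computations of the earlier sections.
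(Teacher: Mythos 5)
Your overall strategy --- unwinding $f^+_v=\rho(\mathbf{w}_{2\boldsymbol{\delta}_v})f^{[0]}_v$, identifying $f^{[0]}_v$ on $\widetilde{\Gamma_0(1)_v}$ with the matrix coefficient $g\mapsto(\omega_{\psi_v}(g)\phi_{0,v},\phi_{0,v})_v$, and evaluating $g\mathbf{w}_{2\boldsymbol{\delta}_v}$ by a triangular factorization fed into Lemma \ref{lem:valuesofeK} --- is exactly what the paper's one-line proof intends, and your parts (1) and (2) check out, cocycles and Weil-index identities included.

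Part (3), however, contains a genuine error in the Kubota cocycle, and the paragraph you append to repair it does not work. The matrix identity is right, but its metaplectic lift is
\[
\mathbf{u}^\flat(\boldsymbol{\delta}c)\,\mathbf{w}_{2\boldsymbol{\delta}}
=\left(\tfrac{-c}{4\boldsymbol{\delta}},\tfrac{-2}{c}\right)
\mathbf{u}^\sharp\!\left(\tfrac{1}{\boldsymbol{\delta}c}\right)\mathbf{m}(-2/c)\,\mathbf{u}^\flat(-4\boldsymbol{\delta}/c),
\]
i.e.\ the sign is $(-c\boldsymbol{\delta},-2c)$, not $(2\boldsymbol{\delta},-c/2)$; the two differ by $(-1,-2c)$. (Indeed $\boldsymbol{c}(\mathbf{u}^\flat(\boldsymbol{\delta}c),\mathbf{w}_{2\boldsymbol{\delta}})=(\tfrac{c}{2},1)=1$, while the only nontrivial contribution on the right comes from $\boldsymbol{c}\bigl(\mathbf{u}^\sharp(\tfrac{1}{\boldsymbol{\delta}c})\mathbf{m}(-2/c),\mathbf{u}^\flat(-4\boldsymbol{\delta}/c)\bigr)=(\tfrac{-c/2}{2\boldsymbol{\delta}},\tfrac{-4\boldsymbol{\delta}/c}{2\boldsymbol{\delta}})$.) With the correct sign the bookkeeping closes exactly: $\overline{\alpha_\psi(2\boldsymbol{\delta})\alpha_\psi(\boldsymbol{\delta}c)}=\alpha_\psi(-2\boldsymbol{\delta})\alpha_\psi(-\boldsymbol{\delta}c)=(-2\boldsymbol{\delta},-\boldsymbol{\delta}c)\,\alpha_\psi(1)\alpha_\psi(2c)$, and $(-c\boldsymbol{\delta},-2c)=(-2\boldsymbol{\delta},-\boldsymbol{\delta}c)$ because their product equals $(-c\boldsymbol{\delta},(-2c)(-2\boldsymbol{\delta}))=(-c\boldsymbol{\delta},c\boldsymbol{\delta})=1$. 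So there is no residual Hilbert symbol: the displayed identity in (3) holds for every $c$ with $0<\ord(c)<2e$, whether or not the Gauss sum vanishes. Your proposed rescue --- that $\int_{\mathfrak{o}}\psi(y^2/(\boldsymbol{\delta}c))\,dy$ vanishes precisely on the square classes of $c$ with $(-1,-2c)=-1$ --- is asserted without proof, is not needed, and cannot be relied on: exactly one of the two cocycle values is correct as a group-theoretic fact, and the vanishing of this dyadic Gauss sum (governed by the parity of $\ord(c)$ and the fine structure of the unit part of $c$, as in Lemma 3.6 of \cite{HiraIke:13}) has no reason to track the character $c\mapsto(-1,-2c)$. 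As written, your argument would leave the constant in (3) wrong by a sign whenever $\ord(c)$ is even and $(-1,-2c)=-1$. The final vanishing statement for odd $\ord(c)$ is fine and coincides with the paper's citation of Lemma 3.6 of \cite{HiraIke:13}.
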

\begin{proof}
The lemma follows from the definition of $f^+_v$ and Lemma \ref{lem:valuesofeK}.
The disappearance of $f^+_v(\mathbf{u}^\flat(\boldsymbol{\delta}_v))$ for $c\in\mathfrak{o}_v$ such that $0<\ord(c)<2e_v$ is odd follows from Lemma 3.6 of \cite{HiraIke:13}.
\end{proof}

Let $B_v$ be the Borel subgroup of $\SL_2(F_v),$ which consists of all upper-triangular matrices.
Then we can take
\[
\mathfrak{R}=\left\{
\mathbf{w}_{\boldsymbol{\delta}_v}, I_v
\right\}
\cup
\left\{
\mathbf{u}^\flat(\boldsymbol{\delta}_vc)\,\bigg|\,
c\in\varpi^r\cdot\mathfrak{o}_v^\times/({\mathfrak{o}_v^\times}^2+\mathfrak{p}_v^{2e_v-r}), 1\leq r\leq 2e_v-1
\right\}
\]
as a complete system of representatives of $\widetilde{B_v}\backslash\Mp_2(F_v)/\widetilde{\Gamma_0(4)_v}$ by Iwasawa decomposition.
For any $g\in\mathfrak{R},$ we let $f^+_{v,g}\in\tilde{I}_{\psi_v}(s_v)$ be the one which is supported on $\widetilde{B}g\widetilde{\Gamma_0(4)_v}$ and satisfies
\[
f^+_{v,g}(g)=f^+_v(g).
\]

\begin{lemma}\label{lem:T1vf+v}
We have
\[
\rho_v(\widetilde{\mathcal{T}}_{1,v})f^+_v=q_v^{3/2+s_v}f^+_v+(q_v^{-s_v+1/2}-q_v^{s_v+3/2})f^+_{v,I_v}.
\]
\end{lemma}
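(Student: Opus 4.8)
The plan is to prove the identity by evaluating both sides on a complete set $\mathfrak{R}$ of representatives of $\widetilde{B_v}\backslash\Mp_2(F_v)/\widetilde{\Gamma_0(4)_v}$, exactly as in the $q$ odd computations of Section~\ref{The Hecke operators}. First I would record that all three functions $\rho_v(\widetilde{\mathcal{T}}_{1,v})f^+_v$, $f^+_v$ and $f^+_{v,I_v}$ are $\Gamma_0(4)_v$-fixed, i.e.\ transform by $\varepsilon_v$ under right translation by $\widetilde{\Gamma_0(4)_v}$. For $f^+_v$ this holds because, by Proposition~\ref{prop:EKfixedsubsp} and Lemma~\ref{defofcheckvarepsilon}, it is fixed up to $\check{\varepsilon}_v$ by the larger group $\widetilde{\Gamma[4^{-1}\mathfrak{d}_v^{-1},4\mathfrak{d}_v]_v}\supset\widetilde{\Gamma_0(4)_v}$, and a short check with the explicit formula for $\varepsilon_v$ (using that $4$ is a square, so $\alpha_{\psi_v}(4c)=\alpha_{\psi_v}(c)$) shows $\check{\varepsilon}_v$ restricts to $\varepsilon_v$ on $\widetilde{\Gamma_0(4)_v}$; the pieces $f^+_{v,g}$ inherit this since each double coset $\widetilde{B_v}g\widetilde{\Gamma_0(4)_v}$ is right $\widetilde{\Gamma_0(4)_v}$-stable, and $\widetilde{\mathcal{T}}_{1,v}$ preserves the $\Gamma_0(4)_v$-fixed space. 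Consequently each of the three is determined by its values on $\mathfrak{R}$.

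Since $f^+_{v,I_v}$ is supported on $\widetilde{B_v}\widetilde{\Gamma_0(4)_v}$ with $f^+_{v,I_v}(I_v)=f^+_v(I_v)$ and vanishes at every other representative, and since $q_v^{1/2-s_v}-q_v^{3/2+s_v}=q_v^{-s_v+1/2}-q_v^{s_v+3/2}$, the asserted identity is equivalent to the two pointwise statements
\[
\rho_v(\widetilde{\mathcal{T}}_{1,v})f^+_v(I_v)=q_v^{1/2-s_v}f^+_v(I_v),\qquad \rho_v(\widetilde{\mathcal{T}}_{1,v})f^+_v(g)=q_v^{3/2+s_v}f^+_v(g)\quad(g\in\mathfrak{R}\setminus\{I_v\}).
\]
To compute these I would feed the coset decomposition of Lemma~\ref{lem:decompofdoublecosetsveven} into the standard formula for the action of a Hecke operator on a $\Gamma$-fixed vector. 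Because $\varepsilon_v(\mathbf{u}^\sharp(\boldsymbol{\delta}_v^{-1}s))=1$ and $\varepsilon_v(\mathbf{m}(\varpi_v))=\alpha_{\psi_v}(\varpi_v)/\alpha_{\psi_v}(1)$, this yields
\[
\rho_v(\widetilde{\mathcal{T}}_{1,v})f^+_v(g)=q_v^{-1/2}\frac{\alpha_{\psi_v}(\varpi_v)}{\alpha_{\psi_v}(1)}\sum_{s\in\mathfrak{o}_v/\mathfrak{p}_v^2}f^+_v\bigl(g\,\mathbf{u}^\sharp(\boldsymbol{\delta}_v^{-1}s)\mathbf{m}(\varpi_v)\bigr).
\]

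For $g=I_v$ the argument $\mathbf{u}^\sharp(\boldsymbol{\delta}_v^{-1}s)\mathbf{m}(\varpi_v)$ is upper triangular, so by the left equivariance defining $\tilde{I}_{\psi_v}(s_v)$ each summand equals $\tfrac{\alpha_{\psi_v}(1)}{\alpha_{\psi_v}(\varpi_v)}q_v^{-(s_v+1)}f^+_v(I_v)$, independent of $s$; summing the $q_v^2$ terms collapses the Weil indices and gives $q_v^{1/2-s_v}f^+_v(I_v)$, settling the first statement at once.

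The remaining cases $g=\mathbf{w}_{\boldsymbol{\delta}_v}$ and $g=\mathbf{u}^\flat(\boldsymbol{\delta}_vc)$ are where the real work lies, and I expect them to be the main obstacle. For each such $g$ I would carry out the Bruhat/triangular decomposition of $g\,\mathbf{u}^\sharp(\boldsymbol{\delta}_v^{-1}s)\mathbf{m}(\varpi_v)$, splitting the sum over $s$ according to $\ord_v(s)$, and reduce every summand—via the left equivariance of $\tilde{I}_{\psi_v}(s_v)$ and the boundary values recorded in Lemma~\ref{lem:valuesoff+}—to an explicit expression in Weil indices and the dyadic integrals $\int_{\mathfrak{o}_v}\psi_v(\cdots y^2)\,dy$. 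Because $v\mid2$, the clean cancellation of Lemma~\ref{lem:sumofWeilconsts} is unavailable, so the delicate point is to track precisely the cocycle factors $(\,\cdot,\cdot)_v$ and the Weil indices $\alpha_{\psi_v}$ produced by each decomposition and to reassemble the Gauss sums so that the total collapses to $q_v^{3/2+s_v}f^+_v(g)$; in particular, when $\ord_v(c)$ is odd the vanishing in Lemma~\ref{lem:valuesoff+}(3) must be reproduced, i.e.\ the corresponding sum must vanish. Once these pointwise equalities are established, the reduction of the second paragraph delivers the lemma.
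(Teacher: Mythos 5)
Your strategy coincides with the paper's: decompose $\widetilde{\Gamma_v}\mathbf{m}(\varpi_v)\widetilde{\Gamma_v}$ into left cosets via Lemma \ref{lem:decompofdoublecosetsveven}, note that $\rho_v(\widetilde{\mathcal{T}}_{1,v})f^+_v$, $f^+_v$ and $f^+_{v,I_v}$ are all $\varepsilon_v$-equivariant under $\widetilde{\Gamma_0(4)_v}$ (this is automatic for any $\rho_v(E^K_v)$-image, so your first paragraph can be compressed), and compare values on the representatives $\mathfrak{R}$ of $\widetilde{B_v}\backslash\Mp_2(F_v)/\widetilde{\Gamma_0(4)_v}$. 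Your reduction of the identity to the two pointwise statements is correct, and your evaluation at $I_v$ is complete and correct: it yields $q_v^{1/2-s_v}f^+_v(I_v)$, which is what the statement of the lemma requires. (The paper's own proof displays $q_v^{s_v+1/2}$ at $I_v$ and $q_v^{-s_v+3/2}$ at the remaining representatives, i.e.\ your values with $s_v$ replaced by $-s_v$; this internal sign slip is harmless for Proposition \ref{prop:T1v3}, whose eigenvalue is symmetric under $s_v\mapsto-s_v$, and your signs are the ones consistent with the lemma as stated.)

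There is nonetheless a genuine gap: the cases $g=\mathbf{w}_{\boldsymbol{\delta}_v}$ and $g=\mathbf{u}^\flat(\boldsymbol{\delta}_vc)$ with $0<\ord_v(c)<2e_v$, which you yourself flag as ``where the real work lies,'' are never carried out. These are exactly the computations that produce the eigenvalue $q_v^{3/2+s_v}$ off the cell of $I_v$ --- that is, the entire nontrivial content of the lemma --- and they must also reproduce the vanishing of $f^+_v(\mathbf{u}^\flat(\boldsymbol{\delta}_vc))$ for $\ord_v(c)$ odd recorded in Lemma \ref{lem:valuesoff+}. What you give is a plan (Bruhat decomposition of $g\,\mathbf{u}^\sharp(\boldsymbol{\delta}_v^{-1}\xi)\mathbf{m}(\varpi_v)$, splitting the sum by $\ord_v(\xi)$, tracking cocycle factors and Weil indices, reassembling the dyadic Gauss sums via Lemmas \ref{lem:valuesofeK} and \ref{lem:valuesoff+}); this is precisely the route the paper takes, but a plan is not a proof. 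To close the gap you must actually verify, for instance, that
\[
\sum_{\xi\in\mathfrak{o}_v/\mathfrak{p}_v^2}f^+_v\bigl(\mathbf{w}_{\boldsymbol{\delta}_v}\mathbf{u}^\sharp(\boldsymbol{\delta}_v^{-1}\xi)\mathbf{m}(\varpi_v)\bigr)=q_v^{2+s_v}\frac{\alpha_{\psi_v}(1)}{\alpha_{\psi_v}(\varpi_v)}\,f^+_v(\mathbf{w}_{\boldsymbol{\delta}_v}),
\]
and the analogous collapse at each $\mathbf{u}^\flat(\boldsymbol{\delta}_vc)$; without these the claimed coefficient $q_v^{3/2+s_v}$ is unestablished.
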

\begin{proof}
By Lemma \ref{lem:decompofdoublecosetsveven}, we have
\[
\rho_v(\widetilde{\mathcal{T}}_{1,v})f^+_v=q_v^{-1/2}\frac{\alpha_{\psi_v}(\varpi_v)}{\alpha_{\psi_v}(1)}\sum_{\xi\in\mathfrak{o}_v/\mathfrak{p}_v^2}\rho_v\left(\mathbf{u}^\sharp(\boldsymbol{\delta}_v^{-1}\xi)\mathbf{m}(\varpi_v)\right)f^+_v.
\]
Applying Lemma \ref{lem:valuesofeK} and Lemma \ref{lem:valuesoff+}, one sees 
\begin{align*}
&\rho_v(\widetilde{\mathcal{T}}_{1,v})f^+_v(I_v)=q_v^{s_v+1/2}f^+_v(I_v),\\
&\rho_v(\widetilde{\mathcal{T}}_{1,v})f^+_v(w_{\boldsymbol{\delta}_v})=q_v^{-s_v+3/2}f^+_v(w_{\boldsymbol{\delta}_v})\\
\mbox{and}\quad&\rho_v(\widetilde{\mathcal{T}}_{1,v})f^+_v(\mathbf{u}^\flat(\boldsymbol{\delta}_vc))=q_v^{-s_v+3/2}f^+_v(\mathbf{u}^\flat(\boldsymbol{\delta}_vc))
\end{align*}
for $0<\ord_v(c)<2e_v$.
From these formulas and the definition of $f^+_{v,I_v},$ one gets the lemma.
\end{proof}

From the lemma above we can get how $E^K_v\ast\widetilde{\mathcal{T}}_{1,v}$ acts on $f\in\mathcal{B}$.

\begin{proposition}\label{prop:T1v3}
	For $v\mid2,$ the Hecke operator $E^K_v\ast\widetilde{\mathcal{T}}_{1,v}\in\widetilde{\mathcal{H}_v}$ acts on $f\in\mathcal{B}$ by
	\[
	\rho_v(E^K_v\ast\widetilde{\mathcal{T}}_{1,v})f=(1+q_v^{-1})^{-1}q_v^{1/2}(q_v^{s_v}+q_v^{-s_v})f
	\]
	where $s_v$ is the complex number such that $\rho_v\subset\tilde{I}_{\psi_v}(s_v)$.
\end{proposition}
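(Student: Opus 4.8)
The plan is to reduce everything to the explicit generator $f^+_v$ of the one-dimensional $E^K_v$-fixed line in the principal series. Since $f\in\mathcal{B}$ is $E^K_v$-fixed and generates $\rho_v\subset\tilde{I}_{\psi_v}(s_v)$, Proposition \ref{prop:EKfixedsubsp} shows that $f$ corresponds, up to a nonzero scalar, to $f^+_v$, where $\tilde{I}_{\psi_v}(s_v)^{E^K_v}=\mathbb{C}\cdot f^+_v$; hence it suffices to prove the identity with $f$ replaced by $f^+_v$. Because the convolution product corresponds to composition of operators, $\rho_v(E^K_v\ast\widetilde{\mathcal{T}}_{1,v})=\rho_v(E^K_v)\circ\rho_v(\widetilde{\mathcal{T}}_{1,v})$, so I would first invoke Lemma \ref{lem:T1vf+v} to write
\[
\rho_v(\widetilde{\mathcal{T}}_{1,v})f^+_v=q_v^{3/2+s_v}f^+_v+(q_v^{1/2-s_v}-q_v^{3/2+s_v})f^+_{v,I_v},
\]
and then apply the idempotent $\rho_v(E^K_v)$. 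The first summand is fixed, so the whole computation reduces to evaluating the projection $\rho_v(E^K_v)f^+_{v,I_v}$.

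Since $\rho_v(E^K_v)$ projects onto $\mathbb{C}\cdot f^+_v$, I would write $\rho_v(E^K_v)f^+_{v,I_v}=c\,f^+_v$ and determine $c$ by evaluating at $I_v$, so that $c\,f^+_v(I_v)=\int_{\Mp_2(F_v)}E^K_v(h)f^+_{v,I_v}(h)\,dh$. Here I would use that $E^K_v$ is supported on $\widetilde{\Gamma[4^{-1}\mathfrak{d}_v^{-1},4\mathfrak{d}_v]}$ by (\ref{eq:formulaofEK}) while $f^+_{v,I_v}$ is supported on $\widetilde{B}\widetilde{\Gamma_0(4)_v}$; because $\widetilde{\Gamma_0(4)_v}$ lies inside both groups and the integrand $E^K_v(h)f^+_{v,I_v}(h)$ is right $\widetilde{\Gamma_0(4)_v}$-invariant (the two $\varepsilon_v$-twists cancel), the integral collapses to the finite sum $c=\sum_b E^K_v(\mathbf{u}^\sharp(b))$ over representatives $b$ of $4^{-1}\mathfrak{d}_v^{-1}/\mathfrak{d}_v^{-1}$, using $f^+_{v,I_v}(\mathbf{u}^\sharp(b))=f^+_v(I_v)$ and $\Vol(\widetilde{\Gamma_0(4)_v})=1$.

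The main work, and the main obstacle, is the evaluation of this sum. Writing $b=4^{-1}\boldsymbol{\delta}_v^{-1}z$ with $z$ running over $\mathfrak{o}_v/4\mathfrak{o}_v$ and inserting the explicit value of $E^K_v$ on $\mathbf{u}^\sharp(4^{-1}\boldsymbol{\delta}_v^{-1}z)$ computed above (the $q$ even analogue of Lemma \ref{lem:valuesofeK}), I would interchange the $z$-sum with the Gaussian integral over $\mathfrak{o}_v$. The inner character sum $\sum_{z\in\mathfrak{o}_v/4\mathfrak{o}_v}\overline{\psi_v(\boldsymbol{\delta}_v^{-1}zy^2/4)}$ vanishes unless $y\in2\mathfrak{o}_v$, in which case it equals $q_v^{2e_v}$; integrating the surviving constant over $2\mathfrak{o}_v$ and using $\Vol(\widetilde{K}_v)=[\Gamma_0(1)_v:\Gamma_0(4)_v]=q_v^{2e_v-1}(q_v+1)$ then yields $c=q_v/(q_v+1)=(1+q_v^{-1})^{-1}$. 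The care needed in pinning down the conductor of the relevant character, so that the Gauss sum degenerates to this clean support condition, and in getting the volume/index normalization exactly right, is where the calculation is most likely to go wrong.

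Finally, substituting $c=(1+q_v^{-1})^{-1}$ gives
\[
\rho_v(E^K_v\ast\widetilde{\mathcal{T}}_{1,v})f^+_v=\left(q_v^{3/2+s_v}+(q_v^{1/2-s_v}-q_v^{3/2+s_v})\frac{q_v}{q_v+1}\right)f^+_v,
\]
and a short simplification collapses the bracket to $\dfrac{q_v^{3/2}(q_v^{s_v}+q_v^{-s_v})}{q_v+1}=(1+q_v^{-1})^{-1}q_v^{1/2}(q_v^{s_v}+q_v^{-s_v})$, which is exactly the asserted eigenvalue $\lambda_v$. Transporting back through the correspondence $f\leftrightarrow f^+_v$ then completes the proof.
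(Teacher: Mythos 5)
Your proposal reproduces the paper's own argument essentially step for step: reduce to the generator $f^+_v$ of $\tilde{I}_{\psi_v}(s_v)^{E^K_v}$, apply Lemma \ref{lem:T1vf+v}, observe that idempotence forces $\rho_v(E^K_v)f^+_{v,I_v}\in\mathbb{C}\cdot f^+_v$, and compute the constant $(1+q_v^{-1})^{-1}$ by evaluating at $I_v$ via the same coset decomposition, Gauss-sum vanishing on $y\notin2\mathfrak{o}_v$, and volume normalization $\Vol(\widetilde{K}_v)=q_v^{2e_v-1}(q_v+1)$. The final simplification to $(1+q_v^{-1})^{-1}q_v^{1/2}(q_v^{s_v}+q_v^{-s_v})$ is also as in the paper, so the proof is correct and follows the same route.
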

\begin{proof}
It suffices to show that
\[
\rho_v(E^K_v\ast\widetilde{\mathcal{T}}_{1,v})f^+_v=(1+q_v^{-1})^{-1}q_v^{1/2}(q_v^{s_v}+q_v^{-s_v})f^+_v.
\]
Since from Proposition \ref{prop:EKfixedsubsp} we already know that $f^+_v$ is fixed by $E^K_v,$ by Lemma \ref{lem:T1vf+v}, we only need to determine $\rho_v(E^K_v)f^+_{v,I_v}$.
Also because $E^K_v$ is an idempotent, we have $\rho_v(E^K_v)f^+_{v,I_v}\in\mathbb{C}\cdot f^+_v$
Hence we only need to calculate $\rho_v(E^K_v)f^+_{v,I_v}(I_v)$ to get $\rho_v(E^K_v)f^+_{v,I_v}$.
Note that
\begin{align*}
 &\widetilde{\Gamma[4^{-1}\mathfrak{d}_v^{-1},4\mathfrak{d}_v]}/\widetilde{\Gamma_v}\\
=&\left\{\mathbf{u}^\sharp(4^{-1}\boldsymbol{\delta}_v^{-1}\xi)\,\mid\,\xi\in\mathfrak{o}_v/(4\mathfrak{o}_v)\right\}\cup\left\{\mathbf{u}^\sharp(4^{-1}\boldsymbol{\delta}_v^{-1}\xi)\mathbf{w}_{4\boldsymbol{\delta}_v}\,\mid\,\xi\in\mathfrak{p}_v/(4\mathfrak{o}_v)\right\}
\end{align*}
and only the representatives of the form $\mathbf{u}^\sharp(4^{-1}\boldsymbol{\delta}_v^{-1}\xi)$ are contained in the support of $f^+_{v,I_v}$.
Thus we have
\begin{align*}
 &\rho_v(E^K_v)f^+_{v,I_v}(I_v)\\
=&\int_{\Mp_2(F_v)}E^K_v(h)f^+_{v,I_v}(h)dh\\
=&\sum_{\xi\in\mathfrak{o}_v/(4\mathfrak{o}_v)}\int_{\widetilde{\Gamma_v}}E^K_v(\mathbf{u}^\sharp(4^{-1}\boldsymbol{\delta}_v^{-1}\xi)\gamma)f^+_{v,I_v}(\mathbf{u}^\sharp(4^{-1}\boldsymbol{\delta}_v^{-1}\xi)\gamma)d\gamma\\
=&\Vol(\widetilde{K})^{-1}|2|_v^{-1}\int_{\mathfrak{o}_v}\sum_{\xi\in\mathfrak{o}_v/(4\mathfrak{o}_v)}\overline{\psi_v\left(\frac{\xi y^2}{4\boldsymbol{\delta}_v}\right)}dy\times f^+_{v,I_v}(I_v)\\
=&\frac{q_v^{e_v}}{q_v^{2e_v}+q_v^{2e_v-1}}\int_{2\mathfrak{o}_v}q_v^{2e_v}dh\times f^+_{v,I_v}(I_v)\\
=&(1+q_v^{-1})^{-1}f^+_{v,I_v}(I_v).
\end{align*}
From this we see
\[
\rho_v(E^K_v)f^+_{v,I_v}=(1+q_v^{-1})^{-1}f^+_v
\]
and by Lemma \ref{lem:T1vf+v} we have proved what we want to show.
\end{proof}


\section{Application of Waldspurger's results}\label{Application of Waldspurger's results}

In the last section we want to consider the relation between the new forms in the plus space and certain modular forms of weight $2k$.
We want to apply Waldspurger's theory for Shimura correspondence.
One can consult \cite{Gan:11} for the knowlodge of Shimura correspondence of automorphic representations.

\par

Given a primitive form $f\in S_{k+1/2}^{+,\mathrm{NEW}}(\Gamma_0(4N),\chi_\mathfrak{f})$ with the corresponding genuine irreducible representation $\rho'=\otimes_v\rho'_v,$ given by the right translation, of $\Mp_2(\mathbb{A}),$ we know that $\rho'_v$ is isomorphic to

\begin{itemize}
	\item a discrete series representation of minimal weight $k_i+1/2$ if $v=\infty_i$ is archimedean.
	\item a principal series representation $\tilde{I}_{\psi_v}(s_v)$ for some $s_v\in\mathbb{C}$ if $v<\infty$ and $v\nmid\mathfrak{I}$.
	\item a Steinberg representation $St_{\psi_v}$ contained in $\tilde{I}_{\psi_v}(s_v)$ for some $s_v\in\mathbb{C}$ such that $q_v^{2s_v}=q_v$ if $v<\infty$ and $v\mid\mathfrak{I}$.
\end{itemize}

Now assume that there exists some $i$ such that $k_i>1,$ then it is known that $\rho'$ cannot be a space consisting of theta functions on $\Mp_2(\mathbb{A})$.
Since $f$ is non-zero, there exists some $a\in F^\times$ such that the $\psi_{a}=\psi(a\cdot)$-th Fourier coefficient of $\rho'$ does not vanish.
Thus by Proposition 6.1 in \cite{Gan:11}, the theta lift $\Theta_{\psi_a}(\rho')$ of $\rho'$ to $\PGL_2(\mathbb{A})$ with respect to $\psi_a$ is not trivial.
Because that $\Theta_{\psi_a}(\rho')\otimes\hat{\chi}_a,$ where $\chi_a$ denotes the quadratic Hecke character of $\mathbb{A}^\times$ with respect to $a,$ does not depend on $a\in F^\times$ whenever $\Theta_{\psi_a}(\rho')$ is not trivial, we may put $\Wald_{\psi}(\rho')=\Theta_{\psi_a}(\rho')\otimes\hat{\chi}_a,$ which is called the Waldspurger lift of $\rho'$.
The Waldspurger lift can be written as a tensor product $\Wald_{\psi}(\rho')=\otimes_v\Wald_{\psi_v}(\rho'_v)$.

\par

According to the table given in 2.17 in \cite{Gan:11}, the local Waldspurger's lift $\Wald_{\psi_v}(\rho_v)$ is a representation of $\PGL_2(F_v)$ which is isomorphic to

\begin{itemize}
	\item a discrete series representation of minimal weight $2k_i$ if $v=\infty_i$ is archimedean.
	\item a principal series representation $I_v(s_v)$ which is induced from the character given by $\begin{pmatrix}
	a&b\\0&d
	\end{pmatrix}\mapsto|ad^{-1}|^{s_v}$ if $v<\infty$ and $v\nmid\mathfrak{I}$.
	\item a Steinberg representation $St_v,$ which is the only irreducible component contained in $I_v(s_v)$ if $v<\infty$ and $v\mid\mathfrak{I}$.
\end{itemize}

Waldspurger's theorems imply that in such a lift we get exactly all the irreducible representations of $\PGL_2(\mathbb{A})$ which has local components satisfying the above conditions.
Also, if we put $K_0(\mathfrak{I}_v)\subset \PGL_2(F_v)$ to be the standard maximal compact subgroup if $\infty>v\nmid\mathfrak{I}$ or the Iwahori subgroup if $v\mid\mathfrak{I}$, then we know that any Hecke automorphic form in $\mathcal{A}^{\mathrm{CUSP}}_{2k}(\PGL_2(F)\backslash\PGL_2(\mathbb{A})/\prod_{v<\infty}K_0(\mathfrak{I}_v))$ which generates a Steinberg representation at any $v\mid\mathfrak{I}$ must generate a global representation satisfying the same conditions.
We call such an automorphic form in $\mathcal{A}^{\mathrm{CUSP}}_{2k}(\PGL_2(F)\backslash\PGL_2(\mathbb{A})/\prod_{v<\infty}K_0(\mathfrak{I}_v))$ a new form.
Thus this gives us a one-to-one correspondence between the primitive forms in $S_{k+1/2}^{+,\mathrm{NEW}}(\Gamma_0(4N),\chi)$ and the Hecke new forms in $\mathcal{A}^{\mathrm{CUSP}}_{2k}(\PGL_2(F)\backslash\PGL_2(\mathbb{A})/\prod_{v<\infty}K_0(\mathfrak{I}_v))$.
Moreover, with the notations used in Section \ref{The Hecke algebras}, we let $\widetilde{\mathcal{U}}_{m,v}$ and $\widetilde{\mathcal{T}}_{m,v}$ correspond to the Hecke operators $\mathcal{U}_{m,v}$ and $\mathcal{T}_{m,v}$ of $\PGL_2(F_v),$ respectively, for any finite odd place $v$.
By the results about the eigenvalues of the Hecke operators in $\widetilde{\mathcal{H}}_{v}$ for finite odd places and the knowledge about the integral weight automorphic forms, we get that such a correspondence commutes with the actions of the local Hecke operators.
Also, for finite even $v,$ if we consider $(1+q_v^{-1})E^K_v\ast\widetilde{\mathcal{T}}_{1,v}$ and $\mathcal{T}_{1,v}$ which is defined similarly, we still get the similar result.
We write this in a theorem to finish this section.

\begin{theorem}
There exists a Hecke isomorphism between $S_{k+1/2}^{+,\mathrm{NEW}}(\Gamma_0(4N),\chi_\mathfrak{f})$ and the space $\mathcal{A}^{\mathrm{NEW,CUSP}}_{2k}(\mathfrak{I}_v)$
which is spanned by the Hecke forms in $\mathcal{A}^{\mathrm{CUSP}}_{2k}(\PGL_2(F)\backslash\PGL_2(\mathbb{A})/\prod_{v<\infty}K_0(\mathfrak{I}_v))$ which generates a Steinberg representation at any place dividing $\mathfrak{I}$.
\end{theorem}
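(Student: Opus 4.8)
The plan is to read off the asserted Hecke isomorphism directly from the Waldspurger lift $\Wald_\psi$ whose local structure has already been tabulated above. First I would fix the basis $\mathcal{B}$ of $S_{k+1/2}^{+,\mathrm{NEW}}(\Gamma_0(4N),\chi_\mathfrak{f})$ consisting of primitive forms, and to each $f\in\mathcal{B}$ attach the genuine irreducible cuspidal representation $\rho'=\otimes_v\rho'_v$ of $\Mp_2(\mathbb{A})$ that it generates, together with its Waldspurger lift $\Wald_\psi(\rho')=\otimes_v\Wald_{\psi_v}(\rho'_v)$. By the local table recalled above, $\Wald_{\psi_v}(\rho'_v)$ is a discrete series of minimal weight $2k_i$ at each archimedean place $\infty_i$, a principal series $I_v(s_v)$ at each finite $v\nmid\mathfrak{I}$, and the Steinberg representation $St_v$ at each $v\mid\mathfrak{I}$. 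In particular $\Wald_\psi(\rho')$ is a cuspidal representation of $\PGL_2(\mathbb{A})$ possessing a non-zero vector fixed by $\prod_{v<\infty}K_0(\mathfrak{I}_v)$ and generating a Steinberg representation at every $v\mid\mathfrak{I}$, so the associated Hecke eigenform lies in $\mathcal{A}^{\mathrm{NEW,CUSP}}_{2k}(\mathfrak{I}_v)$.

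Next I would show that $f\mapsto\Wald_\psi(\rho')$ induces a bijection between $\mathcal{B}$ and the Hecke new forms spanning $\mathcal{A}^{\mathrm{NEW,CUSP}}_{2k}(\mathfrak{I}_v)$. Injectivity follows because $\rho'$, and hence $f$ up to a scalar, is recovered from $\Wald_\psi(\rho')$ through the inverse Shimura--Waldspurger correspondence once $\psi$ is fixed; surjectivity is precisely the statement, recalled above from Waldspurger's theorems as presented in \cite{Gan:11}, that every cuspidal representation of $\PGL_2(\mathbb{A})$ carrying the prescribed local components is realized as such a lift. Extending this bijection of bases $\mathbb{C}$-linearly produces a linear isomorphism $\Wald_\psi\colon S_{k+1/2}^{+,\mathrm{NEW}}(\Gamma_0(4N),\chi_\mathfrak{f})\to\mathcal{A}^{\mathrm{NEW,CUSP}}_{2k}(\mathfrak{I}_v)$.

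It then remains to verify that this isomorphism intertwines the Hecke actions, which is what upgrades it to a Hecke isomorphism, and here I would proceed place by place. For each finite odd $v\nmid\mathfrak{I}$ the Satake parameter $s_v$ is common to $\tilde{I}_{\psi_v}(s_v)$ and to $I_v(s_v)$ by construction of the lift, so by Proposition \ref{prop:T1v1} the $\widetilde{\mathcal{T}}_{1,v}$-eigenvalue $q_v^{1/2}(q_v^{s_v}+q_v^{-s_v})$ of $f$ equals the $\mathcal{T}_{1,v}$-eigenvalue of the lifted form, matching $\widetilde{\mathcal{T}}_{1,v}$ with $\mathcal{T}_{1,v}$. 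For $v\mid\mathfrak{I}$ the matching of $\widetilde{\mathcal{U}}_{0,v},\widetilde{\mathcal{U}}_{1,v}$ with $\mathcal{U}_{0,v},\mathcal{U}_{1,v}$ follows from the isomorphism of the Iwahori-type Hecke algebras established earlier, both acting through the same relations on the one-dimensional fixed line of a Steinberg representation, with the Atkin--Lehner eigenvalues controlled by Corollary \ref{cor:ALinvo}. For $v\mid 2$ I would invoke Proposition \ref{prop:T1v3}, giving the $(1+q_v^{-1})E^K_v\ast\widetilde{\mathcal{T}}_{1,v}$-eigenvalue $q_v^{1/2}(q_v^{s_v}+q_v^{-s_v})$, once more equal to the $\mathcal{T}_{1,v}$-eigenvalue on the $\PGL_2$ side. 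Collecting these local compatibilities over all $v$ yields equivariance for the full abstract Hecke algebra and finishes the proof.

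The hard part will be the surjectivity together with the even-place bookkeeping. Surjectivity rests on Waldspurger's classification of the fibers of the theta correspondence and on the multiplicity-one phenomena ensuring that each admissible representation of $\PGL_2(\mathbb{A})$ is hit by exactly one $\rho'$ after the twist by $\hat{\chi}_a$; making the non-vanishing of the relevant Fourier coefficient, needed so that $\Theta_{\psi_a}(\rho')\neq 0$, uniform over $\mathcal{B}$ uses the hypothesis that some $k_i>1$, which excludes the theta representations. The even places are the most delicate point, since $\widetilde{\mathcal{T}}_{1,v}$ does not preserve the $E^K_v$-fixed line: identifying the correct operator $(1+q_v^{-1})E^K_v\ast\widetilde{\mathcal{T}}_{1,v}$ and checking that its eigenvalue coincides with the $\PGL_2$ one is the computationally sensitive step on which the Hecke-equivariance ultimately depends.
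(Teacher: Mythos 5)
Your proposal follows essentially the same route as the paper: attach to each primitive new form its cuspidal representation $\rho'$, form the Waldspurger lift $\Wald_\psi(\rho')=\Theta_{\psi_a}(\rho')\otimes\hat\chi_a$ (using $k_i>1$ to rule out theta representations and guarantee non-vanishing), match the tabulated local components to land in $\mathcal{A}^{\mathrm{NEW,CUSP}}_{2k}(\mathfrak{I}_v)$, invoke Waldspurger's classification for the bijection, and verify Hecke equivariance place by place via Propositions \ref{prop:T1v1}, \ref{prop:T1v2}, \ref{prop:T1v3} and the Iwahori--Hecke algebra isomorphism, with the normalization $(1+q_v^{-1})E^K_v\ast\widetilde{\mathcal{T}}_{1,v}$ at even places. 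This matches the paper's argument in both structure and the delicate points you single out.
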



\begin{thebibliography}{99}
	\bibitem{BaPu:15}E. M. Baruch and S. Purkait, \textit{Hecke algebras, new vectors and new forms on $\Gamma_0(m)$,} arXiv:1503.02767 (2015)
	\bibitem{RoRa:98}R. Berndt and R. Schmidt, \textit{Elements of the Representation Theory of the Jacobi group,} Birkh\"{a}user (1998)
	\bibitem{EichZag:85}M. Eichler and D. Zagier, \textit{The theory of Jacobi forms,} Springer (1985)
	\bibitem{Gan:11}W.T. Gan, \textit{The Shimura correspondence {\`a} la Waldspurger,} Lecture notes, Postech Theta Festival, Pohang, South Korea (2011)
	\bibitem{HiraIke:13}K. Hiraga and T. Ikeda, \textit{On the Kohnen plus space for Hilbert modular forms of half-Integral weight I,} Compositio Mathematica 149 (2013), 1963-2010
	\bibitem{Ibuki:92}T. Ibukiyama, \textit{On Jacobi forms and Siegel modular forms of half integral weights,} Comment. Math. Univ. St. Paul. Vol. 41 No. 2, 109-124 (1992)
	\bibitem{Ikeda:94}T. Ikeda, \textit{On the theory of Jacobi forms and Fourier-Jacobi coefficients of Eisenstein series,} J. Math. Kyoto Univ. 34-3, 615-636 (1994)
	\bibitem{Ikeda:17}T. Ikeda, \textit{On the functional equation of the Siegel seriesOriginal Research Article,} J. Number Theory 172 (2017) 44-62
	\bibitem{Kohnen:80}W. Kohnen, \textit{Modular forms of half-integral weight on $\Gamma_0(4)$,} Math. Ann. 248, 249-266 (1980)
	\bibitem{Kohnen:82}W. Kohnen, \textit{Newforms of half-integral weight,} Journal für die reine und angewandte Mathematik 333, 32-72 (1982)
	\bibitem{LokeSavin:10}H. Y. Loke and G. Savin, \textit{Representations of the two-fold central extension of $\mathrm{SL}_2(\mathbb{Q}_2)$,} Pacific J. Math. 247, 435-454 (2010)
	\bibitem{Rao:93}R. Ranga Rao, \textit{On Some Explicit Formulas in The Theory of Weil Representation,} Pacific Journal of Mathematics Vol. 157 No. 2, 335-371 (1993)
	\bibitem{Shimura:85}G. Shimura, \textit{On Eisenstein series of half-integral weight,} Duke Math. J., 52 (1985) 281-314
	\bibitem{Ren:16}R-H. Su, \textit{The Kohnen plus space for Hilbert-Siegel modular forms,} J. Number Theory 163 (2016) 267-295
\end{thebibliography}
\end{document}